\theoremstyle{plain}
\newtheorem{thm}{Theorem}[chapter]
\newtheorem{cor}[thm]{Corollary}
\newtheorem{lem}[thm]{Lemma}
\newtheorem{prop}[thm]{Proposition}
\newtheorem{rem}[thm]{Remark}
\theoremstyle{definition}
\newtheorem{defn}[thm]{Definition}
\newtheorem{prob}[thm]{Problem}
\newtheorem{Qn}[thm]{Question}
\theoremstyle{remark}
\theoremstyle{plain}
\newcommand{\pa}{\partial}
\newcommand{\D}{\mathbb{D}}
\newcommand{\C}{\mathbb{C}}
\newcommand{\R}{\mathbb{R}}
\newcommand{\h}{\mathbb{H}}
\newcommand{\kh}{\mathbb{K}}
\newcommand{\W}{\Omega}
\newcommand{\w}{\omega}
\newcommand{\N}{\mathbb{N}}
\newcommand{\T}{\mathbb{T}}
\newcommand{\Z}{\mathbb{Z}}
\newcommand{\A}{\mathbb{A}}
\newcommand{\VK}{Varopoulos-Kaijser }
\title{Interpolation Problems and the von-Neumann Inequality}
\author{Rajeev Gupta}
\begin{document}

\frontmatter

\begin{titlepage}

  \begin{center}

\textbf{\LARGE  {The Carath\'{e}odory-Fej\'{e}r Interpolation Problems \\and the von-Neumann Inequality}  
	}\\
	\vspace{0.5in}
	{\large  A Dissertation \\
	submitted in partial fulfilment of the requirements \\
	\vspace{0.04in}
	for the award of the degree of} \\
	\vspace{0.1in}
	{{\Large \bsifamily 
	Doctor of Philosophy}}  \\
	\vspace{.5in}
	{\large by}\\
	\vspace{0.05in}
	{\large Rajeev Gupta}\\
	\vspace{1in}
\includegraphics[scale=.3]{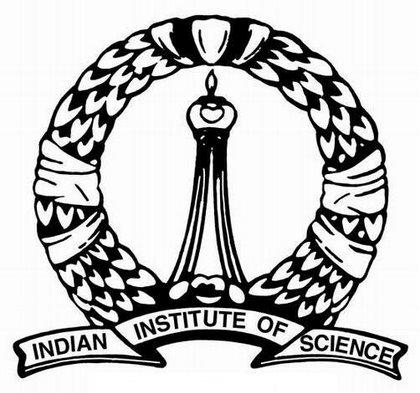} \\
\vspace{.2in}
{\large Department of Mathematics \\
Indian Institute of Science \\
Bangalore - 560012 \\
July 2015 \\}
\end{center}

\end{titlepage}

\newpage
\thispagestyle{empty}
{~}
\newpage

\thispagestyle{empty}
\vspace*{8cm}
\begin{center}
\begin{dedication}
TTo\\
my parents\\
and\\
my elder brother
\end{dedication}
\end{center}

\chapter*{Declaration}

\vspace*{.80in}

I hereby declare that the work reported in this thesis is entirely
original and has been carried out by me under the supervision of
Prof. Gadadhar Misra at the Department of Mathematics, Indian
Institute of Science, Bangalore. I further declare that this work
has not been the basis for the award of any degree, diploma,
fellowship, associateship or similar title of any University or
Institution.
\vspace*{.5in}

\begin{flushright}
Rajeev Gupta\\
S. R. No. 6910-110-101-07875
\end{flushright}
Indian Institute of Science,\\
Bangalore,\\
July, 2015.
\smallskip

\begin{flushright}
 Prof. Gadadhar Misra \\
 (Research advisor)
\end{flushright}

\chapter*{Acknowledgement}

I take this opportunity to thank my PhD supervisor, 
Prof. Gadadhar Misra, for his enthusiasm, motivation, immense 
patience and continuous support throughout my 
stay at Indian Institute of Science. 
He bestowed an ample amount of time for many long and 
fruitful discussions, 
despite being the Chairman of the department.
His joy and enthusiasm towards research was contagious,
thereby encouraging me to explore the subject with aggression
and novelty. 
A special thanks is due for his meticulous scrutiny of 
this thesis draft and a major reorganization of the same.
His support and guidance not only helped me grow as a researcher 
but also shaped me into a better human being.

\medskip

I would like to thank Prof. Pisier for some 
very insightful discussions during IWOTA 2013. 
I would 
like to thank him and Prof. Paulsen for some excellent lectures
at the Institute for Mathematical Sciences, Chennai.

\medskip
I would like to thank the faculty members at
the Department of Mathematics in IISc, especially
for offering such a wide variety of courses. The courses 
offered in topics in operator theory by Prof. Gadadhar Misra
have benefited me immensely. I would also like to thank Prof.
Manjunath Krishnapur for allowing me to simultaneously attend
and assist in the probability theory course.

\medskip

The primary reason for my embracing
research in mathematics is the summer program MTTS.
I would like to mention a special thanks to Prof.
Kumaresan for organizing the same.

\medskip 

I am extremely thankful to the administrative staff at the
Department of Mathematics for seeing me through the many
complicated formalities that have constantly risen during
my stint at IISc.

\medskip The many discussions in mathematics with 
Kartick, Nanda,  Pranav, Ramiz, Samya, Tulasi and 
Vikram have been very rewarding. A heartfelt thanks 
to all of them. Kartick, Pranav and Sayani deserve 
a special mention for pinpointing the mistakes in 
latex that emerged while penning this thesis.

\medskip

My life at IISc has been enjoyable, pleasant
and colourful because of my coexistence with 
folks like Arpan, Atreyee, Avijit, Bidyut, Dr. DD, 
Kartick, Mizanur, Nirupam, Pranav, Ramiz, 
Sayan, Sayani, Tulasi and Vikram. It has only been
more cheerful because of friends like
Amardeep, Bidhan, Manojit, Nanda, 
Rakesh, Samrat, Somnath, Sameer and Soumitra. 
It would be unfair not to mention 
Jiggy and Paddhu for they have made my
life here very charming.

\medskip
I acknowledge the National Board for Higher 
Mathematics (NBHM), Government of India for 
providing me the funding and fellowship to 
pursue research at IISc.  

\medskip

This acknowledgement would be incomplete without 
thanking my family for the support they have 
provided me all through. No words can thank my father 
and my elder brother for being such an immovable
shield for me at times of difficulty. Without their love, 
hard work and encouragement, this thesis would not 
have been possible.  

\chapter*{Abstract}
The validity of the von-Neumann inequality for commuting $n$ - tuples of $3\times 3$ matrices remains open for $n\geq 3$.  
We  give a partial answer to this question, which is used  to obtain a necessary condition for the Carath\'{e}odory-Fej\'{e}r 
interpolation  problem on the polydisc $\D^n.$ In the special case of $n=2$ (which follows from Ando's theorem as well), 
this necessary condition is made explicit. 

An alternative approach to the Carath\'{e}odory-Fej\'{e}r interpolation  problem, in the special 
case of $n=2,$ adapting a theorem of  Kor\'{a}nyi and Puk\'{a}nzsky is given. As a consequence, 
a class of polynomials are isolated for which a complete solution to the\\
Carath\'{e}odory-Fej\'{e}r 
interpolation problem is easily obtained. A natural generalization of the Hankel operators on the 
Hardy space of $H^2(\mathbb T^2)$ then becomes apparent. Many of our results remain valid for any 
$n\in \mathbb N,$ however, the computations are somewhat cumbersome for $n>2$ and are omitted. 

The inequality $\lim_{n\to \infty}C_2(n)\leq 2 K^\C_G$, where $K_G^\C$ is the complex Grothendieck constant and 
\[C_2(n)=\sup\big\{\|p(\boldsymbol T)\|:\|p\|_{\D^n,\infty}\leq 1, \|\boldsymbol T\|_{\infty} \leq 1 \big\}\]
is due to Varopoulos. Here the supremum is taken over all complex polynomials $p$ in $n$ variables of degree 
at most $2$ and commuting $n$ - tuples $\boldsymbol T:=(T_1,\ldots,T_n)$ of contractions. We  show that
\[\lim_{n\to \infty}C_2(n)\leq \frac{3\sqrt{3}}{4} K^\C_G\] obtaining a slight improvement in the inequality 
of Varopoulos.  

We show that the normed linear space $\ell^1(n),$  $n>1,$ has no isometric embedding into $k\times k$ complex matrices for any $k\in \N$
and discuss several infinite dimensional operator space structures on it. 

\tableofcontents

\mainmatter
\chapter{Introduction}
The fundamental inequality of von-Neumann saying that 
$\|T\|\leq 1$ if and only if $\|p(T)\|\leq \|p\|_{\D,\infty}$ 
for any polynomial $p,$ 
has lead to several new developments in modern operator theory. 
This inequality follows from the Sz.-Nagy dilation theorem, 
indeed, it is equivalent to it. 
The homomorphisms 
$\rho: \C[Z]\to \mathcal{B}(\h),$ 
where $\C[Z]$ is the polynomial ring 
and $\mathcal{B}(\h)$ is the algebra of 
bounded linear operators, on some complex separable Hilbert space $\h,$ 
are clearly in one-one correspondence 
with operators $T$ in $\mathcal{B}(\h).$ 
Thus given $T\in \mathcal{B}(\h),$ 
one defines the homomorphism $\rho_{\!_T}(p)=p(T)$ 
and conversely given $\rho,$ 
one may set $T:=\rho(z).$ 
An equivalent formulation of the von-Neumann inequality is the statement: 
A homomorphism $\rho$ is contractive, that is, 
$\|\rho(p)\|\leq \|p\|_{\D,\infty}$ 
for all $p\in \C[Z]$ 
if and only if $\|T\|:=\|\rho(z)\|\leq 1.$ 

The Sz.-Nagy dilation theorem for a homomorphism $\rho$ is the statement:

The homomorphism $\rho$ is contractive if and only if 
there exists a Hilbert space $\kh$ containing $\h$ 
and a $*$-homomorphism $\tilde{\rho}:C(\T)\to \mathcal{B}(\kh)$ 
such that 
\[P_{\h}\tilde{\rho}(p)_{|_{\h}}=\rho(p),\, p\in \C[Z].\]
Since $\sigma(\tilde{\rho}(z))\subset \T$ and $\tilde{\rho}$ is a $*$-homomorphism, it follows that 
\[\|\rho(p)\|\leq \|P_{\h}\tilde{\rho}(p)_{|_{\h}}\|
\leq \|\tilde{\rho}(p)\|\leq \|p\|_{\T,\infty}\leq \|p\|_{\D,\infty},\]
which is the von-Neumann inequality. 
The existence of the $*$-homomorphism 
$\tilde{\rho}$ can be obtained, among several other methods, 
following the Schaffer construction of the (unitary power) dilation. 

Over the past five or six decades, 
the question of the von-Neumann inequality and 
the Sz.-Nagy dilation has been studied vigorously. 
In explicit terms, these two questions are stated below. Let $\C[Z_1,\ldots,Z_n]$ denote the ring of complex valued polynomials in $n$ variables.
\begin{itemize}
\item[(1)] If $T_1,\ldots,T_n$ is a tuple of commuting contractions, 
does it follow that $\|p(T_1,\ldots,T_n)\|\leq \|p\|_{\D^n,\infty}$ for any polynomial $p\in \C[Z_1,\ldots,Z_n]?$ 
\item[(2)] If $\rho$ is a contractive homomorphism, that is, 
$\|\rho(p)\|\leq \|p\|_{\D^n,\infty},$ 
$p\in \C[Z_1,\ldots,Z_n],$ 
does it follow that 
$\rho(p)=P_{\h}\tilde{\rho}(p)_{|_{\h}}$ 
for some $*$-homomorphism 
$\tilde{\rho}:C(\T^n)\to \mathcal{B}(\kh),$ 
where $\kh$ is some Hilbert space containing $\h?$   
\end{itemize}
As is well known, via the foundational work of Arveson \cite{WA1,WA2}, 
the second question is equivalent 
to the complete contractivity of the homomorphism $\rho$: 
\[\|\rho(P)\|\leq \|P\|_{\D^n,\infty}^{\rm op},
\mbox{ where }P=\big(\!\!\big(p_{ij}\big)\!\!\big),\, p_{ij}\in \C[Z_1,\ldots Z_n]\]
\[\mbox{ and }\|P\|_{\D^n,\infty}^{\rm op}=\sup\big\{\big\|\big(\!\!\big(p_{ij}\big)\!\!\big)\big\|:z\in \D^n\big\}.\]  

If $n=1,$ as we have seen, 
an affirmative answer to both of these questions are obtained via the von-Neumann inequality and the Sz.-Nagy dilation theorem. Indeed, an affirmative answer to either of these questions 
gives an affirmative answer to the other. 
This continues to be the case even if $n=2,$ 
thanks to the celebrated theorem of Ando. 
However for $n=3,$ 
examples due to Varopoulos-Kaijser and Parrott 
show that neither $(1)$ nor $(2)$ has an affirmative answer. 

Varopoulos, in a second paper, showed that 
\begin{equation}\label{Varopoulos bounds}
K_{G}^{\C}\leq \sup\|p(T_1,\ldots,T_n)\|\leq 2 K_{G}^{\C},
\end{equation}
where $K_{G}^{\C}$ denote the complex Grothendieck constant 
and 
supremum is over all $n\in\N$, 
tuples of commuting contractions $T=(T_1,\ldots,T_n)$ 
and polynomials $p$ of degree 2 with 
$\|p\|_{\D^n,\infty}\leq 1.$ 
He lamented if $2$ appearing 
on the right hand side of this inequality, can be removed. 
The examples due to Varopoulos leaves the following question (cf. \cite[Chapter 1, Page 24]{Pisier} open: 
\begin{Qn}\label{unbounded}
For a fixed $n\in\N,n\geq 3$ and $M>0,$  
does there exist a commuting contractive $n$ - tuple of operators 
$T_1,\ldots,T_n$ 
such that 
\[\sup_{p\in\C[Z_1,\ldots,Z_n]}\frac{\|p(T_1,\ldots,T_n)\|}{\|p\|_{\D^n,\infty}}>M.\]
\end{Qn}

A class of homomorphism, 
which include the example of Parrott 
were studied further in \cite{GMParrott,VP}, 
where the question of 
contractivity vs. complete contractivity of these homomorphism 
was reduced to certain linear maps. The reason for this lies in showing that the contractivity(respectively complete contractivity) of these homomorphisms is determined by their restriction to the linear polynomials. To explain this in some detail and for use throughout this thesis, we introduced the following notations,

Let $\Omega$ be a bounded and polynomially convex domain in $\mathbb{C}^n$. 
Let $\mathcal{A}(\Omega)$ be the completion of 
$\C[Z_1,\ldots,Z_n]$ with respect to norm $\|\cdot\|_{\Omega,\infty},$ 
where 
$\|f\|_{\W,\infty}=\sup\{|f(\w)|:\w\in\W\}$ 
for every $f\in\C[Z_1,\ldots,Z_n].$ Let $\mathcal{P}_k[Z_1,\ldots,Z_n]$ denote the set of all polynomials in $n$ variables of degree at most $k.$ When number of variables is clear from the context we omit the variables $Z_1,\ldots,Z_n.$ Let $H^\infty(\W)$ denote the set of all complex valued bounded holomorphic functions on $\W$ and $\D$ be the unit disc in $\C$. For each $\w\in\W,$ set
\[H^\infty(\W,\D)=\big\{f\in H^\infty(\W):\|f\|_{\W,\infty}\leq 1\big\}\mbox{ and }H^{\infty}_{\omega}(\Omega,\D)=\big\{f\in H^\infty(\Omega,\D):f(\omega)=0\big\}.\] 
Let $T=(T_1,\ldots,T_n)$ be a tuple of bounded operators on 
some fixed separable Hilbert space $\mathbb{H}$ and 
$\omega=(\omega_1,\ldots,\omega_n)$ be a fixed point in $\Omega$. 
Define the Parrott homomorphism to be the map 
$\rho_{\!_T}^{(\w)}:H^{\infty}(\Omega)\to \mathcal{B}(\mathbb{H}\oplus\mathbb{H})$
given by the formula 
\[\rho_{\!_T}^{(\w)}(f)=
\left(
\begin{array}{cc}
f(\omega)I & D f(\omega)\cdot T\\
0 & f(\omega)I\\
\end{array}
\right),
\]
where $D f(\w)=
\left(\frac{\partial}{\partial z_1}f(\w),\ldots,\frac{\partial}{\partial z_m}f(\w)\right)$ and $D f(\omega)\cdot T=\frac{\partial}{\partial z_1}f(\w)T_1+\cdots+\frac{\partial}{\partial z_m}f(\w)T_m.$ 

The following lemma, called ``the zero lemma'', and several of its variants involving functions defined on domains in $\mathbb C^n$ and taking values in $k\times k$ matrices have been proved in \cite{Gmthesis, GMSastry, GMParrott, VP}. The proof below follows closely the one appearing in \cite{VP}.
\begin{lem}
	A Parrott homomorphism $\rho_{\!_T}^{(\w)}$ is contractive  
	if and only if $\|\rho_{\!_T}^{(\w)}(f)\|\leq 1$ 
	for all $f\in H^{\infty}_{\omega}(\Omega,\D)$.
\end{lem}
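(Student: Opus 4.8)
The plan is to prove the nontrivial (``if'') implication, the reverse being immediate since $H^{\infty}_{\omega}(\Omega,\D)\subset H^{\infty}(\Omega,\D)$ and contractivity of $\rho_{\!_T}^{(\w)}$ means precisely that $\|\rho_{\!_T}^{(\w)}(f)\|\le 1$ for every $f\in H^{\infty}(\Omega,\D)$. The central idea is a M\"obius reduction: an arbitrary $f\in H^{\infty}(\Omega,\D)$ can be post-composed with an automorphism of $\D$ so that the resulting function vanishes at $\omega$, placing it in the class $H^{\infty}_{\omega}(\Omega,\D)$ on which the hypothesis is assumed to hold; the work is then to check that this composition does not alter whether or not $\|\rho_{\!_T}^{(\w)}(\cdot)\|\le 1$.

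First I would record the exact operator norm of a value of the homomorphism. Writing $a=f(\omega)$ and $B=Df(\omega)\cdot T$, the operator $\rho_{\!_T}^{(\w)}(f)$ is the $2\times 2$ block matrix with diagonal $aI$ and upper-right corner $B$. A direct positivity computation, forming $I-\rho_{\!_T}^{(\w)}(f)^*\rho_{\!_T}^{(\w)}(f)$ and taking the Schur complement of its (invertible, when $|a|<1$) upper-left block $(1-|a|^2)I$, shows that
\[\big\|\rho_{\!_T}^{(\w)}(f)\big\|\le 1 \iff \|Df(\omega)\cdot T\|\le 1-|f(\omega)|^2.\]
This is the computational heart of the lemma and the step I expect to demand the most care: one must verify that $B^*B\preceq(1-|a|^2)^2 I$ is equivalent to positivity of the full block, and separately dispose of the boundary case $|a|=1$, where the maximum modulus principle (using $\omega\in\Omega$) forces $f$ to be the constant $a$, so that $B=0$ and $\rho_{\!_T}^{(\w)}(f)=aI$ already has norm $1$.

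Next I would carry out the reduction in the main case $|a|<1$. Let $\phi_a(z)=(z-a)/(1-\bar a z)$ be the automorphism of $\D$ sending $a$ to $0$; since its pole $1/\bar a$ lies outside $\overline{\D}$, the composite $g:=\phi_a\circ f$ is holomorphic and bounded on $\Omega$ with $\|g\|_{\Omega,\infty}\le 1$ and $g(\omega)=\phi_a(a)=0$, so $g\in H^{\infty}_{\omega}(\Omega,\D)$. The chain rule gives $Dg(\omega)=\phi_a'(a)\,Df(\omega)=(1-|a|^2)^{-1}Df(\omega)$, whence $g(\omega)=0$ and $Dg(\omega)\cdot T=(1-|a|^2)^{-1}B$; consequently $\rho_{\!_T}^{(\w)}(g)$ is strictly upper triangular and $\|\rho_{\!_T}^{(\w)}(g)\|=(1-|a|^2)^{-1}\|B\|$.

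Finally I would combine the two steps. By hypothesis $\|\rho_{\!_T}^{(\w)}(g)\|\le 1$, which by the previous line reads $\|B\|\le 1-|a|^2$; by the norm characterization of the second paragraph this is exactly the condition $\|\rho_{\!_T}^{(\w)}(f)\|\le 1$. Since $f\in H^{\infty}(\Omega,\D)$ was arbitrary, $\rho_{\!_T}^{(\w)}$ is contractive. The only genuine obstacle is the clean $2\times 2$ positivity (Schur-complement) identity of the second paragraph; once it is in hand, the M\"obius composition and chain-rule bookkeeping are routine.
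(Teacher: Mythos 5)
Your argument is correct, and it shares with the paper the essential first move: compose an arbitrary $f\in H^{\infty}(\Omega,\D)$ with the disc automorphism sending $f(\w)$ to $0$ so that the hypothesis applies to $\phi\circ f$. Where you diverge is in transferring contractivity back from $\rho_{\!_T}^{(\w)}(\phi\circ f)$ to $\rho_{\!_T}^{(\w)}(f)$: the paper does this in one line by applying the von Neumann inequality to the function $\phi^{-1}$ evaluated (via the functional calculus of the homomorphism) at the contraction $\rho_{\!_T}^{(\w)}(\phi\circ f)$, whereas you bypass von Neumann entirely and instead prove the explicit criterion
\[
\big\|\rho_{\!_T}^{(\w)}(f)\big\|\leq 1 \iff \|Df(\w)\cdot T\|\leq 1-|f(\w)|^2
\]
by a Schur-complement computation on $I-\rho_{\!_T}^{(\w)}(f)^*\rho_{\!_T}^{(\w)}(f)$, and then check via the chain rule that $\phi'(f(\w))=(1-|f(\w)|^2)^{-1}$ converts the hypothesis into exactly that inequality. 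Your route is more computational but self-contained, yields the quantitative contractivity criterion as a by-product, and explicitly disposes of the boundary case $|f(\w)|=1$ (which the paper passes over silently, though it is harmless since the maximum principle forces $f$ constant there). The paper's route is shorter and, more importantly, is the one that survives unchanged for the other ``zero lemmas'' in the thesis --- the $3\times 3$ homomorphisms $\rho_{x,y}^{(\w)}$ and $\mu_X^{(\w)}$ with a second-order nilpotent part --- where an explicit norm formula in the style of your second paragraph would be considerably messier; your method would need the full Parrott/Schur analysis of a $3\times 3$ block matrix in those cases.
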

\begin{proof}
Let us assume that $\|\rho_{\!_T}^{(\w)}(f)\|\leq 1$ 
for all $f\in H^{\infty}_{\omega}(\Omega,\D).$ 
Suppose $g:\W\to\D$ is an analytic function 
and $\phi$ is the automorphism of $\D$ 
mapping $g(\w)$ to $0$. 
Then $\phi\circ g$ is an analytic map 
from $\W$ to $\D$ with $(\phi\circ g)(\w)=0$, 
therefore $\|\rho_{\!_T}^{(\w)}(\phi\circ g)\|\leq 1$. 
Now by von-Neumann's inequality 
we have 
$\|\phi^{-1}(\rho_{\!_T}^{(\w)}(\phi\circ g))\|\leq 1,$ 
which is equivalent to $\|\rho_{\!_T}^{(\w)}(g)\|\leq 1.$ 
Hence $\rho_{\!_T}^{(\w)}$ is a contraction. 
The converse is trivially true.
\end{proof}
Now, let us assume that $\Omega$ is a unit ball in $\mathbb{C}^n$ 
with respect to some norm and $\omega=0.$  
Let
\[\mathcal{L}[Z_1,\ldots,Z_n]=\{a_1 z_1+\cdots+a_n z_n:a_i\in\C~\forall i=1\mbox{ to } n\}\]
be the set of all linear polynomials in $m$ variables.  
Let $\rho_{\!_T}$ denote the homomorphism $\rho^{(0)}_{\!_T}.$ 
\begin{thm}
	 For the Parrott homomorphism $\rho_{\!_T},$ 
	 we have 
	\[\sup\left\{\|\rho_{\!_T}(\ell)\|:
	\ell\in\mathcal{L}[Z_1,\ldots,Z_n],\|\ell\|_{\W,\infty}\leq 1\right\}
	=\sup\left\{\|\rho_{\!_T}(f)\|:f\in H^\infty_0(\W,\D)\right\}.\]
\end{thm}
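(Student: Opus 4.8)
The plan is to reduce both suprema to one and the same extremal problem about linear functionals, exploiting the fact that the base point $\omega=0$ forces the Parrott homomorphism $\rho_{\!_T}$ to depend on a function $f$ only through $f(0)$ and the gradient $D f(0)$. First I would observe that every $f\in H^\infty_0(\Omega,\D)$ satisfies $f(0)=0$, so that
\[\rho_{\!_T}(f)=\left(\begin{array}{cc} 0 & D f(0)\cdot T\\ 0 & 0\end{array}\right),\qquad\text{hence}\qquad \|\rho_{\!_T}(f)\|=\|D f(0)\cdot T\|.\]
The identical computation applied to a linear polynomial $\ell=a_1z_1+\cdots+a_nz_n$ (which also vanishes at the origin, with $D\ell(0)=(a_1,\dots,a_n)$) gives $\|\rho_{\!_T}(\ell)\|=\|\sum_i a_iT_i\|$. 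Thus $\rho_{\!_T}$ factors through the gradient at $0$, and the whole theorem comes down to comparing the attainable gradients of functions in $H^\infty_0(\Omega,\D)$ with the coefficient vectors of linear polynomials of norm at most $1$.

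The inequality ``$\mathrm{LHS}\le\mathrm{RHS}$'' is then immediate: any linear $\ell$ with $\|\ell\|_{\W,\infty}\le 1$ vanishes at $0$ and so belongs to $H^\infty_0(\W,\D)$, meaning the left supremum is taken over a subfamily of the functions on the right.

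For the reverse inequality, given $f\in H^\infty_0(\W,\D)$ I would introduce its linear part $\ell_f(z):=D f(0)\cdot z=\sum_i\frac{\partial f}{\partial z_i}(0)\,z_i$. By the matrix computation above we have $\rho_{\!_T}(f)=\rho_{\!_T}(\ell_f)$, so it remains only to verify that $\ell_f$ is an admissible competitor on the left, that is, $\|\ell_f\|_{\W,\infty}\le 1$. This Schwarz-lemma step is the crux. Fixing $w\in\W$ and using that $\W$ is the open unit ball of a norm (hence circled, so $\lambda w\in\W$ whenever $|\lambda|<1$), the slice $g(\lambda):=f(\lambda w)$ defines a holomorphic map $g\colon\D\to\ov{\D}$ with $g(0)=f(0)=0$. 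The classical Schwarz lemma then yields $|g'(0)|\le 1$, and since $g'(0)=D f(0)\cdot w=\ell_f(w)$, this gives $|\ell_f(w)|\le 1$ for every $w\in\W$, i.e. $\|\ell_f\|_{\W,\infty}\le 1$. Consequently $\|\rho_{\!_T}(f)\|=\|\rho_{\!_T}(\ell_f)\|$ is dominated by the left-hand supremum, and taking the supremum over all $f$ yields ``$\mathrm{RHS}\le\mathrm{LHS}$'', completing the equality.

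The single delicate point is the Schwarz-lemma estimate, and its validity rests squarely on the hypothesis that $\W$ is the unit ball of a norm centred at $\omega=0$: this is exactly what guarantees that the one-dimensional slices $\lambda\mapsto\lambda w$ through the origin stay inside $\W$, so that the one-variable Schwarz lemma applies. For a general domain merely containing $0$ this reduction of holomorphic competitors to their linear parts would break down, which is why the ball hypothesis is imposed.
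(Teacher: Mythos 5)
Your proposal is correct and follows essentially the same route as the paper: both reduce to the observation that $\rho_{\!_T}(f)$ depends only on $Df(0)$, and both use the Schwarz lemma to show the linear part $Df(0)\cdot z$ maps $\W$ into a disc of radius at most $\|f\|_{\W,\infty}$, so that each $f$ is dominated by a linear competitor. The only difference is cosmetic: the paper cites the several-variables Schwarz lemma as a known result, while you prove it by slicing along $\lambda\mapsto\lambda w$ (valid precisely because $\W$ is balanced), which is a worthwhile clarification of why the ball hypothesis is needed.
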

\begin{proof} 
If $f\in  H^\infty_0(\W,\D)$ is a holomorphic function, then from the Schwarz lemma \cite[Theorem 8.1.2]{RW}, 
$\ell:=D f(0)$ maps $\Omega$ in to the disc of radius 
$\|f\|_{\W,\infty}$ and thus $\|\ell\|_{\W,\infty}\leq \|f\|_{\W,\infty}$. 
From the definition of $\rho_{\!_T}$, 
we have $\|\rho_{\!_T}(\ell)\|=\|\rho_{\!_T}(f)\|$. 
Therefore
\[\frac{\|\rho_{\!_T}(\ell)\|}{\|\ell\|_{\Omega,\infty}}
\geq \frac{\|\rho_{\!_T}(f)\|}{\|f\|_{\Omega,\infty}}\]
and hence 
\begin{align*}
	\sup\left\{\|\rho_{\!_T}(\ell)\|:
	\ell\in\mathcal{L}[Z_1,\ldots,Z_n],\|\ell\|_{\W,\infty}\leq 1\right\}
	\geq\sup\left\{\|\rho_{\!_T}(f)\|:f\in H^\infty_0(\W,\D)\right\}.
\end{align*}
The other inequality is obvious. 
\end{proof}
This theorem says that if we wish to establish only the contractivity of Parrott homomorphism $\rho_{\!_T},$ it is enough to restrict $\rho_{\!_T}$ to the linear polynomials. 

It also says that if $\W=\D^n,$ then the Parrott homomorphisms $\rho_{\!_T}$ are contractive if and only if $T_1,\ldots,T_n$ are contractions. This follows from the Schwarz lemma (cf. \cite[Theorem 8.1.2]{RW}):
$$
\big \{Df(0)\in \mathbb C^n: f \in H_0^\infty(\mathbb D^n, \mathbb D)\big \} = \big \{\ell: \ell(\mathbb D^n) \subseteq \mathbb D\big \}.
$$
Consequently, these homomorphisms can not be used to answer the Question \ref{unbounded}.

We therefore investigate the homomorphism induced by the commuting triple of contractions $T_1,T_2,T_3$ given by Varopoulos and Kaijser with the property $\|p(T_1,T_2,T_3)\|>\|p\|_{\D^3,\infty}.$ This leads to a natural definition of a class of operators which we call Varopoulos operator of type I and II. We investigate the answer to the Question \ref{unbounded} assuming the homomorphism $\rho_{\!_T}$ is induced by $T,$ a tuple of these operators. It is useful to recall that Varopoulos, in a second paper, proved the following.
\[K_{G}^{\C}\leq \sup\|\rho_{{\!_T}{|_{\mathcal{P}_2}}}\|\leq 2 K_{G}^{\C},\]
where $K_{G}^{\C}$ denote the complex Grothendieck constant 
and 
supremum is over all $n\in\N$ and  
tuples of commuting contractions $T=(T_1,\ldots,T_n).$ Thus it is natural to ask if $\sup_{\|T\|_{\infty}\leq 1}\|\rho_{{\!_T}{|_{\mathcal{P}_2}}}\|,$ where $\|T\|_{\infty}:=\max\{\|T_1\|,\ldots,\|T_n\|\},$ is closer to the universal constant $K^{\C}_{G}$ of Grothendieck than indicated by the inequalities \eqref{Varopoulos bounds}. We show that inequality on the right can be considerably improved. 
    
Let $\h$ be a separable Hilbert space and 
$\{e_j\}_{j\in\mathbb{N}}$ be a set of orthonormal basis for $\h$. 
For any $x\in\h$, 
define $x^{\sharp}:\h\to\C$ by $x^{\sharp}(y)=\sum_{j}x_jy_j,$ 
where $x=\sum x_je_j$ and $y=\sum y_je_j$. 
For $x,y\in\h$, we set $\left[x^{\sharp},y\right]=x^{\sharp}(y).$
\begin{defn}[Varopoulos Operator of Type I (\sf{V\,I})]
	Let $\mathbb{H}$ be a separable Hilbert space. 
	For $x,y\in\mathbb{H}$, 
	define $T_{x,y}:\C \oplus \h \oplus \C \to \C \oplus \h \oplus \C$ by
	\[T_{x,y}=
	\left(
	\begin{array}{ccc}
		0 & x^{\sharp} & 0\\
		0 & 0 & y\\
		0 & 0 & 0\\
	\end{array}
	\right).\]
	The operator $T_{x,y}$ will be called Varopoulos operator of type I 
	corresponding to the pair of vectors $x,y$. 
	If $x=y$ then $T_{xy}$ will simply be denoted by $T_x$.
\end{defn}
\begin{defn}[Varopoulos Operator of Type II and of order $k$ (\sf{V\,II  of order k})]
	Let $\mathbb{H}$ be a separable Hilbert space. 
	For $X\in\mathcal{B}(\mathbb{H})$, let
	\[T_X=
	\left(
	\begin{array}{ccccc}
		0 & X & 0 & \cdots &0\\
		0 & 0 & X & \cdots & 0\\
		\texorpdfstring{\vdots}{TEXT} & \vdots & \vdots & \ddots & \vdots\\
		0 & 0 & 0 & \cdots & X\\
		0 & 0 & 0 & \cdots & 0\\
	\end{array}
	\right)\]
	be the operator in $\mathcal B(\mathbb H\otimes C^{k+1}).$
In analogy with the work of Varopoulos \cite{V2}, operators of the form $T_X,$ $X\in \mathcal B(\mathbb H),$ are called 	Varopoulos operator of type II and of order $k.$ 
\end{defn}
Let $\W$ be a bounded domain in $\C^n$ and $\w\in \W$ be a fixed but arbitrary point. 
As before let $\rho_{xy}^{(\w)}$ (respectively $\mu_{X}^{(\w)}$) denote the induced homomorphism on $H^\infty(\W),$ corresponding to a tuple of commuting contractions $\w_1I+T_{x_1y_1},\ldots,\w_nI+T_{x_ny_n}$ (respectively $\w_1I+T_{X_1},\ldots,\w_nI+T_{X_n}$), which is defined as following. 
\[\rho_{xy}^{(\w)}(f)=\left(
 \begin{array}{ccc}
	f(\omega) & D f(\omega)\cdot x^{\sharp} & \frac{1}{2}D^2f(\omega)\cdot A_{x,y}\\
	0 & f(\omega)I & D f(\omega)\cdot y\\
	0 & 0 & f(\omega)\\
  \end{array}
\right)\]
for $f\in H^\infty(\W),$ where $x=(x_1,\ldots,x_n),y=(y_1,\ldots,y_n),x^\sharp=(x_1^\sharp,\ldots,x_n^\sharp)$ and $A_{x,y}=\big(\!\!\big([x_i^\sharp,y_j]\big)\!\!\big).$ As the definition of $\rho_{xy}^{(\w)}(f)$ includes only the terms of order at most $2$ from the Taylor series expansion of $f,$ therefore it is quite natural to ask the following question.
\begin{Qn}
We ask if the contractivity of $\rho_{xy}^{(\w)}$ on $H^\infty(\W)$ is equivalent to contractivity of the restriction to the polynomials of degree at most $2$.
\end{Qn}
Clearly to answer this question, one must first answer a related question generalizing the Carath\'{e}odory-Fej\'{e}r interpolation problem, namely: 
Given any polynomial $p$ in $n$ variables of degree $2$ with $p(0)=0,$ find necessary and sufficient conditions on the coefficients of $p$ to  ensure the existence of a holomorphic function $h$ defined on the polydisc $\mathbb D^n$ with $h^{(k)}(0)=0$ for all multi indices $k$ of length at most $2,$ such that $f:=p+h$ maps the polydisc $\mathbb D^n$ to the unit disc $\mathbb D.$   

However the absence of an explicit criterion, in spite of several  results which have been obtained recently \cite{FFE,EPP,Woe,HMLZ}, for the solution to this problem for $n>1$  makes it difficult to answer this question.

We combine a theorem due to Kor\'{a}nyi and Puk\'{a}nszky giving a criterion for determining if the real part of a holomorphic function defined on the polydisc $\mathbb D^n$ is positive with a theorem due to Parrott
to find a solution to the Carath\'{e}odory-Fej\'{e}r interpolation problem. We state these two theorems below. 

\begin{thm}[Kor\'{a}nyi-Puk\'{a}nszky Theorem]
If the power series 
	$\sum_{\alpha \in \mathbb{N}_{0}^{n}}a_{\alpha}z^{\alpha}$ represents a holomorphic function $f$ on the polydisc $\mathbb D^n,$ then   $\Re(f(z))\geq 0$ for all $z\in \mathbb{D}^n$ 
	if and only if the map 
	$\phi:\mathbb{Z}^n\to \mathbb{C}$ 
	defined by 
	\begin{eqnarray*}
		\phi (\alpha )=\left\{
		\begin{array}{ll}
		      2\Re a_{\alpha} & \mbox{\rm if }\alpha =0\\
		      a_{\alpha} & \mbox{\rm if }\alpha > 0\\
		      a_{-\alpha} & \mbox{\rm if }\alpha < 0\\
		      0 & \mbox{\rm otherwise }\\
		\end{array} 
		\right.
	\end{eqnarray*}
is positive, that is, the $k\times k$ matrix $\big (\!\!\big ( \phi(\scriptstyle{m_i-m_j}) \big )\!\!\big )$ is non-negative definite for every choice of $m_1, \ldots, m_k\in \mathbb Z^n.$  
\end{thm}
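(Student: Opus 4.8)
The plan is to reduce the equivalence to the Bochner--Herglotz theorem on the lattice $\Z^n$, identifying $\phi$ (up to a factor of two) with the Fourier--Stieltjes coefficients of a measure on the torus $\T^n$. First I would record the boundary Fourier data of $f$: writing $f_r(\theta)=f(re^{i\theta_1},\ldots,re^{i\theta_n})$ and $u_r=\Re f_r$ for $0<r<1$, a direct computation from $f=\sum_{\alpha\ge 0}a_\alpha z^\alpha$ gives
\[
\widehat{u_r}(m)=r^{\,|m_1|+\cdots+|m_n|}\,\tfrac12\phi(m),\qquad m\in\Z^n,
\]
where $\widehat{u_r}(m)=\int_{\T^n}u_r(\theta)e^{-im\cdot\theta}\,\tfrac{d\theta}{(2\pi)^n}$ and, as is forced by Hermitian symmetry $\phi(-m)=\overline{\phi(m)}$, one reads $\phi(\alpha)=\overline{a_{-\alpha}}$ for $\alpha<0$. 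The factor $2$ in $\phi(0)=2\Re a_0$ is exactly what makes the $m=0$ case consistent, and the coefficient vanishes precisely on the mixed-sign indices, where $\phi$ is defined to be $0$. Thus $\phi(m)=2\lim_{r\to1^-}\widehat{u_r}(m)$, and the theorem becomes a comparison of the boundary data of the pluriharmonic function $\Re f$ with a positive measure.

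For necessity, suppose $\Re f\ge 0$ on $\D^n$. Then for each fixed $r<1$ the function $u_r$ is a non-negative continuous function on $\T^n$, so for any $m_1,\ldots,m_k\in\Z^n$ and $c_1,\ldots,c_k\in\C$ I would use the identity
\[
\sum_{i,j}c_i\overline{c_j}\,\widehat{u_r}(m_i-m_j)
=\int_{\T^n}\Big|\sum_i c_i e^{-im_i\cdot\theta}\Big|^2 u_r(\theta)\,\frac{d\theta}{(2\pi)^n}\ \ge\ 0 .
\]
Since $\widehat{u_r}(m)\to\tfrac12\phi(m)$ as $r\to1^-$ for each fixed $m$, letting $r\to1^-$ shows that $\big(\!\!\big(\phi(m_i-m_j)\big)\!\!\big)$ is non-negative definite.

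For sufficiency, assume $\phi$ is positive. By the Bochner--Herglotz theorem there is a finite non-negative Borel measure $\mu$ on $\T^n$ with $\widehat{\mu}(m)=\tfrac12\phi(m)$ for all $m$. I would then form the polydisc Poisson integral
\[
P[\mu](z)=\int_{\T^n}\prod_{j=1}^n\frac{1-|z_j|^2}{|e^{i\theta_j}-z_j|^2}\,d\mu(\theta),\qquad z\in\D^n,
\]
which is non-negative because the integrand is. Expanding the product Poisson kernel shows that the Fourier coefficient of $P[\mu]$ on the torus of radius $r$ at frequency $m$ equals $r^{\,|m_1|+\cdots+|m_n|}\widehat{\mu}(m)$, which is exactly $\widehat{u_r}(m)$ by the computation above. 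Hence $u_r$ coincides with the radius-$r$ slice of $P[\mu]$ for every $r<1$, so $\Re f=P[\mu]\ge 0$ on $\D^n$.

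The hard part will be the sufficiency direction, and precisely the step where the pluriharmonic function $\Re f$ is matched with the merely $n$-harmonic Poisson integral $P[\mu]$. In several variables the Poisson integral of a positive measure is only separately harmonic and need not be the real part of a holomorphic function, so positivity of a harmonic extension does not by itself produce the desired $f$. What rescues the argument is that $\Re f$ and $P[\mu]$ have the same Fourier support: both annihilate every mixed-sign frequency, $\Re f$ because it is the real part of a holomorphic function and $P[\mu]$ because $\phi$ vanishes there by definition. This alignment of supports, together with the matching of the remaining coefficients, is the crux of the proof and the exact mechanism by which the single scalar condition that $\phi$ be positive encodes the multivariable positivity of $\Re f$.
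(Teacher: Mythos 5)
Your argument is correct in substance, but it is a genuinely different proof from the one in the thesis. You reduce the statement to the Bochner--Herglotz theorem on $\Z^n$: positivity of $\phi$ is identified with positive-definiteness of the Fourier--Stieltjes coefficients of a finite positive measure $\mu$ on $\T^n$, and $\Re f$ is recovered as the polydisc Poisson integral $P[\mu]$ by matching Fourier coefficients on each torus --- the key observation being that both functions annihilate the mixed-sign frequencies, $\Re f$ because $f$ is holomorphic and $P[\mu]$ because $\phi$ is defined to vanish there. This is close in spirit to the original Kor\'{a}nyi--Puk\'{a}nszky argument; it works uniformly in $n$ and produces the Herglotz representing measure explicitly. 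The thesis instead proves the theorem (written out for $n=2$) without measure theory: it applies the Cayley transform to pass to a function $f$ mapping $\D^2$ into $\D$, uses the spectral theorem for the commuting unitaries $I\otimes B^*$ and $B^*\otimes B^*$ to obtain $\|f(I\otimes B^*,B^*\otimes B^*)\|=\|f\|_{\D^2,\infty}$, and then invokes the algebraic identity $P_nC_n^{\rm t}P_n^*=(I-A_nA_n^*)\oplus 1$ together with the D-slice ordering to translate contractivity of the block Toeplitz operators $\mathscr{T}(A_1,\ldots,A_n)$ into non-negativity of the matrix of $\phi$. That route is deliberately tailored to the Carath\'{e}odory--Fej\'{e}r machinery of the chapter, since it makes explicit the equivalence between positivity of $\phi$ and the norm condition $\|\mathscr{T}(A_1,\ldots,A_n)\|\leq 1$, which is what the rest of the chapter uses. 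Two small points about your write-up: you are right that the definition must be read as $\phi(\alpha)=\overline{a_{-\alpha}}$ for $\alpha<0$ (as printed, the matrix of $\phi$ need not even be Hermitian, and the paper's own computations use the conjugates); and in the final step you should run the computation with a polyradius $(r_1,\ldots,r_n)$ rather than a single $r$, since agreement of $\Re f$ and $P[\mu]$ on the diagonal tori $|z_1|=\cdots=|z_n|=r$ alone does not immediately give equality on all of $\D^n$ --- with polyradii the identical computation gives $\widehat{u_{r_1,\ldots,r_n}}(m)=\prod_j r_j^{|m_j|}\,\tfrac12\phi(m)$ and the conclusion follows at every point of the polydisc.
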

Let $f:\mathbb D^n \to \mathbb D$ be  a holomorphic function and $\chi$ be the Cayley map of the unit disc to the right half plane. Then in the matricial representation of $\phi_{\chi \circ f}$ with respect to the usual order in $\mathbb Z^2,$ it is not easy to isolate the coefficients of $f.$ We introduce a new order, to be called, the {\tt D-slice} ordering:
\begin{defn}[D-slice ordering]
Suppose $(x_1,y_1)\in P_l$ and $(x_2,y_2)\in P_m$ 
are two elements in $\mathbb{Z}^2$. 
Then 
\begin{enumerate}
	\item If $l=m,$ then $(x_1,y_1) < (x_2,y_2)$ is determined by the lexicographic ordering on $P_l\subseteq \mathbb Z^2$ and    
	\item  if $l < m$ (resp., if $l > m$), then $(x_1,y_1) < (x_2,y_2)$ (resp., $(x_1,y_1) > (x_2,y_2)$).  
\end{enumerate}
\end{defn}

The matricial representation of the function $\phi_{\chi \circ f}$ is then in the form of a block Toeplitz matrix with respect to the D-slice ordering. 

\begin{thm}[Parrott's Theorem]
	For $i=1,2$, let $\h_i,\,\mathbb{K}_i$ be Hilbert spaces and 
	$\h=\h_1\oplus \h_2,\,\mathbb{K}=\mathbb{K}_1\oplus \mathbb{K}_2.$ 
	If 
	\[
	\left(
	\begin{array}{c}
		A\\
		C\\
	\end{array}
	\right):\h_1\to \mathbb{K} \mbox{ \rm and } \big(C\,\,\,\,\,\,D\big):\h\to \mathbb{K}_2
	\]
	are contractions, then there exists $X\in\mathcal{B}(\h_2,\mathbb{K}_1)$ such that 
	$\left(
	\begin{smallmatrix}
		A & X\\
		C & D\\	
	\end{smallmatrix}
	\right):\h\to \mathbb{K}
	$ is a contraction.
\end{thm}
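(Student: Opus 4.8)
The plan is to realize the desired matrix as a \emph{row extension} of its first column and then to solve the resulting completion problem with two applications of Douglas' range–inclusion lemma. I would write $V=\left(\begin{smallmatrix}A\\ C\end{smallmatrix}\right)\colon\h_1\to\kh$ for the given first column, which is a contraction by hypothesis, and observe that the matrix we seek is exactly the row $\big(V\ \ U\big)\colon\h_1\oplus\h_2\to\kh$ with $U=\left(\begin{smallmatrix}X\\ D\end{smallmatrix}\right)\colon\h_2\to\kh$, whose bottom block is the prescribed $D$ and whose top block is the unknown $X$. Thus the task becomes: produce a single operator $U$ with bottom block $D$ that may be adjoined to $V$ without destroying contractivity.

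First I would record the standard description of contractive row extensions. Since $\big(V\ \ U\big)\big(V\ \ U\big)^{*}=VV^{*}+UU^{*}$, the row is a contraction if and only if $UU^{*}\le I_{\kh}-VV^{*}=\big((I-VV^{*})^{1/2}\big)^{2}$, and by Douglas' lemma this holds precisely when $U=(I-VV^{*})^{1/2}L$ for some contraction $L\colon\h_2\to\kh$. Hence choosing $X$ is the same as choosing a contraction $L$ whose image under $(I-VV^{*})^{1/2}$ has the prescribed bottom block, i.e. $P_{\kh_2}(I-VV^{*})^{1/2}L=D$.

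The second, and decisive, step is to solve this last equation for a contraction $L$. Setting $T:=P_{\kh_2}(I-VV^{*})^{1/2}\colon\kh\to\kh_2$, Douglas' lemma again yields a contractive solution $L$ of $TL=D$ exactly when $DD^{*}\le TT^{*}$. I would then compute $TT^{*}=P_{\kh_2}(I-VV^{*})P_{\kh_2}$, and since the $(2,2)$ block of $VV^{*}$ is $CC^{*}$, this equals $I_{\kh_2}-CC^{*}$. Therefore the solvability condition $DD^{*}\le I_{\kh_2}-CC^{*}$ is precisely $CC^{*}+DD^{*}\le I_{\kh_2}$, which is the hypothesis that $\big(C\ \ D\big)$ is a contraction. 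Fixing such an $L$ and setting $X:=P_{\kh_1}(I-VV^{*})^{1/2}L$ then produces a contraction $\big(V\ \ U\big)=\left(\begin{smallmatrix}A & X\\ C & D\end{smallmatrix}\right)$ with the required entries.

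The place where the two hypotheses genuinely interlock — and the step I expect to be the crux — is the block identity $TT^{*}=I_{\kh_2}-CC^{*}$: it is the shared corner $C$ that couples the defect $(I-VV^{*})^{1/2}$ of the column to the row data, and it is exactly this identity that converts the contractivity of the row $\big(C\ \ D\big)$ into the Douglas solvability criterion for $L$. Everything else is a routine manipulation of defect operators. (Alternatively, one could first dilate $A$ and $D$ by their defect operators $(I-A^{*}A-C^{*}C)^{1/2}$ and $(I-CC^{*}-DD^{*})^{1/2}$ to reduce to the case where the column is an isometry and the row a coisometry, and then compress a contractive completion of the enlarged data back to $\h_1\oplus\h_2\to\kh_1\oplus\kh_2$; the Douglas argument above is cleaner and bypasses this reduction entirely.)
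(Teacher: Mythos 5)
Your argument is correct. Both applications of Douglas' lemma are used in the right direction: $\big(V\ U\big)$ with $V=\left(\begin{smallmatrix}A\\ C\end{smallmatrix}\right)$ is a contraction iff $UU^{*}\le I-VV^{*}$, i.e. iff $U=(I-VV^{*})^{1/2}L$ with $\|L\|\le 1$; and the equation $P_{\mathbb{K}_2}(I-VV^{*})^{1/2}L=D$ admits a contractive solution iff $DD^{*}\le P_{\mathbb{K}_2}(I-VV^{*})\vert_{\mathbb{K}_2}=I_{\mathbb{K}_2}-CC^{*}$, which is exactly the contractivity of $\big(C\ D\big)$. That said, the thesis does not actually prove this statement: Parrott's theorem is quoted as a known result (with a reference to Young's book), and what is recorded alongside it is the classical parametrization of \emph{all} solutions, $X=(I-ZZ^{*})^{1/2}V(I-Y^{*}Y)^{1/2}-ZC^{*}Y$ with $Y,Z$ the Douglas solutions of $D=(I-CC^{*})^{1/2}Y$ and $A=Z(I-C^{*}C)^{1/2}$. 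The comparison is therefore with that standard proof rather than with anything in the text: your route factors the column defect first and then solves for the bottom block, which yields a clean existence proof of a single completion; the recorded formula treats the row and column symmetrically through $Y$ and $Z$ and buys the full solution set (something the thesis actually uses later, e.g.\ in Lemma \ref{OPnorm} and in the counterexample of Chapter \ref{Koranyi}, where the ``$V=0$'' choice and the uniqueness of $X$ when a defect vanishes are exploited). Your proof would not suffice for those later applications, but as a proof of the stated existence theorem it is complete. One cosmetic point: in $TT^{*}=P_{\mathbb{K}_2}(I-VV^{*})P_{\mathbb{K}_2}$ the rightmost factor should be the inclusion $\mathbb{K}_2\hookrightarrow\mathbb{K}$ (the adjoint of the projection), i.e.\ the $(2,2)$ compression of $I-VV^{*}$; the computation itself is unaffected.
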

In this theorem, all the choices for $X$ are given by the formula:
\[(I-ZZ^*)^{1/2}V(I-Y^*Y)^{1/2}-ZC^*Y,\]
where $V$ is a contraction and $Y,\,Z$ are determined from the formulae:
\[D=(I-CC^*)^{1/2}Y,\,\, A=Z(I-C^*C)^{1/2}.\]

Our method gives only a (explicit) necessary condition for the existence of a solution to the Carath\'{e}odory-Fej\'{e}r interpolation problem in general. (Surprisingly, for the case of the bi-disc, this necessary condition is exactly the condition for contractivity of the homomorphisms induced by the Varopoulos operators.)

It also gives an algorithm for constructing a solution whenever such a solution exists. The algorithm involves finding, inductively, polynomials $p_n$ of degree at most $n$ such that a certain block Toeplitz operator, made up of multiplication by these polynomials is contractive. A solution to the Carath\'{e}odory-Fej\'{e}r interpolation problem exists if and only if this process is completed successfully.

If $n=1$ and the necessary condition we have obtained is met, then the algorithm completes successfully and produces a solution to the  Carath\'{e}odory-Fej\'{e}r interpolation problem. Thus in this case, we fully recover the solution to the Carath\'{e}odory-Fej\'{e}r interpolation problem.

We also isolate a class of polynomials for which our necessary condition is also sufficient. This is verified using the deep theorem of Nehari reproduced below (cf. \cite[Theorem 15.14, page 194]{NY}). 

Let $H^2(\T)$ denote the Hardy space, a closed subspace of $L^2(\T)$. 
Let $P_{-}$ denote the orthogonal projection 
of $L^2(\T)$ onto $L^2(\T)\ominus H^2(\T).$
\begin{defn}[Multiplication Operator]
	For $\phi\in L^{\infty}(\T),$ 
	we define multiplication operator 
	$M_{\phi}:L^2(\T)\to L^2(\T)$ 
	by $M_{\phi}(f)=\phi\cdot f$.  
\end{defn}
Since $\phi \cdot f\in L^2(\T)$ 
for any $\phi\in L^{\infty}(\T)$ and 
$f\in L^2(\T)$ 
therefore $M_{\phi}$ is well defined 
for all $\phi\in L^{\infty}(\T)$. 
Also $\|M_{\phi}\|=\|\phi\|_{\infty}$ (cf. Theorem 13.14 in \cite{NY}).

\begin{defn}[Hankel Operator]
	Let $\phi\in L^{\infty}(\T)$. 
	Hankel operator corresponding to $\phi$ 
	is the operator $P_{-}\circ M_{\phi}\vert_{H^2(\T)}$. 
	It is denoted by $H_{\phi}$.
\end{defn}
 
\begin{thm}[Nehari's Theorem]
	If $\phi\in L^{\infty}(\T)$ and 
	$H_{\phi}$ is the corresponding Hankel operator, 
	then
	\[\inf\left\{\|\phi-g\|_{\T,\infty}:g\in H^{\infty}(\T)\right\}
	=\|H_{\phi}\|_{op}.\]
\end{thm}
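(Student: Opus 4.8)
The plan is to prove the two inequalities separately, with the reverse inequality $\inf\{\|\phi-g\|_{\T,\infty}:g\in H^{\infty}(\T)\}\leq \|H_\phi\|_{op}$ carrying all the weight. First I would dispose of the inequality $\|H_\phi\|_{op}\leq \inf\{\|\phi-g\|_{\T,\infty}\}$. The key observation is that $H_g=0$ whenever $g\in H^{\infty}(\T)$: for $f\in H^2(\T)$ one has $gf\in H^2(\T)$, so $P_{-}(gf)=0$. Hence $H_\phi=H_{\phi-g}$ for every $g\in H^{\infty}(\T)$, and since $H_{\phi-g}=P_{-}\circ M_{\phi-g}|_{H^2(\T)}$ is a compression of $M_{\phi-g}$, we get $\|H_\phi\|_{op}=\|H_{\phi-g}\|_{op}\leq\|M_{\phi-g}\|=\|\phi-g\|_{\T,\infty}$. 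Taking the infimum over $g$ gives the inequality.

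For the hard direction I would pass to a duality description of $\|H_\phi\|_{op}$. Writing $H^2_{-}=L^2(\T)\ominus H^2(\T)$ and $H^1_0=\{k\in L^1(\T):\hat k(m)=0\text{ for all }m\leq 0\}$, the first step is the identity $\langle H_\phi f,h\rangle=\int_\T \phi f\bar h\,dm$ for $f\in H^2(\T)$ and $h\in H^2_{-}$, which holds because $P_{-}$ is self-adjoint and $h\in H^2_{-}$. The second step is the Riesz (weak) factorization of Hardy functions: every $k\in H^1_0$ with $\|k\|_1\le 1$ factors as $k=f\bar h$ with $f\in H^2(\T)$, $h\in H^2_{-}$ and $\|f\|_2,\|h\|_2\le 1$, and conversely every such product lies in the unit ball of $H^1_0$. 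Combining these two facts identifies
\[\|H_\phi\|_{op}=\sup\Big\{\Big|\int_\T \phi k\,dm\Big|:k\in H^1_0,\ \|k\|_1\le 1\Big\}.\]

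Finally I would invoke Hahn--Banach duality in the pairing $(L^1(\T))^{*}=L^{\infty}(\T)$. The pre-annihilator of $H^1_0$ inside $L^{\infty}(\T)$ is exactly $H^{\infty}(\T)$, since $\eta\in L^{\infty}(\T)$ kills every $z^m$ with $m\ge 1$ precisely when $\hat\eta(j)=0$ for all $j\le -1$. Because the norm of the restriction of the functional $\phi$ to a subspace $N\subseteq L^1(\T)$ equals $\mathrm{dist}_{L^{\infty}}(\phi,N^{\perp})$ — the ``$\leq$'' being immediate and the ``$\geq$'' produced by a Hahn--Banach extension of $\phi|_N$ to a functional of the same norm — applying this with $N=H^1_0$ converts the supremum above into $\inf\{\|\phi-g\|_{\T,\infty}:g\in H^{\infty}(\T)\}$, completing the proof.

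The step I expect to be the main obstacle is the Riesz factorization of $H^1_0$: realizing an arbitrary unit-ball element $k$ as a product $f\bar h$ with the stated norm control requires the inner-outer factorization theory of Hardy spaces (write $k=zk_1$, factor $k_1=FG$ with $\|F\|_2^2=\|G\|_2^2=\|k\|_1$, and shift a factor of $z$ so that one factor lands in $zH^2$ and hence $\bar{\ }$ of it in $H^2_{-}$). If one prefers to stay closer to the operator-theoretic tools already set up, the same reverse inequality can instead be obtained by repeatedly applying Parrott's Theorem to the finite Hankel sections, building in the limit a multiplier $\psi$ of norm $\|H_\phi\|_{op}$ with the same negative Fourier coefficients as $\phi$; there the obstacle shifts to controlling the weak-$*$ limit and verifying that the resulting bounded operator is genuinely a multiplication operator $M_\psi$ with $\psi\in L^{\infty}(\T)$.
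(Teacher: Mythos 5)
Your argument is correct, but it is worth noting that the thesis never actually proves this statement: Nehari's theorem is quoted twice as a known result with a citation to Young's book, and is used as a black box (e.g.\ in the proof of Theorem \ref{Same alpha and beta} and in isolating the class of polynomials for which the necessary CF condition is sufficient). What you have written out is the classical duality proof, and all the steps check: the easy inequality via $H_{\phi-g}=H_\phi$ for $g\in H^\infty(\T)$ and compression of $M_{\phi-g}$; the identification of $\|H_\phi\|_{op}$ with the norm of the functional $k\mapsto\int_\T\phi k\,dm$ on the unit ball of $H^1_0$ via the pairing $\langle H_\phi f,h\rangle=\int_\T\phi f\bar h\,dm$ together with weak (Riesz) factorization $k=f\bar h$ with $\|f\|_2^2=\|h\|_2^2=\|k\|_1$; and the Hahn--Banach step, using that the annihilator of $H^1_0$ under this pairing is $H^\infty(\T)$, to convert that functional norm into $\operatorname{dist}_\infty(\phi,H^\infty(\T))$. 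The one genuinely nontrivial ingredient is, as you say, the factorization, which needs inner--outer theory. Your proposed alternative --- building the symbol by repeatedly applying Parrott's theorem to finite sections of the Hankel matrix --- is in fact the route the thesis itself takes both for the one-variable CF problem (the alternative proof of Theorem \ref{single variable} via the matrices $C_n$ and $A_n$) and for its two-variable Nehari theorem in Chapter 5 (Lemma \ref{Existence of essentially bounded function} and the theorem following it), so that variant would align more closely with the paper's own machinery; its cost is exactly the convergence issue you flag, namely extracting a weak-$*$ limit and verifying it is a multiplication operator, which is where the measurable-selection care in Lemma \ref{Existence of essentially bounded function} comes in.
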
 

All this is done for the bi-disc $\mathbb D^2$ with the understanding that these computations will go through for the polydisc $\mathbb D^n.$ 
Similarly, while we have discussed the Carath\'{e}odory-Fej\'{e}r interpolation problem for polynomials of degree at most $2,$ again, our methods remain valid for an arbitrary polynomial.  

What follows is a detailed description of the results proved in this thesis.

Following \cite{V2} and \cite[Page 24]{Pisier}, in chapter 2, we define the quantities:
\[C_k(n)=\sup\big\{\|p(\boldsymbol T)\|:\|p\|_{\D^n,\infty}\leq 1, p \mbox{~\rm is of degree at most~} k,\,\|\boldsymbol T\|_{\infty} \leq 1 \big\}\]
and 
\begin{equation}
C(n)= \lim_{k\to \infty} C_k(n), 
\end{equation}
where $\|\boldsymbol T\|_{\infty}=\max\big\{\|T_1\|,\ldots,\|T_n\|\big\}.$ 
In this notation, it follows from the von-Neumann inequality and Ando's theorem that $C(1), C(2) = 1.$ 
Also $C_2(3) >1,$ thanks to the example of Varopoulos and Kaijser \cite{V1} involving an (explicit) homogeneous polynomial of degree $2.$ 
Following this,  in the paper \cite{V2}, Varopoulos proves the inequality \eqref{Varopoulos bounds}. Consequently, the limit of the non-decreasing sequence $C_2(n)$ must be bounded below by  $K_G^\mathbb C.$ We show that $C_2(3) \geq 1.2$ by means of explicit examples. We were hoping to improve this  
inequality obtained earlier by Holbrook \cite{HJ} since our methods  appear to be somewhat more direct. In view of the known lower bound 
for $\lim_{n\to \infty}C_2(n)$ in \eqref{Varopoulos bounds}, we hoped that 
the lower bound for $C_2(3)$ itself will be closer to $K_G^\mathbb C.$ 
In this chapter, we also show that $\|p(T_1,\ldots , T_n)\| \leq \|p\|_{\mathbb D^n, \infty}$ for any $n$ commuting contractions of the form
\begin{equation}\label{von3}
\Big \{\Big ( \,\,\Big ( \begin{smallmatrix}
	\w_1 & \alpha_1 & 0\\
	0 & \w_1 & \beta_1\\
	0 & 0 & \w_1\\
\end{smallmatrix} \Big ), \ldots, \Big ( \begin{smallmatrix}
	\w_n & \alpha_n & 0\\
	0 & \w_n & \beta_n\\
	0 & 0 & \w_n\\
\end{smallmatrix} \Big ) \,\,\Big ): 
\alpha_i \beta_j = \alpha_j \beta_i, 1\leq i,j \leq n,\, \w:=(\w_1, \ldots , \w_n)\in \mathbb D^n \Big\},
\end{equation}
after assuming that $|\alpha_i| = |\beta_i|,$ $1\leq i\leq n.$ This is interesting considering that the von-Neumann inequality is valid for any commuting $n$ - tuple of $2\times 2$ contractions \cite{GMPati,Agler} and fails for $4\times 4$ contractions \cite{HJ}. 
As a corollary of the von-Neumann inequality for a subclass of the operators defined in \eqref{von3}, we get the following necessary condition for the Carath\'{e}odory-Fej\'{e}r interpolation problem for the polydisc $\D^n.$
\begin{thm}\label{necessary}
Let $p$ be a polynomial in $n$ variables of degree $2$ such that $p(0)=0.$ There exists a holomorphic function $q,$ defined on polydisc $\D^n,$ with $q^{(k)}(0)=0,\,|k|\leq d$ such that $\|p+q\|_{\infty}\leq 1$ only if 
\[\sup_{\|\alpha\|_{\infty}\leq 1}\Big\{\Big|\frac{D^2p(0)\cdot A_\alpha}{2}\Big|+\big|Dp(0)\cdot\alpha\big|^2\Big\}\leq 1,\]
where $\alpha=(\alpha_1,\ldots,\alpha_n),$ $A_{\alpha}=\big(\!\!\big(\alpha_i\alpha_j\big)\!\!\big),$ $D^2p(0)\cdot A_{\alpha}=\sum\frac{\partial^2 p}{\partial{z_i}\partial{z_j}}(0)\alpha_i\alpha_j$ and $Dp(0)\cdot \alpha=\sum\frac{\partial p}{\partial{z_i}}(0)\alpha_i.$
\end{thm}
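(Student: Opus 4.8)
The plan is to obtain the inequality as a corollary of the von-Neumann inequality proved earlier in this chapter for the commuting tuples in~\eqref{von3}, applied in the special ``diagonal'' case $\w=0$, $\beta_i=\alpha_i$. Fix $\alpha=(\alpha_1,\dots,\alpha_n)\in\C^n$ with $\|\alpha\|_\infty\le 1$ and let $N=\left(\begin{smallmatrix}0&1&0\\0&0&1\\0&0&0\end{smallmatrix}\right)$. Setting $T_i=\alpha_i N$, the tuple $(T_1,\dots,T_n)$ is of the form~\eqref{von3}: taking $\beta_i=\alpha_i$ makes the commutation relations $\alpha_i\beta_j=\alpha_j\beta_i$ and the normalisation $|\alpha_i|=|\beta_i|$ hold automatically, and $\|T_i\|=|\alpha_i|\le 1$, so each $T_i$ is a contraction of exactly the type covered by the earlier result.

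Assume a function $q$ as in the statement exists and put $f:=p+q$, so that $\|f\|_{\D^n,\infty}\le 1$. The von-Neumann inequality for this subclass of~\eqref{von3} then gives $\|f(T_1,\dots,T_n)\|\le\|f\|_{\D^n,\infty}\le 1$. Next I would evaluate $f(T_1,\dots,T_n)$ explicitly. Since $N^3=0$, every triple product $T_iT_jT_k$ vanishes, so only the Taylor terms of $f$ at the origin of order at most $2$ survive; as $f(0)=0$ and the first- and second-order derivatives of $q$ vanish at $0$ (so that $Df(0)=Dp(0)$ and $D^2f(0)=D^2p(0)$), a direct computation yields
\[
f(T_1,\dots,T_n)=M(\alpha):=\begin{pmatrix}0&a&b\\0&0&a\\0&0&0\end{pmatrix},\quad a:=Dp(0)\cdot\alpha,\ \ b:=\tfrac12\,D^2p(0)\cdot A_\alpha,
\]
which is precisely the image of $f$ under the homomorphism $\rho^{(0)}_{\alpha\alpha}$ associated with a Varopoulos operator of type~I. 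Hence $\|M(\alpha)\|\le 1$ for every $\alpha$ with $\|\alpha\|_\infty\le 1$.

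It remains to translate $\|M(\alpha)\|\le 1$ into the scalar inequality. I would compute $M(\alpha)^*M(\alpha)$, whose only nontrivial block is $\left(\begin{smallmatrix}|a|^2 & \bar a b\\ a\bar b & |a|^2+|b|^2\end{smallmatrix}\right)$; writing $s=|a|^2$ and $t=|b|$, its larger eigenvalue is
\[
\|M(\alpha)\|^2=\tfrac12\Big(2s+t^2+t\sqrt{4s+t^2}\Big).
\]
The single genuinely computational point is the equivalence $\|M(\alpha)\|\le 1\iff s+t\le 1$. Rather than solving the quadratic, I would verify the clean identity that the substitution $t=1-s$ makes the right-hand side equal to $1$ (using $4s+(1-s)^2=(1+s)^2$), and then invoke the fact that the right-hand side is strictly increasing in $t$ for each fixed $s$ to conclude that $\|M(\alpha)\|\le 1$ holds exactly when $s+t\le 1$, i.e.
\[
\Big|\tfrac12\,D^2p(0)\cdot A_\alpha\Big|+\big|Dp(0)\cdot\alpha\big|^2\le 1.
\]
Taking the supremum over all $\alpha$ with $\|\alpha\|_\infty\le 1$ then yields the asserted necessary condition. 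The real weight of the argument sits in the von-Neumann inequality for~\eqref{von3} established earlier; once that is available, this corollary reduces to the functional-calculus bookkeeping of the second paragraph together with the short eigenvalue identity above, which is where I would focus the remaining care.
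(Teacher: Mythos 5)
Your proof is correct and follows exactly the route the paper intends: the paper states this theorem as an immediate corollary of the von-Neumann inequality for the commuting $3\times 3$ tuples in \eqref{von3} (with $\w=0$, $\beta_i=\alpha_i$), and your functional-calculus computation of $f(T_1,\dots,T_n)$ together with the equivalence $\|\mathscr{T}(a,b)\|\leq 1 \iff |b|+|a|^2\leq 1$ is precisely the bookkeeping the paper leaves implicit (compare the displayed condition \eqref{Functional calculus for order 3}). No gaps.
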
  
We also prove the following theorem, giving a considerable improvement on the upper bound \eqref{Varopoulos bounds} previously obtained in \cite{V2}. 
\begin{thm}
$\lim_{n\to \infty} C_2(n) \leq \frac{3\sqrt{3}}{4} K_G^\mathbb C.$
\end{thm}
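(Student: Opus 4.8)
The plan is to bound $\|p(\boldsymbol T)\|$ for an \emph{arbitrary} commuting tuple of contractions, reducing the quadratic part to Grothendieck's inequality exactly as Varopoulos does, and then to sharpen the single place where he loses a factor $2$. Writing $p=p_0+p_1+p_2$ and testing against unit vectors $\xi,\eta$, I would expand
$\langle p(\boldsymbol T)\xi,\eta\rangle=p_0\langle\xi,\eta\rangle+\sum_i a_i\langle T_i\xi,\eta\rangle+\sum_{ij}c_{ij}\langle T_j\xi,T_i^*\eta\rangle$
and note that $x_j:=T_j\xi$ and $y_i:=T_i^*\eta$ have norm $\le 1$. Symmetrizing the linear term (writing it half as $\langle x_j,\eta\rangle$ and half as $\langle\xi,y_j\rangle$, which are equal), the whole expression becomes a bilinear form in the enlarged unit families $\mathcal X=(x_1,\dots,x_n,\xi)$ and $\mathcal Y=(y_1,\dots,y_n,\eta)$ whose coefficient matrix is the symmetric $(n+1)\times(n+1)$ matrix $\tilde C=\left(\begin{smallmatrix} C & \frac12 a\\ \frac12 a^{\top} & p_0\end{smallmatrix}\right)$, where $C=(c_{ij})$. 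The complex Grothendieck inequality then yields $\|p(\boldsymbol T)\|\le K_G^{\mathbb C}\,\|\tilde C\|_{\infty\to1}$, with $\|\tilde C\|_{\infty\to1}=\sup_{s,t\in\overline{\D}^{\,n+1}}|s^{\top}\tilde C\,t|$. This bound is uniform in $n$ and in the tuple, so the passage to $\lim_{n\to\infty}C_2(n)$ is immediate once $\tilde C$ is controlled.

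This reduces the theorem to a statement about the symmetric matrix alone: $\|\tilde C\|_{\infty\to1}\le\frac{3\sqrt3}{4}\,\|p\|_{\D^n,\infty}$. The link back to $p$ is a homogenization identity: the homogeneous degree-two polynomial $\tilde p(z,w)=p_0w^2+w\,p_1(z)+p_2(z)$ has coefficient matrix $\tilde C$, and $\|\tilde p\|_{\D^{n+1},\infty}=\|p\|_{\D^n,\infty}$, since $\tilde p(z,e^{i\theta})=e^{2i\theta}p(e^{-i\theta}z)$ and the maximum principle in $w$ takes care of $|w|<1$. Hence it suffices to prove, for every symmetric $D$ with quadratic form $Q_D(z)=z^{\top}Dz$, the polarization-type inequality $\|D\|_{\infty\to1}\le\frac{3\sqrt3}{4}\,\|Q_D\|_{\D^m,\infty}$. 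The constant is genuinely bigger than $1$: already $D=\frac32 I-\frac12 J$ on $\C^3$ satisfies $\|D\|_{\infty\to1}>\|Q_D\|_{\D^3,\infty}$, its isotropic unimodular eigenvector $(1,\omega,\omega^2)$ (with $\omega=e^{2\pi i/3}$) carrying a large $\|Dv\|_1$ while $Q_D$ vanishes on it; so no factor is being wasted, and the target $\frac{3\sqrt3}{4}$ is strictly below Varopoulos's $2$.

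To prove the polarization inequality, fix $u,v\in\T^m$ with $M:=u^{\top}Dv=\|D\|_{\infty\to1}$ and consider the scalar quadratic $G(\zeta)=Q_D(u+\zeta v)=Q_D(u)+2M\zeta+Q_D(v)\zeta^2$, a genuine value of $Q_D$ at the point $u+\zeta v$. Homogeneity gives $|G(\zeta)|\le\|u+\zeta v\|_\infty^2\,\|Q_D\|$, while $|Q_D(u)|,|Q_D(v)|\le\|Q_D\|$ because $u,v\in\T^m$. Extracting the middle coefficient as a Fourier coefficient, $2M=\frac{1}{2\pi r}\int_0^{2\pi}G(re^{i\theta})e^{-i\theta}\,d\theta$, and using $\|u+re^{i\theta}v\|_\infty^2=\max_k\big(1+r^2+2r\cos(\theta+\beta_k)\big)$ with $\beta_k=\arg(v_k/u_k)$, leads to an estimate governed by the radius $r$ and the average $\frac{1}{2\pi}\int_0^{2\pi}\max_k\cos(\theta+\beta_k)\,d\theta$.

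The hard part is precisely this last estimate: bounding the average crudely by $1$ only recovers Varopoulos's $2$ (the minimum of $\frac{(1+r)^2}{2r}$). The improvement rests on the tension between the spread of the phases $\{\beta_k\}$ — which inflates the average — and the size of $M$, which collapses toward $\|Q_D\|$ as $u$ and $v$ become parallel (when all $\beta_k$ coincide one even gets $M=|Q_D(u)|\le\|Q_D\|$). Carrying out the joint optimization over $r$ and the phase distribution is where the extremal value $\frac{3\sqrt3}{4}=\max_\theta \sin\theta\,(1+\cos\theta)$ appears. This sharp interplay, rather than the Grothendieck reduction of the first step (which is routine), is the main obstacle.
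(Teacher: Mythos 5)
Your first two steps --- expanding $\langle p(\boldsymbol T)\xi,\eta\rangle$ against the enlarged families $(\xi,T_1\xi,\ldots,T_n\xi)$ and $(\eta,T_1^*\eta,\ldots,T_n^*\eta)$, invoking the complex Grothendieck inequality for the bordered matrix $\tilde C$, and homogenizing so that $\|\tilde p\|_{\D^{n+1},\infty}=\|p\|_{\D^n,\infty}$ --- are exactly what the paper does, and you have correctly isolated the crux: the inequality $\|D\|_{\ell^\infty(m)\to\ell^1(m)}\leq \frac{3\sqrt3}{4}\,\|Q_D\|_{\D^m,\infty}$ for symmetric $D$. The gap is that you do not prove this inequality. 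Your own estimates, pushed as far as you state them, give $2M\leq \frac{1}{r}\bigl(1+r^2+2rA\bigr)\|Q_D\|$ with $A=\frac{1}{2\pi}\int_0^{2\pi}\max_k\cos(\theta+\beta_k)\,d\theta$, hence (at $r=1$) only $M\leq(1+A)\|Q_D\|\leq 2\|Q_D\|$, which is Varopoulos's constant. Everything then hinges on the asserted ``tension'' forcing $M$ to be small when the phases $\beta_k$ are spread out, but no such quantitative statement is formulated, let alone proved; you acknowledge this is ``the main obstacle'' and stop there. It is also far from clear that restricting $Q_D$ to the single complex line $u+\zeta v$ retains enough information to reach $\frac{3\sqrt3}{4}$: already for the normalized Varopoulos matrix $A_V/5$ the extremal $u,v$ have phases spread at $2\pi/3$, giving $A=\frac{3\sqrt3}{2\pi}\approx 0.83$ and hence only $M\leq 1.83$ from your bound, versus the true value $1.2$ --- the one-line restriction is very lossy, and nothing you write rules out a configuration with $A$ near $1$ and $M$ near $2$.

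The paper closes this gap by a different and genuinely multivariable argument (Theorem~\ref{secondderbd}): for $f:\D^n\to\D$ holomorphic and $\|a\|_\infty<r$, composing with disc automorphisms and applying the Schwarz lemma gives $\|Df(a)\|_{\ell^1}\leq\frac{1-|f(a)|^2}{1-r^2}\leq\frac{1}{1-r^2}$; then a second application of the Schwarz lemma, now to the map $Df$ from $r\D^n$ into $\frac{1}{1-r^2}(\D^n)^*$, yields $\|D^2f(0)\|_{\ell^\infty\to\ell^1}\leq\frac{1}{r(1-r^2)}$, and optimizing at $r=1/\sqrt3$ gives $\frac{3\sqrt3}{2}$, i.e.\ $\frac{3\sqrt3}{4}$ for the coefficient matrix since $D^2Q_D(0)=2D$. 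Note that this uses the gradient bound at \emph{every} point of the smaller polydisc, not merely along one complex line through the extremal pair $(u,v)$; that is precisely the information your Fourier-coefficient extraction discards. To complete your proof you would either need to establish the phase--magnitude tradeoff you postulate, or replace the last step with the two-fold Schwarz argument.
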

Finally, in this chapter, we investigate in some detail, the contractivity of the homomorphisms 
$\rho_{x,y}$ induced by the Varopoulos operators of type I ({\sf{V\,I}}) and we come up with the same inequality as in the Theorem \ref{necessary} but for $n=2.$ 

In chapter 3, we study the homomorphisms induced by tuples of commuting Varopoulos operators of type II and order $2$ and solve the extremal problem (indeed a more general extremal problem obtained in the study of these homomorphism) occurring in the Theorem \ref{necessary} but for $n=2.$ We define 
\[p_1(z)=\frac{\partial }{\partial z_1}f(0)+\frac{\partial }{\partial z_2}f(0)z\mbox{ and }
p_2(z)=\frac{1}{2}\frac{\partial^2}{\partial z_1^2}f(0)+\frac{\partial^2}{\partial z_1\partial z_2}f(0)z+\frac{1}{2}\frac{\partial^2}{\partial z_2^2}f(0)z^2 
\]
for a holomorphic function $f$ in two variables and  we prove the following.
\begin{thm}
For $f\in{\rm H}^\infty_{0}(\D^2,\D)$, we have, 
$$\sup\limits_{\|X\|_{\infty}\leq 1}\left\|\mathscr{T}\left(D f(0)\cdot  X,
\frac{1}{2}D^2f(0)\cdot A_{ X}\right)\right\|
=\|\mathscr{T}(M_{p_1},M_{p_2})\|,$$
where $X=(X_1,X_2)$ is pair of commuting operators, $\|X\|_{\infty}=\max\{\|X_1\|,\|X_2\|\},$ $A_X=\big(\!\!\big(X_iX_j\big)\!\!\big)$ and $\mathscr T(A_1,A_2)=\Big(\begin{array}{cc}
A_1 & A_2\\
0 & A_1\\
\end{array}\Big)$ for any $A_1,A_2\in \mathcal{B}(\h).$
\end{thm}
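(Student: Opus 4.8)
The plan is to squeeze the left-hand supremum between $\|\mathscr{T}(M_{p_1},M_{p_2})\|$ from both sides. Let $\ell$ and $q$ denote, respectively, the degree-one and degree-two homogeneous parts of the Taylor polynomial of $f$ at $0$, viewed as polynomials on $\C^2$. A direct comparison of coefficients shows that $Df(0)\cdot X=\ell(X_1,X_2)$ and $\tfrac12 D^2f(0)\cdot A_X=q(X_1,X_2)$ for any commuting pair $X=(X_1,X_2)$ (the symmetry of $D^2f(0)$ and the commutativity of $X_1,X_2$ are used here), while the one-variable polynomials of the statement are the slices $p_1(z)=\ell(1,z)$ and $p_2(z)=q(1,z)$. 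For the lower bound I would evaluate at the explicit pair $X_1=I$, $X_2=S$, where $S$ is the unilateral shift (multiplication by $z$) on $H^2(\T)$: this pair commutes and has $\|X\|_{\infty}=1$, and since $\ell(I,S)=p_1(S)=M_{p_1}$ and $q(I,S)=p_2(S)=M_{p_2}$ one obtains $\mathscr{T}(\ell(X),q(X))=\mathscr{T}(M_{p_1},M_{p_2})$ exactly. Hence the supremum is at least $\|\mathscr{T}(M_{p_1},M_{p_2})\|$.

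For the reverse bound I would fix an arbitrary commuting pair of contractions $(X_1,X_2)$ on $\h$ and apply Ando's theorem to obtain commuting unitaries $(U_1,U_2)$ on some $\kh\supseteq\h$ with $r(X_1,X_2)=P_{\h}\,r(U_1,U_2)\big|_{\h}$ for every polynomial $r$. Taking $r=\ell$ and $r=q$ exhibits $\mathscr{T}(\ell(X),q(X))$ as the entrywise compression to $\h\oplus\h$ of $\mathscr{T}(\ell(U),q(U))$, so $\|\mathscr{T}(\ell(X),q(X))\|\le\|\mathscr{T}(\ell(U),q(U))\|$. Because $U_1,U_2$ are commuting unitaries, the unital $C^*$-algebra they generate is $*$-isomorphic to $C(Y)$ for the joint spectrum $Y\subseteq\T^2$; passing through $M_2(C(Y))\cong C(Y,M_2)$ turns the operator norm into a pointwise supremum, and bounding $Y$ by $\T^2$ gives $\|\mathscr{T}(\ell(X),q(X))\|\le\sup_{\zeta\in\T^2}\big\|\mathscr{T}(\ell(\zeta),q(\zeta))\big\|$, the inner norm now being that of a $2\times2$ scalar matrix.

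The crux is to collapse the bidisc boundary to the circle. Writing $\zeta=(\zeta_1,\zeta_2)\in\T^2$ as $\zeta_2=\zeta_1 w$ with $w=\zeta_2/\zeta_1\in\T$, homogeneity gives $\ell(\zeta)=\zeta_1\,p_1(w)$ and $q(\zeta)=\zeta_1^{2}\,p_2(w)$; conjugating the $2\times2$ matrix by the diagonal unitaries $\operatorname{diag}(\zeta_1,1)$ and $\operatorname{diag}(1,\zeta_1)$ strips the scalar factors without altering the norm, so $\big\|\mathscr{T}(\ell(\zeta),q(\zeta))\big\|=\big\|\mathscr{T}(p_1(w),p_2(w))\big\|$. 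As $\zeta$ ranges over $\T^2$ the ratio $w$ ranges over all of $\T$, whence $\|\mathscr{T}(\ell(X),q(X))\|\le\sup_{w\in\T}\big\|\mathscr{T}(p_1(w),p_2(w))\big\|$. It remains to recognise the latter as $\|\mathscr{T}(M_{p_1},M_{p_2})\|$: under $H^2(\T)\oplus H^2(\T)\cong H^2(\T;\C^2)$ the operator $\mathscr{T}(M_{p_1},M_{p_2})$ is multiplication by the analytic $M_2$-valued symbol $w\mapsto\mathscr{T}(p_1(w),p_2(w))$, and for an analytic operator-valued symbol the multiplication operator on the Hardy space has norm equal to the $L^\infty$-norm of the symbol, namely $\sup_{w\in\T}\big\|\mathscr{T}(p_1(w),p_2(w))\big\|$. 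Chaining everything, $\|\mathscr{T}(M_{p_1},M_{p_2})\|\le\sup_{\|X\|_{\infty}\le1}\|\mathscr{T}(\ell(X),q(X))\|\le\sup_{w\in\T}\big\|\mathscr{T}(p_1(w),p_2(w))\big\|=\|\mathscr{T}(M_{p_1},M_{p_2})\|$, so equality holds throughout.

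The single indispensable tool is Ando's theorem, and it is exactly its unavailability for three or more commuting contractions that confines the argument to the bidisc, consistent with the surrounding discussion. I expect the genuine work to be bookkeeping rather than conceptual: checking that the $2\times2$ block compression is performed entrywise, justifying the identification of the block operator norm with the pointwise supremum over the joint spectrum via $M_2(C(Y))\cong C(Y,M_2)$, and recording the analytic-Toeplitz norm identity. The step I would flag as the heart of the matter is the substitution $w=\zeta_2/\zeta_1$, which uses the homogeneity of $\ell$ and $q$ (degrees $1$ and $2$) to reduce the two-variable supremum precisely to the one-variable symbol $\mathscr{T}(p_1,p_2)$; without this exact matching of degrees the collapse to a single circle would fail.
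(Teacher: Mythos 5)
Your proof is correct, and it reaches the identity by a genuinely different route from the paper's. For the upper bound the paper does not invoke Ando's unitary dilation directly: it first uses the commutant lifting theorem to reduce to the case where $X_1$ is a co-isometry, then sets $Y=X_2X_1^*$ and factors the block operator through $P(Y)$ for a one-variable $M_2$-valued polynomial $P$, so that the scalar von-Neumann inequality finishes the estimate; the collapse from two variables to one thus happens at the operator level, whereas you perform it on the distinguished boundary via the substitution $w=\zeta_2/\zeta_1$ after passing to commuting unitaries and the joint spectrum. The two collapses are the same idea in different clothing (the paper also slices $P(z_1,\lambda z_1)$ in its proof of Ando's theorem for operators of type {\sf V\,II}), but your version makes explicit the degree grading that makes the reduction work. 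For identifying the resulting one-variable supremum with $\|\mathscr{T}(M_{p_1},M_{p_2})\|$, the paper evaluates the $M_2$-valued polynomial at $B^*$ and uses $C^*(B)\cong C(\mathbb T)$, which is your $M_2(C(Y))\cong C(Y,M_2)$ step specialized to $Y=\mathbb T$; your lower bound via the unilateral shift and the analytic-Toeplitz norm identity is fine but could be shortened by testing at the commuting pair $(I,M_z)$ on $L^2(\mathbb T)$, which produces $\mathscr{T}(M_{p_1},M_{p_2})$ on the nose and avoids the Hardy-space detour. What each approach buys: yours is self-contained modulo Ando's dilation theorem and bypasses the commutant lifting machinery that the paper explicitly identifies as the dependency of this chapter, while the paper's route reproves Ando's inequality for the type {\sf V\,II} operators along the way and establishes the reduction to $X_1=I$ that it reuses in the subsequent computation.
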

In the process of solving the extremal problem occurring in this theorem, we reprove the von-Neumann inequality and Ando's theorem for a commuting pair of Varopoulos operators of type II.

In chapter 4, we give an alternative for solving the Carath\'{e}odory-Fej\'{e}r interpolation problem after adapting a theorem of Kor\'{a}nyi and Puk\'{a}nszky. This approach is important as it is independent of the commutant lifting theorem, whereas the method in chapter 3 strongly depends on it. For a polynomial $p$ in two variables we define 
\[p_1(z)=\frac{\partial }{\partial z_1}p(0)+\frac{\partial }{\partial z_2}p(0)z\mbox{ and }
p_2(z)=\frac{1}{2}\frac{\partial^2}{\partial z_1^2}p(0)+\frac{\partial^2}{\partial z_1\partial z_2}p(0)z+\frac{1}{2}\frac{\partial^2}{\partial z_2^2}p(0)z^2 .
\]
In the following theorem, we reformulate the Carath\'{e}odory-Fej\'{e}r interpolation problem for the bi-disc $\D^2.$
\begin{thm}
	For any polynomial $p$ of the form 
	$$p(z) = a_{10} z_1 + a_{01}z_2 + a_{20}z_1^2 + a_{11} z_1z_2 + a_{02} z_2^2,$$ 
	there exists a holomorphic function $q,$ 
	defined on the bi-disc $\D^2,$ with $q^{(k)}(0)=0$ for $|k|\leq 2,$ such that 
	$\|p+q\|_{\mathbb D^2, \infty} \leq 1$ if and only if
	 	$|p_2|\leq 1-|p_1|^2$ 
	and there exists a holomorphic function  
	$f:\mathbb{D}\to \mathcal B(L^2(\mathbb{T}))$
	with 
	\[\|f\|_{\D,\infty}^{\rm op}\leq 1 
	\mbox{ and }
	\frac{f^{(k)}(0)}{k!}=M_{p_k}\mbox{ for all }k\geq 0,\]
	where $p_0=0$ and for $k\geq 3,$ 
	$p_k\in\mathbb{C}[Z]$ 
	is a polynomial of degree less than or equal to $k.$ 
	Here $M_{p_k}$ is the multiplication operator on $L^2(\mathbb{T})$        
	induced by the polynomial $p_k.$ 
	\end{thm}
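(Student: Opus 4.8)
The plan is to transport the two-variable Carath\'eodory-Fej\'er problem to an operator-valued problem in one variable, using the Cayley map together with the Kor\'anyi-Puk\'anszky theorem. Let $\chi$ denote the Cayley map carrying $\D$ onto the right half-plane. Writing $F=p+q$, the existence of a holomorphic $q$ with $q^{(k)}(0)=0$ for $|k|\le 2$ and $\|p+q\|_{\D^2,\infty}\le 1$ is the same as the existence of a holomorphic $F$ on $\D^2$ carrying the two-jet of $p$ and satisfying $\|F\|_{\D^2,\infty}\le 1$. Composing with $\chi$, this is equivalent to the existence of a holomorphic $G=\chi\circ F$ on $\D^2$ with $\Re G\ge 0$ whose two-jet is determined from that of $p$ through $\chi$. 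By the Kor\'anyi-Puk\'anszky theorem this is in turn equivalent to positivity of the associated function $\phi_G:\Z^2\to\C$, that is, to non-negative definiteness of every matrix $\big(\!\!\big(\phi_G(m_i-m_j)\big)\!\!\big)$.

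The second step is to read this positivity in the D-slice ordering. Expanding the homogeneous part of $F$ of degree $k$ in the slice variable produces exactly the polynomials $p_k$, so that $p_1,p_2$ are pinned down by $p$ while the $p_k$ with $k\ge 3$ are the free data coming from $q$. As recorded above, in the D-slice ordering the matrix of $\phi_G$ becomes block Toeplitz: the block on the $k$-th diagonal is a Laurent (Toeplitz) operator on $\ell^2(\Z)\cong L^2(\T)$, namely a multiplication operator built from the $k$-th slice polynomial. I would then invoke the operator-valued Herglotz correspondence: non-negative definiteness of the block Toeplitz operator with blocks $\big(\Phi_k\big)$ is equivalent to the existence of a holomorphic $g:\D\to\mathcal B(L^2(\T))$ with $\Re g\ge 0$ whose Taylor coefficients are the $\Phi_k$. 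Applying the operator Cayley transform to $g$ turns $\Re g\ge 0$ into $\|f\|_{\D,\infty}^{\rm op}\le 1$ for $f:\D\to\mathcal B(L^2(\T))$, and tracking the transform shows that the coefficients of $f$ are precisely $M_{p_k}$, with $p_0=0$ and $\deg p_k\le k$. Since every link in this chain is an equivalence, a contractive $f$ with these coefficients likewise reverses the chain to a two-variable $F$ with the prescribed two-jet.

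For the explicit factor $|p_2|\le 1-|p_1|^2$, I would observe that it is nothing but the degree-two truncation of the operator Carath\'eodory-Fej\'er condition, namely the contractivity of the block corner $\mathscr T(M_{p_1},M_{p_2})=\bigl(\begin{smallmatrix}M_{p_1}&M_{p_2}\\0&M_{p_1}\end{smallmatrix}\bigr)$. Because $M_{p_1}$ and $M_{p_2}$ are multiplication operators and the norm of $\bigl(\begin{smallmatrix}a&b\\0&a\end{smallmatrix}\bigr)$ equals one exactly when $|b|=1-|a|^2$, this contractivity reads pointwise on $\T$ as $|p_2|\le 1-|p_1|^2$. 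Thus the inequality is forced by full solvability and is recorded as the explicit necessary piece isolated in the earlier chapters. The actual construction of a contractive $f$ from admissible data I would carry out inductively: given $p_1,\dots,p_n$ for which the $n$-th block corner is a contraction, Parrott's theorem furnishes an operator keeping the $(n{+}1)$-st corner contractive, and the free contraction $V$ in Parrott's formula is exactly where the non-uniqueness of $q$ enters.

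The step I expect to be the main obstacle is the middle one: matching the D-slice block Toeplitz structure of $\phi_G$ with an operator-valued Herglotz function on $\D$ and checking that its diagonal blocks are multiplication operators by polynomials of the correct degree. Two points need care. First, $\chi$ is singular where $F$ approaches the boundary value $1$, so the passage between $\|F\|_{\D^2,\infty}\le 1$ and $\Re(\chi\circ F)\ge 0$ must be carried out in the Herglotz class, with the matching of the two-jet recovered through a limiting argument $F\mapsto rF$, $r\uparrow 1$, and normal families. Second, one must verify that the intra-slice Toeplitz pattern produced by the lexicographic order inside each $P_l$ yields bounded multiplication operators on $L^2(\T)$ and that the operator Cayley transform preserves the degree bound $\deg p_k\le k$; this is the step that encodes the two-variable origin of the data and makes the reduction a genuine equivalence rather than a one-way implication.
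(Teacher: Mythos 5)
Your proposal follows essentially the same route as the paper: Cayley transform plus the Kor\'anyi--Puk\'anszky theorem, the D-slice ordering to exhibit the positivity condition as a block Toeplitz matrix whose blocks are multiplication operators by the slice polynomials (with the degree bound $\deg p_k\leq k$ automatic from the two-variable Taylor coefficients), and a reduction to a one-variable operator-valued Carath\'eodory--Fej\'er problem, with $|p_2|\leq 1-|p_1|^2$ read off as the contractivity of $\mathscr T(M_{p_1},M_{p_2})$. The only cosmetic difference is that you pass from block-Toeplitz positivity to the contractive $f$ via the operator Herglotz representation followed by the operator Cayley transform, whereas the paper performs the identical conversion through the explicit identity $P_nC_n^{\rm t}P_n^*=(I-A_nA_n^*)\oplus 1$ of Lemma \ref{Contractivity and Positivity}; both rest on the same recursion between the coefficients of $F$ and of $\chi\circ F$.
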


In this chapter, we show that $|p_1|^2+|p_2|\leq 1$ is a necessary condition for the solution to exist for the Carath\'{e}odory-Fej\'{e}r interpolation problem. In the following theorem, we also isolate a class of polynomials for which this is a sufficient condition for the existence of a solution. 
\begin{thm}
Let $p_1(z)=\gamma + \delta z$ and $p_2(z)=(\alpha + \beta z)(\gamma + \delta z)$ for some choice of complex numbers $\alpha,~\beta,~\gamma$ and $\delta.$ Assume that 
	$|p_1|^2+|p_2|\leq 1.$ 
	If either $\alpha \beta \gamma \delta =0$ or 
	$\arg(\alpha)-\arg(\beta)=\arg(\gamma)-\arg(\delta),$ 
	then $|p_1|^2+|p_2|\leq 1$ is 
	a sufficient condition also for the existence of a solution for the corresponding Carath\'{e}odory-Fej\'{e}r interpolation problem.
\end{thm}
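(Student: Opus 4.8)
The plan is to feed the hypotheses into the reformulation of the Carath\'{e}odory--Fej\'{e}r problem proved just above. That reformulation says a solution exists precisely when $|p_2|\le 1-|p_1|^2$ and there is a contractive analytic operator function $f\colon\mathbb{D}\to\mathcal B(L^2(\mathbb{T}))$ whose Taylor coefficients are $M_{p_0}=0$, $M_{p_1}$, $M_{p_2}$ and, for $k\ge 3$, multiplication operators $M_{p_k}$ by polynomials $p_k$ of degree at most $k$ that we are free to choose. Since $|p_1|^2+|p_2|\le 1$ forces $|p_2|\le 1-|p_1|^2$, the first condition is automatic and the whole problem collapses to constructing the higher coefficients $p_k$, $k\ge 3$, so that $F(z,w):=\sum_{k\ge 1}z^k p_k(w)$ satisfies $\sup_{w\in\mathbb{T}}|F(z,w)|\le 1$ for every $z\in\mathbb{D}$.

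Before invoking the machinery I would record why the obvious candidate fails, since this locates the real content. The product structure $p_2=(\alpha+\beta w)p_1$ makes $F_0(z,w)=z\,p_1(w)\big/\big(1-z(\alpha+\beta w)\big)$ have exactly the prescribed degree-one and degree-two $z$-coefficients; but $|F_0|\le 1$ on $\mathbb{T}^2$ forces a bound of the form $|p_1(w)|+|\alpha+\beta w|\le 1$, which is strictly stronger than the hypothesis $|p_1|^2+|p_2|=|p_1|\big(|p_1|+|\alpha+\beta w|\big)\le 1$ (and the denominator may even vanish in $\mathbb{D}^2$). Hence no naive rational choice works, and the free coefficients $p_k$ ($k\ge 3$) must genuinely be optimized.

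The optimization I would carry out through Nehari's theorem, exactly as the chapter's block-Toeplitz set-up dictates. Passing to the Cayley transform $\chi$ and applying the Kor\'{a}nyi--Puk\'{a}nszky theorem turns ``$F$ maps $\mathbb{D}^2$ into $\mathbb{D}$'' into positivity of the block-Toeplitz matrix of $\phi_{\chi\circ F}$ written in the D-slice ordering. Choosing the free blocks $M_{p_k}$ to make this block Toeplitz form a contraction is a one-sided (Hankel) completion problem, so by Nehari's theorem the least achievable sup-norm over all admissible completions equals the operator norm of the Hankel operator $H_\phi$ whose symbol is assembled from $M_{p_1}$ and $M_{p_2}$; a solution exists iff $\|H_\phi\|\le 1$. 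Here Parrott's theorem supplies the actual one-step fillings, converting the existence statement into the inductive algorithm that builds $f$.

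The crux---and the step I expect to be hardest---is to evaluate $\|H_\phi\|$ and to see that the hypothesis makes it at most $1$. The product form $p_2=(\alpha+\beta w)p_1$ is decisive: it factors the symbol so that $H_\phi$ is controlled by the multiplier $w\mapsto(\alpha+\beta w)$ acting against $p_1$, and one expects $\|H_\phi\|=\sup_{w\in\mathbb{T}}\big(|p_1(w)|^2+|p_2(w)|\big)^{1/2}$. The subtlety is that for a general symbol the Hankel norm is a genuinely global quantity that strictly dominates this pointwise supremum, so equality requires the extremal behaviour to concentrate coherently. This is exactly what the side hypothesis provides: when $\alpha\beta\gamma\delta=0$ the symbol degenerates to a rank-one (or purely analytic) object whose Hankel norm is the pointwise supremum outright, and when $\arg(\alpha)-\arg(\beta)=\arg(\gamma)-\arg(\delta)$ a unimodular rotation $w\mapsto\lambda w$ places the zeros of $\gamma+\delta w$ and of $\alpha+\beta w$ on a common ray, aligning the phases of the two factors so that the extremizing vector of $H_\phi$ localizes at a single boundary point and the Hankel norm collapses to $\sup_{w}\big(|p_1(w)|^2+|p_2(w)|\big)^{1/2}\le 1$. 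Establishing this phase-alignment-implies-norm-equality statement is the technical heart; once it is in place, $\|H_\phi\|\le 1$ yields the required completion, and hence the Carath\'{e}odory--Fej\'{e}r solution.
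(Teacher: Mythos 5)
Your reduction of the sufficiency direction to ``a solution exists iff $\|H_\phi\|\leq 1$'' is where the argument breaks. The completion problem coming out of Theorem \ref{Final form} is not a standard Nehari/Hankel completion: the free blocks for $k\geq 3$ are not arbitrary operators analytic in $z$, they are constrained to be multiplication operators $M_{p_k}$ with $\deg p_k\leq k$, and this degree constraint is exactly what encodes two-variable analyticity. If you drop it and invoke the operator-valued Nehari theorem, the condition you get is essentially $\|\mathscr T(M_{p_1},M_{p_2})\|\leq 1$, and the paper's own example $p_1=1/\sqrt2$, $p_2=z^2/2$ shows this is \emph{not} sufficient (there $\|H_\phi\|=1$ and $\|\mathscr T(M_{p_1},M_{p_2})\|\leq 1$, yet the data is not even $3$-polynomially extendible, because Parrott's formula forces $p_3=\sqrt2\,z^4$, which has degree $>3$). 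The two-variable Hankel norm of Chapter 5 measures distance to $H_1$, a half-space of $L^\infty(\mathbb T^2)$ strictly larger than the analytic functions, so $\|H_\phi\|\leq 1$ is only necessary. On top of this, your key analytic claim --- that phase alignment forces $\|H_\phi\|$ to collapse to $\sup_w\big(|p_1(w)|^2+|p_2(w)|\big)^{1/2}$ --- is asserted and deferred rather than proved, so even within your framework the proof is incomplete.

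The paper's actual proof avoids all of this by direct construction exploiting the factorization $p_2=(\alpha+\beta z)p_1$. When $\alpha\beta=0$ one extends the scalar factor $z+\alpha z^2$ (or $1+\beta z_2$) by the one-variable CF solution and multiplies by $M_{p_1}$. When $\alpha\beta\neq 0$ one writes $1+\alpha z_1+\beta z_2=(a+\alpha z_1)+(1-a+\beta z_2)$ with $a=\lambda/(1+\lambda)$, $\lambda=|\alpha|/|\beta|$, extends each summand separately in one variable, and uses $\mathscr T(a,|\alpha|)=\lambda\,\mathscr T(1-a,|\beta|)$ to get the bound $\|f\|\leq\|M_{p_1}\|\,\|\mathscr T(1,|\alpha|+|\beta|)\|$. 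The role of the phase hypothesis is then elementary: it guarantees that $|\gamma+\delta w|$ and $|\alpha+\beta w|$ peak at the same $w_0\in\mathbb T$, so $(|\alpha|+|\beta|)\|p_1\|=\|p_2\|$ and $\|p_1\|^2+\|p_2\|=|p_1(w_0)|^2+|p_2(w_0)|\leq 1$ closes the estimate. Your intuition that the phase condition aligns the extrema of the two factors is the right one, but it is used to control the norm of an explicitly constructed extension, not to evaluate a Hankel norm.
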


We illustrate, by means of an example, that this necessary condition is not sufficient in general. In the end of this chapter, we give a proof of the Kor\'{a}nyi-Puk\'{a}nszky theorem for the bi-disc using the spectral theorem. This proof can be made to work for the polydisc as well.

In chapter 5, we give a generalization of Nehari's theorem to two variables. In this chapter we define the Hankel operator $H_{\phi}$ corresponding to any function $\phi\in L^\infty(\T^2).$ The following theorem shows that the norm of the Hankel operator $H_{\phi}$ is the norm of the symbol $\phi$ with respect to a quotient norm, described below.
\begin{thm}[Nehari's theorem for $L^2(\mathbb T^2)$] 
	If $\phi\in L^{\infty}(\T^2),$ 
	then $\|H_{\phi}\|=\mbox{\rm dist}_{\infty}(\phi,H_1)$.
\end{thm}     
In this theorem, $H_1:=\left\{f:=\!\!\!\!\!\sum\limits_{m+n\geq 0}\!\!\!\!\!\!a_{m,n}z_{1}^{m}z_{2}^{n}|f\in L^{\infty}(\T^2)\right\}$ and $\mbox{\rm dist}_{\infty}(\phi,H_1)$ is the distance of $\phi$ from $H_1$ in $L^\infty -$ norm.

In chapter 6, we study the operators space structures on $\ell^1(n)$. 
There is a canonical isometric embedding of $\ell^{\infty}(n)$ into the set of $n\times n$ matrices $M_n.$ However, we show that $\ell^1(n),\,n>1,$ has no isometric embedding into $M_k$ for any $k\in \N.$ 
\begin{thm}
There is no isometric embedding of $\ell^{1}(n),\,n>1,$ in to $M_k$ for any $k\in \N.$
\end{thm}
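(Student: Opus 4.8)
The plan is to reduce to the two-variable case and then exploit the rigidity of the equality case of the triangle inequality in a Hilbert space. First I would note that $\ell^1(2)$ sits isometrically inside $\ell^1(n)$ for every $n>1$ as the span of the first two coordinates, so it suffices to show that $\ell^1(2)$ admits no isometric (complex-linear) embedding into $M_k$ for any $k\in\N$. Suppose for contradiction that $\Phi:\ell^1(2)\to M_k$ is such an embedding and set $A=\Phi(e_1)$, $B=\Phi(e_2)$. Then $\|zA+wB\|=|z|+|w|$ for all $z,w\in\C$, where $\|\cdot\|$ is the operator norm; in particular $\|A\|=\|B\|=1$.

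Second, I would extract, for each point of the unit circle, a vector that is simultaneously norm-maximizing for $A$ and tied to $B$ by a scalar. Fix $z\in\C$ with $|z|=1$; then $\|zA+B\|=2$, and since the unit sphere of $\C^k$ is compact there is a unit vector $\xi_z$ with $\|(zA+B)\xi_z\|=2$. As $\|zA\xi_z\|\le 1$ and $\|B\xi_z\|\le 1$, the triangle inequality forces $\|A\xi_z\|=\|B\xi_z\|=1$, and the equality case in an inner-product space forces $B\xi_z=zA\xi_z$. In particular $\xi_z$ lies in the top singular subspace $V_1:=\{\xi:\|A\xi\|=\|\xi\|\}$, on which $A^*A$ is the identity and hence $A$ acts isometrically.

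Third comes the punchline. Let $U_0=\mathrm{span}\{\xi_z:|z|=1\}\subseteq V_1$, a finite-dimensional space on which $A$ is an isometry, hence injective. Linearity together with $B\xi_z=zA\xi_z$ gives $B(U_0)\subseteq A(U_0)$, so $T:=(A|_{U_0})^{-1}B|_{U_0}$ is a well-defined linear operator on $U_0$ satisfying $T\xi_z=z\xi_z$. Thus every point of the unit circle is an eigenvalue of $T$, contradicting the fact that a linear operator on a finite-dimensional space has only finitely many eigenvalues.

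The step I expect to carry the real content is the second one: upgrading the numerical identity $\|zA+B\|=2$ to the pointwise relation $B\xi_z=zA\xi_z$ for a $\xi_z$ that is simultaneously norm-maximizing for $A$, so that $A$ is invertible on their span. This is where finite-dimensionality (attainment of the norm) and the equality-case rigidity of the Hilbert-space norm are used; the reduction before it is bookkeeping, and the conclusion after it is the elementary observation that a continuum of eigenvalues cannot fit into a finite-dimensional space. It is also here that the complex scalar field is indispensable: over $\R$ one only gets $z\in\{\pm1\}$ and no contradiction, consistent with the fact that real $\ell^1(2)$ \emph{does} embed isometrically into $M_2$.
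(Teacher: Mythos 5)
Your proof is correct, and it takes a genuinely different route from the one in the thesis. The thesis argument first dilates the two norm-one matrices $T_1,T_2$ to a pair of $2k\times 2k$ unitaries $U_1,U_2$ (via the block Julia construction), observes that $(a_1,a_2)\mapsto a_1U_1+a_2U_2$ is still an isometry because compression preserves the lower bound while the triangle inequality gives the upper bound, then normalizes to the pair $(I,D)$ with $D$ a diagonal unitary, and finishes by noting that $\|a_1I+a_2D\|=\max_j|a_1+e^{i\theta_j}a_2|$ can equal $|a_1|+|a_2|$ only when $\arg a_1-\arg a_2$ hits one of the finitely many angles $\theta_j$. Your argument dispenses with dilation theory altogether: you work directly with norming vectors $\xi_z$ for $zA+B$, use the equality cases of the triangle and Cauchy--Schwarz inequalities to upgrade $\|zA+B\|=2$ to the pointwise identity $B\xi_z=zA\xi_z$ on the subspace $\ker(I-A^*A)$ where $A$ is isometric, and then derive a contradiction from the fact that the operator $T=(A|_{U_0})^{-1}B|_{U_0}$ on a finite-dimensional space would have the whole unit circle in its point spectrum. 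Both proofs reduce a continuum of constraints to a finite set of spectral data --- the thesis to the finitely many eigenvalues of a unitary, yours to the finitely many eigenvalues of $T$ --- and both isolate the role of the complex scalars (in the thesis this is the condition $\cos\alpha_j=1$ iff $\alpha_j=0$; in yours it is the observation that over $\R$ one only extracts $z=\pm 1$, consistent with the real embedding $A=I$, $B=\mathrm{diag}(1,-1)$). What your version buys is self-containedness and a clear picture of where finite-dimensionality enters (attainment of the norm plus finiteness of the spectrum); what the thesis version buys is a clean reduction to the completely explicit model $(I,D)$, which makes the failure of the isometry visible by a one-line computation.
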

 The next theorem provides several isometric embeddings of $\ell^1(n)$ into $\mathcal{B}(\h)$ for each $n\in \N.$ 
 
Let $\h_1,\ldots,\h_n$ be Hilbert spaces and $T_i$ 
be a contraction on 
$\h_i$ for $i=1,\ldots,n.$ 
Assume that the unit circle $\T$ is contained in $\sigma(T_i),$ the spectrum of $T_i,$ for $i=1,\ldots,n$. 
Denote 
$$\tilde{T_1}=T_1\otimes I^{\otimes (n-1)},
\tilde{T_2}=I\otimes T_2\otimes I^{\otimes (n-2)},\ldots ,
\tilde{T_n}=I^{\otimes (n-1)}\otimes T_n.$$
\begin{thm}
Suppose the operators $\tilde{T}_1,\ldots,\tilde{T}_n$ are defined as above. Then, the function 
$$f:\ell ^1(n)\to \mathcal B(\h_1 \otimes\cdots \otimes \h_n)$$
defined by 
\begin{align*}
	f(a_1,a_2,\ldots , a_n):=a_1\tilde{T_1}+ a_2\tilde{T_2} +\cdots + a_n \tilde{T_n}.
\end{align*}
is an isometry.
\end{thm}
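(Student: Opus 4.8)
The plan is to prove the two inequalities $\|f(a)\|\le\|a\|_{\ell^1(n)}$ and $\|f(a)\|\ge\|a\|_{\ell^1(n)}$ separately, where $a=(a_1,\dots,a_n)$ and $\|a\|_{\ell^1(n)}=\sum_{i=1}^n|a_i|$. The upper bound is immediate: since each $T_i$ is a contraction, its ampliation $\tilde T_i=I^{\otimes(i-1)}\otimes T_i\otimes I^{\otimes(n-i)}$ is again a contraction on $\h_1\otimes\cdots\otimes\h_n$, so the triangle inequality gives
\[
\|f(a)\|=\Big\|\sum_{i=1}^n a_i\tilde T_i\Big\|\le\sum_{i=1}^n|a_i|\,\|\tilde T_i\|\le\sum_{i=1}^n|a_i|=\|a\|_{\ell^1(n)}.
\]

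For the lower bound I would exploit the hypothesis $\T\subseteq\sigma(T_i)$ to manufacture approximate eigenvectors with prescribed unimodular eigenvalue. The key spectral input is that every $\lambda\in\T$ is an approximate eigenvalue of $T_i$: because $T_i$ is a contraction, $\sigma(T_i)\subseteq\overline{\D}$, so a point $\lambda$ with $|\lambda|=1$ lying in $\sigma(T_i)$ cannot be interior and hence lies on $\partial\sigma(T_i)$; since the boundary of the spectrum is contained in the approximate point spectrum, there is a sequence of unit vectors $x_i^{(m)}\in\h_i$ with $\|(T_i-\lambda)x_i^{(m)}\|\to 0$ as $m\to\infty$.

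Next I would align the phases. Fixing $a$, I choose $\lambda_i\in\T$ by $\lambda_i=\overline{a_i}/|a_i|$ when $a_i\neq0$ (and $\lambda_i$ arbitrary otherwise), so that $a_i\lambda_i=|a_i|$ for every $i$. Taking the corresponding approximate eigenvectors $x_i^{(m)}$ and setting $\xi_m:=x_1^{(m)}\otimes\cdots\otimes x_n^{(m)}$, a unit vector in $\h_1\otimes\cdots\otimes\h_n$, the operator $\tilde T_i$ acts only on the $i$-th tensor slot, whence $\|(\tilde T_i-\lambda_i)\xi_m\|=\|(T_i-\lambda_i)x_i^{(m)}\|\to 0$. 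Consequently
\[
\Big\|\Big(\sum_{i=1}^n a_i\tilde T_i\Big)\xi_m-\Big(\sum_{i=1}^n a_i\lambda_i\Big)\xi_m\Big\|\le\sum_{i=1}^n|a_i|\,\|(\tilde T_i-\lambda_i)\xi_m\|\to 0.
\]
Since $\sum_i a_i\lambda_i=\sum_i|a_i|=\|a\|_{\ell^1(n)}$ and $\|\xi_m\|=1$, letting $m\to\infty$ yields $\|f(a)\|\ge\|a\|_{\ell^1(n)}$.

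Combining the two inequalities gives $\|f(a)\|=\|a\|_{\ell^1(n)}$ for every $a$, so $f$ is an isometry (linearity being clear from the formula). The only genuinely substantive step is the spectral observation that points of $\T\cap\sigma(T_i)$ are approximate eigenvalues, together with the phase choice $\lambda_i=\overline{a_i}/|a_i|$ that forces all $n$ terms to add constructively; the tensor-slot computation and the two triangle inequalities are routine.
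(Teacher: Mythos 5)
Your proposal is correct and follows essentially the same route as the paper: both arguments rest on the observation that $\T\subseteq\sigma(T_i)\subseteq\overline{\D}$ forces every unimodular point into the boundary of the spectrum and hence into the approximate point spectrum, then align the phases $\lambda_i=e^{-i\arg a_i}$ and tensor the approximate eigenvectors to make all terms add constructively. The only cosmetic difference is that you extract the lower bound from the norm estimate $\|f(a)\xi_m-(\sum_i a_i\lambda_i)\xi_m\|\to 0$, while the paper passes through the matrix coefficient $\langle f(a)\xi_m,\xi_m\rangle\to\sum_i|a_i|$; both are valid and equally short.
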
 
 For $n=2,3,$ we show that all of these embeddings are completely isometric to the MIN structure.  In the end of this chapter, using these embeddings and Parrott's example in \cite{GMParrott}, we construct an operator space structure on $\ell^1(3)$ which is distinct from the MIN structure.

\chapter{Varopoulos Operators of Type I}
Let $\C[Z_1,\ldots,Z_n]$ denote 
the set of all polynomials in $n$ complex variables. 
For every contraction $T$ on a complex Hilbert space, 
the von-Neumann inequality \cite{vN} states that 
$\|p(T)\|\leq \|p\|_{\D,\infty}$ for every $p\in\mathbb{C}[Z]$. 
Ando \cite{ando} established an analogous inequality 
for any two commuting contractions 
$T_1,T_2,$ namely, $\|p(T_1,T_2)\|\leq \|p\|_{\D^2,\infty}$ 
for every $p\in\mathbb{C}[Z_1,Z_2]$. 
Varopoulos \cite{V1} constructed examples showing that the generalization of this inequality to three variables fails. He along with Kaijser also produced an explicit example of three commuting contractions $T_1,T_2,T_3$ and a polynomial $p$ with the property $\|p(T_1,T_2,T_3)\| > \|p\|_{\D^3,\infty}.$ Let $\|\boldsymbol T\|_{\infty}=\max\big\{\|T_1\|,\ldots,\|T_n\|\big\},$
\begin{equation} \label{C_k(n)}
C_k(n)=\sup\big\{\|p(\boldsymbol T)\|:\|p\|_{\D^n,\infty}\leq 1, p \mbox{~\rm is of degree at most~} k,\,\|\boldsymbol T\|_{\infty} \leq 1 \big\}.\\
\end{equation}
%
and 
\begin{equation} \label{C(n)}
C(n)= \lim_{k\to \infty} C_k(n). 
\end{equation}
In this notation, it follows from the von-Neumann inequality and Ando's theorem that $C(1), C(2) = 1.$ 

Also $C_2(3) >1,$ thanks to the example of Varopoulos and Kaijser \cite{V1} involving an (explicit) homogeneous polynomial of degree $2.$ 
Following this,  in the paper \cite{V2}, Varopoulos shows that the limit of the non-decreasing sequence $C_2(n)$ must be bounded above by  $2 K_G^\mathbb C,$ where $K_G^\mathbb C$ is the complex Grothendieck constant, the definition is given below.  He also showed that the lower bound for this limit is $K_G^\mathbb C.$ Thus he has proved
\begin{equation}\label{bound for C_2}
K_G^\mathbb C \leq \lim_{n\to \infty} C_2(n) \leq 2 K_G^\mathbb C.
\end{equation}
We show that $C_2(3) \geq 1.2$ by means of explicit examples. We were hoping to improve this  
inequality obtained earlier by Holbrook \cite{HJ} since our methods  appear to be somewhat more direct. In view of the known lower bound 
for $\lim_{n\to \infty}C_2(n)$ in \eqref{bound for C_2}, we hoped that 
the lower bound for $C_2(3)$ itself will be closer to $K_G^\mathbb C.$

We recall some of the details from the two papers \cite{V1, V2} of Varopoulos, which will be useful in what follows.  
Fix a Hilbert space $\mathbb{H}$ and 
a bilinear form $S$ on $\mathbb{H}$ with norm $1$. 
Let $e,f$ be two arbitrary but fixed vectors of length $1$ and set  $\mathcal{H}=\{e\}\oplus\mathbb{H}\oplus\{f\}.$ 
For any $x\in\mathbb{H}$, 
define $T_x:\mathcal{H}\to\mathcal{H}$ by the rule 
\begin{equation}\label{T(x)}
T_xf=x,~T_xy=S(x,y)e,~T_xe=0 \,\mbox{ for all }y\in\h
\end{equation}
and extend it linearly. 
It is then easily verified that 
for every $x,y\in\mathbb{H}$, $T_x$ and $T_y$ commute.
\begin{lem}\label{norm of T_x}
	For every $x\in\mathbb{H}$, $\|T_x\|=\|x\|$, 
	where the operator $T_x$ is defined according to \eqref{T(x)}.
\end{lem}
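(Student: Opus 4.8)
The plan is to establish the two inequalities $\|T_x\| \geq \|x\|$ and $\|T_x\| \leq \|x\|$ separately: the first by evaluating $T_x$ on a single well-chosen vector, and the second by a direct norm computation that exploits the orthogonality of the summands in $\mathcal{H} = \{e\} \oplus \mathbb{H} \oplus \{f\}$ together with the hypothesis that the bilinear form $S$ has norm $1$, which I read as the estimate $|S(x,y)| \leq \|x\|\,\|y\|$ for all $x,y \in \mathbb{H}$.

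For the lower bound I would simply note that $f$ is a unit vector and, by the defining rule \eqref{T(x)}, $T_x f = x$; hence $\|T_x\| \geq \|T_x f\|/\|f\| = \|x\|$. This yields half of the claim with no computation.

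For the upper bound I would write an arbitrary vector of $\mathcal{H}$ as $v = \alpha e + y + \beta f$ with $\alpha,\beta \in \mathbb{C}$ and $y \in \mathbb{H}$, and read off from \eqref{T(x)} that $T_x v = S(x,y)\, e + \beta\, x$, the term in $e$ coming from $T_x y$, the term $\beta x \in \mathbb{H}$ from $T_x f$, and the contribution of $\alpha e$ being annihilated since $T_x e = 0$. The crucial observation is that $S(x,y) e$ lies in the summand $\{e\}$ while $\beta x$ lies in the middle summand $\mathbb{H}$, so the two are orthogonal and $\|T_x v\|^2 = |S(x,y)|^2 + |\beta|^2 \|x\|^2$. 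Applying $|S(x,y)| \leq \|x\|\,\|y\|$ bounds this by $\|x\|^2\big(\|y\|^2 + |\beta|^2\big)$, and since $\|v\|^2 = |\alpha|^2 + \|y\|^2 + |\beta|^2$, every unit vector $v$ satisfies $\|y\|^2 + |\beta|^2 \leq 1$. Combining these gives $\|T_x v\|^2 \leq \|x\|^2$ for all unit $v$, whence $\|T_x\| \leq \|x\|$, completing the proof.

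I do not anticipate a serious obstacle: the argument is really a bounded-bilinear-form estimate presented as an operator-norm computation. The only points requiring care are the correct extraction of the action $T_x v = S(x,y) e + \beta x$ from the piecewise definition \eqref{T(x)} (in particular, recognizing that the $\alpha e$ component contributes nothing, so one may take $\alpha = 0$ in any extremal vector) and the use of orthogonality of the $\{e\}$ and $\mathbb{H}$ summands to split the norm additively. If desired, one could track the extremal case to record when equality $\|T_x\| = \|x\|$ is attained, but this is not needed for the statement.
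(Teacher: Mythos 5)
Your proof is correct and follows essentially the same route as the paper: both compute $\|T_x h\|^2 = |S(x,P_{\mathbb H}h)|^2 + |\beta|^2\|x\|^2$ for $h=\alpha e + P_{\mathbb H}h+\beta f$ and bound it by $\|x\|^2\|h\|^2$ using $|S(x,y)|\le\|x\|\,\|y\|$. The only difference is that you spell out the lower bound via $T_x f = x$, which the paper merely asserts as already known.
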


\begin{proof}
For $h\in\mathcal{H}$ and $\alpha,\beta\in\mathbb{C},$ 
we have $h=\alpha e+P_{\mathbb{H}}h+\beta f,$ 
where $P_{\mathbb{H}}:\mathcal{H}\to\mathcal{H}$ 
is the orthogonal projection on $\mathbb{H}$.
\[\langle T_x^*T_x h,h \rangle = 
|S(x,P_{\mathbb{H}}(h))|^2+|\beta|^2\|x\|^2\leq (\|P_{\mathbb{H}}h\|^2+|\beta|^2)\|x\|^2\]
therefore 
$\langle T_x^*T_x h,h \rangle \leq \|h\|^2\|x\|^2.$
Thus $\|T_x\|\leq \|x\|$. 
We already know that $\|T_x\|\geq \|x\|$ and 
hence $\|T_x\|=\|x\|.$ 
\end{proof}

\begin{defn}[Grothendieck Constant]
	Suppose $A:=\big ( \!\! \big (a_{jk}\big )\!\!\big )_{n\times n}$ is a complex(real) array satisfying

	\begin{equation}\label{hypothesis}
		\left|\sum_{j,k=1}^{n}a_{jk}s_jt_k \right|\leq 
		\max\left\{\left| s_j\right|\left| t_k\right|:1\leq j,k\leq n\right\},
	\end{equation}
	where $s_j,$ $t_k$ are complex(real) numbers. 
	Then there exists $K>0$ such that for any choice of sequence of vectors 
	$(x_j)_1^n$, $(y_k)_1^n$ in a complex(real) Hilbert space $\mathbb{H}$, 
	we have
	\begin{equation}\label{conclusion}
		\left|\sum_{j,k=1}^na_{jk}\langle x_j,y_k \rangle\right| 
		\leq K \max\left\{\|x_j\| \| y_k\|:1\leq j,k\leq n\right\}.
	\end{equation}
	The least constant $K$ satisfying inequality \eqref{conclusion} 
	is denoted by $K_G$ and called Grothendieck constant. 
	The constant $K_G$ is a universal constant independent of $n$ 
	and the matrices satisfying the hypothesis \eqref{hypothesis}. 
	Note that the definition of $K_G$ depends only on the underlying     field. 
	When it is the field $\C$ of complex numbers,   
	this constant is called the 
	complex Grothendieck constant and is denoted by $K_G^\mathbb C.$
	\end{defn}
Let $p_{\!_A}$ be the polynomial  $\sum_{i,j=1}^n a_{ij} z_i w_j.$  The inequality \eqref{hypothesis} is equivalent to saying that $\|p_{_{\!A}}\|_{\mathbb D^{2n},\infty} \leq 1.$ This follows from the equality $\|p_{_{\!A}}\|_{\mathbb D^{2n},\infty} = \|A\|_{\ell^\infty(n) \to \ell^1(n)}.$   Let $p_{_{\!A,\vartriangle}}$ be the restriction of $p_{\!_A}$ to the diagonal set 
$$\vartriangle = \big \{(z_1,\ldots, z_n, z_1,\ldots, z_n): |z_i| < 1, 1\leq i \leq n \big \},$$ which is the polydisc $\mathbb D^n.$
Thus $\|p_{\!_{A,\vartriangle}}\|_{\mathbb D^n,\infty}$ is also at most $1.$  
If $A$ is symmetric, then the second derivative $D^2 p_{_{\!A, \vartriangle}}(0)$ is $2A.$ It is therefore clear that 
\begin{equation} \label{supnotinfty1}
\|p_{_{\!{2A, \vartriangle}}}\|_{\mathbb D^n,\infty} \leq  2\|A\|_{\ell^\infty(n) \to \ell^1(n)}.
\end{equation}
We find examples where \eqref{supnotinfty1} is strict. Indeed, for this particular example, we show that 
$$\frac{\|A\|_{\ell^\infty(n) \to \ell^1(n)}}{\|p_{_{\!{A,\vartriangle}}}\|_{\mathbb D^n,\infty}} \geq 1.2$$
This observation will be important for us in what follows. 
As pointed out earlier, the following theorem is due to Varopoulos.

\begin{thm}[\cite{V2}]\label{lower bound for homogeneous degree 2}
$\lim_{n\to \infty} C(n) \geq K_G^\mathbb C.$ 
\end{thm}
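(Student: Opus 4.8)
The plan is to produce, for each $\epsilon>0$, a commuting tuple of contractions together with a degree-$2$ polynomial of supremum norm at most one on the polydisc whose functional calculus has operator norm exceeding $K_G^{\mathbb C}-\epsilon$; since $C_k(n)$ increases with $k$ one has $C(n)\geq C_2(n)$, and $C_2$ is non-decreasing in $n$, so this will force $\lim_{n}C(n)\geq K_G^{\mathbb C}$.

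First I would extract the near-extremal Grothendieck data from the minimality of $K_G^{\mathbb C}$. Since $K_G^{\mathbb C}$ is the \emph{least} constant for which \eqref{conclusion} follows from \eqref{hypothesis}, given $\epsilon>0$ there exist an integer $N$, an array $A=\big(\!\!\big(a_{jk}\big)\!\!\big)_{N\times N}$ satisfying \eqref{hypothesis}, and vectors $x_1,\dots,x_N,y_1,\dots,y_N$ in a Hilbert space $\mathbb H$, which after rescaling may be taken of norm at most one, with $\big|\sum_{j,k}a_{jk}\langle x_j,y_k\rangle\big|>K_G^{\mathbb C}-\epsilon$. By the discussion preceding \eqref{supnotinfty1}, condition \eqref{hypothesis} says precisely that the degree-$2$ polynomial $p_{\!_A}(z,w)=\sum_{j,k}a_{jk}z_jw_k$ in the $2N$ variables $z_1,\dots,z_N,w_1,\dots,w_N$ satisfies $\|p_{\!_A}\|_{\mathbb D^{2N},\infty}\leq 1$.

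Next I would feed this data into the Varopoulos construction on $\mathcal H=\{e\}\oplus\mathbb H\oplus\{f\}$. Fixing an orthonormal basis, let $S$ be the associated bilinear form $S(u,v)=\sum_i u_iv_i$, which has norm one and satisfies $S(u,\bar v)=\langle u,v\rangle$, where $\bar v$ is the coordinatewise conjugate. Define the operators $U_j=T_{x_j}$ and $V_k=T_{\bar y_k}$ by \eqref{T(x)}. By Lemma \ref{norm of T_x} they are contractions, and any two operators of the form $T_z$ commute, so $(U_1,\dots,U_N,V_1,\dots,V_N)$ is a commuting $2N$-tuple of contractions. A direct computation from \eqref{T(x)} shows that $U_jV_k$ annihilates $e$ and $\mathbb H$ and sends $f\mapsto S(x_j,\bar y_k)\,e=\langle x_j,y_k\rangle\,e$; hence
\[
p_{\!_A}(\boldsymbol U,\boldsymbol V)=\sum_{j,k}a_{jk}U_jV_k
\]
is the rank-one operator $f\mapsto\big(\sum_{j,k}a_{jk}\langle x_j,y_k\rangle\big)e$, whose norm is exactly $\big|\sum_{j,k}a_{jk}\langle x_j,y_k\rangle\big|>K_G^{\mathbb C}-\epsilon$ since $e,f$ are unit vectors.

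Finally, the definition \eqref{C_k(n)} gives $C_2(2N)\geq\|p_{\!_A}(\boldsymbol U,\boldsymbol V)\|>K_G^{\mathbb C}-\epsilon$, and then $\lim_n C(n)\geq\lim_n C_2(n)\geq K_G^{\mathbb C}-\epsilon$; letting $\epsilon\to 0$ completes the argument. The one genuinely delicate point is the complex bilinear-versus-sesquilinear bookkeeping: the construction naturally yields the bilinear pairing $S(x_j,\bar y_k)$, and the device of replacing $y_k$ by its conjugate $\bar y_k$---norm-preserving, hence harmless for contractivity---is exactly what converts this into the inner products $\langle x_j,y_k\rangle$ of Grothendieck's inequality \eqref{conclusion}.
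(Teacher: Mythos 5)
Your argument is correct and is essentially the paper's own proof: both extract near-extremal Grothendieck data from the minimality of $K_G^{\mathbb C}$, feed it into the Varopoulos construction \eqref{T(x)} with the symmetric bilinear form $S(u,v)=\sum_i u_iv_i$ (conjugating the $y_k$ to turn $S$ into the inner product), and read off a commuting $2N$-tuple of contractions and a degree-$2$ polynomial of supremum norm at most $1$ whose functional calculus exceeds $K_G^{\mathbb C}-\epsilon$. The only cosmetic difference is that you evaluate the bilinear polynomial $p_{\!_A}(z,w)$ directly on the two families $T_{x_j},T_{\bar y_k}$, whereas the paper first symmetrizes the coefficient matrix to $\tilde A=\frac12\bigl(\begin{smallmatrix}0&A\\ A^t&0\end{smallmatrix}\bigr)$ and uses the single family $T_{\tilde x_j}$; both routes are valid and equally long.
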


\begin{proof}
It is a well known that $K_{G}^{\C}>1$. 
Let $\epsilon >0$ be a fixed real number 
such that $K_{G}^{\C}-\epsilon > 1$. 
Since $K_{G}^{\C}$ is the least constant in the inequality \eqref{conclusion}, 
therefore there exists a matrix $A:=\big ( \!\!\big (a_{jk}\big )\!\!\big )_{n\times n}$ satisfying the inequality \eqref{hypothesis} and unit vectors $x_i, y_i,$ in $\ell^2(k),$  $1\leq i \leq n,$ for some $k \in \mathbb N$ such that  
\[\sum_{j,k=1}^na_{jk}\langle x_j,y_k \rangle >(K_{G}^{\C}-\epsilon)\]
Let
	\[\tilde{A}=\big ( \!\!\big (\tilde{a}_{jk}\big )\!\!\big )_{2n\times 2n}
	:=\frac{1}{2}
	\left( 
	{\begin{array}{cc}
	   0 & A \\
 	  A^t & 0 \\
 \end{array}
  }
   \right).\]
It is easy to see that 
$\tilde{A}$ satisfies inequality \eqref{hypothesis}. 
Take $\tilde{x}_1=x_1,~\tilde{x}_2=x_2,...,~\tilde{x}_n=x_n,
~\tilde{x}_{n+1}=\bar{y}_1,...,~\tilde{x}_{2n}=\bar{y}_n$ and 
consider the bilinear form $S$ on $\ell^2(k)$ defined as follows: 
\[S(x,y)=\sum_{j=1}^{k}x_jy_j,\]
where $x=(x_1,\ldots,x_k)$ and $y=(y_1,\ldots,y_k)$. 
The operator $\tilde{A}:\ell^{\infty}(2n)\rightarrow \ell^1(2n)$ is of norm at most $1$ 
and
\[\sum_{j,k=1}^{2n}\tilde{a}_{jk}S(\tilde{x}_j,\tilde{x}_k)
=\frac{1}{2}\left\{\sum_{j,k=1}^{n}a_{jk}\langle x_j,y_k\rangle 
+\sum_{j,k=1}^{n}a_{kj}\langle \bar{y_j},\bar{x_k}\rangle\right\},\]
which implies
\begin{equation}\label{new conclusion}
	\sum_{j,k=1}^{2n}\tilde{a}_{jk}S(\tilde{x}_j,\tilde{x}_k)
	=\sum_{j,k=1}^{n}a_{jk}\langle x_j,y_k\rangle>K_{G}^{\C}-\epsilon.
\end{equation}
The polynomial $p(z_1,\ldots,z_{2n})=\sum_{j,k=1}^{2n}\tilde{a}_{jk}z_jz_k$ 
is a homogeneous polynomial of degree two. 
It is clear that $\|p\|_{\D^{2n},\infty}\leq 1$. 
Consider the operators (as defined in \eqref{T(x)}),
\begin{equation}\label{first Varopoulos operator}
	T_{\tilde{x}_j}=
	\left(
	\begin{array}{ccc}
		0 & \tilde{x}_j & 0\\
		0 & 0 & \tilde{x}_j^t\\
		0 & 0 & 0\\
	\end{array}
	\right)
\end{equation}
for $j=1,\ldots,2n$. 
Then $\|p(T_{\tilde{x}_1},\ldots,T_{\tilde{x}_{2n}})\|>K_{G}^{\C}-\epsilon$ 
is a direct implication of the inequality \eqref{new conclusion}.
\end{proof}

Let $\h$ be a separable Hilbert space and 
$\{e_j\}_{j\in\mathbb{N}}$ be a set of orthonormal basis for $\h$. 
For any $x\in\h$, 
define $x^{\sharp}:\h\to\C$ by $x^{\sharp}(y)=\sum_{j}x_jy_j,$ 
where $x=\sum x_je_j$ and $y=\sum y_je_j$. 
For $x,y\in\h$, we set $\left[x^{\sharp},y\right]=x^{\sharp}(y)$. 
From the definition it can be seen that 
$\left[x^{\sharp},y\right]=\left[y^{\sharp},x\right]$. 
Let $\h^{\sharp}:=\left\{x^{\sharp}:x\in\h\right\}.$ 
Let $\h^{\sharp}$ be equipped with the operator norm.
Since the map $\phi:\h\to\h^{\sharp}$ defined by 
$\phi(x)=x^{\sharp}$ is a linear onto isometry,  
therefore $\h^{\sharp}$ is linearly (as opposed to the usual anti-linear identification) isometrically isomorphic to $\h$.

Let $\h_1$ and $\h_2$ be two separable Hilbert spaces  
and  $\{e_j\}_{j\in\mathbb{N}}$,
$\{\tilde{e}_j\}_{j\in\mathbb{N}}$ 
be orthonormal bases 
of $\h_1$ and $\h_2$ respectively. 
Let $\{f_j\}_{j\in\mathbb{N}}$ and 
$\{\tilde{f}_j\}_{j\in\mathbb{N}}$ 
be the corresponding dual basis 
for $\h_1$ and $\h_2$ respectively. 
For a linear map $T:\h_1 \to \h_2$, 
define $T^{\sharp}:\h_2 \to \h_1$ by
\[T^{\sharp}\left(\tilde{e}_k\right)=\sum_{j}\tilde{f}_k(Te_j)e_j\]
and extend it linearly. We note that if $T$ is bounded then so is the operator $T^\sharp.$
We have the following lemma:

\begin{lem}\label{linear identification for H}
	Let $\h_1,\h_2$ and $\h_3$ be separable Hilbert spaces  
	and $\{e_{j}^{(p)}\}_{j\in\mathbb{N}}$	be an orthonormal basis 
	for $\h_p$ for $p=1,2,3$. 
	Let $\{f_{j}^{(p)}\}_{j\in\mathbb{N}}$ 
	be the corresponding dual basis for $\h_p$ for $p=1,2,3$.
	If $T:\h_1 \to \h_2$ and $S:\h_2 \to \h_3$
	are two bounded operators,  
	then $\left(S\circ T\right)^{\sharp}=T^{\sharp}\circ S^{\sharp}.$
\end{lem}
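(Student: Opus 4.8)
The plan is to reduce the identity to a matrix computation and then to a check on basis vectors. First I would record the matrix description of the sharp operation: for a bounded $T:\h_1\to\h_2$, writing $t_{kj}:=f_k^{(2)}(Te_j^{(1)})$ for the entries of $T$ in the given bases, the defining formula reads $T^\sharp(e_k^{(2)})=\sum_j t_{kj}e_j^{(1)}$. In other words $T^\sharp$ is the operator whose matrix is the transpose (not the conjugate transpose, since the pairing $[x^\sharp,y]$ used to set up the dual bases is bilinear rather than sesquilinear) of the matrix of $T$. The lemma is then exactly the statement that transposition reverses the order of a composition, and the whole proof amounts to verifying this with due care about convergence in the infinite-dimensional setting.

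Since compositions of bounded operators are bounded and, as already noted, the sharp of a bounded operator is bounded, both $(S\circ T)^\sharp$ and $T^\sharp\circ S^\sharp$ are bounded operators from $\h_3$ to $\h_1$. It therefore suffices to show they agree on the orthonormal basis $\{e_l^{(3)}\}$, whose closed linear span is $\h_3$. I would first expand $Te_j^{(1)}=\sum_k t_{kj}e_k^{(2)}$ and $Se_k^{(2)}=\sum_l s_{lk}e_l^{(3)}$, where $s_{lk}:=f_l^{(3)}(Se_k^{(2)})$, and combine them to read off the entries of $S\circ T$ as $f_l^{(3)}\big((S\circ T)e_j^{(1)}\big)=\sum_k s_{lk}t_{kj}$. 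Applying the definition of sharp to $S\circ T$ then gives $(S\circ T)^\sharp(e_l^{(3)})=\sum_j\big(\sum_k s_{lk}t_{kj}\big)e_j^{(1)}$.

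For the other side, I would compute $S^\sharp(e_l^{(3)})=\sum_k s_{lk}e_k^{(2)}$, an element of $\h_2$, and then apply $T^\sharp$. Here the one genuine point is the passage $T^\sharp\big(\sum_k s_{lk}e_k^{(2)}\big)=\sum_k s_{lk}T^\sharp(e_k^{(2)})$: this is legitimate precisely because $T^\sharp$ is bounded, hence continuous, so it commutes with the norm-convergent sum defining $S^\sharp(e_l^{(3)})$. Substituting $T^\sharp(e_k^{(2)})=\sum_j t_{kj}e_j^{(1)}$ yields $T^\sharp\circ S^\sharp(e_l^{(3)})=\sum_j\big(\sum_k s_{lk}t_{kj}\big)e_j^{(1)}$, identical to the expression found above. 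Equality on the basis together with boundedness of both operators completes the proof.

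The computation itself is routine; the only real obstacle is bookkeeping, namely keeping straight which of the three dual bases is invoked at each application of sharp, and making sure the interchange of the two summations is justified. This is why I would lean on the boundedness (equivalently, continuity) of $T^\sharp$ to pass it through the convergent series, rather than on any purely formal manipulation of the resulting double sum.
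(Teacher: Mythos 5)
Your proposal is correct and follows essentially the same route as the paper: both check the identity on the basis $\{e_l^{(3)}\}$, expand via the dual-basis coefficients, and rearrange the resulting double sum (your matrix-transpose bookkeeping and appeal to continuity of $T^\sharp$ is just a slightly more explicit rendering of the paper's interchange of summations). No gap.
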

\begin{proof}
It is enough to check the equality 
$\left(S\circ T\right)^{\sharp}=T^{\sharp}\circ S^{\sharp}$ 
on the basis elements 
$\{e_{k}^{(3)}\}_{k\in\mathbb{N}}$. 
For any $k\in\mathbb{N}$,
\begin{align*}
	\left(S\circ T\right)^{\sharp}\big(e_{k}^{(3)}\big)
	&=\sum_{j} f_{k}^{(3)}\big(S\big(T(e_{j}^{(1)}\big)\big)e_{j}^{(1)}.\\
	\big(T^{\sharp}\circ S^{\sharp}\big)\big(e_{k}^{(3)}\big)
	&=T^{\sharp}\big(\sum_{j}f_{k}^{(3)}\big(Se_{j}^{(2)}\big)e_{j}^{(2)}\big)\\
	&=\sum_{j}f_{k}^{(3)}\big(Se_{j}^{(2)}\big)
	\big(\sum_{l}f_{j}^{(2)}(Te_{l}^{(1)})e_{l}^{(1)}\big).
\end{align*}
Thus
\begin{align*}
	\big(T^{\sharp}\circ S^{\sharp}\big)\big(e_{k}^{(3)}\big)
	&=\sum_{l}f_{k}^{(3)}
	\big(S\big(\sum_{j}f_{j}^{(2)}(Te_{l}^{(1)})e_{j}^{(2)}\big)\big)e_{l}^{(1)}\\
	&=\sum_{l}f_{k}^{(3)}\big((S\circ T)(e_{l}^{(1)})\big)e_{l}^{(1)}.
\end{align*}
Hence
$\big(S\circ T\big)^{\sharp}\big(e_{k}^{(3)}\big)
=\big(T^{\sharp}\circ S^{\sharp}\big)\big(e_{k}^{(3)}\big).$
\end{proof}

The form of the operator appearing in \eqref{first Varopoulos operator} 
and the operators used in the addendum of \cite{V1}, 
suggest the definition of the following two classes:
\begin{defn}[Varopoulos Operator of Type I (\sf{V\,I})]
	Let $\mathbb{H}$ be a separable Hilbert space. 
	For $x,y\in\mathbb{H}$, 
	define $T_{x,y}:\C \oplus \h \oplus \C \to \C \oplus \h \oplus \C$ by
	\[T_{x,y}=
	\left(
	\begin{array}{ccc}
		0 & x^{\sharp} & 0\\
		0 & 0 & y\\
		0 & 0 & 0\\
	\end{array}
	\right).\]
	The operator $T_{x,y}$ will be called Varopoulos operator of type I 
	corresponding to the pair of vectors $x,y$. 
	If $x=y$ then $T_{xy}$ will simply be denoted by $T_x$.
\end{defn}
\begin{defn}[Varopoulos Operator of Type II and of order $k$ (\sf{V\,II  of order k})]
	Let $\mathbb{H}$ be a separable Hilbert space. 
	For $X\in\mathcal{B}(\mathbb{H})$, let
	\[T_X=
	\left(
	\begin{array}{ccccc}
		0 & X & 0 & \cdots &0\\
		0 & 0 & X & \cdots & 0\\
		\texorpdfstring{\vdots}{TEXT} & \vdots & \vdots & \ddots & \vdots\\
		0 & 0 & 0 & \cdots & X\\
		0 & 0 & 0 & \cdots & 0\\
	\end{array}
	\right)\]
	be the operator in $\mathcal B(\mathbb H\otimes C^{k+1}).$
In analogy with the work of Varopoulos \cite{V2}, operators of the form $T_X,$ $X\in \mathcal B(\mathbb H),$ are called 	Varopoulos operator of type II and of order $k.$ 
\end{defn}


In the following section, we show that $\|p(T_1,\ldots , T_n)\| \leq \|p\|_{\mathbb D^n, \infty}$ for any $n$ commuting contractions of the form
\begin{equation}\label{3x3class}
\Big \{\Big ( \,\,\Big ( \begin{smallmatrix}
	\w_1 & \alpha_1 & 0\\
	0 & \w_1 & \beta_1\\
	0 & 0 & \w_1\\
\end{smallmatrix} \Big ), \ldots, \Big ( \begin{smallmatrix}
	\w_n & \alpha_n & 0\\
	0 & \w_n & \beta_n\\
	0 & 0 & \w_n\\
\end{smallmatrix} \Big ) \,\,\Big ): 
\alpha_i \beta_j = \alpha_j \beta_i, 1\leq i,j \leq n,\, \w:=(\w_1, \ldots , \w_n)\in \mathbb D^n \Big\},
\end{equation}
after assuming that $|\alpha_i| = |\beta_i|,$ $1\leq i\leq n.$ 
This is interesting considering that the von-Neumann inequality is valid for any commuting $n$ - tuple of $2\times 2$ contractions \cite{GMPati,Agler} and fails for $4\times 4$ contractions \cite{HJ}. 
 
Secondly, we show that $\lim_{n\to \infty} C_2(n) \leq \frac{3\sqrt{3}}{4} K_G^\mathbb C$ giving a considerable improvement on the upper bound previously obtained in \cite{V2}. 

Finally, we investigate in some detail, the contractivity of the homomorphisms 
$\rho_{x,y}$ induced by the Varopoulos operators of type I ({\sf{V\,I}}). 
In particular, for a pair of commuting contractions $T_{x_1}, T_{x_2},$ $x_1,x_2 \in \mathbb C,$ of type {\sf{V\,I}},  and any holomorphic function $f:\mathbb D^2 \to \mathbb D,$ $f(0,0)=0,$ applying the von-Neumann inequality, we must have 
$$		\sup\limits_{x_1,x_2\in\D}\left\|\mathscr{T} \left( \frac{\partial f}{\partial z_1}(0)x_1 
		+ \frac{\partial f}{\partial z_2}(0)x_2,\frac{1}{2}
		\sum_{i,j=1}^{2}\frac{\partial ^2 f}
		{\partial z_i \partial z_j}(0) x_i x_j\right)\right\|
		\leq 1,
$$ 
where $\mathscr{T}(\omega, \alpha) = \Big (\begin{array}{cc} \omega & \alpha\\
0&\omega \end{array}\Big ).$ The solution to this problem (indeed a generalization of it), which we obtain in Chapter \ref{Extremal}, therefore gives a necessary condition for the Carath\'{e}odory- Fej\'{e}r interpolation problem for polynomials of degree $2.$ Unfortunately, while the extremal problem can be stated for any $n$ in $\mathbb N,$ not just for $2,$ its solution depends on the commutant lifting theorem.	
\section{The von-Neumann Inequality}
The von-Neumann inequality  for a commuting  $n$ - tuple of $3\times 3$ matrices remains open for $n\geq 3.$ 
In this section, we establish this inequality for any $n$ - tuple of commuting contractions of the form prescribed in \eqref{3x3class} with 
the additional assumption that $|\alpha_i| = |\beta_i|,$ $1\leq i \leq n.$  

Let $\h$ be a separable Hilbert space. 
Given a set of $n$ operators $A_1,\ldots,A_n$ in $\mathcal{B}(\h),$ 
define the operator 
\[\mathscr{T}(A_1,\ldots,A_n):=\left(
{\begin{array}{ccccc}
	A_1 & A_2 & A_3 &\cdots & A_n \\
	0 & A_1 & A_2 & \cdots & A_{n-1}\\ 
	0 & 0 & A_1 & \cdots & A_{n-2}\\
	\vdots & \vdots & \vdots & \ddots & \vdots \\
	0 & 0 & 0 & \cdots & A_1 \\
\end{array} } 
\right),\]
which is in $\mathcal B(\mathbb H\otimes \mathbb C^n).$

The condition for the contractivity of any  $3\times 3$ matrix of the form   
\begin{equation}\label{Mod are same}
T=
\left(
\begin{array}{ccc}
	\w & \alpha & 0\\
	0 & \w & \beta\\
	0 & 0 & \w\\
\end{array}
\right),\, \w \in \mathbb D,\,\alpha \mbox{\rm~and~}\beta \mbox{\rm~in~} \mathbb C. 
\end{equation}
is given in the following lemma. It will be used repeatedly in what follows.  
\begin{lem} \label{OPnorm}
	The operator $T$ defined in \eqref{Mod are same} 
	is a contraction if and only if 
	\[|\alpha|\leq 1-|\w|^2,~ |\beta|\leq 1-|\w|^2\]
	and 
	\[|\alpha\beta\w|^2
	\leq \left(\big(1-|\w|^2\big)^2-|\alpha|^2\right)
	\left(\big(1-|\w|^2\big)^2-|\beta|^2\right).\]
	In particular, if $|\alpha| = |\beta|,$ then $T$ is contractive if and only if $|\alpha|\leq (1-|\w|)\sqrt{1+|\w|}.$ 
\end{lem}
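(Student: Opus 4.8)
The plan is to reduce the operator-norm condition to the positive semidefiniteness of a single $3\times 3$ Hermitian matrix and then read off the three inequalities from its principal minors. Since $T$ is a contraction if and only if $I-T^*T\geq 0$, the first step is to compute
\[
I-T^*T=\begin{pmatrix}
1-|\omega|^2 & -\bar\omega\alpha & 0\\
-\bar\alpha\omega & 1-|\omega|^2-|\alpha|^2 & -\bar\omega\beta\\
0 & -\bar\beta\omega & 1-|\omega|^2-|\beta|^2
\end{pmatrix}=:M,
\]
a tridiagonal Hermitian matrix. Writing $a=1-|\omega|^2$, the crucial bookkeeping device throughout will be the identity $a+|\omega|^2=1$, which forces several cross terms to collapse.

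Next I would apply the determinantal criterion for positive semidefiniteness. The $(1,1)$ entry $a$ is strictly positive, so the leading $1\times1$ minor is harmless. The leading $2\times2$ minor is $a(a-|\alpha|^2)-|\omega|^2|\alpha|^2=a^2-|\alpha|^2(a+|\omega|^2)=a^2-|\alpha|^2$, whose nonnegativity is exactly the first stated inequality $|\alpha|\leq 1-|\omega|^2$. Expanding $\det M$ and simplifying with $a+|\omega|^2=1$ I expect the clean identity
\[
a\,\det M=\bigl(a^2-|\alpha|^2\bigr)\bigl(a^2-|\beta|^2\bigr)-|\alpha\beta\omega|^2 .
\]
Since $a>0$, this shows $\det M\geq 0$ is equivalent to the third stated inequality. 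The second inequality $|\beta|\leq 1-|\omega|^2$ I would obtain by symmetry: because $\|T\|=\|T^*\|$ and conjugating $T^*$ by the order-reversing permutation produces a matrix of the same form with $\alpha$ and $\beta$ interchanged (and $\omega$ replaced by $\bar\omega$, of equal modulus), the role of the first inequality for that matrix is played by $|\beta|\leq a$. Equivalently, one reads $a^2-|\beta|^2$ off the trailing $2\times2$ minor of $I-TT^*$.

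It then remains to assemble these into the full equivalence. When $|\alpha|<a$ strictly, the leading $2\times2$ block of $M$ is positive definite, so by the Schur complement $M\geq 0$ if and only if $\det M\geq 0$; thus the first and third inequalities already force the second, and positivity is characterized precisely by the three displayed conditions. I expect the single genuinely delicate point to be the boundary case $|\alpha|=a$, where the leading $2\times2$ block is only semidefinite and the naive Schur argument breaks down; this I would settle either by a short continuity argument, perturbing $\alpha$ to make the inequality strict, or by directly checking the three diagonal entries and the remaining $2\times2$ principal minor. This boundary bookkeeping is the main obstacle, the rest being routine algebra driven by $a+|\omega|^2=1$.

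Finally, for the ``in particular'' statement set $|\alpha|=|\beta|=t$. The first two inequalities coincide and read $t\leq a$, while the third becomes $(a^2-t^2)^2\geq t^4|\omega|^2$. Taking square roots, using $a^2-t^2\geq 0$, gives $a^2-t^2\geq t^2|\omega|$, that is $t^2(1+|\omega|)\leq a^2=(1-|\omega|)^2(1+|\omega|)^2$, and dividing by $1+|\omega|>0$ yields $t\leq(1-|\omega|)\sqrt{1+|\omega|}$. Since $\sqrt{1+|\omega|}\leq 1+|\omega|$, this bound is at most $a=1-|\omega|^2$, so it already subsumes $t\leq a$; hence the three conditions collapse to the single inequality $|\alpha|\leq(1-|\omega|)\sqrt{1+|\omega|}$.
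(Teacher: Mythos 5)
Your proposal is correct, and it takes a genuinely different route from the paper. The paper does not touch $I-T^*T$ at all: it first extracts $|\alpha|,|\beta|\leq 1-|\w|^2$ from the two $2\times 2$ corner blocks of $T$, then invokes Parrott's completion theorem to parametrize all entries $a$ making $\left(\begin{smallmatrix}\w&\alpha&a\\0&\w&\beta\\0&0&\w\end{smallmatrix}\right)$ contractive, and observes that $T$ is a contraction precisely when $a=0$ is an admissible choice; forcing $a=0$ in the Parrott formula pins down the contraction $V$ and the condition $|V|\leq 1$ is exactly the third inequality. Your computation of $I-T^*T$, the identity $a\det M=(a^2-|\alpha|^2)(a^2-|\beta|^2)-|\alpha\beta\w|^2$, and the Schur-complement reduction are all correct (I checked the algebra), as is the derivation of the $|\alpha|=|\beta|$ case. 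What each approach buys: yours is elementary and self-contained, needing nothing beyond positive semidefiniteness criteria, and it makes the determinantal structure of the condition transparent; the paper's approach slots into the completion machinery (Parrott, Nehari) that drives the rest of the thesis and is what the subsequent Remark exploits. Note that the degenerate boundary case you flag ($|\alpha|=1-|\w|^2$, where the leading block is only semidefinite) is a real issue for your argument but your continuity fix works cleanly, since at that boundary the third inequality forces $\alpha\beta\w=0$ and scaling $\alpha$ down preserves all three conditions; the paper's proof has the mirror-image defect (division by zero in the formula for $V$) which it passes over silently, so on this point your write-up is actually the more careful of the two.
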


\begin{proof} Suppose $T$ is contraction.   
Then $\mathscr{T}(\w,\alpha):= \Big( \begin{smallmatrix} \omega & \alpha\\ 0 & \omega \end{smallmatrix} \Big )$ and $\mathscr{T}(\w,\beta):= \Big( \begin{smallmatrix} \omega & \beta\\ 0 & \omega \end{smallmatrix} \Big )$ must be contractions.  
Hence, we have 
$|\alpha|\leq 1-|\w|^2$ and $|\beta|\leq 1-|\w|^2.$  
By Parrott's theorem \cite{SP}, 
there exists $a\in \C$ such that the operator $T_a$ is a contraction, where 
\[T_a=\left(
\begin{array}{ccc}
	\w & \alpha & a\\
	0 & \w & \beta \\
	0 & 0 & \w\\
\end{array}
\right).\]
Every possible choice of $a$ in $\mathbb C,$ ensuring contractivity of the operator $T_a$ is given by  
\[a=(I-ZZ^*)^{1/2}V(I-Y^*Y)^{1/2}-ZS^*Y,\]
where $V:\C\to \C$ is an arbitrary contraction, 
$S=\mathscr{T}(0,\w)$, $R=(\beta,\w)^{\rm t}$ 
and $Q=(\w,\alpha)$. 
The operators $Y$ and $Z$ are explicitly determined by the formulae
$R=(I-SS^*)^{1/2}Y$ and $Q=Z(I-S^*S)^{1/2}$.
The reader is referred to (cf. \cite[Chapter 12]{NY}) 
for more information on the Parrott's theorem and related topics.
Thus  
\[Y=
\left(
	\frac{\beta}{(1-|\w|^2)^{1/2}}, \w\\
\right)^{\rm t}
\mbox{ and } Z=\left(\w,\frac{\alpha}{(1-|\w|^2)^{1/2}}\right).\]
Therefore 
\begin{equation}\label{Choices for a}
	a=\left((1-|\w|^2)-\frac{|\alpha|^2}{1-|\w|^2}\right)^{1/2}V\left((1-|\w|^2)-\frac{|\beta|^2}{1-|\w|^2}\right)^{1/2}
	-\frac{\overline{\w}\beta\alpha}{1-|\w|^2}.
\end{equation}
Since $T$ is a contraction, it follows that 
$a=0$ is a valid choice in \eqref{Choices for a} 
for some contraction $V$. This forces  
\[V=\frac{\overline{\w}\beta\alpha}
{1-|\w|^2}\Big(\big(1-|\w|^2\big)-\frac{|\alpha|^2}{1-|\w|^2}\Big)^{-1/2}\Big(\big(1-|\w|^2\big)-\frac{|\alpha|^2}{1-|\w|^2}\Big)^{-1/2}\]
to be of absolute value at most 1. 
Thus we have 
\[\frac{|\w|^2|\alpha|^2 |\beta|^2}{\big(1-|\w|^2\big)^2}
\leq \Big(\big(1-|\w|^2\big)-\frac{|\alpha|^2}{1-|\w|^2}\Big)
\Big(\big(1-|\w|^2\big)-\frac{|\beta|^2}{1-|\w|^2}\Big).\]
Hence we get 
\begin{align*}
	|\alpha\beta\w|^2
	\leq \left(\big(1-|\w|^2\big)^2-|\alpha|^2\right)
	\left(\big(1-|\w|^2\big)^2-|\beta|^2\right).
\end{align*}
All the steps in the proof given above are reversible. Therefore, 
the converse statement is valid as well.

The condition for contractivity assuming $|\alpha| = |\beta|$ is easily seen to be $$|\alpha|\leq (1-|\w|)\sqrt{1+|\w|}.$$ 
\end{proof}

\begin{rem}
	It is known that $T$ is a contraction if and only if 
	$\|f(T)\|\leq 1$ for all $f$ in the disc algebra $\A(\D)$ 
	with $f(\w)=0$ for an arbitrary but fixed $\w$ in $\mathbb D$ and $\|f\|_{\D,\infty}\leq 1$. 
	Therefore $T$ in \eqref{Mod are same} is a contraction if and only if 
	\begin{align*}
		|f'(\w)|^2|\alpha|^2+|f{'\!'}(\w)/2||\alpha|^2\leq 1
	\end{align*}
	for all $f\in\A(\D)$ with $f(\w)=0$ and $\|f\|_{\D,\infty}\leq 1.$	
	Thus 
	\[|\alpha|^2\leq 
	\frac{1}{\sup \left(|f'(\w)|^2+|\frac{f{'\!'}(\w)}{2}|\right)},\]
	where the supremum is over the set 
	$\{f\in \A(\D):f(\w)=0,\|f\|_{\D,\infty}\leq 1\}$. 
	From the Lemma \ref{OPnorm}, 
	we  conclude (for some arbitrary but fixed $\w \in \mathbb D$) that 
	\begin{align*}
	\sup \left\{|f'(\w)|^2+\big |\frac{f''(\w)}{2}\big |:f \in \mathbb A(\D),  f(\w)=0, \|f\|_{\D,\infty}\leq 1 \right\}
	=\frac{1}{(1-|\w|^2)(1-|\w|)}.
	\end{align*}
	\end{rem}

For $\alpha_j,\beta_j\in \mathbb D,$ define the operators 
\[T_j=
\left(
\begin{array}{ccc}
	0 & \alpha_j & 0\\
	0 & 0 & \beta_j\\
	0 & 0 & 0\\
\end{array}
\right), \, 1\leq j \leq n,
\]
%
and assume that $\alpha_j\beta_k=\alpha_k\beta_j,$ $j,k=1,\ldots, n.$ This commuting $n$-tuple of contractions $T=(T_1,\ldots ,T_n)$ is in the set \eqref{3x3class} with $\w=0.$ It defines a homomorphism $\rho_{\!_T}: \mathbb C[Z_1,\ldots,Z_n] \to \mathcal B(\mathbb C^3)$ given by the formula 
$ \rho_{\!_T}(p) := p(T).$ Explicitly evaluating $p(T),$ we obtain 
\begin{equation}\label{Hom with 0 eigenvalue}
\rho_{\!_T}(f):=\left(
\begin{array}{ccc}
	f(0) & Df(0)\cdot\alpha & \frac{1}{2}D^2f(0)\cdot A_{\alpha\beta}\\
	0 & f(0) & Df(0)\cdot\beta\\
	0 & 0 & f(0)\\ 
\end{array}
\right),\,\, f\in \mathbb C[Z_1,\ldots,Z_n], 
\end{equation}
where $A_{\alpha\beta}=\big ( \!\! \big (\alpha_j\beta_k\big )\!\!\big )_{n\times n}.$ Here  
\[Df(0)\cdot\alpha=\sum_{j} \frac{\partial f}{\partial z_j}(0)\alpha_j,
~Df(0)\cdot\beta=\sum_{j} \frac{\partial f}{\partial z_j}(0)\beta_j\] 
and 
\[D^2f(0)\cdot A_{\alpha\beta}=
\sum_{j,k}\frac{\partial^2f}{\partial z_j\partial z_k}(0)\alpha_j\beta_k.\]
Clearly, the formula \eqref{Hom with 0 eigenvalue} makes sense and defines a homomorphism of the algebra ${\rm H}^\infty(\mathbb D^n)$ consisting of all bounded holomorphic functions on the polydisc ${\mathbb D}^n.$  

The following lemma and several of its variants involving functions defined on domains in $\mathbb C^n$ and taking values in $k\times k$ matrices have been proved in \cite{Gmthesis, GMSastry, GMParrott, VP}. The proof below follows closely the one appearing in \cite{VP}.

\begin{lem}[The zero lemma] \label{zeropolydisk}
	The homomorphism $\rho_{\!_T}$ is a contraction 
	if and only if $\|\rho_{\!_T}(f)\|\leq 1$ 
	for all $f\in {\rm H}^\infty(\D^n)$ 
	with $f(0)=0$ and $\|f\|_{\D^n,\infty}\leq 1$.
\end{lem}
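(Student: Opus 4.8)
The statement to prove is the ``zero lemma'' (Lemma \ref{zeropolydisk}): the homomorphism $\rho_{\!_T}$ is contractive on ${\rm H}^\infty(\D^n)$ if and only if $\|\rho_{\!_T}(f)\|\le 1$ for every $f\in {\rm H}^\infty(\D^n)$ with $f(0)=0$ and $\|f\|_{\D^n,\infty}\le 1$. One direction is trivial: if $\rho_{\!_T}$ is a contraction, then in particular $\|\rho_{\!_T}(f)\|\le \|f\|_{\D^n,\infty}\le 1$ for every admissible $f$ vanishing at the origin. So the entire content is the converse, and the plan is to mimic the argument already carried out in the excerpt for the Parrott homomorphism (the first ``zero lemma'' proved early in the introduction), using a M\"obius automorphism of the disc to reduce the general case to the case $f(0)=0$.

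\textbf{Main steps.} Assume $\|\rho_{\!_T}(f)\|\le 1$ for all $f\in {\rm H}^\infty(\D^n)$ with $f(0)=0$ and $\|f\|_{\D^n,\infty}\le 1$. Take an arbitrary $g\in {\rm H}^\infty(\D^n)$ with $\|g\|_{\D^n,\infty}\le 1$; I must show $\|\rho_{\!_T}(g)\|\le 1$. First I would let $\phi$ be the disc automorphism sending $g(0)$ to $0$, so that $f:=\phi\circ g$ is a bounded holomorphic map $\D^n\to \D$ with $f(0)=0$, whence $\|\rho_{\!_T}(f)\|\le 1$ by hypothesis. The crucial observation is that $\rho_{\!_T}$ is an algebra homomorphism of ${\rm H}^\infty(\D^n)$ that factors through the $3$-jet of $f$ at $0$ (formula \eqref{Hom with 0 eigenvalue} depends only on $f(0)$, $Df(0)$ and $D^2f(0)$). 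Consequently, composing with $\phi$ corresponds, on the operator side, to applying the single-variable holomorphic functional calculus $\phi(\rho_{\!_T}(g))$: since $\rho_{\!_T}$ is multiplicative and $\phi$ is (locally) a power series, one has $\rho_{\!_T}(\phi\circ g)=\phi\bigl(\rho_{\!_T}(g)\bigr)$, where $\phi$ acts on the $3\times 3$ operator $\rho_{\!_T}(g)$ via its Taylor expansion about the scalar $g(0)$.

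\textbf{Closing the loop.} From $\|\rho_{\!_T}(\phi\circ g)\|\le 1$, that is $\|\phi(\rho_{\!_T}(g))\|\le 1$, I would then apply $\phi^{-1}$ (also a disc automorphism, hence a bounded holomorphic map) and invoke the classical one-variable von-Neumann inequality: since $\phi(\rho_{\!_T}(g))$ is a contraction and $\phi^{-1}$ is a holomorphic self-map of $\D$ with sup-norm $1$, we get $\|\phi^{-1}(\phi(\rho_{\!_T}(g)))\|\le 1$, i.e. $\|\rho_{\!_T}(g)\|\le 1$. This establishes contractivity of $\rho_{\!_T}$ on all of ${\rm H}^\infty(\D^n)$, completing the converse and hence the lemma.

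\textbf{Expected obstacle.} The only delicate point is justifying the intertwining identity $\rho_{\!_T}(\phi\circ g)=\phi(\rho_{\!_T}(g))$ and the subsequent use of von-Neumann's inequality on the operator $\rho_{\!_T}(g)$. Because $\rho_{\!_T}(g)$ is a genuine operator (not merely a scalar) one must check that $\rho_{\!_T}(g)$ is itself a contraction \emph{before} the functional calculus/von-Neumann step can be applied---but this is exactly what the argument is trying to prove, so care is needed to avoid circularity. The clean way around this, and the route I would take, is to argue purely at the level of jets: both $\rho_{\!_T}(\phi\circ g)$ and $\phi(\rho_{\!_T}(g))$ are determined by the same $3$-jet data, so the identity holds by direct substitution into \eqref{Hom with 0 eigenvalue} without ever needing $\rho_{\!_T}(g)$ to be a contraction a priori; von-Neumann's inequality is then applied to the already-bounded operator $\phi(\rho_{\!_T}(g))$ composed with $\phi^{-1}$, exactly as in the Parrott-homomorphism zero lemma given earlier in the excerpt.
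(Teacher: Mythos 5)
Your proposal is correct and follows essentially the same route as the paper: the trivial direction is dispatched, and the converse is obtained by composing $g$ with a disc automorphism $\phi$ sending $g(0)$ to $0$, applying the hypothesis to $\phi\circ g$, and recovering $\rho_{\!_T}(g)=\phi^{-1}\bigl(\rho_{\!_T}(\phi\circ g)\bigr)$ via the one-variable von-Neumann inequality. Your extra remark about verifying the intertwining identity $\rho_{\!_T}(\phi\circ g)=\phi(\rho_{\!_T}(g))$ at the level of $2$-jets is a sound way to justify a step the paper leaves implicit, but it does not change the argument.
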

\begin{proof}
Let us assume that $\|\rho_{\!_T}(f)\|\leq 1$ 
for all $f:\D^n\to\D$ with $f(0)=0$. 
Let $g:\D^n\to\D$ be an analytic function 
and $\phi$ be an automorphism of $\D$ 
mapping $g(0)$ to $0$. 
Then $\phi\circ g$ is an analytic map 
from $\D^n$ to $\D$ with $(\phi\circ g)(0)=0$, 
therefore $\|\rho_{\!_T}(\phi\circ g)\|\leq 1$. 
Now by von-Neumann's inequality 
we have 
$\|\phi^{-1}(\rho_{\!_T}(\phi\circ g))\|\leq 1$ 
which is equivalent to $\|\rho_{\!_T}(g)\|\leq 1.$ 
Hence $\rho_{\!_T}$ is a contraction. 
The converse is trivially true.
\end{proof}
 
\begin{thm}\label{Theorem for zero as only eigenvalues}
	The homomorphism $\rho_{\!_T}$, 
	as defined in \eqref{Hom with 0 eigenvalue} for the commuting tuple of contractions $T,$ is contractive. 
\end{thm}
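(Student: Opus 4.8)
\emph{Proof idea.} The plan is to collapse the $n$ variables onto a single complex line and then invoke the classical one-variable von-Neumann inequality. First I would exploit the commutation relations. The identities $\al_j\beta_k=\al_k\beta_j$ say precisely that the rank-one matrix $\al\beta^{\mathrm t}$ is symmetric, so the vectors $\al=(\al_1,\dots,\al_n)$ and $\beta=(\beta_1,\dots,\beta_n)$ are linearly dependent; concretely, if $\al\neq 0$ and $\al_{i_0}\neq 0$, setting $\la:=\beta_{i_0}/\al_{i_0}$ gives $\beta=\la\al$. The standing hypothesis $|\al_i|=|\beta_i|$ then forces $|\la|=1$. (The degenerate case $\al=0$ gives $\beta=0$ and $\rho_{\!_T}(f)=f(0)I$, which is trivially contractive.)

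Next I would introduce the slice function $g(z):=f(z\al)$, $z\in\D$. Because $|z\al_j|<1$, this is a bounded holomorphic map of the disc with $\|g\|_{\D,\infty}\le\|f\|_{\D^n,\infty}$, and $g(\D)\subseteq\D$ as soon as $\|f\|_{\D^n,\infty}\le 1$. Differentiating, $g(0)=f(0)$, $g'(0)=Df(0)\cdot\al$ and $g''(0)=D^2f(0)\cdot A_\al$ with $A_\al=\big(\!\!\big(\al_j\al_k\big)\!\!\big)$. Since $\beta=\la\al$ forces $A_{\al\beta}=\la A_\al$, the three nonzero off-diagonal data of $\rho_{\!_T}(f)$ in \eqref{Hom with 0 eigenvalue} become $Df(0)\cdot\al=g'(0)$, $Df(0)\cdot\beta=\la g'(0)$ and $\tfrac12 D^2f(0)\cdot A_{\al\beta}=\tfrac{\la}{2}g''(0)$.

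The key step is to remove the phase $\la$ by a diagonal unitary. Let $N:=\mathscr{T}(0,1,0)$ be the nilpotent Jordan block; since $N^3=0$, the functional calculus gives $g(N)=\mathscr{T}\big(g(0),g'(0),\tfrac12 g''(0)\big)$. Conjugating by $U:=\mathrm{diag}(1,1,\la)$, which is unitary exactly because $|\la|=1$, one checks entrywise that $U\rho_{\!_T}(f)U^{*}=g(N)$: the factor $\la$ on the $(2,3)$ entry and on the $(1,3)$ corner is cancelled by $|\la|^2=1$, while the $(1,2)$ entry is untouched. Finally $N$ is a contraction ($N^{*}N=\mathrm{diag}(0,1,1)$), so the one-variable von-Neumann inequality gives $\|g(N)\|\le\|g\|_{\D,\infty}\le\|f\|_{\D^n,\infty}$, and unitary invariance yields $\|\rho_{\!_T}(f)\|=\|g(N)\|\le\|f\|_{\D^n,\infty}$, which is the desired contractivity.

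The only place the hypotheses enter, and hence the crux of the argument, is the reduction $\beta=\la\al$ with $|\la|=1$: it is what lets the single slice $g$ simultaneously account for the $\al$-data, the $\beta$-data and the second-order corner, and it is what makes the twisting matrix $U$ unitary. I expect this to be the main conceptual obstacle; the remaining verifications (proportionality, the derivative identities, and the entrywise check $U\rho_{\!_T}(f)U^{*}=g(N)$) are routine. If one wished to drop the assumption $|\al_i|=|\beta_i|$, then $U$ would cease to be unitary and one would instead slice along whichever of $\al,\beta$ has the smaller modulus, comparing norms directly via the symmetry $\|M\|=\|M^{\mathrm t}\|$ of the strictly upper-triangular matrix $M:=\rho_{\!_T}(f)-f(0)I$; this refinement is not needed under the stated hypothesis.
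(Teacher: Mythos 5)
Your core idea --- use the relations $\al_j\beta_k=\al_k\beta_j$ to write $\beta=\la\al$, slice $f$ along $\al$ to the one-variable function $g(z)=f(z\al)$, and invoke the one-variable von-Neumann inequality for a nilpotent Jordan block --- is exactly the paper's argument (the paper forms $q_\al(t)=p(t\al_1,\dots,t\al_n)$ and uses the resulting bound $\|\mathscr{T}(\sum a_i\al_i,\sum a_{ij}\al_i\al_j)\|\le 1$), and your conjugation by $\mathrm{diag}(1,1,\la)$ is a clean way to finish when $|\la|=1$. The supporting computations in that case (linear dependence of $\al$ and $\beta$, the identities $A_{\al\beta}=\la A_\al$, $g'(0)=Df(0)\cdot\al$, $g''(0)=D^2f(0)\cdot A_\al$, and the entrywise check of the conjugation) are all correct.

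The gap is that the theorem does not assume $|\al_i|=|\beta_i|$: it is stated, and proved in the paper, for arbitrary $\al_j,\beta_j\in\D$ satisfying only the cross-relations, and the paper's proof explicitly treats $0<|\beta_1|\le|\al_1|$ by setting $\mu=|\beta_1|/|\al_1|\le1$. Your main argument therefore covers only a special case, and the one-line sketch you offer for the general case points in the wrong direction: you must slice along the vector with the \emph{larger} modulus, not the smaller. Concretely, writing $\beta=\la\al$ with $|\la|=\mu\le1$ and $a=g'(0)$, $b=\tfrac12 g''(0)$, the nonzero entries of $\rho_{\!_T}(f)-f(0)I$ are $a$, $\la a$ and $\la b$, and the required inequality $|\la b|^2\le(1-|a|^2)(1-|\la a|^2)$ follows from $|b|\le1-|a|^2$ precisely because $|\la|^2\le1$; if instead you slice along $\beta$ (the smaller), the $(1,2)$ and $(1,3)$ entries acquire the factor $\la^{-1}$ with $|\la^{-1}|\ge1$ and the analogous estimate reduces to $|\la|^{-2}\le1$, which fails. (A smaller remark: your assertion that $\al=0$ forces $\beta=0$ also uses the equal-modulus hypothesis; in general the relations give no information about $\beta$ when $\al=0$, though that degenerate case is disposed of by the $2\times2$ von-Neumann inequality along $\beta$.) With the slicing direction corrected, your argument does yield the full theorem.
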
 
\begin{proof} 
Assume that the supremum norm of the polynomial 
\[p(z_1,\ldots,z_n)
=\sum_{j=1}^{n}a_j z_j
+\sum_{i,j=1}^{n}a_{ij}z_iz_j
+\sum_{k=3}^{d}\sum_{|I|=k}a_Iz^I\]
over the polydisc $\D^n$ is at most $1.$
Then 
\[\left\|\rho_{\!_T}(p)\right\|=\left\|\left(
\begin{array}{cc}
	\sum a_i\alpha_{i} & \sum a_{ij}\alpha_i\beta_j\\
	0 & \sum a_i\beta_i\\ 
\end{array}
\right)
\right\|
\leq 
1 
\]
if and only if 
\begin{eqnarray}\label{Functional calculus for order 3 with diagonal 0}
	\left|\sum_{i,j=1}^{n} a_{ij}\alpha_i\beta_j\right|^2\leq
	\left(1-\left|\sum_{j=1}^{n}a_j\alpha_j \right|^2\right)
	\left(1-\left|\sum_{j=1}^{n}a_j\beta_j\right|^2\right).
\end{eqnarray} 
Without loss of generality we assume 
$0<|\beta_1|\leq |\alpha_1|$. 
Let $|\beta_1|/|\alpha_1|=\mu $. 
We have $\alpha_j\beta_k=\alpha_k\beta_j$ 
for all $j,k=1,\ldots,n$ 
therefore inequality \eqref{Functional calculus for order 3 with diagonal 0} 
is equivalent to
\begin{eqnarray}\label{Contractivity condition for taken operators with diagonal 0}
	\left|\sum_{i,j=1}^{n}a_{ij}\alpha_i \alpha_j\right|^2\leq
	\left(1-\left|\sum_{j=1}a_j\alpha_j\right|^2\right)\left(\frac{1}{\mu ^2}-\left|\sum_{j=1}a_j		\alpha_j\right|^2\right).
\end{eqnarray}
Define $q_{\alpha}(t):=p(t\alpha_1,\ldots,t\alpha_n)$ 
for all $t\in \D$. 
Since $\|p\|_{\D^n,\infty}\leq 1$ 
therefore $\|q_{\alpha}\|_{\D,\infty}\leq 1$ and 
hence 
$\mathscr{T}\big(\sum a_i\alpha_{i},\sum a_{ij}\alpha_i\alpha_j\big)$ 
is of norm at most 1. 
Therefore 
\[\left|\sum_{i,j=1}^{n}a_{ij}\alpha_i \alpha_j\right|^2\leq
\left(1-\left|\sum_{j=1}a_j\alpha_j\right|^2\right)
\left(1-\left|\sum_{j=1}a_j\alpha_j\right|^2\right)\]
and since $\mu \leq 1$, it follows that 
\eqref{Contractivity condition for taken operators with diagonal 0} holds.  
Hence $\rho_{\!_T}$ is a contractive homomorphism.
\end{proof}
%
%

We now prove the von-Neumann inequality for any contractive $n$ - tuple in the set \eqref{3x3class} assuming $|\alpha_i| = |\beta_i|,$ $1\leq i \leq n.$ (We no longer assume that $\w=0.$) As before, such a $n$-tuple defines a homomorphism 
$\rho_{\!_{T,\w}}:{\rm H}^\infty(\D^n)\to \mathcal{B}(\C^3)$ by 
\begin{equation}\label{Homomophism corresponding to general operators}
	\rho_{\!_{T,\w}}(f):=\left(
	\begin{array}{ccc}
		f(\w) & Df(\w)\cdot\alpha & \frac{1}{2}D^2f(\w)\cdot A_{\alpha\beta}\\
		0 & f(\w) & Df(\w)\cdot\beta\\
		0 & 0 & f(\w)\\ 
	\end{array}
	\right), 
\end{equation}
where 
\[Df(\w)\cdot\alpha=\sum_{j} \frac{\partial f}{\partial z_j}(\w)\alpha_j,
~Df(\w)\cdot\beta=\sum_{j} \frac{\partial f}{\partial z_j}(\w)\beta_j\]
and 
\[D^2f(\w)\cdot A_{\alpha\beta}
=\sum_{j,k}\frac{\partial^2f}{\partial z_j\partial z_k}(\w)\alpha_j\beta_k.\]
Here is ``the zero lemma{'\!'} again, now adapted to work for the homomorphism $\rho_{\!_{T,\w}}.$ For the proof, we compose $f$ with an automorphism of the disc taking $f(\w)$ to $0,$ whenever $f(\w) \not = 0.$
\begin{lem}
The homomorphism 	$\rho_{\!_{T,\w}}$ is contractive if and only if 
	$\|\rho_{\!_{T,\w}}(f)\|\leq 1$ for all $f\in {\rm H}^\infty(\D^n)$ 
	with $f(\w)=0$ and $\|f\|_{\D^n,\infty}\leq 1$.
\end{lem}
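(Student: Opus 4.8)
The plan is to reproduce the proof of the earlier zero lemma (Lemma~\ref{zeropolydisk}) almost verbatim, replacing the base point $0$ by the arbitrary point $\w$; the only genuine change is that the disc automorphism must now be centered at $g(\w)$ rather than at $g(0)$, exactly as anticipated in the sentence preceding the statement. One direction is immediate and requires no work: if $\rho_{\!_{T,\w}}$ is contractive, then $\|\rho_{\!_{T,\w}}(f)\|\leq 1$ holds for \emph{every} $f$ with $\|f\|_{\D^n,\infty}\leq 1$, and in particular for all such $f$ vanishing at $\w$.

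For the converse I would assume $\|\rho_{\!_{T,\w}}(f)\|\leq 1$ for all $f\in {\rm H}^\infty(\D^n)$ with $f(\w)=0$ and $\|f\|_{\D^n,\infty}\leq 1$, and take an arbitrary holomorphic $g\colon\D^n\to\D$ with $\|g\|_{\D^n,\infty}\leq 1$. Letting $\phi$ be the automorphism of $\D$ sending $g(\w)$ to $0$, the composite $\phi\circ g$ maps $\D^n$ into $\D$ and satisfies $(\phi\circ g)(\w)=0$, so the hypothesis gives $\|\rho_{\!_{T,\w}}(\phi\circ g)\|\leq 1$. The key step is then the functional-calculus identity $\rho_{\!_{T,\w}}(\phi\circ g)=\phi\big(\rho_{\!_{T,\w}}(g)\big)$, where the right-hand side is the single-variable holomorphic functional calculus applied to the operator $\rho_{\!_{T,\w}}(g)$. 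Granting this, since $\rho_{\!_{T,\w}}(\phi\circ g)$ is an operator of norm at most $1$ and $\phi^{-1}$ is a holomorphic self-map of $\D$, the von-Neumann inequality yields $\|\phi^{-1}\big(\rho_{\!_{T,\w}}(\phi\circ g)\big)\|\leq \|\phi^{-1}\|_{\D,\infty}\leq 1$. Applying the same composition identity in reverse gives $\phi^{-1}\big(\rho_{\!_{T,\w}}(\phi\circ g)\big)=\rho_{\!_{T,\w}}(\phi^{-1}\circ\phi\circ g)=\rho_{\!_{T,\w}}(g)$, whence $\|\rho_{\!_{T,\w}}(g)\|\leq 1$; as $g$ was arbitrary, $\rho_{\!_{T,\w}}$ is contractive.

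The hard part, and the only point requiring care beyond the $\w=0$ case, is justifying the identity $\rho_{\!_{T,\w}}(\phi\circ g)=\phi\big(\rho_{\!_{T,\w}}(g)\big)$, i.e. that the jet-homomorphism commutes with the single-variable functional calculus of disc automorphisms. This is legitimate because $\rho_{\!_{T,\w}}$ is the homomorphism induced by the commuting tuple of $3\times 3$ matrices in \eqref{3x3class}, so its value on any holomorphic $h$ depends only on the $2$-jet of $h$ at $\w$; the chain rule for $2$-jets then matches composition with $\phi$ on the function side against functional calculus on the operator side. The functional calculus of $\phi$ (and of $\phi^{-1}$) is well defined here because $\rho_{\!_{T,\w}}(g)$ is upper triangular with the single eigenvalue $g(\w)$, so $\sigma\big(\rho_{\!_{T,\w}}(g)\big)=\{g(\w)\}\subset\D$ and $\phi$ is holomorphic on $\D$. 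Once this identity is in hand, the remainder of the argument is purely formal and identical to the proof of Lemma~\ref{zeropolydisk}.
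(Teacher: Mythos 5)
Your proposal is correct and follows essentially the same route as the paper: the paper proves this lemma exactly as it proves Lemma~\ref{zeropolydisk}, by composing $g$ with the disc automorphism $\phi$ taking $g(\w)$ to $0$, applying the hypothesis to $\phi\circ g$, and then undoing $\phi$ via the von-Neumann inequality. Your extra care in justifying the identity $\rho_{\!_{T,\w}}(\phi\circ g)=\phi\big(\rho_{\!_{T,\w}}(g)\big)$ (via the $2$-jet chain rule and the fact that $\rho_{\!_{T,\w}}(\phi\circ g)$ is nilpotent with spectrum $\{0\}$) only makes explicit what the paper leaves implicit.
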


\begin{thm}\label{Same alpha and beta}
	The homomorphism $\rho_{\!_{T,\w}}$ induced by  a contractive $n$ - tuple  $T$ in the set \eqref{3x3class} 
	with $\alpha_j=\beta_j,$  $j=1,\ldots, n,$ 
	 	is itself contractive.
\end{thm}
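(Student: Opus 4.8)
The plan is to reduce the statement to the one‑variable von‑Neumann inequality by restricting $f$ to a suitable analytic disc through $\w.$ By the zero lemma just established for $\rho_{\!_{T,\w}},$ it suffices to prove $\|\rho_{\!_{T,\w}}(f)\|\le 1$ for every $f\in \mathrm H^\infty(\D^n)$ with $f(\w)=0$ and $\|f\|_{\D^n,\infty}\le 1.$ For such an $f,$ put $a=Df(\w)\cdot\alpha$ and $b=\tfrac12 D^2f(\w)\cdot A_{\alpha\beta}=\tfrac12\sum_{j,k}\tfrac{\partial^2 f}{\partial z_j\partial z_k}(\w)\alpha_j\alpha_k$ (using $\beta_j=\alpha_j$). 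Then \eqref{Homomophism corresponding to general operators} collapses to the nilpotent matrix
\[
\rho_{\!_{T,\w}}(f)=\left(\begin{array}{ccc} 0 & a & b\\ 0 & 0 & a\\ 0 & 0 & 0\end{array}\right)=\mathscr T(0,a,b),
\]
so the entire theorem reduces to the single scalar estimate $\|\mathscr T(0,a,b)\|\le 1.$

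The idea is to realize $\mathscr T(0,a,b)$ as $g(J),$ where $J=\mathscr T(0,1,0)$ is the nilpotent Jordan block (a contraction with $\|J\|=1$) and $g:\D\to\D$ is obtained by composing $f$ with an analytic disc $\psi=(\psi_1,\dots,\psi_n):\D\to\D^n$ with $\psi(0)=\w.$ If $g=f\circ\psi$ then $g(0)=0,$ $g'(0)=\sum_j \tfrac{\partial f}{\partial z_j}(\w)\psi_j'(0)$ and $g''(0)=\sum_{j,k}\tfrac{\partial^2 f}{\partial z_j\partial z_k}(\w)\psi_j'(0)\psi_k'(0)+\sum_j \tfrac{\partial f}{\partial z_j}(\w)\psi_j''(0).$ Choosing each $\psi_j$ with $\psi_j'(0)=\alpha_j$ makes $g'(0)=a$ and turns the first sum into $2b;$ if in addition $\psi_j''(0)=0$ for every $j,$ then $\tfrac12 g''(0)=b.$ For such a $g$ one has $g(J)=g'(0)J+\tfrac12 g''(0)J^2=\mathscr T(0,a,b),$ and since $\|g\|_{\D,\infty}\le\|f\|_{\D^n,\infty}\le 1,$ the one‑variable von‑Neumann inequality applied to $J$ (exactly as in the proof of Theorem \ref{Theorem for zero as only eigenvalues}, replacing $g$ by $g(s\,\cdot)$ and letting $s\uparrow 1$ so as to work in the disc algebra) yields $\|\mathscr T(0,a,b)\|=\|g(J)\|\le 1.$

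The crux is therefore the construction of self‑maps $\psi_j:\D\to\D$ with the prescribed $2$‑jet $\psi_j(0)=\w_j,\ \psi_j'(0)=\alpha_j,\ \psi_j''(0)=0.$ Writing $\psi_j=m_{\w_j}\circ\chi_j,$ where $m_{\w_j}$ is the disc automorphism with $m_{\w_j}(0)=\w_j$ (so $m_{\w_j}'(0)=1-|\w_j|^2$ and $m_{\w_j}''(0)=-2\overline{\w_j}(1-|\w_j|^2)$), reduces this to finding $\chi_j:\D\to\D$ with $\chi_j(0)=0,$ $\chi_j'(0)=\alpha_j/(1-|\w_j|^2)=:c_j,$ and $\chi_j''(0)=2\overline{\w_j}\,c_j^{\,2}$ (the value forcing $\psi_j''(0)=0$). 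By the second‑order Schwarz lemma the admissible values of $\chi_j''(0)$ fill the closed disc of radius $2(1-|c_j|^2)$ about the origin, so such a $\chi_j$ exists precisely when $|2\overline{\w_j}c_j^{\,2}|\le 2(1-|c_j|^2),$ i.e. when $|c_j|^2(1+|\w_j|)\le 1,$ which unwinds to $|\alpha_j|\le(1-|\w_j|)\sqrt{1+|\w_j|}.$ This is the main obstacle, and it is resolved by the hypothesis: by Lemma \ref{OPnorm}, in the case $|\alpha_j|=|\beta_j|,$ the contractivity of $T_j$ is equivalent to exactly this inequality. Thus the assumption that $T$ be contractive is precisely what makes the required slice $\psi$ available, and the proof closes; the boundary case of equality (where $\chi_j$ is forced to be an automorphism) and the degenerate cases $\alpha_j=0$ or $\w_j=0$ are immediate.
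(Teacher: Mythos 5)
Your proof is correct, and it is structurally parallel to the paper's: both arguments hinge on producing, for each $j$, a holomorphic self-map of the disc whose $2$-jet at the relevant point is $(\w_j,\ast,0)$, with existence guaranteed by the contractivity criterion of Lemma~\ref{OPnorm}, and then composing $f$ with the resulting analytic disc so that the one-variable theory takes over. The implementations differ in two respects. First, the paper obtains its disc maps $f_j$ with the extremal derivative $(1-|\w_j|)\sqrt{1+|\w_j|}$ by invoking Nehari's theorem, and must then compensate by rescaling the nilpotent operators to $S_j$ with entries $\alpha_j/\big((1-|\w_j|)\sqrt{1+|\w_j|}\big)$ before applying the $\w=0$ case (Theorem~\ref{Theorem for zero as only eigenvalues}); you instead build $\psi_j$ with $\psi_j'(0)=\alpha_j$ directly from the Schwarz--Pick lemma, which is more elementary (no Nehari, no rescaling) and makes the role of the hypothesis transparent: the jet $(\w_j,\alpha_j,0)$ is realizable precisely when $|\alpha_j|\le(1-|\w_j|)\sqrt{1+|\w_j|}$, i.e.\ precisely when $T_j$ is a contraction. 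Second, you collapse all the way to the scalar von Neumann inequality for the Jordan block $J$ in a single step, whereas the paper routes through the $n$-variable $\w=0$ theorem, whose own proof restricts to the slice $t\mapsto t\alpha$ --- so the two reductions ultimately coincide; note in this connection that $\|\mathscr{T}(0,a,b)\|=\|\mathscr{T}(a,b)\|$, so your final estimate is literally the inequality \eqref{Functional calculus for order 3} that the paper's proof targets. Your handling of the boundary and degenerate cases ($\chi_j$ forced to be a Blaschke product, or $\alpha_j=0$, or $\w_j=0$) is also sound.
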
 
\begin{proof}
Assume that the supremum norm of the polynomial 
\[p(z_1,\ldots,z_n)=\sum_{j=1}^{n}a_j(z_j-\w_j)
+\sum_{i,j=1}^{n}a_{ij}(z_j-\w_j)(z_i-\w_i)
+\sum_{k=3}^{d}\sum_{|I|=k}a_I(z-\w)^I\]
over the polydisc $\D^n$ is at most 1,  
where $d$ is the degree of $p$. 
Then  
\[\left\|\rho_{\!_{T,\w}}(p)\right\|
=\left\|\left(
\begin{array}{cc}
	\sum a_i\alpha_{i} & \sum a_{ij}\alpha_i\alpha_j\\
	0 & \sum a_i\alpha_i\\ 
\end{array}
\right)
\right\|
\leq 
1 
\]
if and only if 
\begin{eqnarray}\label{Functional calculus for order 3}
	\left|\sum_{i,j=1}^{n} a_{ij}\alpha_i\alpha_j\right|\leq
	\left(1-\left|\sum_{j=1}^{n}a_j\alpha_j \right|^2\right).
\end{eqnarray}
For $j=1,\ldots,n,$ applying the Lemma \ref{OPnorm} to the operator  
\[
\left(
\begin{array}{ccc}
	\w_j & (1-|\w_j|)\sqrt{1+|\w_j|} & 0\\
	0 & \w_j & (1-|\w_j|)\sqrt{1+|\w_j|}\\
	0 & 0 & \w_j\\
\end{array}
\right)
\] 
we conclude that it must be contractive. 
Therefore, using Nehari's  theorem (cf. \cite[Chapter 15, Theorem 15.14]{NY}), we obtain a holomorphic function $h_j,$ $h_j^{(k)}(0) = 0, \, k=0,1,2,$ defined in the unit disc $\mathbb D$ such that the supremum norm of the function \[f_j(z)=\w_j + (1-|\w_j|)\sqrt{1+|\w_j|}z+ h_j(z)\] over the unit disc $\D$ is at most $1.$
Define $f=(f_1,\ldots,f_n):\D^n\to \D^n$ by 
$f(z_1,\ldots,z_n)=\big(f_1(z_1),\ldots,f_n(z_n)\big)$. 
Now $p\circ f$ maps $\D^n$ to $\D$ 
with $p\circ f(0)=0$. 
Define the following contractive operators 
\[S_j:=\left(
\begin{array}{ccc}
	0 & \frac{\alpha_j}{(1-|\w_j|)\sqrt{1+|\w_j|}} & 0\\
	0 & 0 & \frac{\alpha_j}{(1-|\w_j|)\sqrt{1+|\w_j|}}\\
	0 & 0 & 0\\
\end{array}
\right)\]
for $j=1,\ldots,n$.    
Then $S=(S_1,\ldots,S_n)$ is a tuple of commuting contractions. 
From Theorem \ref{Theorem for zero as only eigenvalues} 
it is clear that $\|p\circ f (S)\|\leq 1$. 
Therefore \eqref{Functional calculus for order 3} holds and  
hence $\|p(T_1,\ldots,T_n)\|\leq 1$.
\end{proof}
As a corollary of this theorem, we get the following necessary condition for the Carath\'{e}odory-Fej\'{e}r interpolation problem for the polydisc $\D^n.$
\begin{thm}
Let $p$ be a polynomial in $n$ variables of degree $2$ such that $p(0)=0.$ There exists a holomorphic function $q,$ defined on polydisc $\D^n,$ with $q^{(k)}(0)=0,\,|k|\leq d$ such that $\|p+q\|_{\infty}\leq 1$ only if 
\[\sup_{\|\alpha\|_{\infty}\leq 1}\Big\{\Big|\frac{D^2p(0)\cdot A_\alpha}{2}\Big|+\big|Dp(0)\cdot\alpha\big|^2\Big\}\leq 1,\]
where $\alpha=(\alpha_1,\ldots,\alpha_n),$ $A_{\alpha}=\big(\!\!\big(\alpha_i\alpha_j\big)\!\!\big),$ $D^2p(0)\cdot A_{\alpha}=\sum\frac{\partial^2 p}{\partial{z_i}\partial{z_j}}(0)\alpha_i\alpha_j$ and $Dp(0)\cdot \alpha=\sum\frac{\partial p}{\partial{z_i}}(0)\alpha_i.$
\end{thm}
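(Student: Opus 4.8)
The plan is to derive the inequality as a direct consequence of the von-Neumann inequality for the commuting triples already established in Theorem~\ref{Theorem for zero as only eigenvalues}, the only real work being to read off the asserted scalar inequality from the norm of a single $3\times 3$ matrix. Suppose, then, that such a holomorphic $q$ exists and set $f:=p+q$. Since $p(0)=0$ and all derivatives of $q$ at the origin up to order two vanish, $f$ is a bounded holomorphic self-map of $\D^n$ with $f(0)=0$, $Df(0)=Dp(0)$ and $D^2 f(0)=D^2 p(0)$; in particular $\|f\|_{\D^n,\infty}\le 1$.

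Next I would fix $\alpha=(\alpha_1,\dots,\alpha_n)$ with $\|\alpha\|_\infty\le 1$ and consider the commuting $n$-tuple $T=(T_1,\dots,T_n)$ from \eqref{3x3class} taken with $\w=0$ and $\beta_j=\alpha_j$; the relations $\alpha_i\beta_j=\alpha_j\beta_i$ then hold trivially, and by Lemma~\ref{OPnorm} each $T_j$ is a contraction because $|\alpha_j|\le 1$. Theorem~\ref{Theorem for zero as only eigenvalues} guarantees that the induced homomorphism $\rho_{\!_T}$ is contractive, so that $\|\rho_{\!_T}(f)\|\le\|f\|_{\D^n,\infty}\le 1$ (using $f(0)=0$ and the zero lemma, Lemma~\ref{zeropolydisk}). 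Inserting $f(0)=0$, $\beta=\alpha$, and the matched derivative data into the explicit form \eqref{Hom with 0 eigenvalue} shows that this value is the nilpotent Toeplitz matrix
\[
\rho_{\!_T}(f)=
\begin{pmatrix}
0 & a & b\\
0 & 0 & a\\
0 & 0 & 0
\end{pmatrix},
\qquad
a:=Dp(0)\cdot\alpha,\quad b:=\tfrac12\,D^2 p(0)\cdot A_\alpha .
\]

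The heart of the argument is then a direct norm computation, which is where the one genuinely technical point lies. Forming $N^{*}N$ for $N:=\rho_{\!_T}(f)$, one finds that its only nonzero block is the $2\times 2$ matrix $\left(\begin{smallmatrix} |a|^2 & \bar a\, b\\ a\,\bar b & |a|^2+|b|^2\end{smallmatrix}\right)$, whose characteristic polynomial is $\lambda^2-(2|a|^2+|b|^2)\lambda+|a|^4$. Solving for the larger root gives $\|\rho_{\!_T}(f)\|^2=\tfrac12\big(2|a|^2+|b|^2+|b|\sqrt{4|a|^2+|b|^2}\big)$. The hard part will be to convert this into a clean inequality: the plan is to analyze the level set $\|N\|=1$ by isolating the radical and squaring, which yields the factorization $(|a|^2-1-|b|)(|a|^2-1+|b|)=0$; the sign constraint inherited from the squaring rules out the spurious factor $|a|^2=1+|b|$, leaving the boundary $|a|^2+|b|=1$. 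Since $N=0$ at $a=b=0$, monotonicity then gives the clean equivalence $\|\rho_{\!_T}(f)\|\le 1\iff |a|^2+|b|\le 1$.

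Substituting back the values of $a$ and $b$ produces, for the fixed $\alpha$, the inequality $\big|\tfrac12 D^2 p(0)\cdot A_\alpha\big|+|Dp(0)\cdot\alpha|^2\le 1$, and taking the supremum over all $\alpha$ with $\|\alpha\|_\infty\le 1$ yields the stated necessary condition. I expect no further obstacle beyond the norm identity above, since the substantive operator-theoretic content—the von-Neumann inequality for this class of non-nilpotent $3\times 3$ triples—was already absorbed into Theorem~\ref{Theorem for zero as only eigenvalues}, and everything else here is bookkeeping together with the elementary spectral computation.
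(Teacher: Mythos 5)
Your proposal is correct and follows essentially the route the paper intends: the theorem is stated there as an immediate corollary of the von-Neumann inequality for the commuting tuples in \eqref{3x3class} with $\w=0$ and $\beta=\alpha$ (Theorem~\ref{Theorem for zero as only eigenvalues}), applied to $f=p+q$ and combined with the norm criterion $\|\rho_{\!_T}(f)\|\leq 1 \iff |\tfrac12 D^2p(0)\cdot A_\alpha|+|Dp(0)\cdot\alpha|^2\leq 1$. Your explicit computation of this criterion via the characteristic polynomial of $N^*N$ is a correct substitute for the paper's reduction to the $2\times 2$ block $\big(\begin{smallmatrix} a & b\\ 0 & a\end{smallmatrix}\big)$, so no gap remains.
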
 

\begin{rem}
This proof of the von-Neumann inequality works without having to make  the assumption that $|\alpha_i| = |\beta_i|,$ if instead, we assume that   
  	\[
	\left(
	\begin{array}{ccc}
		\w_j & \alpha_j & 0\\
		0 & \w_j & \alpha_j\\
		0 & 0 & \w_j\\
	\end{array}
	\right) 
	\mbox{ and }
		\left(
	\begin{array}{ccc}
		\w_j & \beta_j & 0\\
		0 & \w_j & \beta_j\\
		0 & 0 & \w_j\\
	\end{array}
	\right)
	\]
are contractions for $j=1,\ldots,n.$ 
Unfortunately, there are contractive $n$ - tuples $T$ in the set \eqref{3x3class} for which this condition is not met, for example, 
take $\w=\alpha=2/5$ and $\beta=4/5,$ here $n$ is just $1!$ 
\end{rem}

\section{An improvement in the bound for \texorpdfstring{$C_2(n)$}{TEXT}}
The explicit example in \cite{V1} showing that a commuting triple of contractions need not define a contractive homomorphism of the tri-disc algebra uses the interesting polynomial 
$$p_{\!_V}(z_1,z_2,z_3):=z_{1}^{2}+z_{2}^{2}+z_{3}^{2}-2z_1z_2-2z_2z_3-2z_3z_1.$$ 
This polynomial will be referred as the \VK polynomial. 
The supremum norm of $p_{\!_V}$ over the tri-disc is shown to be $5.$  
Let 
\begin{equation}\label{ptoA}
	A_{\!_V}:=
	\left(
	\begin{array}{rrr}
		1 & -1 & -1\\
		-1 & 1 & -1\\
		-1 & -1 & 1\\
	\end{array}
	\right)
\end{equation}
be the matrix of co-efficients of the polynomial $p_{\!_V}.$
\begin{lem}
$\|A_{\!_V}\|_{\ell^{\infty}(3)\to \ell^1(3)}\geq 6.$
\end{lem}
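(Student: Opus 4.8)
The plan is to use the defining identity for the operator norm,
\[
\|A_{\!_V}\|_{\ell^{\infty}(3)\to\ell^1(3)}
=\sup_{\|x\|_\infty\le 1}\|A_{\!_V}x\|_1,
\]
so that to establish the lower bound it suffices to produce a single vector $x\in\C^3$ with $\|x\|_\infty\le 1$ for which $\|A_{\!_V}x\|_1\ge 6$.

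First I would record the structural observation that $A_{\!_V}=2I-J$, where $J$ is the $3\times 3$ matrix all of whose entries equal $1$. Equivalently, writing $s:=x_1+x_2+x_3$, the $i$-th coordinate of $A_{\!_V}x$ is $2x_i-s$, so that
\[
\|A_{\!_V}x\|_1=\sum_{i=1}^{3}|2x_i-s|.
\]
The key idea is that passing to \emph{complex} unimodular entries with vanishing sum decouples the two contributions: if $s=0$, then each summand reduces to $2|x_i|$.

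Accordingly, I would set $\z:=e^{2\pi i/3}$, a primitive cube root of unity, and take $x=(1,\z,\z^2)$. Then $\|x\|_\infty=1$, and since $1+\z+\z^2=0$ we have $s=0$; hence $A_{\!_V}x=2x$ and
\[
\|A_{\!_V}x\|_1=2\big(|1|+|\z|+|\z^2|\big)=6.
\]
This gives $\|A_{\!_V}\|_{\ell^{\infty}(3)\to\ell^1(3)}\ge 6$, as required.

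There is no genuine obstacle in the computation itself, but the one point worth flagging is that the complex test vector is essential: if $x$ were restricted to the real extreme points $\{\pm1\}^3$, the maximum of $\sum_i|2x_i-s|$ would be only $5$ (attained, for instance, at $x=(1,1,-1)$), falling short of the claimed bound. This is precisely the phenomenon the lemma isolates, namely the gap between the $\ell^\infty\to\ell^1$ norm of $A_{\!_V}$ (at least $6$) and the supremum norm $5$ of the \VK polynomial restricted to the diagonal, which is what underlies the ratio $1.2$ sought in this section.
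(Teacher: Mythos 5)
Your proof is correct and rests on the same key idea as the paper's: testing against the cube-roots-of-unity vector $(1,e^{2\pi i/3},e^{4\pi i/3})$, which is exactly the extremal configuration $\theta_1-\theta_2=\theta_2-\theta_3=2\pi/3$ the paper arrives at by minimizing $\cos(\theta_1-\theta_2)+\cos(\theta_2-\theta_3)+\cos(\theta_3-\theta_1)$. Your decomposition $A_{\!_V}=2I-J$ merely streamlines that computation, and your closing remark about real sign vectors giving only $5$ is accurate.
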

\begin{proof}
Suppose $a_{ij}$ denote the 
$(i,j)$ entry of $A_{\!_V}$ and $z_j=e^{i\theta_j}$ for $i,j=1,2,3$.
Then  
\begin{align*}
	\big|\sum a_{ij}z_i\overline{z}_j\big|&=|z_1|^2+|z_2|^2+|z_3|^2
	-2Re\left(z_1\overline{z}_2+z_2\overline{z}_3+z_3\overline{z}_1\right)\\
	&= 3-2(cos(\theta_1-\theta_2)+cos(\theta_2-\theta_3)+cos(\theta_3-\theta_1))\\
	&\leq 3-2\left(\frac{-3}{2}\right)\\
	&= 6. 
\end{align*}
Using the Lemma \ref{minimizing sum of inner products}, the above inequality can easily be deduced . 
For $\theta_1-\theta_2=\frac{2\pi}{3}=\theta_2-\theta_3$, 
the inequality in this computation becomes an equality. 
Thus 
\[\|A_{\!_V}\|_{\ell^{\infty}(3)\to \ell^1(3)}
\geq \sup_{|z_j|=1}\left|\sum a_{ij}z_i\overline{z}_j\right|=6.\]
\end{proof} 

Thus $\|A_{\!_V}\|_{\ell^{\infty}(3)\to \ell^1(3)}>\|p_{\!_V}\|_{\D^3,\infty}$. 
Here $\frac{\|A_{\!_V}\|_{\ell^{\infty}(3)\to \ell^1(3)}}{\|p_{\!_V}\|_{\D^3,\infty}}\geq 1.2.$\\
%

\noindent \textbf{Question:} 
Does there exists $k > 0$ such that 
$\|A\|_{\ell^{\infty}(n)\to \ell^1(n)}\leq k \|p_{\!_{A,\vartriangle}}\|_{\D^n,\infty}$ for all symmetric matrices $A$ of size $n,$
$n\in\N?$

We have just seen that $k$ is bounded below by $1.2.$ Now, we show that $\frac{3\sqrt{3}}{4}$ is an upper bound for $k.$ This will be an immediate corollary of the following theorem giving an upper bound for the second derivative. 
\begin{thm}\label{secondderbd}
If $f:\mathbb D^n \to \mathbb D$ is a holomorphic function, then $\|D^2f(0)\|_{\ell^{\infty}(n)\to \ell^1(n)}$ is bounded above by $\frac{3\sqrt{3}}{2}.$  
\end{thm}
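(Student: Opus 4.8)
The plan is to reduce the bound on the bilinear $\ell^\infty(n)\to\ell^1(n)$ norm of the Hessian to a single one-variable Cauchy estimate, where the decisive input is the Schwarz--Pick lemma for the polydisc (the distance-decreasing property of $f$ for the Carath\'eodory metric). First I would record that, writing $H=D^2f(0)=\big(\!\!\big(\tfrac{\partial^2 f}{\partial z_j\partial z_k}(0)\big)\!\!\big)$, the relevant norm is
\[
\|H\|_{\ell^\infty(n)\to\ell^1(n)}=\sup\Big\{\,\Big|\textstyle\sum_{j,k}\tfrac{\partial^2 f}{\partial z_j\partial z_k}(0)\,z_jw_k\Big| : \|z\|_{\infty}\le 1,\ \|w\|_{\infty}\le 1\,\Big\},
\]
exactly as in the identity $\|A\|_{\ell^\infty(n)\to\ell^1(n)}=\|p_{\!_A}\|_{\mathbb D^{2n},\infty}$ used earlier. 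So I fix $z,w$ in the closed unit ball of $\ell^\infty(n)$ and aim to bound the displayed bilinear quantity by $\tfrac{3\sqrt3}{2}$; note this needs no reduction to $f(0)=0$ and works for a general $f$.

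Next I would introduce the one-variable holomorphic function
\[
\Psi(\lambda)=\sum_{k=1}^{n}\frac{\partial f}{\partial z_k}(\lambda z)\,w_k,\qquad \lambda\in\mathbb D,
\]
that is, the derivative of $f$ at the point $\lambda z\in\mathbb D^n$ in the direction $w$. Differentiating once more in $\lambda$ and setting $\lambda=0$ gives $\Psi'(0)=\sum_{j,k}\tfrac{\partial^2 f}{\partial z_j\partial z_k}(0)z_jw_k$, precisely the quantity to be estimated. The heart of the proof is the bound $|\Psi(\lambda)|\le \tfrac{1}{1-|\lambda|^2}$. To obtain it I would invoke the Schwarz--Pick inequality for the polydisc at $\zeta=\lambda z$ in the direction $w$,
\[
\frac{\big|\sum_k \tfrac{\partial f}{\partial z_k}(\zeta)\,w_k\big|}{1-|f(\zeta)|^2}\ \le\ \max_{1\le k\le n}\frac{|w_k|}{1-|\zeta_k|^2},
\]
which I would prove by lifting: pick Schwarz--Pick extremal discs $\psi_k:\mathbb D\to\mathbb D$ with $\psi_k(0)=\lambda z_k$ and $\psi_k'(0)$ a common positive multiple of $w_k$, assemble $h=(\psi_1,\dots,\psi_n):\mathbb D\to\mathbb D^n$, and apply the classical one-variable Schwarz--Pick lemma to $f\circ h$. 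Since $\|z\|_\infty\le1$ and $\|w\|_\infty\le1$ we have $|\zeta_k|=|\lambda|\,|z_k|\le|\lambda|$ and $|w_k|\le1$, so the right-hand side is at most $\tfrac{1}{1-|\lambda|^2}$; combined with $|f(\zeta)|\le1$ this gives the claimed bound on $\Psi$.

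Finally I would apply the Cauchy coefficient estimate to $\Psi$ on the circle $|\lambda|=r$,
\[
|\Psi'(0)|\ \le\ \frac{1}{r}\sup_{|\lambda|=r}|\Psi(\lambda)|\ \le\ \frac{1}{r\,(1-r^2)},\qquad 0<r<1,
\]
and optimize the free radius: $r(1-r^2)$ is maximal at $r=1/\sqrt3$ with value $\tfrac{2}{3\sqrt3}$, whence $|\Psi'(0)|\le \tfrac{3\sqrt3}{2}$. Taking the supremum over all admissible $z,w$ yields $\|D^2f(0)\|_{\ell^\infty(n)\to\ell^1(n)}\le \tfrac{3\sqrt3}{2}$.

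I expect the main obstacle to be the clean justification of the polydisc Schwarz--Pick estimate and the correct identification of $\Psi$ so that $\Psi'(0)$ reproduces the Hessian bilinear form. This is exactly the step where the hypotheses $\|z\|_\infty\le1$, $\|w\|_\infty\le1$ and the polydisc geometry are used; the key gain is that the denominator is $1-|\lambda|^2|z_k|^2\ge 1-|\lambda|^2$ rather than $1-|\lambda|$, and it is this quadratic vanishing that turns the elementary optimum $r=1/\sqrt3$ into the sharp constant $\tfrac{3\sqrt3}{2}$. The remaining Cauchy estimate and one-variable optimization are routine once that bound is established.
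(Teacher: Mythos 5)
Your proof is correct and follows essentially the same route as the paper: both establish the gradient bound $\|Df(a)\|_1\le\frac{1}{1-r^2}$ for $a$ in the polydisc of radius $r$ via the Schwarz--Pick/Schwarz lemma, gain the extra factor $\frac{1}{r}$ by a Cauchy-type derivative estimate, and optimize $r(1-r^2)$ at $r=1/\sqrt{3}$. The only (cosmetic) difference is that you scalarize by pairing with fixed directions $z,w$ and apply the one-variable Cauchy estimate to $\Psi$, whereas the paper invokes ``Schwarz's lemma'' for the vector-valued map $Df:r\mathbb{D}^n\to\frac{1}{1-r^2}(\mathbb{D}^n)^*$; your version is arguably the cleaner justification of that step, since $Df$ need not vanish at the origin.
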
 
\begin{proof}
Let $f$ be a complex valued analytic function on $\D^n$ 
with $\|f\|_{\D^n,\infty}\leq 1$. 
Let $a=(a_1,\ldots,a_n)\in \D^n$ be an arbitrary point. 
Let $\Phi_j$ be the automorphism of the unit disc 
defined by 
\[\Phi_j(z)=\frac{z+a_j}{1+\overline{a}_jz}\]
for $j=1,\ldots,n$. 
Let $\Phi(z_1,\ldots,z_n)=(\Phi_1(z_1),\ldots,\Phi_n(z_n))$ 
and $\varphi$ be the automorphism of the unit disc 
such that $\varphi(f(a))=0$. 
Due to chain rule we have 
\[D(\varphi\circ f \circ \Phi)(0)=\varphi '(f(a))Df(a)D\Phi(0).\]
As 
$g:=\varphi\circ f \circ \Phi:\D^n\to \D$ 
is an analytic map 
therefore due to Schwarz's lemma 
$Dg(0)$ is a contractive linear functional on $(\C^n,\|\cdot\|_{\D^n,\infty})$. 
Also 
\[D\Phi(0)=
\left(
\begin{array}{cccc}
	1-|a_1|^2 & 0 & \cdots & 0\\
	0 & 1-|a_2|^2 & \cdots & 0\\
	\vdots & \vdots & \ddots & \vdots\\
	0 & 0 & \cdots & 1-|a_n|^2\\
\end{array}
\right)\]
therefore 
\[Df(a)=\varphi'(f(a))^{-1}\left(\sum_{j=1}^{n}\frac{\partial_jg(0)}{1-|a_j|^2}\right).\]
Thus we have 
\[\left\|Df(a)\right\|_1
\leq (1-|f(a)|^2)\max_{j}\frac{1}{1-|a_j|^2}.\]    
Suppose $r\in (0,1)$ is such that 
$|a_i|<r$ for all $i=1,\ldots,n$. 
Then we have 

\begin{equation}
	\|Df(a)\|_1\leq \frac{1}{1-r^2}.
\end{equation}

\noindent Let $g:=Df$ then $g$ is a map 
from $r\D^n$ to $\frac{1}{1-r^2}(\D^n)^*$ 
where $(\D^n)^*$ denotes the 
dual unit ball of $(\C^n,\|\cdot\|_{\D^n,\infty})$. 
Now due to Schwarz's lemma 
$Dg(0)$ is a linear operator on $\C^n$ 
which maps $r\D^n$ into $\frac{1}{1-r^2}(\D^n)^*$. 
Hence we have 

\begin{equation}\label{Raw bound on second derivative}
	\|D^2f(0)\|_{\ell^{\infty}(n)\to \ell^1(n)}
	\leq \frac{1}{r(1-r^2)}.
\end{equation}

\noindent Inequality \eqref{Raw bound on second derivative} 
is true for every $r\in (0,1)$ 
and maximum of $r(1-r^2)$ is attained at $r=1/\sqrt{3}$. 
Therefore we can conclude that
 
$$	\|D^2f(0)\|_{\ell^{\infty}(n)\to \ell^1(n)}
	\leq \frac{3\sqrt{3}}{2}.$$
\end{proof}  
\noindent Let $p(z_1,z_2,\ldots,z_n)=\sum a_{ij}z_iz_j$ 
be a homogeneous polynomial of degree $2$ in $n$ variables 
with $\|p\|_{\D^n,\infty}\leq 1$. 
As $D^2p(0)=(\!\!(2a_{ij})\!\!)_{n\times n}$ 
therefore from \eqref{secondderbd},  
we have

\begin{equation}\label{Limit to the norm of A}
	\|(a_{ij})\|_{\ell^{\infty}(n)\to \ell^1(n)}
	\leq \frac{3\sqrt{3}}{4}\approx 1.3.
\end{equation}
This leads to a considerable improvement in one of the theorems of \cite{V2},     which is exactly the same as the theorem below except that the constant obtained in \cite{V2} is $2K_{G}^{\C}.$
\begin{thm}
	Suppose $p$ be a polynomial of degree atmost 2 in $n$ variables 
	and $T=(T_1,\ldots,T_n)$ be a tuple of commuting contractions 
	on a Hilbert space $\h$. 
	Then
	\[\|p(T_1,\ldots,T_n)\|
	\leq \frac{3\sqrt{3}}{4}K_{G}^{\C}\|p\|_{\D^n,\infty},\]
	where $K_{G}^{\C}$ is the complex Grothendieck constant.
\end{thm}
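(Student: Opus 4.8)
The plan is to reduce the general degree-$\le 2$ case to the homogeneous degree-$2$ case, which is already controlled by the coefficient bound \eqref{Limit to the norm of A}, and then to convert that coefficient estimate into an operator-norm bound via the Grothendieck inequality \eqref{conclusion}. This is exactly the mechanism behind Theorem \ref{lower bound for homogeneous degree 2} and Varopoulos's original argument, but with the sharper constant $\tfrac{3\sqrt3}{4}$ in place of $2$.

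First I would homogenize. Writing $p(z) = c + \sum_{j=1}^n b_j z_j + \sum_{i,j=1}^n a_{ij} z_i z_j$ with $(a_{ij})$ symmetric, I introduce an auxiliary variable $z_0$ and set
\[\tilde p(z_0,z_1,\ldots,z_n) = c\,z_0^2 + \sum_{j=1}^n b_j z_0 z_j + \sum_{i,j=1}^n a_{ij} z_i z_j,\]
a homogeneous polynomial of degree $2$ in $n+1$ variables whose symmetric coefficient matrix $\tilde A = \big(\!\!\big(\tilde a_{ij}\big)\!\!\big)_{0\le i,j\le n}$ has $\tilde a_{00}=c$, $\tilde a_{0j}=\tilde a_{j0}=b_j/2$, and $\tilde a_{ij}=a_{ij}$ for $i,j\ge 1$, and which satisfies $\tilde p(1,z_1,\ldots,z_n)=p(z)$. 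The point to verify here is that $\|\tilde p\|_{\D^{n+1},\infty} = \|p\|_{\D^n,\infty}$: by the maximum modulus principle on the polydisc the supremum of $|\tilde p|$ is attained on the torus $\T^{n+1}$, where $|z_0|=1$ and $\tilde p(z_0,z) = z_0^2\,p(z/z_0)$ with $z/z_0\in\T^n$, so the two suprema coincide.

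Next I would apply \eqref{Limit to the norm of A} to $\tilde p$, which after scaling gives $\|\tilde A\|_{\ell^\infty(n+1)\to\ell^1(n+1)} \le \tfrac{3\sqrt3}{4}\|\tilde p\|_{\D^{n+1},\infty} = \tfrac{3\sqrt3}{4}\|p\|_{\D^n,\infty}$. Setting $T_0=I$, the tuple $(T_0,T_1,\ldots,T_n)$ is again a commuting tuple of contractions, and a direct computation gives $\tilde p(T_0,\ldots,T_n) = \sum_{i,j=0}^n \tilde a_{ij} T_i T_j = p(T_1,\ldots,T_n)$. Then, for unit vectors $\xi,\eta\in\h$, I write
\[\langle p(T)\xi,\eta\rangle = \sum_{i,j=0}^n \tilde a_{ij}\langle T_i T_j\xi,\eta\rangle = \sum_{i,j=0}^n \tilde a_{ij}\langle T_j\xi, T_i^*\eta\rangle = \sum_{i,j=0}^n \tilde a_{ij}\langle u_j, v_i\rangle,\]
where $u_j := T_j\xi$ and $v_i := T_i^*\eta$ satisfy $\|u_j\|,\|v_i\|\le 1$. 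Using the symmetry $\tilde a_{ij}=\tilde a_{ji}$ to relabel the pairing into the form of \eqref{conclusion}, the Grothendieck inequality applied to the normalized matrix $\tilde A/\|\tilde A\|_{\ell^\infty\to\ell^1}$ yields $|\langle p(T)\xi,\eta\rangle| \le K_G^\C\,\|\tilde A\|_{\ell^\infty\to\ell^1} \le \tfrac{3\sqrt3}{4}K_G^\C\|p\|_{\D^n,\infty}$, and taking the supremum over unit $\xi,\eta$ gives the theorem.

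The main obstacle I anticipate is the homogenization step: verifying that passing to $\tilde p$ neither increases the supremum norm (the maximum modulus argument above) nor destroys the contraction structure, the latter being secured precisely by the choice $T_0=I$. Once this bookkeeping is settled, the remaining estimate is the standard Grothendieck pairing trick, with the improved constant $\tfrac{3\sqrt3}{4}$ supplied by \eqref{Limit to the norm of A} substituted for the factor $2$ used in \cite{V2}.
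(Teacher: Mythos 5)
Your proposal is correct and follows essentially the same route as the paper: homogenize $p$ with an auxiliary variable $z_0$ (the paper's polynomial $q$ and matrix $B$ are exactly your $\tilde p$ and $\tilde A$), note $\|q\|_{\D^{n+1},\infty}=\|p\|_{\D^n,\infty}$, pair $p(T)$ against unit vectors via $v_j=T_j\xi$, $w_i=T_i^*\eta$ with $T_0=I$, and apply the Grothendieck inequality together with the bound \eqref{Limit to the norm of A}. Your maximum-modulus justification of the norm equality and the explicit identity $\tilde p(T_0,\ldots,T_n)=p(T)$ supply details the paper leaves implicit, but the argument is the same.
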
 
\begin{proof}
Let 
\[p(z_1,\ldots,z_n)=a_0+
\sum_{j=1}^{n}a_jz_j+
\sum_{j,k=1}^{n}a_{jk}z_jz_k.\]
For $x,y\in \h$ arbitrary vectors of norm at most 1, we have 
\[\left|\langle p(T_1,\ldots,T_n)x,y\rangle\right|=
\big|a_0\langle x,y\rangle + \sum_{j=1}^{n}\langle a_jT_jx,y\rangle 
+\sum_{j,k=1}^{n} \langle a_{jk}T_jx,T_{k}^{*}y\rangle\big|.\]
Let 
\[B=
\left(
\begin{array}{ccccc}
	a_0 & a_1/2 & a_2/2 & \cdots & a_n/2\\
	a_1/2 & a_{11} & a_{12} & \cdots & a_{1n}\\
	\vdots & \vdots & \vdots & & \vdots\\
	a_n/2 & a_{n1} & a_{n2} & \cdots & a_{nn}\\
\end{array}
\right)
\]
and $q$ be the corresponding  homogeneous polynomial of degree $2$  
in $n+1$ variables defined by
\[q(z_0,z_1,\ldots,z_n)=a_0 z_{0}^{2}
+\sum_{j=1}^{n}a_jz_jz_0
+\sum_{j,k=1}^{n}a_{jk}z_jz_k.\]
It can easily be seen that $\|q\|_{\D^{n+1},\infty}=\|p\|_{\D^n,\infty}$.  
Suppose $v_0=x,v_j=T_jx$ and $w_0=y,w_j=T_{j}^{*}y$ 
for $j=1,\ldots,n$. 
Then
\[\sum_{j=0}^{n} b_{jk}\langle v_j,w_k \rangle 
= a_0\langle x,y\rangle 
+ \sum_{j=1}^{n}\langle a_jT_jx,y\rangle 
+\sum_{j,k=1}^{n} \langle a_{jk}T_jx,T_{k}^{*}y\rangle,\]
where $b_{jk}$ is the $(j,k)$ entry in $B$. 
Now from the definition of the complex Grothendieck constant, we get
\[\Big|\sum_{j=0}^{n} b_{jk}\langle v_j,w_k \rangle\Big|
\leq 
K_G^\C\|B\|_{\ell^{\infty}(n+1)\to \ell^1(n+1)}.
\]
Now, to complete the proof, one merely has to apply the inequality  \eqref{Limit to the norm of A}.
\end{proof}

\section{Homomorphisms induced by operators of type  \texorpdfstring{${\sf{V\,I}}$}{TEXT}}\label{VOTO}
Let $\Omega$ be a bounded domain in $\mathbb{C}^m$ 
and $\omega=(\omega_1,...,\omega_m)\in\Omega$ be fixed. 
Let $\mathbb{H}$ be a separable Hilbert space. 
Let $x=(x_1,\ldots,x_m),y=(y_1,\ldots,y_m)$, 
where $x_j,y_j\in\mathbb{H}$ for all $j=1,\ldots,m$, 
be such that $[x_{j}^{\sharp},y_k]=[x_{k}^{\sharp},y_j]$ 
for all $j,k=1,\ldots,m$.  
Let the operator $T_{x_j,y_j}$ be of type {\sf{V\,I}} 
corresponding to the pair $x_j,y_j,$ $j=1,\ldots,m.$ We let $\boldsymbol T^{(\omega)}_{x,y}$ denote the commuting $n$-tuple $(\omega_1 I + T_{x_1,y_1}, \ldots, \omega_m I + T_{x_m,y_m}).$ We will let $\boldsymbol T_x$ denote the $m$-tuple $ (T_{x_1,x_1}, \ldots,  T_{x_m,x_m}).$
It is easy to see that for $j,k,l=1,\ldots,m,$ 
we have 
$T_{x_j,y_j}T_{x_k,y_k}T_{x_l,y_l}=0$ 
and
\[T_{x_j,y_j}T_{x_k,y_k}=
\left(
 \begin{array}{ccc}
  	 0 & 0 & \big[x_{j}^{\sharp},y_k\big]\\
  	 0 & 0 & 0\\
  	 0 & 0 & 0\\
  \end{array} 
\right).\]
Consequently, for any polynomial $p$ in $m$ variables, we see that 
\begin{equation} \label{FuncCal(p)}
p(\boldsymbol T^{(\omega)}_{x,y})=\left(
 \begin{array}{ccc}
	p(\omega) & D p(\omega)\cdot x^{\sharp} & \frac{1}{2}D^2p(\omega)\cdot A_{x,y}\\
	0 & p(\omega)I & D p(\omega)\cdot y\\
	0 & 0 & p(\omega)\\
  \end{array}
\right),
\end{equation}
where $x^{\sharp}=\big(x_1^\sharp,\ldots,x_m^\sharp\big),$ 
$A_{x,y}=\big(\!\!\big([x_{i}^{\sharp},y_j]\big)\!\!\big)_{m\times m}$. Therefore, extending this definition to functions in ${\rm H}^\infty(\Omega),$ we obtain the homomorphism  $\rho^{(\w)}_{x,y}:
{\rm H}^\infty(\Omega)\to
\mathcal{B}(\mathbb{C}\oplus\mathbb{H}\oplus\mathbb{C}),$ which for any polynomial $p$ is given by the formula 
$\rho^{(\w)}_{x,y}(p) = p(\boldsymbol T^{(\omega)}_{x,y})$ and is defined for $f$ in ${\rm H}^\infty(\W)$ by the same formula. 
The homomorphism $\rho^{(0)}_{x,x}$ will simply be denoted by $\rho_x$.

Suppose $\W=\D^m$ and 
$\|x_j\|\leq 1,\|y_j\|\leq1$ for each $j=1,\ldots,m$. 
Then for $m=1,2$, 
we know that $\rho^{(\w)}_{x,y}$ is contractive homomorphism. 
What about $m>2$?\\
The following example is due to Varopoulos and Kaijser in \cite{V1}. Set  
\[A_1=
\left(
\begin{array}{ccccc}
	0 & 0 & 0 & 0 & 0\\
	1 & 0 & 0 & 0 & 0\\
	0 & 0 & 0 & 0 & 0\\
	0 & 0 & 0 & 0 & 0\\
	0 & 1/\sqrt{3} & -1/\sqrt{3} & -1/\sqrt{3} & 0\\ 
\end{array}
\right),\]
\[A_2=
\left(
\begin{array}{ccccc}
	0 & 0 & 0 & 0 & 0\\
	0 & 0 & 0 & 0 & 0\\
	1 & 0 & 0 & 0 & 0\\
	0 & 0 & 0 & 0 & 0\\
	0 & -1/\sqrt{3} & 1/\sqrt{3} & -1/\sqrt{3} & 0\\ 
\end{array}
\right)\]
and
\[A_3=
\left(
\begin{array}{ccccc}
	0 & 0 & 0 & 0 & 0\\
	0 & 0 & 0 & 0 & 0\\
	0 & 0 & 0 & 0 & 0\\
	1 & 0 & 0 & 0 & 0\\
	0 & -1/\sqrt{3} & -1/\sqrt{3} & 1/\sqrt{3} & 0\\ 
\end{array}
\right).
\]
It is easy to see that 
$A_1,A_2$ and $A_3$ are commuting contractions.
Now, consider the \VK polynomial $p_{\!_V}$  defined earlier. 
Choose $x_1=(\frac{1}{\sqrt{3}},-\frac{1}{\sqrt{3}},-\frac{1}{\sqrt{3}}),$ 
$x_2=(-\frac{1}{\sqrt{3}},\frac{1}{\sqrt{3}},-\frac{1}{\sqrt{3}}),$ 
$x_3=(-\frac{1}{\sqrt{3}},-\frac{1}{\sqrt{3}},\frac{1}{\sqrt{3}})$
and 
$y_1=(1,0,0),$ $y_2=(0,1,0),$ $y_3=(0,0,1)$.
In the notations above  
$T_{x_1,y_1}=A_1,$ $T_{x_2,y_2}=A_2$ and $T_{x_3,y_3}=A_3$. 
We have 
\[\left\|p_{\!_V}(T_{x_1,y_1},T_{x_2,y_2},T_{x_3,y_3})\right\|
=\Big|\sum\limits_{j,k=1}^{3}a_{jk}
\left[x_{j}^{\sharp},y_k\right]\Big|=3\sqrt{3}> 5 = \|p_{\!_V}\|_{\D^3,\infty},\]
where $\big (\!\!\big ( a_{jk} \big )\!\!\big ) = A_{\!_{V}}.$ 
Hence $\rho^{0}_{x,y}$ corresponding to 
$x=(x_1,x_2,x_3),\,y=(y_1,y_2,y_3)$ is not contractive.
In this example the ratio of 
$\|p_{\!_V}(\boldsymbol T_{x,y}^0 )\|$ to $\|p_{\!_V}\|_{\D^3,\infty}$ is approximately $1.04.$ 
In this section we shall show that 
$$
\sup \Big \{\frac{\|p_{\!_{V}}(\boldsymbol T_x)\|}{\|p_{\!_{V}}\|_{\mathbb D^3,\infty}}: \|x\|_2=1 \Big \} 
$$
is $1.2,$ which was proved earlier by Holbrook \cite{HJ}. However, we give many examples of operators of type {\sf{V\,I}} for which this upper bound is attained.  As explained earlier, the hope that we may be able to increase it even further was the motivation behind introducing the set of operators {\sf{V\,I}.}  For the proof, we shall need the following lemma.
\begin{lem}\label{minimizing sum of inner products}
	For $n>1,$ we have  
	\[\min
	\big(\langle x_1,x_2 \rangle + 
	\langle x_2,x_3 \rangle + 
	\langle x_3,x_1 \rangle\big)
	=-\frac{3}{2}, 
	\]
	where the minimum is over the set $\big\{(x_1,x_2,x_3):x_1,x_2,x_3\in\R^n,\,\|x_i\|_2=1,\,i=1,2,3\big\}.$
\end{lem}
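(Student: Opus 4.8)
The plan is to reduce the entire problem to the non-negativity of a single squared norm by completing the square. First I would expand the squared norm of the sum of the three vectors:
\[
\|x_1+x_2+x_3\|_2^2 = \sum_{i=1}^{3}\|x_i\|_2^2 + 2\big(\langle x_1,x_2\rangle + \langle x_2,x_3\rangle + \langle x_3,x_1\rangle\big).
\]
Since each $x_i$ has unit norm, the first sum equals $3$, so the quantity to be minimized is exactly $\tfrac{1}{2}\big(\|x_1+x_2+x_3\|_2^2-3\big)$. Because a squared norm is non-negative, this gives at once the lower bound
\[
\langle x_1,x_2\rangle + \langle x_2,x_3\rangle + \langle x_3,x_1\rangle \geq -\tfrac{3}{2},
\]
valid for every $n$ and entirely independent of the dimension.

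It then remains to exhibit unit vectors attaining equality, which by the identity above happens precisely when $x_1+x_2+x_3=0$. For this I would produce an explicit configuration inside a two-dimensional subspace, namely three unit vectors separated by angle $2\pi/3$, for example
\[
x_1=(1,0,0,\ldots,0),\quad x_2=\big(-\tfrac{1}{2},\tfrac{\sqrt{3}}{2},0,\ldots,0\big),\quad x_3=\big(-\tfrac{1}{2},-\tfrac{\sqrt{3}}{2},0,\ldots,0\big).
\]
A direct check shows these are unit vectors with $x_1+x_2+x_3=0$, so each pairwise inner product equals $-\tfrac{1}{2}$ and the sum is $-\tfrac{3}{2}$. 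Combined with the lower bound, this shows the minimum is attained and equals $-\tfrac{3}{2}$.

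The only place the hypothesis $n>1$ is used is in this achievability step: in $\R^1$ the unit vectors are merely $\pm 1$, and no choice of three of them sums to zero, so the value $-\tfrac{3}{2}$ cannot be reached. There is no real analytic obstacle here—no Lagrange multipliers or compactness arguments are required, since completing the square turns the constrained minimization into a transparent non-negativity statement. The single point needing (trivial) verification is that the proposed triple genuinely consists of unit vectors summing to zero, and I expect even that to be immediate rather than an obstruction.
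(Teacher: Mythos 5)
Your proof is correct and follows essentially the same route as the paper: both expand $\|x_1+x_2+x_3\|_2^2$ to obtain the lower bound $-\tfrac{3}{2}$ from non-negativity of the squared norm, and both exhibit the same equilateral configuration $x_1=(1,0,\ldots,0)$, $x_2=(-\tfrac12,\tfrac{\sqrt3}{2},0,\ldots,0)$, $x_3=(-\tfrac12,-\tfrac{\sqrt3}{2},0,\ldots,0)$ to attain it. If anything, your write-up is slightly more explicit than the paper's about why the bound is sharp and where $n>1$ is used.
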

\begin{proof}
Let $x_1,x_2,x_3\in\R^n$ with $\|x_i\|_2=1,$ $i=1,2,3.$ 
The following identity is easily verified:  
\[\|x_1+x_2+x_3\|_2^2=\|x_1\|_2^2+\|x_2\|_2^2+\|x_3\|_2^2 + 
2\big(\langle x_1,x_2 \rangle + 
\langle x_2,x_3 \rangle + 
\langle x_3,x_1 \rangle\big).\]  
For $i=1,2,3,$ $\|x_i\|_2=1,$  
therefore
\[\|x_1+x_2+x_3\|_2^2-3
=2\big(\langle x_1,x_2 \rangle + 
\langle x_2,x_3 \rangle + 
\langle x_3,x_1 \rangle\big).\]
Thus $\langle x_1,x_2 \rangle + 
\langle x_2,x_3 \rangle + 
\langle x_3,x_1 \rangle$ 
is minimized at $x_1,x_2,x_3\in\R^n$ 
such that $x_1+x_2+x_3=0$. 
Choose any three points $x_1,x_2,x_3$ 
from the unit sphere of $\R^n$ such that 
the centroid of these points is the origin. 
For example, choose $x_1=(1,0,\ldots,0)$, 
$x_2=(-1/2,\sqrt{3}/2,0,\ldots,0)$ and $x_3=(-1/2,-\sqrt{3}/2,0,\ldots,0)$. 
Thus we have proved the lemma.
\end{proof}
What follows is an easy generalization of the preceding lemma.
\begin{lem}
	For $n>1,$ we have  
	\[\min\left(\sum\limits_{i<j}\langle x_i,x_j \rangle \right)
	=-\frac{m}{2}, 
	\]
	where minimum is over the set 
	$\big\{(x_1,\ldots,x_m):x_1,\ldots,x_m\in\R^n,\,\|x_i\|_2=1,\,i=1,\ldots,m\big\}.$
\end{lem}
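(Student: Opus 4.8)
The plan is to mirror the proof of the preceding Lemma~\ref{minimizing sum of inner products}, replacing the three-term expansion by the full polarization of the squared norm of $\sum_{i=1}^{m}x_i$. First I would record the identity
\[\Big\|\sum_{i=1}^{m} x_i\Big\|_2^2 = \sum_{i=1}^{m}\|x_i\|_2^2 + 2\sum_{i<j}\langle x_i,x_j\rangle,\]
valid in any real inner product space. Since each $x_i$ is a unit vector, $\sum_{i}\|x_i\|_2^2 = m$, so this rearranges to
\[\sum_{i<j}\langle x_i,x_j\rangle = \tfrac{1}{2}\Big(\Big\|\sum_{i=1}^{m} x_i\Big\|_2^2 - m\Big).\]
Thus minimizing the left-hand side is exactly the same as minimizing $\big\|\sum_i x_i\big\|_2^2$, and the lower bound $-m/2$ can be read off immediately from the nonnegativity of a squared norm, with equality forced precisely when $\sum_i x_i = 0$.

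Second, to show that the value $-m/2$ is attained I need to exhibit, for each $m$, unit vectors $x_1,\ldots,x_m\in\R^n$ with $\sum_i x_i = 0$; this is where the hypothesis $n>1$ is used. I would take the vertices of a regular $m$-gon: identify a two-dimensional coordinate subspace of $\R^n$ with $\C$ and set $x_k = \big(\cos\tfrac{2\pi k}{m},\,\sin\tfrac{2\pi k}{m},\,0,\ldots,0\big)$ for $k=1,\ldots,m$. These are unit vectors whose sum corresponds to $\sum_{k=1}^{m} e^{2\pi i k/m} = 0$. Substituting this configuration into the identity above yields $\sum_{i<j}\langle x_i,x_j\rangle = -m/2$, which completes the argument.

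The only point requiring care is the existence step, which genuinely relies on $n\geq 2$: in $\R^1$ a unit vector is $\pm 1$, so a zero-sum configuration exists only for even $m$, whereas the regular $m$-gon construction works for every $m$ as soon as a two-dimensional subspace is available. With that subspace in hand no real obstacle remains, and the identity-plus-nonnegativity half of the argument is entirely routine and identical in spirit to the $m=3$ case already established.
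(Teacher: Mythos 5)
Your proof is correct and follows exactly the approach the paper uses for the $m=3$ case (Lemma~\ref{minimizing sum of inner products}), of which the paper declares this lemma an ``easy generalization'' without supplying details: expand $\|\sum_i x_i\|_2^2$, read off the bound $-m/2$ from nonnegativity, and attain it with a zero-sum configuration of unit vectors, your regular $m$-gon being the natural generalization of the paper's equilateral-triangle choice. Your remark that $n>1$ is needed for the existence step (since in $\R^1$ a zero-sum configuration of unit vectors requires $m$ even) is a worthwhile clarification the paper leaves implicit.
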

Let $x_1,x_2,x_3\in\R^{n}$ be arbitrary vectors of Euclidean norm $1$ and  
set $x=(x_1,x_2,x_3).$  
Consider the algebra homomorphism 
$\rho_{x}$ as in \eqref{FuncCal(p)}, namely, $\rho_{x}(p) = p(\boldsymbol T_x).$ 
Take \VK polynomial $p_{\!_V}$. By the definition of $\rho_{x},$ 
it is easy to see that 
\begin{align*}
	\left\|\rho_{x}(p_{\!_V})\right\|
	&=\Big|\sum_{j,k=1}^{3}a_{jk}[x_{j}^{\sharp},x_k]\Big|
	=\Big|\sum_{j,k=1}^{3}a_{jk}\langle x_{j},x_k \rangle\Big|\\
	&=\sum_{i=1}^{3}a_{ii} + 2a_{12}\langle x_1,x_2 \rangle + 
	2a_{23}\langle x_2,x_3 \rangle + 2a_{31}\langle x_3,x_1 \rangle\\
	&=3-2\left(\langle x_1,x_2 \rangle + 
	\langle x_2,x_3 \rangle + \langle x_3,x_1 \rangle\right).
\end{align*}
From the Lemma \ref{minimizing sum of inner products}, 
it is clear that 
we can choose $x_1,x_2,x_3\in\R^n$
(in fact there are infinitely many choices for $x$ for each $n>1$) 
such that $\|\rho_x(p_{\!_V})\|=6$ 
and $\|x_i\|_2=1$ for each $i=1,2,3$. 
Thus
\[\frac{\|\rho_{x}(p_{\!_V})\|}{\|p_{\!_V}\|_{\D^3,\infty}}=\frac{6}{5}=1.2>1.\]
Hence for this choice of $x$ 
the corresponding ratio of $\|p_{\!_V}(\boldsymbol T_x)\|$ to $\|p_{\!_V}\|_{\D^3,\infty}$ is 1.2.

\medskip

We state ``the zero lemma{'\!'} for a third time, in the form we will use it here. The proof is no different from what has been indicated earlier. 

\begin{lem}\label{Zero lemma for VOFT}
	For $m-$tuple of vectors, $x=(x_1,\ldots,x_m),y=(y_1,\ldots,y_m)$ 
	from $\h,$ we have, 
	$\|\rho^{(\w)}_{x,y}(f)\|\leq 1$ for all $f\in{\rm H}^\infty(\Omega,\D)$ 
	if and only if 
	$\|\rho^{(\w)}_{x,y}(f)\|\leq 1$ 
	for all $f\in{\rm H}^\infty_\w(\Omega,\D).$
\end{lem}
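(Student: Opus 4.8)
The plan is to mirror the earlier incarnations of the zero lemma (e.g.\ Lemma~\ref{zeropolydisk}), exploiting that $\rho^{(\w)}_{x,y}$ is an algebra homomorphism determined by a functional calculus at the single spectral point $\w$. One implication is immediate: since every $f\in {\rm H}^\infty_\w(\W,\D)$ already lies in ${\rm H}^\infty(\W,\D)$, the condition $\|\rho^{(\w)}_{x,y}(f)\|\leq 1$ for all $f\in {\rm H}^\infty(\W,\D)$ trivially restricts to those $f$ vanishing at $\w$. So the real content is the reverse implication.

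For the reverse direction, I would assume $\|\rho^{(\w)}_{x,y}(f)\|\leq 1$ for every $f\in {\rm H}^\infty_\w(\W,\D)$ and fix an arbitrary $g\in {\rm H}^\infty(\W,\D)$. First I would choose the automorphism $\phi$ of $\D$ sending $g(\w)$ to $0$; then $\phi\circ g$ maps $\W$ into $\D$ and satisfies $(\phi\circ g)(\w)=0$, so $\phi\circ g\in {\rm H}^\infty_\w(\W,\D)$ and the hypothesis gives $\|\rho^{(\w)}_{x,y}(\phi\circ g)\|\leq 1$. Next, using that $\rho^{(\w)}_{x,y}$ is the homomorphism obtained by substituting the commuting tuple $\boldsymbol T^{(\w)}_{x,y}$ into holomorphic functions, I would record the composition identity $\rho^{(\w)}_{x,y}(\phi\circ g)=\phi\bigl(\rho^{(\w)}_{x,y}(g)\bigr)$, the right-hand side being the rational functional calculus of the disc automorphism $\phi$ applied to the operator $\rho^{(\w)}_{x,y}(g)$. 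Finally, applying $\phi^{-1}$ and invoking the von-Neumann inequality (valid here because $\rho^{(\w)}_{x,y}(\phi\circ g)$ is a contraction and $\|\phi^{-1}\|_{\D,\infty}=1$) yields $\|\rho^{(\w)}_{x,y}(g)\|=\|\phi^{-1}(\rho^{(\w)}_{x,y}(\phi\circ g))\|\leq 1$, which is the desired conclusion.

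The one point deserving care --- the main, if modest, obstacle --- is justifying the composition identity $\rho^{(\w)}_{x,y}(\phi\circ g)=\phi(\rho^{(\w)}_{x,y}(g))$ together with the unambiguity of the functional calculus. Here I would use that the operator $\rho^{(\w)}_{x,y}(g)=g(\boldsymbol T^{(\w)}_{x,y})$ is, by the explicit formula \eqref{FuncCal(p)}, upper triangular with the single diagonal value $g(\w)\in\D$, so its spectrum is $\{g(\w)\}$ and any function holomorphic in a neighbourhood of $g(\w)$ (in particular $\phi$ and $\phi^{-1}$) acts through its finite Taylor expansion there; since $\boldsymbol T^{(\w)}_{x,y}$ is nilpotent modulo the scalar part, only the terms up to second order survive. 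This reduces the chain-rule identity to a finite, purely algebraic computation and guarantees $\phi^{-1}\bigl(\phi(\rho^{(\w)}_{x,y}(g))\bigr)=\rho^{(\w)}_{x,y}(g)$, closing the argument exactly as in the earlier versions of the lemma.
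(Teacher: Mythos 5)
Your proposal is correct and follows exactly the route the paper takes: the paper proves this lemma by citing its earlier incarnations (e.g.\ Lemma~\ref{zeropolydisk}), whose proof is precisely your composition with a disc automorphism $\phi$ sending $g(\w)$ to $0$, followed by the von-Neumann inequality applied to $\phi^{-1}$. Your extra care in justifying $\rho^{(\w)}_{x,y}(\phi\circ g)=\phi(\rho^{(\w)}_{x,y}(g))$ via the nilpotent upper-triangular structure of $g(\boldsymbol T^{(\w)}_{x,y})$ fills in a detail the paper leaves implicit, but does not change the argument.
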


As before, using the Lemma \ref{Zero lemma for VOFT} 
 we may assume that $f(\omega)=0,$ without loss of generality,   
in determining the contractivity of $\rho^{(\w)}_{x,y}$ for $f$ in 
any algebra of holomorphic functions containing the algebra ${\rm H}^\infty(\Omega).$ 

\begin{prop}\label{contractivity VOFT}
	Let $\rho^{(\w)}_{x,y}$ be as defined in \eqref{FuncCal(p)}. 
	For $f\in{\rm H}^\infty_{\w}(\W,\D)$, 
	we get $\|\rho^{(\w)}_{x,y}(f)\|\leq 1$ if and only if
	\[\left|\frac{1}{2}D^2f(\omega)\cdot A_{x,y}\right|^2\leq 
	\left(1-\left\|D f(\omega)\cdot x\right\|^2\right)
	\left(1-\left\|D f(\omega)\cdot y\right\|^2\right). \]
\end{prop}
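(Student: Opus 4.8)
The plan is to reduce the statement to a direct operator-norm computation and then read off the condition by means of Parrott's theorem, exactly in the spirit of Lemma~\ref{OPnorm}. By Lemma~\ref{Zero lemma for VOFT} it suffices to treat $f\in{\rm H}^\infty_{\w}(\W,\D)$, so that $f(\w)=0$. Substituting into \eqref{FuncCal(p)}, the operator $\rho^{(\w)}_{x,y}(f)$ becomes the strictly upper-triangular (hence nilpotent) operator
\[
M=\left(
\begin{array}{ccc}
0 & a & c\\
0 & 0 & b\\
0 & 0 & 0
\end{array}
\right)
\]
on $\C\oplus\h\oplus\C$, where $a=Df(\w)\cdot x^{\sharp}$ is a functional on $\h$, $b=Df(\w)\cdot y$ is a vector in $\h$, and $c=\tfrac12 D^2f(\w)\cdot A_{x,y}$ is a scalar. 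Since the map $\phi(z)=z^{\sharp}$ is a linear isometry of $\h$ onto $\h^{\sharp}$, we have $\norm{a}=\norm{Df(\w)\cdot x}$, and clearly $\norm{b}=\norm{Df(\w)\cdot y}$. Because the first column of $M$ vanishes, $\norm{M}$ equals the norm of the compression $\tilde M:\h\oplus\C\to\C\oplus\h$ given by $\tilde M(v,t)=(a(v)+ct,\,tb)$.

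First I would compute $\norm{\tilde M}$ by Parrott's theorem, regarding $c$ as the free corner of $\tilde M=\left(\begin{smallmatrix} a & c\\ 0 & b\end{smallmatrix}\right)$. The two border pieces $\left(\begin{smallmatrix} a\\ 0\end{smallmatrix}\right)$ and $(0\;\;b)$ are contractions precisely when $\norm{a}\le1$ and $\norm{b}\le1$. Applying the completion formula following Parrott's theorem with $C=0$ (so that $Z=a$, $Y=b$ and the term $ZC^{*}Y$ drops out), the admissible corners are exactly $c=(1-\norm{a}^2)^{1/2}V(1-\norm{b}^2)^{1/2}$ with $\abs{V}\le1$; that is, $\norm{\tilde M}\le1$ iff $\norm{a}\le1$, $\norm{b}\le1$ and
\[
\abs{c}^2\le\bigl(1-\norm{a}^2\bigr)\bigl(1-\norm{b}^2\bigr).
\]
Substituting the identifications above gives exactly the inequality asserted in the proposition. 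Equivalently, avoiding Parrott, one may maximize $\abs{a(v)+ct}^2+\abs{t}^2\norm{b}^2$ over $\norm{v}^2+\abs{t}^2=1$; taking $v$ along the Riesz vector of $a$ and aligning phases reduces this to the largest eigenvalue of $\left(\begin{smallmatrix}\norm{a}^2 & \norm{a}\,\abs{c}\\ \norm{a}\,\abs{c} & \abs{c}^2+\norm{b}^2\end{smallmatrix}\right)$, and the requirement $\lambda_{\max}\le1$ collapses, via the determinant of $I$ minus this matrix, to the same inequality.

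The step needing care is the pair of side conditions $\norm{a}\le1$ and $\norm{b}\le1$, which is what turns the single displayed inequality into a genuine criterion for $\norm{M}\le1$. For the ``only if'' direction they are automatic: compressing $M$ to the top-left $2\times2$ corner yields an operator of norm $\norm{a}$, and to the bottom-right corner one of norm $\norm{b}$, so $\norm{M}\le1$ forces both. For the ``if'' direction they follow from the Schwarz lemma, since $Df(\w)$ is a contractive linear functional on $(\C^{m},\norm{\cdot}_{\W,\infty})$ while $\norm{x_j},\norm{y_j}\le1$, whence $\norm{Df(\w)\cdot x}\le1$ and $\norm{Df(\w)\cdot y}\le1$. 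Granting these, the remaining diagonal requirement $\abs{c}^2+\norm{b}^2\le1$ is implied by the displayed inequality, because $\abs{c}^2+\norm{b}^2\le(1-\norm{a}^2)(1-\norm{b}^2)+\norm{b}^2=1-\norm{a}^2(1-\norm{b}^2)\le1$, so the product (determinant) condition alone is equivalent to contractivity. I expect this bookkeeping around the side conditions, together with verifying the Parrott completion formula in the operator setting where $a$ and $b$ are a functional and a vector rather than scalars, to be the main obstacle; the algebraic collapse of the three conditions to one is the decisive simplification.
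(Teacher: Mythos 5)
Your argument is correct, and it rests on the same essential reduction as the paper's proof: both discard the trivial row and column of $\rho^{(\w)}_{x,y}(f)$ and compute the norm of the surviving $2\times 2$ corner with entries $a=Df(\w)\cdot x^{\sharp}$, $b=Df(\w)\cdot y$, $c=\tfrac12 D^2f(\w)\cdot A_{x,y}$. The difference is in the finishing move. The paper conjugates by isometries $V_1,V_2$ sending $b$ and $a$ to multiples of $e_1$, so that the operator entries become the scalars $\norm{a},\norm{b}$ and the problem reduces to the norm of a $2\times 2$ scalar matrix, whose contractivity condition is then read off without comment. You keep the operator entries and invoke Parrott's completion formula with $C=0$ (or, equivalently, your direct $\lambda_{\max}$ computation); both routes yield the same determinant inequality. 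What your version adds is an explicit treatment of the side conditions $\norm{a}\le 1$ and $\norm{b}\le 1$: these really are needed for the ``if'' direction, since the displayed inequality alone does not force contractivity (take $c=0$ and $\norm{a}=\norm{b}=2$, which satisfies $0\le(1-4)(1-4)$ while the corner has norm $2$), and the paper's one-line conclusion silently assumes them. One caveat on your fix: deriving the side conditions from the Schwarz lemma uses $\norm{x_j},\norm{y_j}\le 1$ and the polydisc (or unit-ball, $\w=0$) geometry, none of which appears among the stated hypotheses of the proposition, which is posed for arbitrary $x,y$ and a general bounded $\W$; the clean repair is either to add $\norm{Df(\w)\cdot x}\le 1$ and $\norm{Df(\w)\cdot y}\le 1$ to the equivalence (they are automatic in the ``only if'' direction, as you observe), or to note that these bounds do hold in every situation where the proposition is applied later in the thesis.
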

\begin{proof}
Let $f\in{\rm H}^\infty_{\w}(\W,\D).$ Let $V_1:\mathbb{H}\to\ell^2$ and 
$V_2:\mathbb{H}^{\sharp}\to(\ell^2)^{\sharp}$ 
be isometries taking 
$D f(\omega)\cdot y$ 
to $\|D f(\omega)\cdot y\|e_1$ 
and 
$D f(\omega)\cdot x^{\sharp}$ 
to $\|D f(\omega)\cdot x\|e_{1}^{\rm t}$ respectively, 
where $e_1$ is $(1,0,0,\ldots)^{\rm t}$. 
Then
\begin{align*}
	\|\rho^{(\w)}_{x,y}(f)\|&=\left\|
	\left(
	\begin{array}{cc}
			D f(\omega)\cdot x^{\sharp} & \frac{1}{2}D^2f(\omega)\cdot A_{x,y}\\
		   0 & D f(\omega)\cdot y\\
	  \end{array}
	\right)
	\right\|\\
	&=\left\|
	\left(
		\begin{array}{cc}
			\frac{1}{2}D^2f(\omega)\cdot A_{x,y} & D f(\omega)\cdot x^{\sharp}\\
			D f(\omega)\cdot y & 0\\
		\end{array}
	\right)
	\right\|.
\end{align*}
As norms are preserved under isometries therefore
\[\|\rho^{(\w)}_{x,y}(f)\|= \left\|
\left(
\begin{array}{cc}
 	  1 & 0\\
  	 0 & V_1\\
\end{array}
\right)
\left(
\begin{array}{cc}
	\frac{1}{2}D^2f(\omega)\cdot A_{x,y} & D f(\omega)\cdot x^{\sharp}\\
	D f(\omega)\cdot y & 0\\
\end{array}
\right)
\left(
\begin{array}{cc}
	   1 & 0\\
	   0 & V_2\\
\end{array}
\right)
\right\|,\]
and hence
\begin{align*}
	\|\rho^{(\w)}_{x,y}(f)\|&=\left\|
	\left(
	\begin{array}{cc}
	   \frac{1}{2}D^2f(\omega)\cdot A_{x,y} & \|D f(\omega)\cdot x\|e_{1}^{\rm t}\\
	   \|D f(\omega)\cdot y\|e_{1} & 0\\
	 \end{array}
	\right)
	\right\|\\
	&=\left\|
	\left(
	\begin{array}{cc}
		   \frac{1}{2}D^2f(\omega)\cdot A_{x,y} & \|D f(\omega)\cdot x\|\\
	 	  \|D f(\omega)\cdot y\| & 0\\
	  \end{array}
	\right)
	\right\|.
\end{align*}
Thus we have the proposition.

\end{proof}
Let
\[\mathcal D_{\Omega}^{(\omega)}
:=\Big\{\Big(\frac{1}{2} D^2f(\omega),D f(\omega)\Big)
\mid f\in{\rm H}^\infty_{\w}(\W,\D)\Big\}\]
be a subset of $M_{m}^{s}\times \mathbb{C}^m$, 
where $M_{m}^{s}$ denotes 
the set of all $m\times m$ complex symmetric matrices.

\begin{lem}
The set	$\mathcal D_{\Omega}^{(\omega)}$ can be realized as the unit ball in $M_{m}^{s}\times \mathbb{C}^m$ with respect to some norm, say $\|\cdot\|_\mathcal D.$
\end{lem}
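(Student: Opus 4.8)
The plan is to verify that $\mathcal D_\Omega^{(\omega)}$ has exactly the properties that characterise the closed unit ball of a norm on the finite-dimensional complex vector space $M_m^s\times\C^m$: convexity, the circled (balanced) property, absorption, and compactness. The starting observation is that the assignment $\Psi(f):=\big(\tfrac12 D^2f(\omega),Df(\omega)\big)$ is $\C$-linear in $f$, since differentiation at a fixed point is linear; thus $\mathcal D_\Omega^{(\omega)}=\Psi\big(H^\infty_\omega(\Omega,\D)\big)$ is the image of the convex circled set $H^\infty_\omega(\Omega,\D)$ under a linear map. Convexity and the circled property $\lambda\,\mathcal D_\Omega^{(\omega)}\subseteq\mathcal D_\Omega^{(\omega)}$ for $\abs{\lambda}\le1$ are then immediate: if $\abs{\lambda}\le1$ and $f\in H^\infty_\omega(\Omega,\D)$ then $\lambda f\in H^\infty_\omega(\Omega,\D)$ with $\Psi(\lambda f)=\lambda\Psi(f)$, and convex combinations of admissible $f$ remain admissible.

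Next I would show $\mathcal D_\Omega^{(\omega)}$ is absorbing. Given $(S,v)\in M_m^s\times\C^m$ with $S=\big(\!\!\big(s_{ij}\big)\!\!\big)$ and $v=(v_1,\dots,v_m)$, the degree-two polynomial
\[
p(z)=\sum_{i=1}^m v_i(z_i-\omega_i)+\sum_{i,j=1}^m s_{ij}(z_i-\omega_i)(z_j-\omega_j)
\]
satisfies $p(\omega)=0$, $Dp(\omega)=v$ and $\tfrac12 D^2p(\omega)=S$, so $\Psi(p)=(S,v)$. Since $\Omega$ is bounded, $M:=\norm{p}_{\Omega,\infty}<\infty$; if $M>0$ then $p/M\in H^\infty_\omega(\Omega,\D)$ and $\Psi(p/M)=(S,v)/M$, so $(S,v)\in M\,\mathcal D_\Omega^{(\omega)}$ (the case $M=0$ forces $(S,v)=(0,0)$, which lies in $\mathcal D_\Omega^{(\omega)}$). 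For boundedness, fix $r>0$ so that the closed polydisc of polyradius $r$ about $\omega$ is contained in $\Omega$; the Cauchy estimates applied to any $f$ with $\norm{f}_{\Omega,\infty}\le1$ bound $\abs{Df(\omega)}$ and $\abs{D^2f(\omega)}$ uniformly, so $\mathcal D_\Omega^{(\omega)}$ is contained in a ball of $M_m^s\times\C^m$.

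The step I expect to be the crux is closedness, since images of closed bounded sets under linear maps need not be closed in general; here it is rescued by normality. Given a sequence $f_n\in H^\infty_\omega(\Omega,\D)$ with $\Psi(f_n)\to(S,v)$, Montel's theorem yields a subsequence converging locally uniformly on $\Omega$ to a holomorphic $f$; local uniform convergence gives $\norm{f}_{\Omega,\infty}\le1$, $f(\omega)=0$, and convergence of all derivatives at $\omega$, whence $\Psi(f)=(S,v)$ and $(S,v)\in\mathcal D_\Omega^{(\omega)}$. (Equivalently, one may invoke weak-$*$ compactness of the unit ball of $H^\infty(\Omega)$ together with weak-$*$ continuity of the derivative functionals at the interior point $\omega$.) Being convex, circled, absorbing, bounded and closed in a finite-dimensional space, $\mathcal D_\Omega^{(\omega)}$ contains no line, so its Minkowski gauge
\[
\norm{(S,v)}_{\mathcal D}:=\inf\big\{t>0:(S,v)\in t\,\mathcal D_\Omega^{(\omega)}\big\}
\]
is a genuine norm, where the circled property upgrades absolute homogeneity to all complex scalars, and $\mathcal D_\Omega^{(\omega)}$ is precisely its closed unit ball. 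This gives the asserted realization.
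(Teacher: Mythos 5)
Your proof is correct and follows essentially the same route as the paper: verify that $\mathcal D_{\Omega}^{(\omega)}$ is balanced, convex and absorbing (using linearity of $f\mapsto\big(\tfrac12 D^2f(\omega),Df(\omega)\big)$, stability of $H^\infty_\omega(\Omega,\mathbb D)$ under scalars of modulus at most one and convex combinations, and the normalized quadratic polynomial for absorption), then take the Minkowski gauge. The only difference is that you additionally check boundedness (Cauchy estimates) and closedness (Montel), steps the paper leaves implicit even though they are what guarantee the gauge is a genuine norm having $\mathcal D_{\Omega}^{(\omega)}$ as its closed unit ball rather than merely a seminorm with the set sandwiched between its open and closed balls.
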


\begin{proof} We will show that $\mathcal D_{\Omega}^{(\omega)}$ is a balanced, convex and absorbing subset of $M_{m}^{s}\times \mathbb{C}^m.$  
\begin{itemize}
\item\textbf{Balanced:} If $\lambda \in \D$ and 
$\left(\frac{1}{2}D^2f(\omega),
D f(\omega)\right)\in \mathcal D_{\Omega}^{(\omega)},$ 
then
\[\lambda\left(\frac{1}{2}D^2f(\omega),
D f(\omega)\right)
=\left(\frac{1}{2}D^2(\lambda f)(\omega),
D (\lambda f)(\omega)\right).\]
The map  
$\lambda f:\Omega \rightarrow \D$ is  analytic with 
$\lambda f(\omega)=0$ and hence
\[\lambda\left(\frac{1}{2}D^2f(\omega),
D f(\omega)\right)\in \mathcal D_{\Omega}^{(\omega)}.\]

\item\textbf{Convex:} Pick 
	\[\left(\frac{1}{2}D^2f(\omega),D f(\omega)\right),
	\left(\frac{1}{2}D^2g(\omega),D g(\omega)\right)
	\in \mathcal D_{\Omega}^{(\omega)}.\]
	For the $h:=t f+ (1-t)g,$ $t\in (0,1),$ we have 
	\begin{align*}
		&t \left(\frac{1}{2}D^2f(\omega),
		D f(\omega)\right) + 
		\left(1-t\right)\left(\frac{1}{2}D^2g(\omega),
		D g(\omega)\right)\\
		&=\left(\frac{1}{2}D^2 h(\omega),
		D h(\omega)\right).
	\end{align*}

	Since $f,g$ are in ${\rm H}^\infty_\omega(\Omega, \mathbb D),$ it follows that 
	$h$ is also in ${\rm H}^\infty_\omega(\Omega, \mathbb D).$
	Hence
	\[t\left(\frac{1}{2}D^2f(\omega),
	D f(\omega)\right) + 
	(1-t)\left(\frac{1}{2}D^2g(\omega),
	D g(\omega)\right)\in \mathcal D_{\Omega}^{(\omega)}.\] 

\item \textbf{Absorbing:} Let $B=(b_{jk})$ be a symmetric matrix 
	of order $m$ and $a=(a_1,...,a_m)$ in $\mathbb{C}^m$. 
	Define 
	\[p(z_1,z_2,...,z_m)=
	\sum_{j=1}^{m}a_j(z_j-\omega_j)+
	\sum_{j,k=1}^{m}b_{jk}(z_j-\omega_j)(z_k-\omega_k).\]
	The function 
	\[f(z_1,z_2,...,z_m)=\frac{p(z_1,z_2,...,z_m)}{\|p\| _{\W,\infty}}.\]
	is clearly in ${\rm H}^\infty_\omega(\Omega, \mathbb D)$ with 
	\[D f(\omega)=\frac{a}{\|p\|_{\W,\infty}}
	\mbox{ and }
	\frac{1}{2}D^2f(\omega)=\frac{B}{\|p\|_{\W,\infty}}.\]
	Hence
	\[\frac{1}{\|p\|_{\W,\infty}}(B,a)\in \mathcal D_{\Omega}^{(\omega)}.\] 
\end{itemize}
\end{proof}
The set 
\[\mathbb U:=
\left\{(z,v_1,v_2):z\in\mathbb{C},v_1,v_2\in\mathbb{H}
\mbox{ with }
|z|^2\leq \left(1-\|v_1\|^2\right)\left(1-\|v_2\|^2\right)\right\}\]
is seen to be the unit ball via the identification $(z,v_1,v_2) \to  \Big(
\begin{smallmatrix}
	   v_{1}^{\sharp} & z\\
	   0 & v_2\\
  \end{smallmatrix}
\Big ).$ Clearly, $\|\Big(
\begin{smallmatrix}
	   v_{1}^{\sharp} & z\\
	   0 & v_2\\
  \end{smallmatrix}
\Big )\| \leq 1$ if and only if $(z,v_1,v_2)$ is in $\mathbb U.$ Thus we have proved the following lemma.
\begin{lem}
The set 	$\mathbb U$ is the unit ball with respect to the norm 
$\|(z,v_1,v_2)\|_\mathbb U:= \|\Big(
\begin{smallmatrix}
	   v_{1}^{\sharp} & z\\
	   0 & v_2\\
  \end{smallmatrix}
\Big )\|.$
\end{lem}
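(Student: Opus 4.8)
The plan is to recognize that $\|\cdot\|_{\mathbb U}$ is genuinely a norm and that the claimed description of its unit ball is precisely the scalar inequality already isolated in Proposition \ref{contractivity VOFT}. First I would note that $\|(z,v_1,v_2)\|_{\mathbb U}$ is the pullback of the operator norm under the map $\Psi\colon (z,v_1,v_2)\mapsto \Big(\begin{smallmatrix} v_1^{\sharp} & z\\ 0 & v_2\end{smallmatrix}\Big)$, and $\Psi$ is linear (since $v\mapsto v^{\sharp}$ is linear) and injective (the matrix vanishes only if $z=0$ and $v_1^{\sharp}=v_2=0$, i.e. $v_1=v_2=0$). Hence $\|\cdot\|_{\mathbb U}$ is indeed a norm, and it remains only to identify its unit ball, that is, to establish
\[
\Big\|\Big(\begin{smallmatrix} v_1^{\sharp} & z\\ 0 & v_2\end{smallmatrix}\Big)\Big\|\leq 1 \iff |z|^2\leq \big(1-\|v_1\|^2\big)\big(1-\|v_2\|^2\big).
\]

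Next I would reduce the operator norm of the block matrix to that of a $2\times 2$ scalar matrix, exactly as in the proof of Proposition \ref{contractivity VOFT}: choosing isometries $V_1\colon \h\to\ell^2$ with $V_1(v_2)=\|v_2\|e_1$ and $V_2\colon \h^{\sharp}\to(\ell^2)^{\sharp}$ with $V_2(v_1^{\sharp})=\|v_1\|e_1^{\rm t}$ (using $\|v_1^{\sharp}\|=\|v_1\|$) and conjugating by the induced block isometries, which preserve norms. Absorbing the argument of $z$ into a diagonal unitary then gives
\[
\|(z,v_1,v_2)\|_{\mathbb U}=\Big\|\Big(\begin{smallmatrix} a & c\\ 0 & b\end{smallmatrix}\Big)\Big\|,\qquad a=\|v_1\|,\ b=\|v_2\|,\ c=|z|\geq 0.
\]
This is the coordinate-free shadow of Proposition \ref{contractivity VOFT}, with $v_1^{\sharp},z,v_2$ in the roles of $Df(\omega)\cdot x^{\sharp}$, $\tfrac12 D^2f(\omega)\cdot A_{x,y}$ and $Df(\omega)\cdot y$, so one could alternatively simply invoke that proposition.

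The remaining step is a scalar computation. For $N=\Big(\begin{smallmatrix} a & c\\ 0 & b\end{smallmatrix}\Big)$ with $a,b,c\geq 0$, the matrix $N^*N$ has trace $a^2+b^2+c^2$ and determinant $a^2b^2$, so $\|N\|^2$ is the larger root of $\lambda^2-(a^2+b^2+c^2)\lambda+a^2b^2=0$; requiring this root to be at most $1$ and clearing the square root yields $a^2+b^2+c^2-a^2b^2\leq 1$, i.e. $c^2\leq(1-a^2)(1-b^2)$. The one point that deserves more than the text's ``clearly'' is the regime: $\|N\|\leq 1$ already forces $a,b\leq 1$, since the columns of $N$ have norms $a$ and $\sqrt{b^2+c^2}\geq b$, and it is exactly on the region $a,b\leq 1$ that the square-root manipulation is reversible and free of spurious branches. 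I expect this bookkeeping to be the only real obstacle; everything else is routine. Substituting $a=\|v_1\|,\ b=\|v_2\|,\ c=|z|$ back in then identifies the unit ball of $\|\cdot\|_{\mathbb U}$ with $\mathbb U$, which completes the proof.
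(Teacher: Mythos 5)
Your proof is correct and takes essentially the same route as the paper: the text disposes of this lemma with a single ``Clearly,'' so the linearity/injectivity check, the isometry reduction to a $2\times 2$ scalar matrix, and the trace--determinant computation you supply are precisely the details being omitted, and they mirror the technique of Proposition~\ref{contractivity VOFT}. Your point about the regime $\|v_1\|,\|v_2\|\leq 1$ is a genuine and worthwhile observation: as literally written, the defining inequality of $\mathbb{U}$ also admits points such as $z=0$, $\|v_1\|=\|v_2\|=2$, for which the block matrix is not a contraction, so the stated equivalence holds only once $\mathbb{U}$ is understood to include the constraint $\|v_1\|,\|v_2\|\leq 1$ (which is automatic in the intended application, where these are norms of derivatives of a map into $\mathbb{D}$).
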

%

For fixed $x=(x_1,...,x_m),y=(y_1,...,y_m)$ in $\mathbb{H}^m$, 
define a linear map 
$L^{(\w)}_{x,y}:M_{m}^{s}\times \mathbb{C}^m
\to
\C\oplus\mathbb{H}\oplus\h$ 
by the formula 
\[L^{(\w)}_{x,y}(B,a)=\left(\frac{1}{2}\mbox{\rm tr}(A_{x,y}B),a\cdot x,a\cdot y\right),\]
where $a\cdot x= a_1 x_1 + \cdots + a_m x_m,$ $a=(a_1,\ldots , a_m)\in \mathbb C^m$ (and $a\cdot
y$ is defined similarly). Proposition \ref{contractivity VOFT} together with what we have said here amounts to the equivalence asserted in the following theorem.   
\begin{thm}
The following statements are equivalent:
\begin{itemize}
	\item[1.] $\rho^{(\w)}_{x,y}$ is a contractive homomorphism.
	\item[2.] $L^{(\w)}_{x,y}:(M_{m}^{s}\times \mathbb{C}^m,\|.\|_{\mathcal D})
	\to(\C\oplus\mathbb{H}\oplus\h,\|. \|_{\mathbb U})$ 
	is a contractive linear map.
\end{itemize}
\end{thm}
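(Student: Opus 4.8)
The plan is to see this theorem as the assembly of three facts already established: the zero lemma (Lemma~\ref{Zero lemma for VOFT}), the operator-norm computation of Proposition~\ref{contractivity VOFT}, and the two preceding lemmas realizing $\mathcal D_{\Omega}^{(\omega)}$ and $\mathbb U$ as unit balls. The one structural principle I would invoke is elementary: a linear map $L:(V,\|\cdot\|_V)\to(W,\|\cdot\|_W)$ has operator norm at most $1$ precisely when it carries the closed unit ball of $V$ into the closed unit ball of $W$. Thus proving the equivalence of $(1)$ and $(2)$ reduces to identifying the unit balls on both sides and tracking how $L^{(\w)}_{x,y}$ acts on them.

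First I would unwind statement $(1)$. By the zero lemma, $\rho^{(\w)}_{x,y}$ is contractive if and only if $\|\rho^{(\w)}_{x,y}(f)\|\le 1$ for every $f\in{\rm H}^\infty_{\w}(\W,\D)$. For such $f$ the diagonal entry $f(\omega)$ is zero, so Proposition~\ref{contractivity VOFT} rewrites this operator bound as the single scalar inequality
\[\Big|\tfrac{1}{2}D^2f(\omega)\cdot A_{x,y}\Big|^2\le\big(1-\|Df(\omega)\cdot x\|^2\big)\big(1-\|Df(\omega)\cdot y\|^2\big),\]
required to hold for all $f\in{\rm H}^\infty_{\w}(\W,\D)$.

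Next I would build the dictionary between the two sides. Setting $B=\tfrac12 D^2f(\omega)$ and $a=Df(\omega)$, a direct expansion of the trace (using the symmetry of the Hessian and of $A_{x,y}=\big(\!\!\big([x_i^{\sharp},y_j]\big)\!\!\big)$ forced by the commutativity hypothesis) shows that the first coordinate of $L^{(\w)}_{x,y}(B,a)$ is, up to the chosen normalization, the quantity $\tfrac12 D^2f(\omega)\cdot A_{x,y}$, while the remaining coordinates are $a\cdot x=Df(\omega)\cdot x$ and $a\cdot y=Df(\omega)\cdot y$. Hence $L^{(\w)}_{x,y}(B,a)$ is exactly the triple $(z,v_1,v_2)$ appearing in the displayed inequality, and that inequality is by definition the statement $(z,v_1,v_2)\in\mathbb U$, i.e. $\|L^{(\w)}_{x,y}(B,a)\|_{\mathbb U}\le 1$.

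With the dictionary in place the equivalence closes at once. By the lemma preceding the theorem, the pairs $\big(\tfrac12 D^2f(\omega),Df(\omega)\big)$ with $f\in{\rm H}^\infty_{\w}(\W,\D)$ are precisely the points of the unit ball $\mathcal D_{\Omega}^{(\omega)}$ of $\|\cdot\|_{\mathcal D}$; and $\mathbb U$ is the unit ball of $\|\cdot\|_{\mathbb U}$. Therefore the scalar inequality holding for every such $f$ says exactly that $L^{(\w)}_{x,y}$ maps the unit ball of $\|\cdot\|_{\mathcal D}$ into the unit ball of $\|\cdot\|_{\mathbb U}$, which is statement $(2)$; reading the equalities in reverse yields the converse. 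The only step demanding real care, and the one I would single out as the crux, is the trace identity of the previous paragraph, since it is there that the normalizations of $A_{x,y}$, of $B=\tfrac12 D^2f(\omega)$, and of the pairing $D^2f(\omega)\cdot A_{x,y}$ must be reconciled so that the constant in front of $z$ comes out correctly; every other ingredient is a transcription of earlier results.
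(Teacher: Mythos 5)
Your proposal is correct and follows exactly the route the paper intends: the zero lemma reduces contractivity to $f\in{\rm H}^\infty_{\w}(\W,\D)$, Proposition~\ref{contractivity VOFT} converts the operator bound into membership of $\bigl(\tfrac12 D^2f(\omega)\cdot A_{x,y},\,Df(\omega)\cdot x,\,Df(\omega)\cdot y\bigr)$ in $\mathbb U$, and the two unit-ball lemmas turn "maps $\mathcal D_{\Omega}^{(\omega)}$ into $\mathbb U$" into "$L^{(\w)}_{x,y}$ is contractive." The paper itself offers no more than the assertion that these ingredients "amount to" the theorem, so your explicit tracking of the trace/normalization identity (where the factors of $\tfrac12$ in $B$, in $A_{x,y}$, and in the definition of $L^{(\w)}_{x,y}$ must be reconciled) is the one point genuinely requiring care, and you have correctly singled it out.
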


Let $E$ be a domain (containing 0) in $\C$. 
For each $k\in\mathbb{N}_{0},$ let 
\begin{align*}
	\mathcal{P}_{k}(\W,E)=
	\left\{p\in\C[Z_1,\ldots,Z_m]:\deg(p)
	\leq k 
	\mbox{ and }
	p(\W)\subset E\right\}.
\end{align*}

For each $\omega\in\W,$ let 
$\mathcal{P}_{k}^{\omega}(\Omega,E)$ 
denote the set of all polynomials 
$p\in\mathcal{P}_{k}(\Omega,E)$ 
such that $p(\omega)=0$. 

Now, suppose $\Omega$ is the unit disc and $\omega=0.$  Then we have the following theorem. 
\begin{thm}
	If $\mathbb{H}$ is a separable Hilbert space 
	and $x\in\mathbb{H}$ with $\|x\|\leq 1,$  
	then for the homomorphism $\rho_{x},$ 
	we have    
	\[\sup\left\{\|\rho_{x}(p)\|:p\in\mathcal{P}_{1}^{0}(\D,\D)\right\}
	=\sup\left\{\|\rho_{x}(f)\|:f\in{\rm H}^\infty_{0}(\D,\D)\right\}.\]
\end{thm}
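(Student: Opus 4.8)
The plan is to compute both suprema explicitly and show that each equals $\|x\|$. The inequality ``$\geq$'' is immediate: a polynomial $p$ of degree at most $1$ with $p(0)=0$ and $p(\D)\subseteq\D$ is exactly $p(z)=az$ with $|a|\leq 1$, so $\mathcal P_1^0(\D,\D)\subseteq {\rm H}^\infty_0(\D,\D)$ and the left-hand supremum cannot exceed the right-hand one. The work is therefore entirely in the reverse inequality, for which I would show directly that $\|\rho_x(f)\|\leq\|x\|$ for every $f\in{\rm H}^\infty_0(\D,\D)$, while exhibiting a linear polynomial attaining $\|x\|$.

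For the key computation I would specialize Proposition \ref{contractivity VOFT} to $m=1$, $y=x$, and $\w=0$. Since $f(0)=0$, formula \eqref{FuncCal(p)} shows that $\rho_x(f)$ depends only on $f'(0)$ and $f''(0)$, and the reduction carried out in the proof of Proposition \ref{contractivity VOFT} (passing to the off-diagonal $2\times 2$ block and applying the isometries $V_1,V_2$) yields
\[
\|\rho_x(f)\|=\left\|\begin{pmatrix} \tfrac12 f''(0)\,[x^\sharp,x] & |f'(0)|\,\|x\| \\ |f'(0)|\,\|x\| & 0\end{pmatrix}\right\|.
\]
Applied to a linear $p(z)=az$ this kills the second-derivative entry and gives $\|\rho_x(p)\|=|a|\,\|x\|$; taking the supremum over $|a|\leq 1$ shows that the left-hand side of the asserted identity is exactly $\|x\|$. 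It then remains to bound the displayed $2\times 2$ norm by $\|x\|$ for an arbitrary $f$.

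The three constraints I would invoke are $|f'(0)|\leq 1$ (Schwarz), the Schwarz--Pick estimate $|\tfrac12 f''(0)|\leq 1-|f'(0)|^2$ applied to $g(z):=f(z)/z\colon\D\to\overline\D$, and $|[x^\sharp,x]|=\big|\sum_j x_j^2\big|\leq \|x\|^2$. Writing $b:=|f'(0)|\,\|x\|$ and $s:=\tfrac12 f''(0)\,[x^\sharp,x]$, the inequality $\|\rho_x(f)\|\leq\|x\|$ is equivalent to $\|x\|^2 I-M^*M\geq 0$, where $M$ is the matrix above. The $(2,2)$-entry of this Hermitian matrix is $\|x\|^2-b^2=(1-|f'(0)|^2)\|x\|^2\geq 0$, and a direct computation gives its determinant as $(\|x\|^2-b^2)^2-|s|^2\|x\|^2$; hence positivity reduces to checking $|s|\,\|x\|\leq \|x\|^2-b^2$. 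Feeding in the three bounds, $|s|\,\|x\|\leq(1-|f'(0)|^2)\|x\|^2\cdot\|x\|=(\|x\|^2-b^2)\,\|x\|\leq \|x\|^2-b^2$, where the last step uses $\|x\|\leq 1$. This gives $\|\rho_x(f)\|\leq\|x\|$ and, combined with the previous paragraph, the equality of the two suprema.

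The main obstacle here is conceptual rather than computational. Unlike the Parrott homomorphism treated earlier in this chapter, $\rho_x$ genuinely sees the second derivative of $f$, so a priori a nonlinear $f$ could force $\|\rho_x(f)\|$ above the value attained on linear polynomials; the whole point of the argument is that the Schwarz--Pick bound $|\tfrac12 f''(0)|\leq 1-|f'(0)|^2$ exactly compensates for the extra entry, the final estimate collapsing to the hypothesis $\|x\|\leq 1$. I expect the only delicate points to be the $2\times 2$ positivity/determinant bookkeeping and the verification that the Schwarz--Pick coefficient body $\{(f'(0),\tfrac12 f''(0))\}$ is precisely the constraint to invoke.
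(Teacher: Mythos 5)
Your proof is correct and follows essentially the same route as the paper: both reduce $\|\rho_x(f)\|$ to the norm of the $2\times 2$ matrix $\bigl(\begin{smallmatrix} \frac12 f''(0)[x^\sharp,x] & |f'(0)|\,\|x\| \\ |f'(0)|\,\|x\| & 0\end{smallmatrix}\bigr)$, apply Cauchy--Schwarz in the form $|[x^\sharp,x]|\leq\|x\|^2$, and use $\|x\|\leq 1$ together with the constraint on $(f'(0),\tfrac12 f''(0))$ to conclude the bound $\|x\|$, which linear polynomials attain. The only difference is cosmetic: the paper quotes a closed-form expression for the $2\times 2$ norm and leaves the Schwarz--Pick constraint implicit in its final step, whereas you carry out the estimate by a positivity/determinant computation with that constraint stated explicitly.
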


\begin{proof}
We know that for $f\in{\rm H}^\infty(\D)$ with $f(0)=0$,
\[\|\rho_{x}(f)\|=\left\|
\left(
	\begin{array}{cc}
	   \frac{1}{2}f{'\!'}(0)[x^{\sharp},x] & \|f'(0)x\|\\
	   \|f'(0)x\| & 0\\
    \end{array}
\right)
\right\|,\]
therefore from the formula in \cite{GMSastry},
\[\|\rho_{x}\|=
\left\{|a|\|x\|^2+\frac{1}{2}\left[|a|^2\left|[x^{\sharp},x]\right|^2
+\sqrt{|b|^4\left|[x^{\sharp},x]\right|^4
+4|a|^2\|x\|^2|b|^2\left|[x^{\sharp},x]\right|^2}\right]\right\}^{\frac{1}{2}},\]
where $a=f'(0)$ and $b=f{'\!'}(0)/2$.
Using Cauchy-Schwarz inequality we get
\[\|\rho_{x}\|\leq 
\left\{|a|\|x\|^2+
\frac{1}{2}\left[|a|^2\|x\|^4+
\sqrt{|b|^4\|x\|^8+4|a|^2|b|^2\|x\|^6}\right]\right\}^{\frac{1}{2}}\]
and therefore
\[\|\rho_{x}\|\leq 
\|x\|\left\{|a|+\frac{1}{2}\left[|a|^2
+\sqrt{|b|^4+4|a|^2|b|^2}\right]\right\}^{\frac{1}{2}}.\]
Hence $\sup\left\{\|\rho_{x}(f)\|:f\in{\rm H}^\infty_{0}(\D,\D)\right\}=\|x\|.$ 
It is easy to see that
\[\sup\left\{\|\rho_{x}(p)\|:p\in\mathcal{P}_{1}^{0}(\D,\D)\right\}=\|x\|.\]
Hence the proof is complete.
\end{proof}
The following corollary is now evident. 
\begin{cor}
	Suppose $\mathbb{H}$ is a separable Hilbert space 
	and $x\in\mathbb{H}$ with $\|x\|=1$. 
	Then for homomorphism $\rho_{x}$ defined above, 
	we get 
	\[\sup\left\{\|\rho_{x}(p)\|:p\in\mathcal{P}_{1}^{0}(\D,\D)\right\}
	=\sup\left\{\|\rho_{x}(f)\|:f\in{\rm H}^\infty(\D,\D)\right\}.\]
\end{cor}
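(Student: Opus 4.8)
The plan is to sandwich the quantity $\sup\{\|\rho_x(f)\|:f\in{\rm H}^\infty(\D,\D)\}$ between two values that the preceding theorem has already pinned down, and then to close the remaining gap with the zero lemma (Lemma~\ref{Zero lemma for VOFT}). First I would record the trivial inclusions $\mathcal{P}_{1}^{0}(\D,\D)\subseteq{\rm H}^\infty_{0}(\D,\D)\subseteq{\rm H}^\infty(\D,\D)$, which give at once the chain
\[
\sup\left\{\|\rho_x(p)\|:p\in\mathcal{P}_{1}^{0}(\D,\D)\right\}
\le\sup\left\{\|\rho_x(f)\|:f\in{\rm H}^\infty_{0}(\D,\D)\right\}
\le\sup\left\{\|\rho_x(f)\|:f\in{\rm H}^\infty(\D,\D)\right\}.
\]
By the preceding theorem the first two suprema coincide, and its proof shows that their common value equals $\|x\|=1$. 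Hence the only thing left to establish is the reverse inequality $\sup\{\|\rho_x(f)\|:f\in{\rm H}^\infty(\D,\D)\}\le 1$.

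This is exactly where the hypothesis $\|x\|=1$ is used. Since the supremum over ${\rm H}^\infty_{0}(\D,\D)$ equals $1$, we have $\|\rho_x(f)\|\le 1$ for every $f\in{\rm H}^\infty_{0}(\D,\D)$. I would then invoke Lemma~\ref{Zero lemma for VOFT} with $\Omega=\D$, $\w=0$, $m=1$ and $y=x$, so that $\rho^{(0)}_{x,x}=\rho_x$; the lemma asserts that contractivity of $\rho_x$ on the smaller class ${\rm H}^\infty_{0}(\D,\D)$ is equivalent to contractivity on all of ${\rm H}^\infty(\D,\D)$. Consequently $\|\rho_x(f)\|\le 1$ for every $f\in{\rm H}^\infty(\D,\D)$, which is precisely the reverse inequality. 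Combining the two directions yields the claimed equality, with both sides equal to $\|x\|=1$.

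The proof is short because the heavy lifting is already done: the theorem supplies the value $\|x\|$, and the zero lemma transfers contractivity from functions vanishing at the origin to arbitrary functions bounded by $1$. The one point that genuinely requires care—and the reason the statement is phrased with $\|x\|=1$ rather than merely $\|x\|\le 1$—is that Lemma~\ref{Zero lemma for VOFT} is a biconditional about the bound being $\le 1$. It therefore determines the supremum over ${\rm H}^\infty(\D,\D)$ only when the supremum over ${\rm H}^\infty_{0}(\D,\D)$ already saturates at $1$; the normalization $\|x\|=1$ is exactly what makes both ``$\le 1$'' conditions tight enough to force equality, whereas for $\|x\|<1$ the lemma alone would not locate the value.
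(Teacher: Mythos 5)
Your proof is correct and follows exactly the route the paper intends: the paper offers no argument beyond declaring the corollary ``now evident,'' and the evident argument is precisely yours --- the preceding theorem pins the common value of the first two suprema at $\|x\|=1$, and the zero lemma (Lemma~\ref{Zero lemma for VOFT} with $\Omega=\D$, $\w=0$, $y=x$) transfers the bound $\le 1$ from ${\rm H}^\infty_{0}(\D,\D)$ to all of ${\rm H}^\infty(\D,\D)$. Your closing remark correctly identifies why the normalization $\|x\|=1$ is needed for this argument to determine the larger supremum.
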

\section{The Carath\'{e}odory-Fej\'{e}r Interpolation Problems}
We state the well known interpolation problem in $m$ variables, usually known as the Carath\'{e}odory-Fej\'{e}r (CF) problem. 
\begin{prob}[CF] \label{CF}
Given any polynomial $p$ in $m$ variables of degree $d$, find necessary and sufficient conditions on the co-efficients of $p$ to  ensure the existence of a holomorphic function $h$ defined on the polydsic $\mathbb D^m$ with $h^{(k)}(0)=0$ for all multi indices $k$ of length at most $d,$ such that 
$f:=p+h$ maps the polydisc $\mathbb D^m$ to the unit disc $\mathbb D.$   
\end{prob} 
Without loss of generality one may assume that 
$p(0)=0$ via the transitivity of the unit disc  $\D.$ 
There are several different known solutions to the CF problem when $n=1,$ see (cf. \cite[Page 179]{Nik}). However, repeated attempts to obtain solutions for $n>1$ has remained unsuccessful for the most part, however see (cf. \cite[Chapter 3]{BMW}) for a comprehensive survey of recent results.  
In these notes we shall obtain necessary condition for the CF problem  
for the bi-disc $\D^2.$ (However,  we first discuss the case of the unit disc $\mathbb D,$ which paves the way for the case of the bi-disc $\mathbb D^2.$)  We show that for certain class of polynomials of degree at most $2,$ our necessary conditions turn out to be sufficient as well. None the less, they are not always sufficient as we demonstrate by means of an example.  

We point out that the necessary condition for the CF problem actually works for any $n,$ via an adaptation of a theorem due to Kor\'{a}nyi and Puk\'{a}nszky \cite{KP}. However for $n>2,$ the computations involved in deriving the necessary condition explicitly is cumbersome. Therefore, 
we don't give the details except in the case $n=2.$ 

\subsection{CF problem in one variable}
The CF problem for one variable is stated below for polynomials of degree at most two and with $p(0)=0.$ This is the first non-trivial case of the CF problem and is typical of all other cases.  
\begin{prob} \label{Planar extension}
	Fix $p$ to be the polynomial  $p(z)=az+bz^2.$  
	Find a necessary and sufficient condition 
	for the existence of a holomorphic function $g$ defined on the unit disc $\mathbb D$  with $g^{(k)}(0) = 0,\, k=0,1,2,$ 
	such that  $\|p + g \|_{\D,\infty}\leq 1.$ 
\end{prob}

\medskip

Let
$T_x$ be an operator of the type {\sf{V\,I}} for some $x\in \C.$ 
For any $f\in{\rm H}^\infty_{0}(\D,\D),$ picking $|x| \leq 1$ to ensure contractivity of $T_x,$  we see that 
$\|\rho_{x}(f)\| \leq 1.$ Now, applying 
Corollary \ref{contractivity VOFT}, we find that 
\[\left|\frac{1}{2}f{'\!'}(0)x^2\right|+\left|f'(0)x\right|^2 \leq 1.\]
Taking supremum over all $x$ such that $|x| \leq 1,$ 
we get
\[\left|\frac{1}{2}f{'\!'}(0)\right|+\left|f'(0)\right|^2 \leq 1,\]
which is equivalent to
\[\left\|\mathscr{T} \left(f'(0),\frac{f{'\!'}(0)}{2}\right)\right\| := \Big \|\Big ( \begin{matrix} f^\prime(0) & \frac{f^{\prime\!\prime}(0)}{2} \\
0&  f^\prime(0) \end{matrix} \Big ) \Big \| \leq 1.\]
Thus we have proved the following theorem. 

\begin{thm}
	Suppose $f:\D\to\D$ is an analytic function with $f(0)=0.$ Then
	\[\left\|\mathscr{T} \left(f'(0),\frac{f{'\!'}(0)}{2}\right)\right\| \leq 1.\]
\end{thm}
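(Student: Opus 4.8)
The plan is to obtain the statement directly from the contractivity of the single-variable Varopoulos homomorphism $\rho_x$ of type {\sf{V\,I}}, combined with the scalar criterion of Proposition \ref{contractivity VOFT}. First I would fix a scalar $x\in\C$ (so that $\h=\C$) with $|x|\leq 1$; by Lemma \ref{OPnorm} applied with $\w=0$ and $\alpha=\beta=x$, the operator $T_x$ is then a contraction. Since $\rho_x$ is a contractive homomorphism in the one-variable case, it follows that $\|\rho_x(f)\|\leq 1$ for every $f\in{\rm H}^\infty_0(\D,\D)$.

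Next I would evaluate $\rho_x(f)$ by means of the functional calculus formula \eqref{FuncCal(p)}. With $f(0)=0$, $m=1$, $x^\sharp=x$, and $[x^\sharp,x]=x^2$, this is the strictly upper triangular $3\times 3$ matrix whose superdiagonal entries are $f'(0)x$ and whose corner entry is $\tfrac12 f{'\!'}(0)x^2$. Feeding this (with $y=x$, all quantities scalar) into Proposition \ref{contractivity VOFT} converts $\|\rho_x(f)\|\leq 1$ into
\[\left|\tfrac12 f{'\!'}(0)x^2\right|^2\leq\left(1-|f'(0)x|^2\right)^2,\]
which on taking square roots is precisely
\[\left|\tfrac12 f{'\!'}(0)x^2\right|+|f'(0)x|^2\leq 1.\]

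I would then take the supremum over $|x|\leq 1$ (equivalently let $|x|\to 1$) to clear the parameter $x$ and obtain
\[\left|\tfrac12 f{'\!'}(0)\right|+|f'(0)|^2\leq 1.\]
The final step is to recognize this inequality as the contractivity condition for the $2\times 2$ matrix $\mathscr{T}\!\left(f'(0),\tfrac12 f{'\!'}(0)\right)$: as in the proof of Lemma \ref{OPnorm}, the matrix $\mathscr{T}(\w,\alpha)=\big(\begin{smallmatrix}\w&\alpha\\0&\w\end{smallmatrix}\big)$ is a contraction exactly when $|\alpha|\leq 1-|\w|^2$, i.e.\ when $|\alpha|+|\w|^2\leq 1$; applying this with $\w=f'(0)$ and $\alpha=\tfrac12 f{'\!'}(0)$ gives the claim.

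Since all three ingredients---the contractivity of $\rho_x$ for $m=1$, the scalar criterion of Proposition \ref{contractivity VOFT}, and the elementary $2\times 2$ contractivity condition---are already available, I do not expect a serious obstacle. The only point requiring care is the reduction from the operator inequality $\|\rho_x(f)\|\leq 1$ to the scalar estimate: this hinges on restricting to $|x|\leq 1$ so that $T_x$ is genuinely a contraction, and on checking that Proposition \ref{contractivity VOFT} continues to hold verbatim in the degenerate one-dimensional situation $\h=\C$, where $A_{x,x}$ collapses to the single number $x^2$.
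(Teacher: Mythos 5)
Your proposal is correct and follows essentially the same route as the paper: pick a scalar $x$ with $|x|\leq 1$ so that $T_x$ is a contraction, invoke the one-variable von-Neumann inequality to get $\|\rho_x(f)\|\leq 1$, convert this via Proposition \ref{contractivity VOFT} into $\big|\tfrac12 f''(0)x^2\big|+|f'(0)x|^2\leq 1$, take the supremum over $|x|\leq 1$, and identify the resulting inequality with the contractivity of $\mathscr{T}\big(f'(0),\tfrac12 f''(0)\big)$. The only cosmetic difference is that you spell out the square-root step and the degenerate $\h=\C$ reading of the proposition, which the paper leaves implicit.
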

We answer  the question of the converse in the theorem below. 
\noindent
\begin{thm}\label{single variable}
	If $\alpha,\beta\in\mathbb{C}$ are 
	such that $\|\mathscr{T}(\beta,\alpha)\|\leq 1,$ 
	then there exists an analytic map $f:\D\rightarrow \D$ 
	such that $f(0)=0,f'(0)=\beta$ and $f{'\!'}(0)/2=\alpha$.
\end{thm}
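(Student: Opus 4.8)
The plan is to convert the operator-norm hypothesis into a scalar inequality on $\alpha,\beta$ and then exhibit an explicit analytic self-map of $\D$ carrying the prescribed jet. First I would record, exactly as in the $2\times 2$ computation preceding the statement, that
\[
\|\mathscr{T}(\beta,\alpha)\|\leq 1 \iff |\alpha|+|\beta|^2\leq 1 .
\]
This is the only way the hypothesis will be used. In particular it yields both $|\beta|\leq 1$ and $|\alpha|\leq 1-|\beta|^2$.

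Next comes the construction, which amounts to a single (inverse) step of the Schur algorithm. If $|\beta|=1$, the inequality forces $\alpha=0$, and then $f(z)=\beta z$ already satisfies $f(0)=0$, $f'(0)=\beta$ and $f{'\!'}(0)/2=0$, so I may assume $|\beta|<1$. Set $c=\dfrac{\alpha}{1-|\beta|^2}$; the hypothesis is precisely the statement that $|c|\leq 1$. I would then define
\[
f(z)=z\,\frac{\beta+cz}{1+\overline{\beta}\,cz}.
\]
Since $|\overline{\beta}\,c|\leq |\beta|<1$, the denominator does not vanish on $\D$, so $f$ is holomorphic there; and writing $b_\beta^{-1}(w)=\dfrac{\beta+w}{1+\overline{\beta}\,w}$ for the disc automorphism sending $0$ to $\beta$, one has $f(z)=z\,b_\beta^{-1}(cz)$, a product of the Schwarz multiplier $z$ with a self-map of $\D$, whence $|f(z)|<1$ for $z\in\D$; thus $f:\D\to\D$.

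Finally I would verify the jet. Expanding $\dfrac{\beta+cz}{1+\overline{\beta}cz}=\beta + c(1-|\beta|^2)z+O(z^2)=\beta+\alpha z+O(z^2)$ gives $f(z)=\beta z+\alpha z^2+O(z^3)$, and reading off the coefficients yields $f(0)=0$, $f'(0)=\beta$ and $f{'\!'}(0)/2=\alpha$, as required. I do not expect a serious obstacle: the content is entirely in choosing the right closed-form function. The only points demanding care are the degenerate case $|\beta|=1$ (disposed of separately above) and confirming that $f$ is a genuine self-map of $\D$ rather than merely a bounded function, which is why I factor $f$ through the automorphism $b_\beta^{-1}$ and the Schwarz multiplier $z$.
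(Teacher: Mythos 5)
Your proof is correct and uses essentially the same construction as the paper: the function $f(z)=z\,b_\beta^{-1}(cz)$ with $c=\alpha/(1-|\beta|^2)$ is exactly the paper's $f(z)=z\cdot\frac{e^{i\theta}z+\beta}{1+\overline{\beta}e^{i\theta}z}$, generalized to $|c|\leq 1$ rather than $|c|=1$. The only difference is presentational: the paper first normalizes to the extremal case $|\alpha|+|\beta|^2=1$ (using that the coefficient body is balanced and convex) and then invokes Schwarz-lemma rigidity to derive the form of $f$, whereas you write the candidate down for arbitrary admissible $(\alpha,\beta)$ and verify directly, which also cleanly handles the degenerate case $|\beta|=1$.
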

\begin{proof}
Let $\alpha,\beta\in\mathbb{C}$ be such that 
$\|\mathscr{T}(\beta,\alpha)\|\leq 1$ 
i.e. $|\alpha|+|\beta|^2\leq 1$. 
As $\mathcal D_{\D}^{0}$ is a convex and balanced set 
so without loss of generality we assume $\alpha>0$ 
and $|\alpha|+|\beta|^2=1$. 
Define
\begin{eqnarray*}
	g(z):= \left\{
	\begin{array}{ll}
	      \beta & if~z=0\\
	      \frac{f(z)}{z} & otherwise. \\
	\end{array} 
	\right.
\end{eqnarray*}
Let $\phi_{\beta}$ denote the automorphism of $\D$ 
mapping $\beta$ to $0$. 
From chain rule, we get 
\[\left(\phi _{\beta}\circ g\right)'(0)
= \frac{\alpha }{1-|\beta |^2}=1.\]
Hence 
$(\phi _{\beta}\circ g)(z)= e^{i\theta}z$ 
for some $\theta \in [0,2\pi)$. 
Therefore
\[g(z)= \frac{e^{i\theta}z+\beta}{1+\overline{\beta}e^{i\theta}z}.\]
and thus
\[f(z)=z\cdot\frac{e^{i\theta}z+\beta}{1+\overline{\beta}e^{i\theta}z}.\]
\end{proof}
Thus we have found necessary and sufficient condition for the CF problem \ref{Planar extension}.  
A second approach to this problem will be given in Chapter \ref{Koranyi}.

\subsection{CF interpolation problem in two variables}
The complete solution to the CF problem remains a mystery, although, several different partial answers are known. On the other hand, 
Eschmeier, Patton and Putinar \cite{EPP} find a necessary and 
sufficient condition for the CF problem for the bi-disc $\mathbb D^2.$ However, these conditions are somewhat intractable.
\begin{thm}
	Let $d$ be a positive integer and 
	let $P(z)$ be a polynomial of degree less than or equal to $d$ 
	in two complex variables. 
	There exists an analytic function $F:\D^2\to \D$ 
	such that $P \equiv F \mod(z^{d+1})$ 
	if and only if there are Hilbert spaces $\h_1$ and $\h_2$ 
	and a pair of vector valued polynomial functions 
	of degree less than or equal to $d$, 
	$A_k:\D^2\to \h_k$, $k = 1,2$, 
	such that:
	\[1-P(z)P(z)\equiv (1-|z_1|^2)\|A_1(z)\|^2_1+(1-|z_2|^2)\|A_2(z)\|^2_2
	 \mod(z^{d+1},\overline{z}^{d+1}).\] 
\end{thm}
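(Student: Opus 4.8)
This is a truncated Agler decomposition, so I would read the symbol $P(z)P(z)$ as the hereditary expression $\overline{P(w)}P(z)$ (equivalently $|P(z)|^2$ on the diagonal), with $z$ and $\bar z$ regarded as independent formal variables; under this reading the congruence $\mod(z^{d+1},\bar z^{d+1})$ asserts exactly equality of the coefficients of $z^\alpha\bar z^\beta$ for all $|\alpha|,|\beta|\le d$, and polarization $\bar z\mapsto\bar w$ is then automatic. The plan is to treat the two implications separately: the bidisc Agler decomposition (available precisely because of Ando's theorem) drives the forward direction, and a lurking-isometry/transfer-function construction handles the converse.

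\textbf{Forward direction.} Suppose $F:\D^2\to\D$ is holomorphic with $P\equiv F \mod(z^{d+1})$. By the Agler decomposition on the bidisc there are Hilbert spaces $\h_1,\h_2$ and holomorphic $A_k:\D^2\to\h_k$ with
\[1-\overline{F(w)}F(z)=(1-\bar w_1 z_1)\langle A_1(z),A_1(w)\rangle+(1-\bar w_2 z_2)\langle A_2(z),A_2(w)\rangle.\]
Setting $w=z$ and replacing each $A_k$ by its Taylor polynomial $\tilde A_k$ of degree $\le d$, I would check two elementary facts: (i) writing $F=P+R$ with $R$ of order $\ge d+1$ gives $\overline{F}F\equiv\overline{P}P \mod(z^{d+1},\bar z^{d+1})$, since every cross term carries a factor of $z$- or $\bar z$-degree $\ge d+1$; and (ii) the coefficients of $z^\alpha\bar z^\beta$ with $|\alpha|,|\beta|\le d$ in $(1-|z_j|^2)\langle A_k(z),A_k(z)\rangle$ depend only on the Taylor data of $A_k$ up to order $d$, so truncation does not disturb the congruence. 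Together these produce the required decomposition with the polynomial data $\tilde A_1,\tilde A_2$.

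\textbf{Converse direction.} Here I would run the realization argument. Put $\h=\h_1\oplus\h_2$, $A(z)=A_1(z)\oplus A_2(z)$ and $Z(z)=z_1 I_{\h_1}\oplus z_2 I_{\h_2}$. Polarizing and rearranging the hypothesis gives
\[\langle Z(z)A(z),Z(w)A(w)\rangle+1\ \equiv\ \langle A(z),A(w)\rangle+\overline{P(w)}P(z)\ \pmod{z^{d+1},\bar w^{d+1}},\]
that is, $\langle\Psi(z),\Psi(w)\rangle\equiv\langle\Phi(z),\Phi(w)\rangle$ for the $\h\oplus\C$-valued polynomials $\Psi=(ZA,\,1)^{t}$ and $\Phi=(A,\,P)^{t}$. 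Reading off the coefficients $\Psi_\alpha,\Phi_\alpha$ of $z^\alpha$, the congruence says exactly that the Gram matrices $\bigl(\langle\Psi_\alpha,\Psi_\beta\rangle\bigr)$ and $\bigl(\langle\Phi_\alpha,\Phi_\beta\rangle\bigr)$ agree for $|\alpha|,|\beta|\le d$. Hence $\Psi_\alpha\mapsto\Phi_\alpha$, $|\alpha|\le d$, extends to a well-defined isometry between the corresponding finite-dimensional subspaces, which I would extend (by $0$ on the orthogonal complement) to a contraction $U=\left(\begin{smallmatrix}\mathbf A&\mathbf B\\ \mathbf C&\mathbf D\end{smallmatrix}\right)$ on $\h\oplus\C$. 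The transfer function $F(z)=\mathbf D+\mathbf C Z(z)\bigl(I-\mathbf A Z(z)\bigr)^{-1}\mathbf B$ is then automatically a Schur-class function on $\D^2$, i.e. $F:\D^2\to\overline{\D}$ (the elementary half of the transfer-function calculus, using only that $U$ is a contraction and $\|Z(z)\|<1$ on $\D^2$). Finally, the colligation relations $\mathbf A ZA+\mathbf B\equiv A$ and $\mathbf C ZA+\mathbf D\equiv P$ hold modulo $(z^{d+1})$ by construction, and an induction on the order shows first $(I-\mathbf A Z)^{-1}\mathbf B\equiv A$ and then $F\equiv P$, both $\mod(z^{d+1})$.

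\textbf{Main obstacle.} The delicate point is entirely the bookkeeping around the truncation: I must be sure the congruence $\mod(z^{d+1},\bar w^{d+1})$ controls precisely the Gram entries with $|\alpha|,|\beta|\le d$ and no more (note $Z(z)A(z)$ has degree $d+1$, so its top coefficients are genuinely uncontrolled and must play no role in defining the isometry), and that the resulting contractive colligation reproduces the jet of $P$ to order exactly $d$ --- which is where the inductive matching of $(I-\mathbf A Z)^{-1}\mathbf B$ with $A$ is essential. The only genuinely external input is Ando's theorem, invoked in the forward direction to produce the Agler decomposition of the given $F$; the converse construction is self-contained linear algebra on jets.
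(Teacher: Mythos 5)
The thesis does not actually prove this theorem: it is quoted as background from the cited paper of Eschmeier, Patton and Putinar (introduced with the remark that their conditions are ``somewhat intractable''), so there is no in-paper argument to compare yours against. Your proposal is essentially the standard proof of this result and it is sound. The forward direction correctly reduces to the Agler decomposition on the bidisc (available via Ando's theorem) plus the observation that the coefficient of $z^\alpha\bar z^\beta$ with $|\alpha|,|\beta|\le d$ in $(1-|z_j|^2)\langle A_k(z),A_k(z)\rangle$ involves only the Taylor data of $A_k$ up to order $d$, so truncating the $A_k$ to polynomials is harmless; the converse is the lurking-isometry/transfer-function realization, and you correctly isolate the one genuinely delicate point, namely that the Gram identity is available only for multi-indices of length at most $d$ while $Z(z)A(z)$ has degree $d+1$, so the isometry is built from the controlled coefficients alone and the jet of $F$ is recovered by the inductive matching of $(I-\mathbf{A}Z)^{-1}\mathbf{B}$ with $A$. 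Two small points to make explicit: the map $\Psi_\alpha\mapsto\Phi_\alpha$ is well defined on linear relations because the two Gram matrices coincide (so $\sum c_\alpha\Psi_\alpha=0$ forces $\sum c_\alpha\Phi_\alpha=0$); and the transfer function of a contractive colligation a priori lands only in $\overline{\D}$, so you need the maximum principle to conclude $F(\D^2)\subseteq\D$, which fails precisely when $F$ is a unimodular constant --- equivalently when $P$ is a unimodular constant, a degenerate case in which the open-disc formulation of the statement is itself the culprit rather than your argument.
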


Analogous to the case of one variable, the CF problem in the case of two variables is given below for polynomials of degree at most two with constant term zero. 
This is typical of all other cases.
\begin{prob}\label{two Variables}
	Fix $p\in\mathbb{C}[Z_1,Z_2]$ to be the polynomial  
	\[p(z_1,z_2)=
	a_{1,0}z_1+a_{0,1}z_2+
	a_{2,0}z_{1}^{2}+a_{1,1}z_1z_2+a_{0,2}z_{2}^2.\]
	Find necessary and sufficient conditions 
	for the existence of a holomorphic function 
	$q$ on $\mathbb{D}^2$ with $q^{(k)}(0)=0,$ 
	for multi indices $k$ of length at most $2$,
	such that $\|p+q\|_{\D^2,\infty}\leq 1$. 
\end{prob}

\medskip

Let $T_{x_1}$, $T_{x_2}$ 
be operators of type {\sf{V\,I}} for  $x_1,x_2$ in $\mathbb C,$ 
$|x_1|, |x_2| \leq 1.$ 
Let $f\in \mbox{\rm H}^\infty_0(\mathbb D^2, \mathbb D)$ be any holomorphic function mapping $\mathbb D^2$ to $\mathbb D$ with $f(0) = 0.$  
The von-Neumann inequality in 
Theorem \ref{Theorem for zero as only eigenvalues}  
implies that $\|f(T_{x_1},T_{x_2})\|\leq 1,$ 
which in turn is equivalent to
\[\|D f(0)\cdot x\|^2+ 
\Big|{\rm tr}\left(\frac{1}{2} D^2f(0)\cdot xx^{\rm t}\right)\Big| \leq 1,\] where $x$ is the column vector $\binom{x_1}{x_2}.$
Thus we have the following theorem.

\begin{thm}\label{1var necessary condition}
	If $p$ is any complex valued polynomial in two variables 
	of degree at most $2$ with $p(0)=0,$ then
	\begin{equation}\label{second derivative and VOFT}
		\sup\limits_{x_1,x_2\in\D}\left\|\mathscr{T} \left( \frac{\partial p}{\partial z_1}(0)x_1 
		+ \frac{\partial p}{\partial z_2}(0)x_2,\frac{1}{2}
		\sum_{i,j=1}^{2}\frac{\partial ^2 p}
		{\partial z_i \partial z_j}(0) x_i x_j\right)\right\|
		\leq 1
	\end{equation}
	is a necessary condition 
	for the existence of a holomorphic function $q:\mathbb D^2 \to \mathbb C,$ with $q^{(k)}(0)=0,$ $|k| \leq 2,$ such that  $\|p+q\|_{\mathbb D^2,\infty} \leq  1.$
\end{thm}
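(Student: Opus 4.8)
The plan is to derive the stated inequality as an immediate consequence of the von-Neumann inequality already established for the Varopoulos operators of type {\sf{V\,I}}, after observing that the quantity to be bounded sees $p$ only through its $2$-jet at the origin.

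Suppose a solution exists: there is a holomorphic $q$ on $\mathbb D^2$ with $q^{(k)}(0)=0$ for $|k|\le 2$ and $\|p+q\|_{\mathbb D^2,\infty}\le 1$. Set $f:=p+q$. First I would record the two facts that drive the argument. Since $q$ vanishes to order three at the origin, $f$ and $p$ share the same first and second partial derivatives there,
\[\frac{\partial f}{\partial z_j}(0)=\frac{\partial p}{\partial z_j}(0),\qquad \frac{\partial^2 f}{\partial z_i\partial z_j}(0)=\frac{\partial^2 p}{\partial z_i\partial z_j}(0);\]
and since $f(0)=0$ with $\|f\|_{\mathbb D^2,\infty}\le 1$, we have $f\in{\rm H}^\infty_0(\mathbb D^2,\mathbb D)$.

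Next, fix $x=(x_1,x_2)\in\mathbb D^2$ and form the commuting pair $T_{x_1},T_{x_2}$ of type {\sf{V\,I}} in the scalar case $\mathbb H=\mathbb C$. Each $T_{x_j}$ is a contraction because $|x_j|<1$, and the pair meets the hypothesis of Theorem \ref{Theorem for zero as only eigenvalues} with $\alpha_j=\beta_j=x_j$, so that $\alpha_j\beta_k=\alpha_k\beta_j$ holds automatically. Hence $\rho_x$ is a contractive homomorphism and, applied to $f$, yields $\|\rho_x(f)\|\le\|f\|_{\mathbb D^2,\infty}\le 1$. The functional-calculus formula \eqref{FuncCal(p)} realizes $\rho_x(f)$ as the $3\times 3$ nilpotent matrix whose only nonzero entries are $Df(0)\cdot x$ in the two super-diagonal positions and $\tfrac12 D^2f(0)\cdot A_{x,x}$ in the corner. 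Its first column and last row vanish, so its norm equals that of the $2\times 2$ compression $\mathscr{T}\big(Df(0)\cdot x,\tfrac12 D^2f(0)\cdot A_{x,x}\big)$; this is precisely the reduction performed in Proposition \ref{contractivity VOFT}.

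Finally, I would replace the derivatives of $f$ by those of $p$, which is legitimate by the matching of $2$-jets, to obtain
\[\left\|\mathscr{T}\left(\frac{\partial p}{\partial z_1}(0)x_1+\frac{\partial p}{\partial z_2}(0)x_2,\ \frac12\sum_{i,j=1}^2\frac{\partial^2 p}{\partial z_i\partial z_j}(0)x_ix_j\right)\right\|\le 1\]
for every $x\in\mathbb D^2$, and then take the supremum over $x_1,x_2\in\mathbb D$. I do not expect a genuine obstacle here: the analytic weight of the argument is entirely carried by Theorem \ref{Theorem for zero as only eigenvalues}, which may be assumed. The only points demanding care are the bookkeeping that $\rho_x(f)$ depends on $f$ solely through its $2$-jet, which legitimizes passing from $f$ back to $p$, and the identification of the $3\times 3$ operator norm with the $2\times 2$ norm of $\mathscr{T}$; both are routine given the earlier results.
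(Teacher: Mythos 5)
Your proposal is correct and follows essentially the same route as the paper: the paper likewise takes $f=p+q\in{\rm H}^\infty_0(\mathbb D^2,\mathbb D)$, applies the von-Neumann inequality of Theorem \ref{Theorem for zero as only eigenvalues} to the commuting pair $T_{x_1},T_{x_2}$ of type {\sf V\,I}, and reads off the $\mathscr T$-norm bound from the functional calculus, noting that $\rho_x(f)$ depends only on the $2$-jet of $f$ at the origin. No gaps.
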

In the Chapter \ref{Extremal} we will compute the supremum occurring 
in \eqref{second derivative and VOFT}. We will also  
find conditions on the coefficients of the polynomial $p,$ apart from the ones imposed by \eqref{second derivative and VOFT}, which will ensure the existence of the required function $q.$

\chapter{Varopoulos Operators of Type II}\label{Extremal}
\section{Homomorphisms induced by operators of type \texorpdfstring{{\sf{V\,II}}}{TEXT}~ and order two}
Let $\Omega$ be a bounded domain in $\mathbb{C}^m$ 
and $\omega=(\omega_1,\ldots,\omega_m)\in\Omega$ be fixed. 
Let $\mathbb{H}$ be a separable Hilbert space 
and $ X=(X_1,\ldots,X_m)$ be a tuple of commuting contractions, $X_j\in\mathcal{B}(\mathbb{H}),$ $j=1,\ldots,m.$ 
Let $T_{X_j}$ be of type {\sf{V\,II}} and of order $2,$ $j=1,\ldots,m.$ 
Let $\boldsymbol T_X$ be the $n$-tuple $(\omega_1 I + T_{X_1}, \ldots , \omega_m I + T_{X_m}).$
For $j,k,l=1,\ldots,m,$ we have $T_{X_j}T_{X_k}=T_{X_k}T_{X_j},$ 
$T_{X_j}T_{X_k}T_{X_l}=0$ 
and
\[T_{X_j}T_{X_k}=
\left(
 \begin{array}{ccc}
	   0 & 0 & X_jX_k\\
	   0 & 0 & 0\\
	   0 & 0 & 0\\
  \end{array} 
\right).\]
Let $A_{X}$ denote the block matrix 
$\big(\!\!\big(X_jX_k\big)\!\!\big)_{m\times m}$ of operators. 
Consequently, for any polynomial $p$ in $m$ variables, we see that 
\[p(\boldsymbol T\!_X)=\left(
 \begin{array}{ccc}
	   p(\omega)I & D p(\omega)\cdot X & \frac{1}{2}D^2p(\omega)\cdot A_X\\
	   0 & p(\omega)I & D p(\omega)\cdot X\\
	   0 & 0 & p(\omega)I\\
  \end{array}
\right).
\] 
Therefore, extending this definition to functions in 
${\rm H}^\infty(\Omega),$ 
we obtain the algebra homomorphism 
$\mu^{(\w)}_{X}:{\rm H}^\infty(\Omega)\to\mathcal{B}(\h\oplus\h\oplus\h),$ 
which for any polynomial $p$ is given by the formula 
$\mu^{(\w)}_{X}(p) = p(\boldsymbol T\!_X)$ and is defined for $f$ in ${\rm H}^\infty(\W)$ by the same formula.
Suppose $\W$ is the polydisc $\D^m$. Then, 
for $m=1,2$, we know that 
$\mu^{(0)}_{X}:=\mu_X$ is a contractive homomorphism. 
What about $m>2$?

Consider the operators $A_1,A_2$ and $A_3$ 
as defined in the Section \ref{VOTO}. 
Consider $T_{A_1},T_{A_2}$ and $T_{A_3}$, 
the operators of type {\sf{V\,II}} and of order $2.$ 
Consider the \VK polynomial $p_{\!_V}$.
From the computation in \cite{V1}, 
we get
\[\left\|p_{\!_V}(T_{A_1},T_{A_2},T_{A_3})\right\|
=\Big\|\sum\limits_{j,k=1}^{3}a_{jk} A_k A_j\Big\| 
= 3\sqrt{3},\]
where $\big (\!\!\big ( a_{jk} \big )\!\!\big ) = A_{\!_V}.$
Therefore
\[\left\|p_{\!_V}(T_{A_1},T_{A_2},T_{A_3})\right\|>
\|p_{\!_V}\|_{\D^3,\infty}=5.\]
Hence $\mu_{ X}$ corresponding to the tuple 
$ X=(A_1,A_2,A_3)$ of commuting contractions 
$A_1,$ $A_2$ and $A_3$ is not contractive.\\
We need a version of ``the zero lemma'' one final time which is  adapted to apply directly to the functional calculus for operators of the type {\sf V\,II}. This variant is also proved exactly the same way as before. 
\begin{lem}\label{Zero lemma for VOTT}
	The homomorphism $\mu^{(\w)}_{X}$ is contractive  
	if and only if $\|\mu^{(\w)}_{ X}(f)\|\leq 1$ 
	for all $f$ in ${\rm H}^\infty_{\w}(\W,\D)$.
\end{lem}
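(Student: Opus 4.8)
The plan is to mirror the proof of the earlier ``zero lemma'' (Lemma \ref{zeropolydisk}), the only genuinely new bookkeeping being the single-variable functional calculus of the block operator $\boldsymbol T_X$ of type {\sf V\,II}. The nontrivial direction is the ``if'' direction. The ``only if'' direction is immediate: since ${\rm H}^\infty_{\w}(\W,\D)\subseteq {\rm H}^\infty(\W,\D)$, contractivity of $\mu^{(\w)}_{X}$ (a bound over the larger class) forces the bound over the smaller class $f(\w)=0$.

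For the ``if'' direction I would start from the hypothesis that $\|\mu^{(\w)}_{X}(f)\|\leq 1$ for every $f\in{\rm H}^\infty_{\w}(\W,\D)$, and take an arbitrary $g\in{\rm H}^\infty(\W,\D)$, not necessarily vanishing at $\w$. Since $g(\w)\in\D$ and the automorphisms of $\D$ act transitively, choose an automorphism $\phi$ of $\D$ with $\phi(g(\w))=0$. Then $\phi\circ g$ is holomorphic from $\W$ into $\D$ with $(\phi\circ g)(\w)=0$, so $\phi\circ g\in{\rm H}^\infty_{\w}(\W,\D)$, and the hypothesis yields $\|\mu^{(\w)}_{X}(\phi\circ g)\|\leq 1$.

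Next I would set $S:=\mu^{(\w)}_{X}(\phi\circ g)$. Because $(\phi\circ g)(\w)=0$, the diagonal blocks $p(\omega)I$ of the block form vanish, so $S$ is strictly block upper triangular and $S^3=0$; in particular $S$ is a contraction whose spectrum is $\{0\}\subset\D$, while $\phi^{-1}$ is holomorphic on $\D$ with $\|\phi^{-1}\|_{\D,\infty}\leq 1$. The von-Neumann inequality applied to the single contraction $S$ then gives $\|\phi^{-1}(S)\|\leq\|\phi^{-1}\|_{\D,\infty}\leq 1$. What remains is to identify $\phi^{-1}(S)$ with $\mu^{(\w)}_{X}(g)$.

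That identification is the one step deserving care, and it is where the structure of $\boldsymbol T_X$ enters, so I expect it to be the main (indeed only) obstacle. Because $T_{X_j}T_{X_k}T_{X_l}=0$, the homomorphism $\mu^{(\w)}_{X}$ depends only on the $2$-jet of its argument at $\omega$, and since $S^3=0$ the single-variable holomorphic functional calculus of $S$ coincides with substitution into the truncated Taylor expansion of $\phi^{-1}$ about $(\phi\circ g)(\w)=0$. Compatibility of composition of $2$-jets with composition of functions then gives $\phi^{-1}(S)=\phi^{-1}\big(\mu^{(\w)}_{X}(\phi\circ g)\big)=\mu^{(\w)}_{X}(\phi^{-1}\circ\phi\circ g)=\mu^{(\w)}_{X}(g)$. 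Combining this with the bound of the previous paragraph yields $\|\mu^{(\w)}_{X}(g)\|\leq 1$ for every $g\in{\rm H}^\infty(\W,\D)$, that is, $\mu^{(\w)}_{X}$ is contractive. Apart from this compatibility check, every step is a verbatim repeat of Lemma \ref{zeropolydisk}.
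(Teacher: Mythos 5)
Your proof is correct and is essentially the argument the paper intends: the paper proves this variant "exactly the same way as before," namely by composing with a disc automorphism $\phi$ sending $g(\w)$ to $0$, applying the hypothesis to $\phi\circ g$, and then using the von-Neumann inequality for the single contraction $\mu^{(\w)}_{X}(\phi\circ g)$ to recover $\|\mu^{(\w)}_{X}(g)\|\leq 1$. Your extra care in justifying $\phi^{-1}\big(\mu^{(\w)}_{X}(\phi\circ g)\big)=\mu^{(\w)}_{X}(g)$ via nilpotency and compatibility of $2$-jets is a valid elaboration of a step the paper leaves implicit.
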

If $\Omega$ is the unit disc $\D$ 
and $\omega=0,$ then we have the following theorem.

\begin{thm}
	Suppose $\mathbb{H}$ is a separable Hilbert space 
	and $X\in\mathcal{B}(\mathbb{H})$ with $\|X\|\leq 1$. 
	Then for homomorphism $\mu_{ X},$ we get 
	$\sup\left\{\|\mu_{X}(p)\|:p\in\mathcal{P}_{1}^{0}(\D,\D)\right\}
	=\sup\left\{\|\mu_{\boldsymbol X}(f)\|:f\in{\rm H}^\infty_{0}(\D,\D)\right\}.$
\end{thm}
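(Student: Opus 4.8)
The plan is to prove the two suprema are equal by showing both coincide with $\|X\|.$ The inequality
\[
\sup\left\{\|\mu_X(p)\|:p\in\mathcal{P}_{1}^{0}(\D,\D)\right\}\leq \sup\left\{\|\mu_X(f)\|:f\in{\rm H}^\infty_{0}(\D,\D)\right\}
\]
is immediate, since every degree-one polynomial sending $\D$ into $\D$ and vanishing at $0$ lies in ${\rm H}^\infty_{0}(\D,\D).$ First I would evaluate the left-hand side directly: such a polynomial has the form $p(z)=az$ with $|a|\leq 1,$ so $\mu_X(p)=a\,T_X,$ and since $T_X^{*}T_X=\mathrm{diag}(0,X^{*}X,X^{*}X)$ one gets $\|\mu_X(p)\|=|a|\,\|X\|.$ Taking the supremum over $|a|\leq 1$ shows the left-hand side equals $\|X\|.$

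It therefore remains to prove $\|\mu_X(f)\|\leq \|X\|$ for every $f\in{\rm H}^\infty_{0}(\D,\D).$ Writing $\beta=f'(0)$ and $\gamma=\tfrac12 f{'\!'}(0),$ the functional calculus (using $f(0)=0$ and $T_X^{3}=0$) produces the $3\times 3$ block form of $\mu_X(f)$ whose first block column and last block row vanish; hence its norm equals that of the $2\times 2$ block operator $\left(\begin{smallmatrix}\beta X & \gamma X^2\\ 0 & \beta X\end{smallmatrix}\right).$ The crucial step is the factorization
\[
\begin{pmatrix}\beta X & \gamma X^{2}\\ 0 & \beta X\end{pmatrix}
=\begin{pmatrix}X & 0\\ 0 & X\end{pmatrix}
\begin{pmatrix}\beta I & \gamma X\\ 0 & \beta I\end{pmatrix},
\]
which yields $\|\mu_X(f)\|\leq \|X\|\cdot\bigl\|\left(\begin{smallmatrix}\beta I & \gamma X\\ 0 & \beta I\end{smallmatrix}\right)\bigr\|,$ so it suffices to show the second factor has norm at most $1.$

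To bound $M:=\left(\begin{smallmatrix}\beta I & \gamma X\\ 0 & \beta I\end{smallmatrix}\right),$ I would verify $I-M^{*}M\geq 0$ by a Schur complement computation. The $(1,1)$ block is $(1-|\beta|^{2})I\geq 0,$ and (when $|\beta|<1$) the Schur complement reduces to $(1-|\beta|^{2})I-\tfrac{|\gamma|^{2}}{1-|\beta|^{2}}X^{*}X,$ which is nonnegative exactly when $|\gamma|^{2}\|X\|^{2}\leq(1-|\beta|^{2})^{2}.$ Since $\|X\|\leq 1,$ this is guaranteed by $|\gamma|\leq 1-|\beta|^{2},$ that is, by $\|\mathscr{T}(\beta,\gamma)\|\leq 1$; and this last inequality is precisely the Schwarz-type bound on the first two Taylor coefficients of a self-map of $\D$ fixing $0$ established in the preceding theorem. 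Combining the pieces gives $\|\mu_X(f)\|\leq\|X\|$ for all $f,$ so the right-hand side is at most $\|X\|,$ and together with the trivial inequality above this forces equality of the two suprema.

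The main obstacle is isolating the right factorization so that the scalar constraint $|\gamma|+|\beta|^{2}\leq 1$ enters sharply: the naive estimate $\|\mu_X(f)\|\leq 1$, obtained from von-Neumann's inequality applied to the single contraction $T_X,$ is too weak when $\|X\|<1.$ The delicate point is that extracting $\mathrm{diag}(X,X)$ on the left leaves a residual matrix whose norm is controlled to be at most $1$ by the Carath\'eodory--Fej\'er bound combined with $\|X\|\leq 1,$ which is exactly what converts the bound $1$ into the sharp bound $\|X\|.$
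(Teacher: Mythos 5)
Your proof is correct and follows essentially the same route as the paper: both compute the left-hand supremum as $\|X\|$ and then factor $\mathscr{T}(f'(0)X,\tfrac12 f''(0)X^2)$ as $\operatorname{diag}(X,X)$ times a residual block matrix, reducing everything to the one-variable bound $|\tfrac12 f''(0)|+|f'(0)|^2\leq 1.$ The only (cosmetic) difference is that you certify $\bigl\|\bigl(\begin{smallmatrix}\beta I & \gamma X\\ 0 & \beta I\end{smallmatrix}\bigr)\bigr\|\leq 1$ by a direct Schur-complement computation, whereas the paper dominates this factor by the scalar matrix $\mathscr{T}(f'(0),\tfrac12 f''(0))$ and invokes $\|\mathscr{T}(a_1,a_2)\|\leq\|f\|_{\D,\infty}.$
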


\begin{proof}
We have 
$\sup\left\{\|\mu_{X}(p)\|:p\in\mathcal{P}_{1}^{0}(\D,\D)\right\}
= \|X\|$ by definition.  
Fix $f\in{\rm H}^\infty_{0}(\D,\D),$ and assume that $f$ is represented in the unit disc $\mathbb D$ by the convergent power series 
$\sum_{j=1}^{\infty}a_n z^n.$ 
Then 
\[\|\mu_{X}(f)\|=\left\|\mathscr{T}\left(a_1 X,a_2 X^2 \right)\right\|
\leq 
\|X\| \left\|\mathscr{T}\left(a_1 I,a_2 X\right)\right\|.\]
Also
$\|f\|_{\D,\infty}
\geq 
\left\|\mathscr{T}\left(a_1,a_2\right)\right\|,$
therefore
\[\frac{\|\mu_{X}(f)\|}{\|f\|_{\D,\infty}}
\leq 
\frac{\|X\|\left\|\mathscr{T}\left(a_1I,a_2 X\right)\right\|}
{\left\|\mathscr{T}\left(a_1,a_2\right)\right\|}
\leq 
\|X\|.\]
Hence 
\[\sup\left\{\|\mu_{X}(p)\|:p\in\mathcal{P}_{1}^{0}(\D,\D)\right\}
=\sup\left\{\|\mu_{X}(f)\|:f\in{\rm H}^\infty_{0}(\D,\D)\right\}=\|X\|.\]
\end{proof}

For two commuting contractions $X_1,X_2\in\mathcal{B}(\h),$ let $X=(X_1,X_2)$
and $T_{X_1}$, $T_{X_2}$ be the operators of type {\sf{V\,II}} and  order 2. Setting $\Omega=\D^2,$ and $\omega=0,$ 
we  see that the homomorphism $\mu_{X}$ is contractive via Ando's theorem.   
Hence for $f\in{\rm H}^\infty_{0}(\D^2,\D)$, we have 
\[\sup_{X}\left\|\mathscr{T}\left(D f(0)\cdot  X,
\frac{1}{2}D^2f(0)\cdot A_{ X}\right)\right\|
\leq 1,\]
where the supremum is taken over all pairs of commuting contractions $X=(X_1,X_2).$ 
Thus we have proved the following theorem.

\begin{thm}\label{var necessary condition}
	Let $f\in{\rm H}^\infty_{0}(\D^2,\C)$. 
	Then
	\[\sup\left\|\mathscr{T}\left(\frac{\partial f}{\partial z_1}(0)X_1 + 
	\frac{\partial f}{\partial z_2}(0)X_2,~ 
	\frac{1}{2}\sum_{i,j=1}^{2}\frac{\partial ^2 f}
	{\partial z_i \partial z_j}(0) X_iX_j \right)\right\|
	\leq 1,\]
	where the supremum is taken over all pairs of commuting contractions 
	$X_1,X_2\in\mathcal{B}(\h),$
	is a necessary condition for $f$ 
	to map $\mathbb{D}^2$ to $\mathbb{D}$.
\end{thm}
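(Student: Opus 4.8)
The plan is to exhibit the quantity inside the supremum as $\|f(T_{X_1},T_{X_2})\|$, where $T_{X_1},T_{X_2}$ are the Varopoulos operators of type {\sf{V\,II}} and order $2$ attached to a commuting contractive pair $X=(X_1,X_2)$, and then to bound this by $1$ using Ando's theorem. First I would fix commuting contractions $X_1,X_2\in\mathcal B(\h)$ and record, as in the discussion preceding the statement, that $T_{X_1}T_{X_2}=T_{X_2}T_{X_1}$ (because $X_1X_2=X_2X_1$) and $\|T_{X_j}\|\le\|X_j\|\le 1$; thus $(T_{X_1},T_{X_2})$ is a commuting pair of contractions on $\h\oplus\h\oplus\h$. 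Since $f(0)=0$, the functional-calculus formula for $\boldsymbol T_X$ gives
\[f(T_{X_1},T_{X_2})=\left(\begin{array}{ccc} 0 & Df(0)\cdot X & \tfrac12 D^2f(0)\cdot A_X\\ 0 & 0 & Df(0)\cdot X\\ 0 & 0 & 0\end{array}\right),\]
which is well defined because $T_{X_j}T_{X_k}T_{X_l}=0$ forces only the $2$-jet of $f$ at $0$ to survive.

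Next I would collapse this $3\times3$ nilpotent block to the $2\times2$ operator $\mathscr T$. Writing $A=Df(0)\cdot X$ and $B=\tfrac12 D^2f(0)\cdot A_X$, the action $(u,v,w)\mapsto(Av+Bw,\,Aw,\,0)$ neither depends on $u$ nor populates the third coordinate, so its operator norm equals that of $(v,w)\mapsto(Av+Bw,\,Aw)$; that is, $\|f(T_{X_1},T_{X_2})\|=\|\mathscr T(A,B)\|$. It therefore remains to show $\|f(T_{X_1},T_{X_2})\|\le 1$ and then to take the supremum over all commuting contractive pairs $X$. For the bound I would invoke Ando's theorem: the commuting contractions $T_{X_1},T_{X_2}$ possess a commuting unitary dilation, so the induced homomorphism $\mu_X$ is contractive on ${\rm H}^\infty(\D^2)$; since $f$ maps $\D^2$ into $\D$ we have $\|f\|_{\D^2,\infty}\le 1$ and hence $\|f(T_{X_1},T_{X_2})\|\le 1$.

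The step requiring the most care is this last contractivity bound. One cannot simply replace $f$ by its degree-$2$ Taylor polynomial and apply von Neumann's inequality for polynomials, because truncation may push the supremum norm above $1$. Instead I would use Lemma \ref{Zero lemma for VOTT} (``the zero lemma'') to reduce contractivity of $\mu_X$ to its values on ${\rm H}^\infty_0(\D^2,\D)$, together with the fact that Ando's dilation furnishes a genuine, contractive ${\rm H}^\infty(\D^2)$ functional calculus (via the joint spectral measure on $\T^2$ of the dilating unitaries and the maximum principle), rather than merely a polynomial one. Granting this, compressing the dilation of $f$ back to $\h\oplus\h\oplus\h$ yields $\|\mu_X(f)\|\le\|f\|_{\D^2,\infty}$, and the theorem follows by taking the supremum over $X$.
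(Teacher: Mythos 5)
Your proposal is correct and follows essentially the same route as the paper: realize the quantity as $\|f(\boldsymbol T_X)\|$ for the order-$2$ type {\sf V\,II} operators, collapse the nilpotent $3\times 3$ block to $\mathscr T(Df(0)\cdot X,\tfrac12 D^2f(0)\cdot A_X)$, and bound it by $1$ via contractivity of $\mu_X$ from Ando's theorem. The only difference is that you spell out (via the zero lemma and the $H^\infty$ functional calculus coming from the dilating unitaries) why the bound applies to bounded holomorphic functions and not just polynomials, a point the paper leaves implicit.
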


In what follows, 
we show that 
the supremum in \eqref{second derivative and VOFT}
is the same as the one appearing in Theorem \ref{var necessary condition}.
We then proceed to compute it explicitly. 
Let $(B)_1$  denote 
the open unit ball of the Banach space $B.$ 
Let $\W$ be a domain in $\C^m$ and $k\in\N$. 
We shall denote the set of all 
$M_k-$valued polynomials in $m$ variables by $\mathcal{P}(\C^m,M_k)$. 
The symbol $\mathcal{P}_{n}(\C^m,M_k)$ will denote 
the set of all polynomials in $\mathcal{P}(\C^m,M_k)$ 
which are of degree at most $n$. 
For any $\w\in\W$, we shall denote 
$\mathcal{P}_{n}^{(\w)}(\C^m,M_k):=
\left\{p\in\mathcal{P}_{n}(\C^m,M_k): p(\w)=0 \right\}$ 
and 
$\mathcal{P}_{n}^{(\w)}\left(\W,(M_k)_1\right):=
\left\{p\in\mathcal{P}_{n}^{(\w)}(\C^m,M_k):\|p\|_{\W,\infty}^{\rm op}\leq 1 \right\},$ 
where $\|p\|_{\W,\infty}^{\rm op}=\sup\{\|p(z)\|_{op}:z\in\W\}$.
 
Our attempt here to find a solution to the extremal problem stated 
in the Theorem \ref{var necessary condition} leads naturally to an independent verification of the von-Neumann inequality for pairs of commuting contractions of type {\sf{V\,II}.} 

\section{The von-Neumann Inequality in One variable}\label{1varvon}
In what follows the following lemma is needed.
\begin{lem}\label{VN for MVP}
	If $F(z)=A_1 + A_2z + A_3z^2+\cdots $ 
	is an analytic map on $\mathbb{D}$ taking values in $(M_k)_1,$ 
	then $\mathscr{T}(A_1,A_2)$ has norm at most 1. 
\end{lem}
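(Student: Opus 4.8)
The plan is to recognize $\mathscr{T}(A_1,A_2)=\left(\begin{smallmatrix}A_1 & A_2\\ 0 & A_1\end{smallmatrix}\right)$ as a compression of the operator of multiplication by $F$ on a vector-valued Hardy space, and then to invoke the elementary fact that a compression of a contraction is again a contraction. Concretely, I would work on $H^2(\mathbb D)\otimes\mathbb C^k$, the Hardy space of $\mathbb C^k$-valued analytic functions on $\mathbb D$, in which the monomials $\{z^n v\}$ are orthogonal with $\|z^n v\|^2=\|v\|^2$. Since $F$ takes values in $(M_k)_1$, that is $\|F(z)\|_{op}\le 1$ for every $z\in\mathbb D$, the multiplication operator $M_F\colon g\mapsto Fg$ should be a contraction on this space, and this is the one input that must be secured carefully.

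First I would verify the contractivity of $M_F$ without recourse to boundary-value theory. For a $\mathbb C^k$-valued polynomial $g$ the product $Fg$ is again analytic on $\mathbb D$, and for each $0<r<1$ the pointwise estimate $\|F(re^{i\theta})g(re^{i\theta})\|\le\|g(re^{i\theta})\|$ integrates to $\frac{1}{2\pi}\int\|(Fg)(re^{i\theta})\|^2\,d\theta\le\frac{1}{2\pi}\int\|g(re^{i\theta})\|^2\,d\theta\le\|g\|_{H^2}^2$. Letting $r\to 1$ and using that the $L^2$-means of an analytic function increase monotonically to the square of its $H^2$-norm gives $\|Fg\|_{H^2}\le\|g\|_{H^2}$, so $M_F$ is a contraction on the dense set of polynomials, hence everywhere.

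Next I would compress $M_F$ to the subspace $\mathcal M$ of $\mathbb C^k$-valued polynomials of degree at most $1$, identified with $\mathbb C^k\oplus\mathbb C^k$ via $a+bz\leftrightarrow(a,b)$. The expansion $F(z)(a+bz)=A_1a+(A_2a+A_1b)z+\cdots$ shows that the compression $P_{\mathcal M}M_F|_{\mathcal M}$ sends $(a,b)$ to $(A_1a,\,A_2a+A_1b)$, i.e. it is the block matrix $\left(\begin{smallmatrix}A_1 & 0\\ A_2 & A_1\end{smallmatrix}\right)$. Conjugating by the self-adjoint unitary flip $\left(\begin{smallmatrix}0 & I\\ I & 0\end{smallmatrix}\right)$ carries this into $\mathscr{T}(A_1,A_2)$, so the two operators have equal norm. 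Since $P_{\mathcal M}M_F|_{\mathcal M}$ is a compression of the contraction $M_F$, its norm is at most $1$, and therefore $\|\mathscr{T}(A_1,A_2)\|\le 1$, as desired.

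The only genuine point requiring attention is the passage from the hypothesis that $\|F(z)\|_{op}\le 1$ on the open disc to the contractivity of $M_F$, which I expect to be the main (and only mild) obstacle: $F$ is a priori defined only on $\mathbb D$ and need not extend to $\mathbb T$, so I deliberately avoid appealing to the existence of radial boundary values and instead use the monotone-means argument above. Everything else reduces to the standard compression principle together with the routine unitary equivalence between the two Toeplitz block matrices.
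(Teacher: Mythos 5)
Your proposal is correct, and it proves the lemma by a genuinely different route than the thesis does. The thesis applies the Möbius-type automorphism $\phi_{-A_1}$ of the operator ball $(M_k)_1$ to reduce to the case $F(0)=0$, invokes the Schwarz lemma to conclude that $(I-A_1A_1^*)^{-1/2}A_2(I-A_1^*A_1)^{-1/2}$ is a contraction, and then uses Parrott's theorem (with the explicit $Y$, $Z$ parametrization) to convert that statement into $\|\mathscr{T}(A_1,A_2)\|\leq 1$. You instead realize $\mathscr{T}(A_1,A_2)$, after conjugation by the flip $\bigl(\begin{smallmatrix}0&I\\I&0\end{smallmatrix}\bigr)$, as the compression of the multiplier $M_F$ on $H^2(\mathbb D)\otimes\mathbb C^k$ to the degree-$\leq 1$ polynomials, and you correctly isolate and handle the one delicate point: since $F$ is only defined on the open disc, you establish $\|M_F\|\leq 1$ by the monotonicity of the $L^2$-means $\frac{1}{2\pi}\int\|h(re^{i\theta})\|^2\,d\theta=\sum_n\|c_n\|^2r^{2n}$ rather than by appealing to boundary values. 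The block-matrix computation $P_{\mathcal M}M_F(a,b)=(A_1a,\,A_2a+A_1b)$ and the flip conjugation are both right. What each approach buys: the thesis's argument stays within the Schwarz--Parrott machinery that recurs throughout (the same parametrization appears in Lemma~\ref{OPnorm} and in the Nehari chapter) and produces the sharper intermediate fact about the normalized derivative; your compression argument is more elementary, avoids Parrott's theorem entirely, and generalizes at no extra cost to $\|\mathscr{T}(A_1,\ldots,A_n)\|\leq 1$ for every $n$ by compressing to polynomials of degree less than $n$ --- a statement the thesis needs elsewhere and would otherwise have to obtain by iteration.
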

\begin{proof}
Suppose $\phi _{-A_1}:\big(M_k\big)_1\to \big(M_k\big)_1$ is an analytic map 
defined by 
\[\phi_{-A_1}(C)=\big(I-A_1 A^{*}_{1}\big)^{-\frac{1}{2}}
(C-A_1)\big(I-A^{*}_{1} C\big)^{-1}\big(I-A^{*}_{1} A_1\big)^{\frac{1}{2}}.\]
Then $\phi_{-A_1}\circ F$ maps 
$\mathbb{D}$ to $\big(M_k\big)_1$ 
with $\big(\phi _{-A_1}\circ F\big)\big(0\big)=0$.
\[\big(\phi _{-A_1}\circ F\big)'\big(0\big)=\phi _{-A_1}'\big(F(0)\big)F'(0)
=\big(I-A_1A^{*}_{1}\big)^{-\frac{1}{2}} A_2 \big(I-A^{*}_{1} A_1\big)^{-\frac{1}{2}}\]
Schwarz's lemma implies that 
$\big(\phi _{-A_1}\circ F\big)'\big(0\big)$ is a contractive linear map 
from $\C$ to $M_k$ and 
therefore
\[\left\|\left(I-A_1 A^{*}_{1}\right)^{-\frac{1}{2}}
 A_2 \left(I-A^{*}_{1} A_1\right)^{-\frac{1}{2}}\right\|
 \leq 1.\]
Now due to Parrott's theorem, 
we conclude that $\|\mathscr{T}(A_1,A_2)\|\leq 1$.
\end{proof}
The theorem below proves the von-Neumann inequality (involving matrix valued polynomials)  
for operators of  type ${\sf{V\,II}}$ and order $2.$   

The homomorphism $\mu_X^{(\omega)}$ naturally extends to the algebra $\mbox{\rm H}^\infty(\Omega) \otimes M_k$ by tensoring with 
the identity map $I_k$ on the $k\times k$ matrices $M_k.$ Thus $$\mu_X^{(\omega)}\otimes I_k: \mbox{\rm H}^\infty(\Omega) \otimes M_k\to \mathcal B(\mathbb H\otimes \C^3) \otimes M_k$$ is given by the formula $\mu_X^{(\omega)}\otimes I_k(F):=\big ( \!\!\big (\mu_X^{(\omega)}(F_{ij})\big )\!\!\big ),$ where $F = \big ( \!\!\big (F_{ij}\big )\!\!\big ) \in \mbox{\rm H}^\infty(\Omega) \otimes M_k.$ In particular, for any $F(z) = A_0 + A_1 z + A_ 2 z^2 + \cdots ,$ it follows that   
\begin{eqnarray}\label{FuncTensor}
\mu_X^{(\omega)}\otimes I_k(F) &=& A_0 \otimes I_k + A_1 \otimes T_X + A_2 \otimes T_X^2 + \cdots \nonumber\\
&=&  \left(
\begin{smallmatrix}
     A_0 & A_1\otimes X & A_2\otimes X^2\\
      0 & A_0 & A_1\otimes X \\
      0 & 0 & A_0\\
\end{smallmatrix} 
\right)
\end{eqnarray}
\begin{thm}
	Let $X$ be a contraction on some Hilbert space $\h$ 
	and $T_{X}$ be the operator of type {\sf{V\,II}} and order 2. 
	If $P\in\mathcal{P}_{n}\left(\D,(M_k)_1\right)$ 
	then, $\|P(T_X)\|\leq 1$.
\end{thm}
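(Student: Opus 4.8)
The plan is to prove, by induction on the order of the operator, the slightly more general assertion that $\|P(T_X)\|\le 1$ for a type-{\sf{V\,II}} operator of \emph{any} order, the order-$2$ case being exactly the theorem. Write $P(z)=A_0+A_1z+A_2z^2+\cdots$ with $\|P(z)\|_{\rm op}\le 1$ on $\D$; since $T_X^{\,\ell}=0$ once $\ell$ exceeds the order, only the first few coefficients survive, and by \eqref{FuncTensor} the operator $P(T_X)$ is the upper-triangular Toeplitz $\mathscr{T}(A_0, A_1\otimes X, A_2\otimes X^2)$ built from them. Note first that $T_X$ is a contraction: for $(u,v,w)^{\rm t}$ one has $\|T_X(u,v,w)^{\rm t}\|^2=\|Xv\|^2+\|Xw\|^2\le \|X\|^2\|(u,v,w)\|^2$, and similarly in every order.

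The inductive engine uses the same two tools as Lemma~\ref{VN for MVP}: the operator-valued Schwarz lemma and the matricial Möbius map. Let $\phi_{-A_0}$ be the automorphism of $(M_k)_1$ sending $A_0$ to $0$ (as in the proof of Lemma~\ref{VN for MVP}), and set $\psi:=\phi_{-A_0}\circ P$, so $\psi:\D\to(M_k)_1$ is analytic with $\psi(0)=0$. The matrix Schwarz lemma gives $\psi(z)=z\,R(z)$ for an analytic $R:\D\to(M_k)_1$, say $R(z)=R_0+R_1z+\cdots$. Inverting, $P=\phi_{A_0}\circ(zR)$, and because the functional calculus $F\mapsto F(T_X)$ is a homomorphism under which the constant $A_0$ becomes $A_0\otimes I$ (which commutes with every power of $T_X$), this identity transfers to the operator level:
\[
P(T_X)=\phi_{A_0\otimes I}\big((zR)(T_X)\big)=\phi_{A_0\otimes I}\big(T_X\,R(T_X)\big).
\]
Since the operator Möbius transform $\phi_{A_0\otimes I}$ maps the closed operator unit ball into itself, it suffices to prove $\|T_X R(T_X)\|\le 1$.

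Here the order drops by one. Multiplying $R(T_X)$ by the nilpotent $T_X$ annihilates the diagonal and shifts, so $T_X R(T_X)$ is the strictly-upper Toeplitz with blocks $R_j\otimes X^{\,j+1}$, whose norm equals that of $\mathscr{T}(R_0\otimes X, R_1\otimes X^2,\ldots)=(I\otimes X)\,R(T_X')$, where $T_X'$ is the type-{\sf{V\,II}} operator of one lower order. Hence $\|T_X R(T_X)\|\le \|X\|\,\|R(T_X')\|\le \|R(T_X')\|\le 1$ by the induction hypothesis applied to the contractive-valued $R$. The base case (order $0$) is just $\|A_0\|=\|P(0)\|\le 1$, which closes the induction.

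The step I expect to be the main obstacle is the intertwining identity $P(T_X)=\phi_{A_0\otimes I}(T_X R(T_X))$: the Möbius map carries the factors $(I-A_0A_0^*)^{\pm1/2}$ and the resolvent $(I+A_0^*\,(zR))^{-1}$, and to push these through the functional calculus one must know that $I+(A_0^*\otimes I)(T_X R(T_X))$ is genuinely invertible and that $\phi_{A_0\otimes I}$ is a ball automorphism. When $\|A_0\|<1$ both facts follow from a Neumann-series expansion; the borderline case $\|A_0\|=1$ I would handle by proving the estimate for $rP$ and letting $r\uparrow 1$. An alternative, more dilation-theoretic route is to dilate $X$ to a unitary $U$ (each matrix entry of $P(T_X)$ involves a single power of $X$, so $P(T_X)$ is a compression of $P(T_U)$), then use the spectral theorem for $U$ to reduce to scalar coefficients, where the claim becomes $\|\mathscr{T}(A_0,A_1,A_2)\|\le 1$ and follows from Lemma~\ref{VN for MVP} together with one application of Parrott's theorem; there the delicate point is certifying that the actual coefficient $A_2$ is an admissible Parrott completion, which is precisely what the Schwarz estimate on the second coefficient supplies.
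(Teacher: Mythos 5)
Your argument is correct, and it reaches the conclusion by a genuinely different route from the paper. The paper's proof, after using the zero lemma to normalize $P(0)=0$, pulls the contraction $X$ out of \emph{both} off-diagonal entries in a single stroke, writing $\mathscr{T}(A_1\otimes X, A_2\otimes X^2)$ as $\bigl(\begin{smallmatrix} I\otimes X & 0\\ 0 & I\end{smallmatrix}\bigr)\mathscr{T}(A_1\otimes I, A_2\otimes I)\bigl(\begin{smallmatrix} I & 0\\ 0 & I\otimes X\end{smallmatrix}\bigr)$, thereby reducing the whole theorem to the scalar-coefficient bound $\|\mathscr{T}(A_1,A_2)\|\leq 1$ of Lemma \ref{VN for MVP}, whose proof is where the Schwarz lemma and Parrott's theorem enter. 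You instead iterate: Möbius-normalize at the operator level, apply the Schwarz factorization $\psi=zR$, peel off one diagonal factor of $I\otimes X$, and drop the order by one. What your route buys is (i) the elimination of Parrott's theorem entirely, and (ii) a proof that works verbatim for type {\sf V\,II} operators of arbitrary order $k$, whereas the paper's argument as written is tied to order $2$ through the two-coefficient Lemma \ref{VN for MVP}. What it costs is the need to justify the intertwining identity $P(T_X)=\phi_{A_0\otimes I}\bigl(\psi(T_X)\bigr)$ and to run the induction over a slightly larger class than the one in the statement: the function $R=\psi/z$ is neither a polynomial nor open-ball valued, so the inductive hypothesis must be phrased for analytic maps of $\mathbb D$ into the \emph{closed} ball (harmless, by rescaling $rP$ and letting $r\uparrow 1$, or by truncating the Taylor series since $T_X$ is nilpotent). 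Both of the delicate points you flag do resolve as you expect: $\psi(T_X)$ is strictly upper triangular, so $I+(A_0^*\otimes I)\psi(T_X)$ is unipotent and the resolvent in $\phi_{A_0\otimes I}$ is given by a terminating Neumann series; and since $P$ maps into the open ball one automatically has $\|A_0\|<1$, so the borderline case never occurs. Your alternative dilation-theoretic sketch at the end is closer in spirit to the paper's (it reintroduces Parrott), and the difficulty you identify there --- certifying that the actual third coefficient is an admissible Parrott completion --- is real and is not resolved by Parrott's theorem alone; but your main inductive argument does not depend on it.
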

\begin{proof}
The zero lemma \ref{Zero lemma for VOTT} is easy to prove for the homomorphism $\mu^{(\omega)}_X\otimes I_k.$ The proof now involves finding an automorphism of the unit ball (with respect to the operator norm) in the $k\times k$ matrices taking $P(\omega)$ to $0.$ Since this group of  automorphisms is known to be transitive, the proof of the zero lemma even for matrix valued polynomials is same in spirit to the ones given before. We therefore assume, without loss generality, that $P(0)=0.$ 
Thus for any polynomial $P(z)=A_1z + A_2z^2+ \cdots + A_nz^n,$ we have  
\[P(T_X):= \mu_X\otimes I_k(P) = \left(
\begin{array}{ccc}
      0 & A_1\otimes X & A_2\otimes X^2\\
      0 & 0 & A_1\otimes X \\
      0 & 0 & 0\\
\end{array} 
\right)
\]
and hence 
\begin{align*}
	\left\| P(T_X)\right\|&=\left\| \left(
	\begin{array}{cc}
	      A_1\otimes X & A_2\otimes X^2\\
	     0 & A_1\otimes X \\
	\end{array} 
	\right)
	\right\|\\
	&=
	\left\| \left(
	\begin{array}{cc}
	      I\otimes X & 0\\
	     0 &I \\
	\end{array}
	\right)
	\left(
	\begin{array}{cc}
	      A_1\otimes I & A_2\otimes I\\
	     0 & A_1\otimes I \\
	\end{array}
	\right)
	\left(
	\begin{array}{cc}
	      I & 0\\
	     0 & I\otimes X \\
	\end{array} 
	\right)
	\right\|.
\end{align*}
Since $\|X\|\leq 1,$ it follows that  
$\left\| P(T_X)\right\|\leq \left\|\mathscr{T}(A_1,A_2)\right\|.$
Now using the Lemma \ref{VN for MVP},  
we get $\left\| P(T_X)\right\|\leq 1.$
\end{proof}

\section{Ando's Theorem for the Operators of type \texorpdfstring{{\sf{V\,II}}}~~and order 2}
The zero lemma \ref{Zero lemma for VOTT}, modified as in the previous section, applies to the case of matrix valued polynomials in any number of variables. In this form, it is stated in the papers \cite{GMSastry,  VP}. Consequently it is enough, without loss of generality, that the homomorphisms we consider below are defined only on polynomials with $P(0) =0.$  
We now recall the commutant lifting theorem (cf.\cite[Theorem 4]{DMP}), which we use in the proof of the lemma below.
\begin{thm}
	Let $T$ be a contraction on a Hilbert space $\h$, 
	$U$ be its minimal co-isometric dilation 
	acting on some Hilbert space $\mathbb{K}$, 
	and $R$ be an operator on $\h$ commuting with $T$. 
	Then there is an operator $S$ on $\mathbb{K}$ 
	commuting with $U$ such that
	\[S\h\subset\h,\,\|S\| = \|R\|\,\mbox{ and } R^{m} T^{n} = P_{\h} S^m U^n \vert_{\h} \; \forall m,n \geq 0.\]
\end{thm}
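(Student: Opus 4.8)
The plan is to prove this statement, which is the commutant lifting theorem, by the iterative one-step extension of Douglas, Muhly and Pearcy, using Parrott's theorem (stated above) to control the norm at each stage. First I would reduce the stated conclusion to a cleaner one. Discarding the trivial case $R=0$ and scaling, we may assume $\|R\|=1$. I claim it suffices to produce $S$ on $\mathbb{K}$ with $SU=US$, $S\h\subseteq\h$, $S|_\h=R$ and $\|S\|\le 1$: indeed, for the minimal co-isometric dilation $\h$ is invariant for $U$ and $U|_\h=T$, so once $\h$ is invariant for both $S$ and $U$, commutativity gives $S^mU^n|_\h=R^mT^n$ for all $m,n\ge 0$, whence $P_\h S^mU^n|_\h=R^mT^n$; and $\|S\|\ge\|S|_\h\|=\|R\|$ then forces $\|S\|=\|R\|$.

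Next I would pass to the geometry of the dilation. Since $U$ is the minimal co-isometric dilation of $T$, its adjoint $V:=U^*$ is the minimal isometric dilation of $T^*$; thus $\h$ is invariant for $U$, co-invariant for $V$, and $\mathbb{K}=\bigvee_{n\ge 0}V^n\h$. This last equality furnishes an increasing filtration $\mathbb{K}_0\subseteq\mathbb{K}_1\subseteq\cdots$, where $\mathbb{K}_j=\bigvee_{0\le n\le j}V^n\h$ and $\overline{\bigcup_j\mathbb{K}_j}=\mathbb{K}$, together with orthogonal increments $\mathbb{K}_{j+1}\ominus\mathbb{K}_j$ which are isometric copies of the defect space of $D_{T^*}$. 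Working with $W:=S^*$ (so that $SU=US$ becomes $WV=VW$ and $S|_\h=R$ becomes the lifting condition $P_\h W|_\h=R^*$), I would build $W$ as a coherent limit of contractions $W_j$ on $\mathbb{K}_j$ with $W_0=R^*$ and $W_{j+1}|_{\mathbb{K}_j}=W_j$.

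The inductive step is where Parrott's theorem enters. Passing from $W_j$ to $W_{j+1}$ amounts to prescribing the action of $W_{j+1}$ on the new increment $\mathbb{K}_{j+1}\ominus\mathbb{K}_j$. The intertwining relation together with the already fixed block $W_j$ pins down all but one corner of the $2\times 2$ operator matrix of $W_{j+1}$ relative to $\mathbb{K}_{j+1}=\mathbb{K}_j\oplus(\mathbb{K}_{j+1}\ominus\mathbb{K}_j)$; the prescribed row and column are contractions because $\|W_j\|\le 1$ and because the increments are isometric copies built from $D_{T^*}$. Parrott's theorem then supplies a value of the free corner making $\|W_{j+1}\|\le 1$. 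Taking the limit, $W:=\lim_j W_j$ is contractive on the dense subspace $\bigcup_j\mathbb{K}_j$, extends by continuity to $\mathbb{K}$, satisfies $WV=VW$ (checked on the dense subspace), and lifts $R^*$; setting $S:=W^*$ finishes the construction.

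The hard part will be the bookkeeping in the inductive step: one must arrange the block decomposition so that the commutation relation determines exactly the two known sides of a Parrott completion while leaving a single genuinely free corner, and verify that the bound $\|W_j\|\le 1$ transfers to the partial data fed into Parrott's theorem. This is the computational heart of the Douglas--Muhly--Pearcy argument, and the real delicacy is to make the one-step choices simultaneously compatible with commutation and non-increasing in norm across all (infinitely many) stages, so that no earlier choice obstructs a later extension. The minimality of the dilation is what guarantees that $W$ is determined on the dense union, and hence extends unambiguously to all of $\mathbb{K}$.
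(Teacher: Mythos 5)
First, a point of reference: the thesis does not prove this statement at all --- it is the commutant lifting theorem, quoted verbatim from Douglas--Muhly--Pearcy and used as a black box in the proof of Lemma~\ref{X_1 can be taken to be unitary}. So your proposal is not being measured against an argument in the paper; it has to stand on its own as a proof of a genuinely hard classical theorem. Your opening reductions (normalizing $\|R\|=1$, observing that $\h$ is invariant for $U$ with $U|_\h=T$, and that $S\h\subseteq\h$, $S|_\h=R$, $\|S\|\le 1$ suffice) are correct, and the overall strategy --- one-step extension through the filtration $\mathbb{K}_j=\bigvee_{0\le n\le j}V^n\h$ with Parrott's theorem controlling the norm --- is indeed how this theorem is proved.

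However, the inductive scheme you actually set up is wrong, and not merely in its bookkeeping. You propose contractions $W_j$ \emph{on} $\mathbb{K}_j$ with $W_0=R^*$ and $W_{j+1}|_{\mathbb{K}_j}=W_j$; in the limit this makes every $\mathbb{K}_j$, in particular $\h$, \emph{invariant} for $W=S^*$ with $W|_\h=R^*$. That is not the lifting condition (which only requires $P_\h W|_\h=R^*$ with $\h$ co-invariant for $W$), and it is unachievable in general. Take $\h=\C^2$ and $T=R=\left(\begin{smallmatrix}0&1\\0&0\end{smallmatrix}\right)$. Writing $V$ in Schäffer form with defect operator $D=(I-TT^*)^{1/2}=\left(\begin{smallmatrix}0&0\\0&1\end{smallmatrix}\right)$, the relations $WV=VW$, $W|_\h=R^*$ and $R^*T^*=T^*R^*$ force $W$ to send the increment vector $(0,Dh,0,\dots)$ to $(0,DR^*h,0,\dots)$; for $h=e_1$ this sends $0$ to a unit vector, so no such $W$ exists --- even though the theorem holds here with $S=U$. (Equivalently: your $W$ makes $\h$ invariant for $S^*$, while the theorem needs $\h$ invariant for $S$; insisting on both asks $\h$ to reduce $S$, which a commuting lifting almost never does.) The correct iteration extends in the orthogonal direction: one constructs $Y|_\h:\h\to\mathbb{K}$ (equivalently the coherent compressions $P_{\mathbb{K}_j}Y|_{\mathbb{K}_j}$) one defect increment at a time, and only in that formulation does commutation with $V$ pin down exactly one row and one column of a $2\times 2$ block, leaving a single Parrott-completable corner. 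Finally, even granting the right setup, you explicitly defer ``the computational heart'' --- the verification that the forced row and column are contractions and that the infinitely many choices remain mutually consistent. That verification \emph{is} the theorem; as written, this is a plan for a proof rather than a proof.
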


Let $X_1$ and $X_2$ be two commuting contractions 
on a Hilbert space $\h$ and 
$T_{X_1},T_{X_2}$ be the operators of type {\sf{V\,II}} and order $2$ respectively.  
Let
\[P(z_1,z_2)=\sum_{k=1}^{n}\sum_{p+q=k} A_{pq}z_{1}^{p}z_{2}^{q}\]
be in 
$\mathcal{P}_{n}^{(0)}\left(\C^2,M_l\right)$. 
Evaluating the polynomial $P$ on the commuting pair of contractions $T_{X_1},T_{X_2},$ we get 
\[P(T_{X_1},T_{X_2})=\left(
\begin{array}{ccc}
      0 & \sum\limits_{p+q=1} A_{pq}\otimes X_{1}^{p}X_{2}^{q} & \sum\limits_{p+q=2}A_{pq}\otimes 	X_{1}^{p} X_{2}^{q}\\
      0 & 0 &\sum\limits_{p+q=1} A_{pq}\otimes X_{1}^{p}X_{2}^{q} \\
      0 & 0 & 0\\
\end{array} 
\right)\]
and hence 
\[\left\| P(T_{X_1},T_{X_2})\right\| =\left\| \left(
\begin{array}{cc}
      \sum\limits_{p+q=1} A_{pq}\otimes X_{1}^{p}X_{2}^{q} & \sum\limits_{p+q=2}A_{pq}\otimes 		X_{1}^{p} X_{2}^{q}\\
     0 &\sum\limits_{p+q=1} A_{pq}\otimes X_{1}^{p}X_{2}^{q} \\
\end{array} 
\right)
\right\| .\]

\begin{lem}\label{X_1 can be taken to be unitary}
	For any polynomial $P$ in $\mathcal{P}_{n}^{(0)}\left(\C^2,M_l\right),$
	$\|P(T_{X_1},T_{X_2})\|\leq 1$ 
	for all commuting contractions $X_1,~X_2$ 
	if and only if $\|P(T_{X_1},T_{X_2})\|\leq 1$ 
	for all commuting pairs $X_1,~X_2$ 
	with $X_1$ is co-isometry and 
	$X_2$ contractive.
\end{lem}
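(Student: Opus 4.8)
The forward implication is immediate and requires no work: a co-isometry is in particular a contraction, so the pairs $(X_1,X_2)$ with $X_1$ a co-isometry and $X_2$ a contraction form a subfamily of all commuting contractive pairs, and the hypothesis $\|P(T_{X_1},T_{X_2})\|\le 1$ for the larger family restricts trivially to this subfamily. So the content is entirely in the reverse implication, and the plan is to use the commutant lifting theorem recalled just above to replace an arbitrary commuting pair of contractions by one whose first operator is a co-isometry.

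Given commuting contractions $X_1,X_2$ on $\h$, I would set $T:=X_1$ and let $U$ be the minimal co-isometric dilation of $T$ on a space $\kh\supseteq\h$. Applying commutant lifting with $R:=X_2$, which commutes with $T=X_1$, produces an operator $S\in\mathcal B(\kh)$ commuting with $U$, with $\|S\|=\|X_2\|\le 1$ and $S\h\subseteq\h$, and satisfying the intertwining identity $X_2^{m}X_1^{n}=P_{\h}\,S^{m}U^{n}|_{\h}$ for all $m,n\ge 0$. Then $(U,S)$ is a commuting pair in which $U$ is a co-isometry and $S$ is a contraction, so the hypothesis yields $\|P(T_U,T_S)\|\le 1$. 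The task is then to transfer this bound back down to $\h$.

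The key step is to recognize $P(T_{X_1},T_{X_2})$ as a compression of $P(T_U,T_S)$. Writing both in the reduced $2\times 2$ block form displayed above, each block entry of $P(T_U,T_S)$ has the shape $\sum_{p+q=k}A_{pq}\otimes U^{p}S^{q}$ with $k\in\{1,2\}$, and compressing by $I_l\otimes P_{\h}$ on both sides replaces $U^{p}S^{q}$ by $P_{\h}U^{p}S^{q}|_{\h}$. Since $U$ and $S$ commute, $P_{\h}U^{p}S^{q}|_{\h}=P_{\h}S^{q}U^{p}|_{\h}=X_2^{q}X_1^{p}=X_1^{p}X_2^{q}$, where the middle equality is the commutant lifting identity with $m=q,\ n=p$ and the last uses that $X_1,X_2$ commute. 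Hence the block-wise compression of $P(T_U,T_S)$ to $(\C^{l}\otimes\h)\oplus(\C^{l}\otimes\h)$ coincides, entry by entry (the $(2,1)$ block being zero in both), with $P(T_{X_1},T_{X_2})$. As compression by the isometric inclusion $(\C^{l}\otimes\h)^{\oplus 2}\hookrightarrow(\C^{l}\otimes\kh)^{\oplus 2}$ cannot increase the operator norm, we get $\|P(T_{X_1},T_{X_2})\|\le\|P(T_U,T_S)\|\le 1$, as required.

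The only point needing genuine care, and hence the main obstacle, is the exact matching of the compressed blocks: one must track the orders of $U^{p},S^{q}$ and of $X_1,X_2$, and it is precisely the interplay of the two commutativity relations with the single intertwining identity of the commutant lifting theorem that legitimizes the reordering $P_{\h}U^{p}S^{q}|_{\h}=X_1^{p}X_2^{q}$. Once this identity is in place, the remainder is the standard observation that a compression of a contraction is a contraction.
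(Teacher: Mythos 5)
Your proposal is correct and follows essentially the same route as the paper: both directions are handled identically, with the reverse implication obtained by passing to the minimal co-isometric dilation $U$ of $X_1$, lifting $X_2$ to a commuting contraction $S$ via the commutant lifting theorem, and observing that $P(T_{X_1},T_{X_2})$ is the compression of $P(T_U,T_S)$ to $(\C^l\otimes\h)\oplus(\C^l\otimes\h)$. Your explicit care with the ordering $P_{\h}U^{p}S^{q}|_{\h}=X_2^{q}X_1^{p}=X_1^{p}X_2^{q}$ is a welcome clarification of a point the paper's displayed identity passes over silently.
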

\begin{proof}
Let $X_1$ and $X_2$ be any two commuting contractions.
 Let $U:\mathbb{K}\to \mathbb{K}$ be the minimal co-isometric dilation of $X_1$. 
 From commutant lifting theorem there exists an operator 
 $S:\mathbb{K}\to \mathbb{K}$ 
 such that
\[\|S\|=\|X_2\|,~SU=US 
\mbox{ and }
X_{2}^{m}{X_1}^n=P_{\mathbb{H}}S^{m}{U^n}|_{\mathbb{H}} 
\mbox{ for all }m,n\in\mathbb{N}_0.\]
Let $T_U$ and $T_S$ be the operators of type {\sf{V\,II}} and order $2.$
Setting  
$\widetilde{\h}=\mathbb{C}^l\otimes \mathbb{H}\oplus \mathbb{C}^l\otimes \mathbb{H},$ we have 
\[P_{\widetilde{\h}}\mathscr{T}\left(A_{10}\otimes U + A_{01}\otimes S,
A_{20}\otimes U^2 + A_{11}\otimes US + A_{02}\otimes S^2\right)_{\big{|}{\widetilde{\h}}}\]
\[=\mathscr{T}\left(\sum\limits_{p+q=1} A_{pq}\otimes X_{1}^{p}X_{2}^{q},
\sum\limits_{p+q=2}A_{pq}\otimes X_{1}^{p} X_{2}^{q}\right).\]
Thus for any polynomial $P$ of the form 
\[P(z_1,z_2)=\sum_{k=1}^{n}\sum_{p+q=k} A_{pq}z_{1}^{p}z_{2}^{q}\]
mapping $\D^2$ into $(M_l)_1,$, we have
$\|P(T_U,T_S)\|\geq \|P(T_{X_1},T_{X_2})\|$
completing the proof of the lemma. 
\end{proof}
\begin{thm}\label{Ando}
	Let $T_{X_1}$ and $T_{X_2}$ be commuting contractions 
	of type {\sf V\,II} and order $2.$ We have 
	 	 $\|P(T_{X_1},T_{X_2})\|\leq \|P\|_{\D^2,\infty}^{\rm op},$
for any matrix valued polynomial $P$ in two variables. 
\end{thm}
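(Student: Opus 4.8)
The plan is to reduce the two–variable statement to the one–variable von-Neumann inequality for type {\sf V\,II} operators, which is already in hand. By homogeneity I may assume $\|P\|_{\mathbb{D}^2,\infty}^{\rm op}\leq 1$ and prove $\|P(T_{X_1},T_{X_2})\|\leq 1$. As displayed just before Lemma \ref{X_1 can be taken to be unitary}, this norm equals $\|\mathscr{T}(L,Q)\|$, where $L=\sum_{p+q=1}A_{pq}\otimes X_1^{p}X_2^{q}$ and $Q=\sum_{p+q=2}A_{pq}\otimes X_1^{p}X_2^{q}$ are the degree one and degree two parts. By the (matrix valued) zero lemma \ref{Zero lemma for VOTT} I may assume $P(0)=0$. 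Finally, using Lemma \ref{X_1 can be taken to be unitary} (which, through the commutant lifting theorem, replaces $X_1$ by a co-isometry) followed by the passage from that co-isometry to its minimal unitary dilation — again lifting $X_2$ so as to dilate the pair — I may assume that $X_1=U$ is \emph{unitary} and that $X_2$ is a contraction commuting with $U$. This reduction to a unitary first coordinate is the decisive point.

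Next I would establish the estimate for scalars. For $|w|\leq 1$ and $z_1\in\mathbb{D}$ the point $(z_1,z_1w)$ lies in $\mathbb{D}^2$, so the slice $z_1\mapsto P(z_1,z_1w)=u(w)z_1+v(w)z_1^2+\cdots$ is an analytic map of $\mathbb{D}$ into $(M_l)_1$ vanishing at $0$, where $u(w)=A_{10}+A_{01}w$ and $v(w)=A_{20}+A_{11}w+A_{02}w^2$. By Schwarz's lemma the function $P(z_1,z_1w)/z_1=u(w)+v(w)z_1+\cdots$ again takes values in $(M_l)_1$, so Lemma \ref{VN for MVP} applied to it yields $\|\mathscr{T}(u(w),v(w))\|\leq 1$ for every $w$ with $|w|\leq 1$. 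Consequently the $M_{2l}$-valued polynomial $\Phi(w):=\mathscr{T}(u(w),v(w))$ satisfies $\|\Phi\|_{\mathbb{D},\infty}^{\rm op}\leq 1$.

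It remains to pass from a scalar $w$ to the operator $Y:=U^{*}X_2$, which is a contraction since $U$ is unitary and $X_2$ a contraction. Because $U$ is unitary it commutes with $U^{*}$, whence $Y$ commutes with $U$ and one has $UY=YU=X_2$, $U^{2}Y=UX_2$ and $U^{2}Y^{2}=X_2^{2}$. A direct computation with the block form of $\mathscr{T}$ then gives the factorization
\[
\mathscr{T}(L,Q)=\left(\begin{smallmatrix} I & 0\\ 0 & U^{*}\end{smallmatrix}\right)\,\mathscr{T}\!\big(u(Y),v(Y)\big)\,\left(\begin{smallmatrix} U & 0\\ 0 & U^{2}\end{smallmatrix}\right),
\]
where $u(Y)=A_{10}\otimes I+A_{01}\otimes Y$, $v(Y)=A_{20}\otimes I+A_{11}\otimes Y+A_{02}\otimes Y^{2}$, and the two outer factors are unitary. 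Since $\mathscr{T}(u(Y),v(Y))=\Phi(Y)$, multiplication by unitaries gives $\|\mathscr{T}(L,Q)\|=\|\Phi(Y)\|$. Von-Neumann's inequality for the single contraction $Y$ applied to the matrix valued polynomial $\Phi$ (equivalently, dilating $Y$ to a unitary) then yields $\|\Phi(Y)\|\leq\|\Phi\|_{\mathbb{D},\infty}^{\rm op}\leq 1$, so that $\|P(T_{X_1},T_{X_2})\|\leq 1=\|P\|_{\mathbb{D}^2,\infty}^{\rm op}$.

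The main obstacle is the reduction of the first coordinate to a unitary. The factorization above uses crucially that $X_2$ commutes with $U^{*}$; for a mere co-isometry one only has $UY=X_2$ but not $YU=X_2$, and the identity $U^{2}Y^{2}=X_2^{2}$ fails, so the block factorization collapses. It is precisely unitarity that restores the commutation $X_2U^{*}=U^{*}X_2$, and supplying it — via commutant lifting together with the co-isometry-to-unitary dilation — is the one genuinely nontrivial step. Everything after that is the scalar one–variable inequality (Lemma \ref{VN for MVP}) together with routine bookkeeping in the explicit block form of $\mathscr{T}$.
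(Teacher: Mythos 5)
Your argument is correct in outline and follows the same skeleton as the paper's proof of Theorem~\ref{Ando}: normalize to $P(0)=0$ and $\|P\|_{\D^2,\infty}^{\rm op}\leq 1$, use the slices $z_1\mapsto P(z_1,wz_1)$ together with the Schwarz lemma and Lemma~\ref{VN for MVP} to conclude that $\Phi(w)=\mathscr{T}(u(w),v(w))$ has operator sup norm at most $1$ on $\D$, dilate the first coordinate, factor $\mathscr{T}(L,Q)$ through $\Phi$ evaluated at a single contraction $Y$, and finish with the one-variable von-Neumann inequality for matrix valued polynomials. Where you genuinely diverge is at the dilation step. The paper stops at a co-isometry $X_1$ (Lemma~\ref{X_1 can be taken to be unitary}), sets $Y=X_2X_1^{*}$, and sandwiches $f(Y)$ between the contractions $\operatorname{diag}(I\otimes X_1, I)$ and $\operatorname{diag}(I, I\otimes X_1)$; you instead pass to a unitary $U$, set $Y=U^{*}X_2$, and obtain an exact unitary equivalence $\|\mathscr{T}(L,Q)\|=\|\Phi(Y)\|$. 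Your diagnosis of why this matters is accurate and is worth emphasizing: with only $X_1X_1^{*}=I$ one computes $X_1Y^2X_1=X_2^2X_1^{*}X_1$, which need not equal $X_2^2$ (take $X_1$ the backward shift and $X_2=I$), so the quadratic term in the co-isometric factorization does not come out right; unitarity of $U$ restores $X_2U^{*}=U^{*}X_2$ and hence $U^2Y^2=X_2^2$. In this sense your route is not merely an alternative but tightens the computation at exactly the point where the co-isometric version is fragile.

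The one step you should not leave as an assertion is the reduction to a unitary first coordinate. Lemma~\ref{X_1 can be taken to be unitary} delivers only a co-isometry, so to replace $X_1$ by a unitary $U$ while retaining a commuting contractive lift $S$ of $X_2$ with the compression property $X_1^{p}X_2^{q}=P_{\h}\,U^{p}S^{q}|_{\h}$ (which is what makes $P(T_{X_1},T_{X_2})$ a compression of $P(T_U,T_S)$ and hence gives $\|P(T_{X_1},T_{X_2})\|\leq\|P(T_U,T_S)\|$), you need the commutant lifting theorem for the minimal \emph{unitary} dilation, equivalently the fact that the commutant of a co-isometry lifts with preserved norm to the commutant of its minimal unitary extension. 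This is standard Sz.-Nagy--Foias theory, but it goes beyond the commutant lifting statement quoted in the text, and since you call this reduction ``the decisive point'' it deserves a precise citation or a short proof rather than a parenthetical.
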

\begin{proof} 
Let $P$ be a polynomial in two variables of the form 
\[P(z_1,z_2)=\sum_{k=1}^{n}\sum_{p+q=k} A_{pq}z_{1}^{p}z_{2}^{q},\, A_{pq} \in M_l,\]
with $\|P\|_{\D^2,\infty}^{\rm op}\leq 1.$ 
For $\lambda\in\D$,
\[p_{\lambda}(z_1):=P(z_1,\lambda z_1)=
(A_{10}+ A_{01}\lambda)z_1 + (A_{20} + A_{11}\lambda + A_{02}\lambda ^2){z_1}^2+...\]
maps 
$\mathbb{D}$ to $(M_l)_1$. 
Therefore for each $\lambda\in\D$,
\[\left\| \left(
\begin{array}{cc}
      A_{10}+ A_{01}\lambda & A_{20} + A_{11}\lambda + A_{02}\lambda ^2\\
     0 & A_{10}+ A_{01}\lambda \\
\end{array} 
\right)
\right\| 
\leq 1 ,\]
which is equivalent to
\[\left\| \mathscr{T}\left(A_{10},A_{20}\right)+
\mathscr{T}\left(A_{01},A_{11}\right)\lambda+
\mathscr{T}\left(0,A_{02}\right)\lambda^2\right\| 
\leq 1,\]
for all $\lambda\in\D$. 
Define $f:\mathbb{D}\to (M_{2l})_1$ by 
\[f(\lambda)=\mathscr{T}\left(A_{10},A_{20}\right)+
\mathscr{T}\left(A_{01},A_{11}\right)\lambda+
\mathscr{T}\left(0,A_{02}\right)\lambda^2.\]
Let $X_1$ be a co-isometry operator 
and $X_2$ be an arbitrary contraction 
such that $X_1X_2=X_2X_1$. 
If $Y:=X_2X_{1}^{*},$ then
\[f(Y)=\mathscr{T}\left(A_{10}\otimes I + 
A_{01}\otimes Y, ~A_{20}\otimes I +
A_{11}\otimes Y + A_{02}\otimes Y^2\right).\]
An easy computation gives
\begin{align*}
	&\left(
	\begin{array}{cc}
	      A_{10}\otimes X_1 + A_{01}\otimes X_2 & A_{20}\otimes {X_1}^2 + A_{11}\otimes X_2 X_1 		+ A_{02}\otimes {X_2}^2\\
	     0 &  A_{10}\otimes X_1 + A_{01}\otimes X_2 \\
	\end{array} 
	\right)\\
	&=
	\left(
	\begin{array}{cc}
	     I\otimes X_1 & 0\\
	     0 & I \\
	\end{array} 
	\right)
	f(Y)
	\left(
	\begin{array}{ll}
	     I & 0\\
	     0 & I\otimes X_1 \\
	\end{array} 
	\right).
\end{align*}
Therefore,
\[\left\|\mathscr{T}\left(\sum\limits_{p+q=1} A_{pq}\otimes X_{1}^{p}X_{2}^{q},
~ \sum\limits_{p+q=2}A_{pq}\otimes X_{1}^{p} X_{2}^{q}\right) \right\| 
\leq \left\| f(X_2{X_1}^{*}) \right\|\]
and since $X_2X_{1}^{*}$ is a contraction, 
therefore by the von-Neumann inequality, 
we have 
\[\left\|\mathscr{T}\left(\sum\limits_{p+q=1} A_{pq}\otimes X_{1}^{p}X_{2}^{q},
~ \sum\limits_{p+q=2}A_{pq}\otimes X_{1}^{p} X_{2}^{q}\right) \right\| 
\leq 1.\]
This completes the proof of the theorem.
\end{proof}

\section{Solution to the Extremal Problem}
In this section we shall calculate the supremum 
occurring in Theorem \ref{1var necessary condition} and 
Theorem \ref{var necessary condition}.\\ 
Let $B$ be the bilateral shift on $\ell^2(\mathbb{Z})$  
and $C^*(B)$ be the commutative unital $C^*-$ algebra generated by $B.$  
Hence $C^*(B)$ is isometrically isomorphic to the $C^*$- algebra of continuous functions $C(\sigma(B)),$ where  $\sigma(B)=\mathbb{T}$ is the unit circle in $\C.$ This isometric isomorphism, which we denote by $\tau,$ is defined by the rule $\tau(f) = f(B^*).$ 
Consequently, for any $k\in \mathbb N,$ 
the map 
\[\tau \otimes I_k : C(\mathbb{T}) \otimes M_k\to\mathcal{B}(\ell^2(\mathbb{Z})) \otimes M_k\]
is also a $*-$ isometric monomorphism. 
In particular, for any $P\in\mathcal{P}(\C,M_k),$ we have
\begin{equation}\label{norm of matrix valued polynomial}
	\|P\|_{\D,\infty}^{\rm op}=\|P(B^*)\|.
\end{equation}
In the proof of the Theorem \ref{Ando}, we have seen that in solving the extremal problem 
\[\sup\left\|\mathscr{T}\left(\frac{\partial f}{\partial z_1}(0)X_1 
+ \frac{\partial f}{\partial z_2}(0)X_2,
~\frac{1}{2} \sum_{i,j=1}^{2}
\frac{\partial ^2 f}{\partial z_i \partial z_j}(0) X_iX_j \right)\right\|\]
over 
all commuting contractions $X_1,X_2\in\mathcal{B}(\h),$ we may assume without loss of generality that $X_1=I.$  
Therefore the supremum in Theorem \ref{var necessary condition} 
is equal to the
\begin{equation}\label{reduced to I}
	\sup\left\|\mathscr{T}\left(\frac{\partial f}{\partial z_1}(0)I 
	+ \frac{\partial f}{\partial z_2}(0)X,
	~ \frac{1}{2}\left(\frac{\partial ^2 f}{\partial z^{2}_{1}}(0) I 
	+ 2\frac{\partial ^2 f}{\partial z_1 \partial z_2}(0)X 
	+\frac{\partial ^2 f}{\partial z_{2}^{2}}(0)X^2\right) \right)\right\|,
\end{equation}
where the supremum  is taken over all contractions $X$.

Let $P$ be the polynomial (taking values in $2\times 2$ matrices $M_2$)  
\[P(\lambda)=\mathscr{T}\left(\frac{\partial f}{\partial z_1}(0),
\frac{1}{2}\frac{\partial ^2 f}{\partial z_{1}^{2}}(0)\right) + 
\mathscr{T}\left(\frac{\partial f}{\partial z_2}(0),
\frac{\partial ^2 f}{\partial z_1 \partial z_2}(0)\right)\lambda + 
\mathscr{T}\left(0,\frac{1}{2}\frac{\partial ^2 f}{\partial z_{2}^{2}}(0)\right)\lambda^2\]
We have 
\[\sup\limits_{x_1,x_2\in\D}\left\|\mathscr{T}\left( \frac{\partial f}{\partial z_1}(0)x_1 + 
\frac{\partial f}{\partial z_2}(0)x_2,
\frac{1}{2}\sum_{i,j=1}^{2}\frac{\partial ^2 f}{\partial z_i \partial z_j}(0) x_i x_j\right)\right\|
=\|P\|_{\D,\infty}^{\rm op}\]
and for any contraction $X,$ an application of the von-Neumann inequality gives 
$\|P(X)\|\leq \|P\|_{\D,\infty}^{\rm op}.$
From \eqref{norm of matrix valued polynomial} 
we have,
$\|P\|_{\D,\infty}^{\rm op}=\|P(B^*)\|.$
Therefore,
$\sup\|P(X)\|=\|P(B^*)\|$
and hence supremum in \eqref{reduced to I}, 
Theorem \ref{1var necessary condition} and 
Theorem \ref{var necessary condition} are 
equal to
\[\left\|\mathscr{T}\left(\frac{\partial f}{\partial z_1}(0)I + 
\frac{\partial f}{\partial z_2}(0)B^*,
~ \frac{1}{2}\frac{\partial ^2 f}{\partial z^{2}_{1}}(0) I + 
\frac{\partial ^2 f}{\partial z_1 \partial z_2}(0)B^* +
\frac{1}{2}\frac{\partial ^2 f}{\partial z_{2}^{2}}(0){B^*}^2 \right)\right\|.\]
Thus Theorem \ref{1var necessary condition} and 
Theorem \ref{var necessary condition} are equivalent to the following theorem.
\begin{thm}\label{explicit necessary condition}
	For any  $f\in{\rm H}^\infty_{0}(\D^2,\D),$  
		\[\left\|\mathscr T\left(\frac{\partial f}{\partial z_1}(0)I + 
	\frac{\partial f}{\partial z_2}(0)B^*,
	~ \frac{1}{2}\frac{\partial ^2 f}{\partial z^{2}_{1}}(0) I + 
	\frac{\partial ^2 f}{\partial z_1 \partial z_2}(0)B^* +
	\frac{1}{2}\frac{\partial ^2 f}{\partial z_{2}^{2}}(0){B^*}^2 \right)\right\|
	\leq 1.\]
\end{thm}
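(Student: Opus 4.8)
The plan is to observe that this statement is the terminal link in the chain of identities set up immediately above it: the operator whose norm is displayed is exactly $P(B^*)$ for the $M_2$-valued quadratic polynomial $P$ introduced just before the theorem, so the whole task amounts to pushing the bound of Theorem~\ref{var necessary condition} through that chain. Concretely, $P(B^*)=\mathscr{T}\!\big(\tfrac{\partial f}{\partial z_1}(0)I+\tfrac{\partial f}{\partial z_2}(0)B^*,\ \tfrac{1}{2}\tfrac{\partial^2 f}{\partial z_1^2}(0)I+\tfrac{\partial^2 f}{\partial z_1\partial z_2}(0)B^*+\tfrac{1}{2}\tfrac{\partial^2 f}{\partial z_2^2}(0)(B^*)^2\big)$ by linearity of the slotwise assignment $\mathscr{T}(\cdot,\cdot)$, which is precisely the operator in the statement.

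First I would invoke Theorem~\ref{var necessary condition}: since $f$ maps $\D^2$ into $\D$, the supremum of $\|\mathscr{T}(D f(0)\cdot X,\tfrac{1}{2} D^2 f(0)\cdot A_X)\|$ over all commuting pairs of contractions $X=(X_1,X_2)$ is at most $1$. The second step collapses the pair into a single contraction. Here I would recycle the sandwich computation from the proof of Theorem~\ref{Ando}: by Lemma~\ref{X_1 can be taken to be unitary} one may take $X_1$ a co-isometry, and conjugating the $\mathscr{T}$-expression by $\operatorname{diag}(I\otimes X_1,I)$ and $\operatorname{diag}(I,I\otimes X_1)$ rewrites it as $P$ evaluated at the single contraction $Y=X_2 X_1^{*}$. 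Hence the supremum over commuting pairs is dominated by $\sup_{Y}\|P(Y)\|$ over single contractions, while the reverse inequality is trivial (embed $X$ as the pair $(I,X)$); the two suprema coincide, and this common value is the quantity in \eqref{reduced to I}.

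Next I would identify $\sup_{X}\|P(X)\|$ with $\|P(B^*)\|$. The matrix-valued von-Neumann inequality gives $\|P(X)\|\le\|P\|_{\D,\infty}^{\mathrm{op}}$ for every contraction $X$, whereas \eqref{norm of matrix valued polynomial}, resting on $\tau\otimes I_k$ being a $*$-isometric monomorphism, yields $\|P\|_{\D,\infty}^{\mathrm{op}}=\|P(B^*)\|$; since $B^*$ is unitary, hence a contraction, the supremum is actually attained at $X=B^*$. Thus all three quantities agree and equal $\|P(B^*)\|$. Combining with the first step, the norm in the statement equals this common supremum and is therefore at most $1$.

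The only non-bookkeeping point, and hence the main obstacle, is the identity $\|P\|_{\D,\infty}^{\mathrm{op}}=\|P(B^*)\|$ together with the reduction to a single contraction. The identity is what forces the bilateral shift, rather than an arbitrary contraction, into the statement: one needs a single contraction whose functional calculus on matrix-valued polynomials is norm-preserving, and $B^*$ qualifies precisely because $C^*(B)\cong C(\T)$ isometrically via $\tau(g)=g(B^*)$. The passage to $X_1=I$ is where the substance of Ando's theorem (commutant lifting) is genuinely consumed; everything else is linear algebra and a specialization of Theorem~\ref{var necessary condition} to the operator $B^*$.
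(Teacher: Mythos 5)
Your proposal is correct and follows essentially the same route as the paper: specialize Theorem \ref{var necessary condition}, reduce the commuting pair to a single contraction via the sandwich identity from the proof of Theorem \ref{Ando} (Lemma \ref{X_1 can be taken to be unitary}), and then use the matrix-valued von-Neumann inequality together with \eqref{norm of matrix valued polynomial} to identify the supremum with $\|P(B^*)\|$. The paper's argument in the section on the extremal problem is exactly this chain of equalities, so there is nothing to add.
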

The following corollary is essentially a restatement of Theorem \ref{explicit necessary condition}. However, it is worded to make the 
necessary condition for the CF problem inherent in this theorem, evident. 

\begin{cor}\label{finalCF}
	If $p$ is any complex valued polynomial in two variables 
	of degree at most $2$ with $p(0)=0,$ then
	$$	\left\|\mathscr T\left( \frac{\partial p}{\partial z_1}(0) I  
		+ \frac{\partial p}{\partial z_2}(0) B^*, \frac{1}{2}\frac{\partial ^2 p}{\partial z^{2}_{1}}(0) I + 
	\frac{\partial ^2 p}{\partial z_1 \partial z_2}(0)B^* +
	\frac{1}{2}\frac{\partial ^2 p}{\partial z_{2}^{2}}(0){B^*}^2 \right)\right\|
		\leq 1 $$
	is a necessary condition 
	for the existence of a holomorphic function $q:\mathbb D^2 \to \mathbb C,$ with $q^{(k)}(0)=0,$ $|k| \leq 2,$ such that  $\|p+q\|_{\mathbb D^2,\infty} \leq  1.$
\end{cor}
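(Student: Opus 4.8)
The plan is to deduce this directly from Theorem~\ref{explicit necessary condition} by applying it to the function $f := p + q$. The whole content of the corollary is a change of viewpoint: Theorem~\ref{explicit necessary condition} is phrased as an unconditional inequality for every $f \in {\rm H}^\infty_{0}(\D^2,\D)$, whereas the corollary reads it as a constraint that the coefficients of a prescribed quadratic $p$ must satisfy if it is to be completable to such an $f$. The key structural observation I would record first is that the matrix-valued expression whose norm is being bounded depends on $f$ \emph{only} through the partial derivatives $\tfrac{\partial f}{\partial z_i}(0)$ and $\tfrac{\partial^2 f}{\partial z_i\partial z_j}(0)$, that is, only through the Taylor data of $f$ up to order $2$ at the origin.

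The argument itself proceeds in three short steps. First, assume such a holomorphic $q$ exists, and set $f := p + q$. Since $p(0)=0$ and the hypothesis $q^{(k)}(0)=0$ for $|k|\le 2$ in particular gives $q(0)=0$, we have $f(0)=0$; combined with $\|p+q\|_{\mathbb D^2,\infty}\le 1$, this places $f$ in ${\rm H}^\infty_{0}(\D^2,\D)$ exactly as required by the hypothesis of Theorem~\ref{explicit necessary condition}. Second, because $q^{(k)}(0)=0$ for every multi-index $k$ with $|k|\le 2$, the first- and second-order partial derivatives of $f$ at $0$ coincide with those of $p$; consequently
\[
\frac{\partial f}{\partial z_i}(0)=\frac{\partial p}{\partial z_i}(0),
\qquad
\frac{\partial^2 f}{\partial z_i\partial z_j}(0)=\frac{\partial^2 p}{\partial z_i\partial z_j}(0),
\qquad i,j=1,2.
\]
Third, substituting these equalities into the expression appearing in Theorem~\ref{explicit necessary condition} shows that the matrix built from the derivatives of $f$ is identically the matrix built from the derivatives of $p$. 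The inequality $\le 1$ supplied by Theorem~\ref{explicit necessary condition} for $f$ is therefore precisely the asserted inequality for $p$, which establishes necessity.

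There is no genuine obstacle here: the substance has already been carried by Theorem~\ref{explicit necessary condition}, whose proof in turn rests on the reduction of the extremal problem to the bilateral shift $B^*$ and the von-Neumann inequality for operators of type {\sf V\,II}. The only points requiring any care are definitional — confirming that $p+q$, being bounded by $1$ in supremum norm and vanishing at the origin, indeed belongs to ${\rm H}^\infty_{0}(\D^2,\D)$, and that the vanishing of the low-order jet of $q$ transfers the quadratic Taylor data of $f$ onto that of $p$. Both are immediate from the definitions, so the corollary is, as stated, essentially a restatement of the preceding theorem.
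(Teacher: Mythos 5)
Your proposal is correct and matches the paper's intent exactly: the paper explicitly describes Corollary~\ref{finalCF} as a restatement of Theorem~\ref{explicit necessary condition}, and the only content is precisely the substitution $f=p+q$ together with the observation that the vanishing of the $2$-jet of $q$ at the origin identifies the relevant Taylor data of $f$ with that of $p$. No further comment is needed.
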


In the Chapter \ref{Koranyi},
we will give another proof of 
Theorem \ref{explicit necessary condition} 
and investigate the question of the converse.

\chapter{The Kor\'{a}nyi-Puk\'{a}nszky Theorem and CF Problems}\label{Koranyi}
\section{The Kor\'{a}nyi-Puk\'{a}nszky Theorem}
We recall the following theorem of Kor\'{a}nyi and Puk\'{a}nszky proved 
in \cite[Corollary, Page 452]{KP}. This gives a necessary and sufficient condition for the range of a  holomorphic function defined on the polydisc $\mathbb D^n$ to  be in the right half plane $H_+.$ 
\begin{thm}[Kor\'{a}nyi-Puk\'{a}nszky Theorem] \label{Koranyi Theorem}
	If the power series 
	$\sum_{\alpha \in \mathbb{N}_{0}^{n}}a_{\alpha}z^{\alpha}$ represents a holomorphic function $f$ on the polydisc $\mathbb D^n,$ then   $\Re(f(z))\geq 0$ for all $z\in \mathbb{D}^n$ 
	if and only if the map 
	$\phi:\mathbb{Z}^n\to \mathbb{C}$ 
	defined by 
	\begin{eqnarray*}
		\phi (\alpha )=\left\{
		\begin{array}{ll}
		      2\Re a_{\alpha} & \mbox{\rm if }\alpha =0\\
		      a_{\alpha} & \mbox{\rm if }\alpha > 0\\
		      a_{-\alpha} & \mbox{\rm if }\alpha < 0\\
		      0 & \mbox{\rm otherwise }\\
		\end{array} 
		\right.
	\end{eqnarray*}
	is positive, that is, the $k\times k$ matrix $\big (\!\!\big ( \phi(\scriptstyle{m_i-m_j}) \big )\!\!\big )$ is non-negative definite for every choice of $m_1, \ldots, m_k\in \mathbb Z^n.$  
\end{thm}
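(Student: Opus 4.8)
The plan is to identify $\phi$ with the sequence of Fourier coefficients of the non-negative $n$-harmonic function $u:=2\Re f$ and then to invoke the classical equivalence, due to Bochner, between positive-definiteness of a function on $\Z^n$ and its being the Fourier--Stieltjes transform of a positive measure on $\T^n$. Concretely, writing $f(z)=\sum_{\alpha\geq 0}a_\alpha z^\alpha$ and $u=f+\overline{f}$, a direct inspection of the boundary series shows that the Fourier coefficient of $u$ at frequency $\beta\in\Z^n$ equals $a_\beta$ when $\beta>0$, equals $2\Re a_0$ when $\beta=0$, equals $\overline{a_{-\beta}}$ when $\beta<0$, and vanishes otherwise; that is, it is exactly $\phi(\beta)$. (Here $\beta>0$ means $\beta\in\N_0^n\setminus\{0\}$ and $\beta<0$ means $-\beta\in\N_0^n\setminus\{0\}$, so the mixed-sign multi-indices are precisely the ``otherwise'' case.) Thus both implications reduce to the statement that the matrix $\big(\widehat u(m_i-m_j)\big)$ is non-negative definite if and only if $u\geq 0$.

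For the forward direction I would fix $0<r<1$ and pass to the dilate $u_r(z):=u(rz)$, which extends continuously to $\overline{\D^n}$ and restricts to a non-negative continuous function on $\T^n$ whose Fourier coefficients are $r^{|\beta|}\phi(\beta)$, with $|\beta|=\sum_i|\beta_i|$. For any $c_1,\ldots,c_k\in\C$ and $m_1,\ldots,m_k\in\Z^n$ one then has the elementary identity
\[ \sum_{i,j} c_i\,\overline{c_j}\, r^{|m_i-m_j|}\phi(m_i-m_j) = \int_{\T^n} u_r(\theta)\,\Big|\sum_i c_i e^{-i m_i\cdot\theta}\Big|^2\, d\theta \geq 0, \]
so the matrix $\big(r^{|m_i-m_j|}\phi(m_i-m_j)\big)$ is non-negative definite; letting $r\to 1$ preserves non-negativity and yields positivity of $\phi$.

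For the converse I would start from a positive-definite $\phi$, produce a finite positive measure $\mu$ on $\T^n$ with $\widehat\mu=\phi$, and then recover $f$. Bochner's theorem is exactly where the spectral theorem enters: I would perform the Kolmogorov--GNS construction, forming the Hilbert space completion of the finitely supported functions on $\Z^n$ under the semi-inner product $\langle e_m,e_{m'}\rangle=\phi(m-m')$ (positive-definiteness guarantees this is a genuine semi-inner product), observing that the coordinate shifts $U_1,\ldots,U_n$, given by $U_j e_m=e_{m+\delta_j}$, descend to commuting unitaries, and applying the spectral theorem for a commuting family of unitaries to obtain a joint spectral measure $E$ on $\T^n$. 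The scalar measure $\mu:=\langle E(\cdot)\xi_0,\xi_0\rangle$, with $\xi_0$ the class of $e_0$, then satisfies $\widehat\mu(\alpha)=\langle U^\alpha\xi_0,\xi_0\rangle=\phi(\alpha)$. With $\mu$ in hand I would let $v$ be the Poisson integral of $\mu$ on $\D^n$, a non-negative $n$-harmonic function, and match power series: its holomorphic part reproduces a holomorphic $g$ with $g(0)=\Re a_0$ and the same higher coefficients as $f$, so $g$ differs from $f$ only by the additive imaginary constant $i\Im a_0$. Hence $v=g+\overline{g}=2\Re f$, giving $\Re f=\tfrac12 v\geq 0$.

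The main obstacle, and the technical heart of the argument, is the spectral-theorem step furnishing $\mu$, together with the bookkeeping that matches $\widehat\mu$ to $\phi$ on all of $\Z^n$; the supporting analytic points---justifying the dilation and the limit $r\to 1$ in the forward direction, and identifying the Poisson integral of $\mu$ with $2\Re f$ through its Fourier expansion in the converse---are routine but must be carried out carefully across the several coordinates. For $n=1$ everything specializes to the classical Herglotz theorem, and the only genuinely multivariate ingredient is the joint spectral measure for the commuting unitaries $U_1,\ldots,U_n$, which is why the argument written for $n=2$ transfers verbatim to general $n$.
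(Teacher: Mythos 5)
Your proof is correct, but it takes a genuinely different route from the one in the paper. You identify $\phi$ with the boundary Fourier coefficients of the non-negative $n$-harmonic function $u=2\Re f$ (correctly observing that the mixed-sign frequencies vanish because $u$ is the sum of a holomorphic and an antiholomorphic function, and silently repairing the statement's $a_{-\alpha}$ to $\overline{a_{-\alpha}}$ for $\alpha<0$, which is forced by the Hermitian symmetry of any positive-definite function); the forward implication is then the integral of $u_r$ against $\big|\sum_i c_ie^{-im_i\cdot\theta}\big|^2$ followed by $r\to1$, and the converse is Bochner's theorem (via the Kolmogorov--GNS construction and the joint spectral measure of the commuting shift unitaries) plus the Poisson integral of the resulting positive measure. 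The paper proceeds quite differently: it normalizes the value at the origin, passes via the inverse Cayley map to a function mapping $\D^2$ into $\D$, relates the two sets of Taylor coefficients by the recurrence of Lemma \ref{recurrence 2 variables}, and then rests on two pillars --- the algebraic identity $P_nC_n^{\rm t}P_n^{*}=(I-A_nA_n^{*})\oplus 1$ of Lemmas \ref{contractivity vs positivity} and \ref{Contractivity and Positivity}, which converts positivity of the block-Toeplitz matrices built from the $C_n$ into contractivity of $\mathscr{T}(A_1,\ldots,A_n)$, and the spectral theorem applied to the commuting unitaries $I\otimes B^{*}$ and $B^{*}\otimes B^{*}$ to obtain \eqref{Spectral}, together with Theorem \ref{Matrix phi} identifying the matrix of $\phi$ in the D-slice ordering. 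Both arguments invoke the spectral theorem for a commuting family of unitaries, but at different points: you use it to manufacture the representing measure, while the paper uses it to compute the supremum norm of the Cayley-transformed function as an operator norm. Your argument is shorter, transfers verbatim to every $n$, and is essentially the classical Herglotz--Bochner proof; what the paper's version buys is that it runs entirely through the D-slice block-Toeplitz formalism and the positivity-versus-contractivity dictionary that drive the Carath\'eodory--Fej\'er analysis in the rest of the chapter.
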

We will call the function 
$\phi,$ the Kor\'{a}nyi-Puk\'{a}nszky function 
corresponding to the coefficients 
$(a_{\alpha})_{\alpha\in \mathbb{N}_0^n}.$

Let us revisit the (CF) problem of realizing a polynomial 
$p\in\mathbb{C}[Z_1,\ldots,Z_n]$ of degree $d$ 
as the first $d$ terms of the power series expansion 
of an analytic function $f\in{\mbox{\rm H}^{\infty}}(\mathbb{D}^n)$ 
with  
$\|f\|_{\D^n,\infty}\leq 1.$ 
\section{The planar Case}
Although, we state the problem below for polynomials of degree $2,$ our methods apply to the general case.
\begin{prob} \label{planar extension}
	Fix $p$ to be the polynomial  $p(z)=az+bz^2.$  
	Find a necessary and sufficient condition 
	for the existence of a holomorphic function $g$ 
	defined on the unit disc $\mathbb D$  
	with $g^{(k)}(0) = 0,\, k=0,1,2,$ 
	such that  $\|p + g \|_{\D,\infty}\leq 1.$ 
\end{prob}

We note that the condition on the range of a holomorphic function 
given in the theorem of Kor\'{a}nyi and Puk\'{a}nszky can be easily converted into a condition where the range is required to be in the unit disc $\mathbb D.$ For this  
consider the Cayley map 
$\chi :\mathbb{D}\to H_{+}$ 
into the right half plane defined by 
\[\chi(z)=\frac{1+z}{1-z},\] 
which is a bi-holomorphism. 
Let $f\in{\mbox{\rm H}^{\infty}}(\mathbb{D})$ be given by the power series 
$f(z)=\sum_{n=1}^{\infty}a_nz^n$. 
Assume that $f$ maps $\mathbb{D}$ to $\mathbb{D}$. 
This happens if and only if $\chi \circ f$ 
maps $\mathbb{D}$ to $H_{+}$. 
Also,
\begin{equation}
 	\chi \circ f(z) = \frac{1+f(z)}{1-f(z)}
 	=2\left(c_0+\sum_{n=1}^{\infty}c_nz^n\right),  \label{c_n's}
\end{equation}
where $c_0 = 1/2$ and the new coefficients $c_n$ are as in the lemma below. 
In this section, we set $c_0=1/2,$ wherever it occurs. 
\begin{lem} \label{recursive coefficients}
	The coefficient $c_n$ in equation \eqref{c_n's} 
	is given by 
	$a_n+\sum\limits_{\substack{j=1}}^{n-1}a_jc_{n-j}$ 
	for $n \geq 1$.
\end{lem}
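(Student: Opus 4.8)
The plan is to clear the denominator in the defining relation \eqref{c_n's} and then compare coefficients of $z^n$. Recall that $f\in{\rm H}^\infty_0(\D,\D)$, so $f(z)=\sum_{n\geq 1}a_n z^n$ with $a_0=0$, and that we have set $c_0=1/2$. In this notation equation \eqref{c_n's} reads $\frac{1+f(z)}{1-f(z)}=2\sum_{n\geq 0}c_n z^n$, and multiplying both sides by $1-f(z)$ produces the identity $1+f(z)=2\,(1-f(z))\sum_{n\geq 0}c_n z^n$, valid as an identity of power series convergent in a neighbourhood of the origin. Everything then reduces to reading off the coefficient of $z^n$ on each side.

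First I would expand the right-hand side as $2\sum_{n\geq 0}c_n z^n - 2\,f(z)\sum_{n\geq 0}c_n z^n$ and fix some $n\geq 1$. On the left the coefficient of $z^n$ is simply $a_n$. On the right the first sum contributes $2c_n$, while the Cauchy product $f(z)\sum_{m\geq 0}c_m z^m$ contributes $\sum_{k+m=n,\,k\geq 1,\,m\geq 0}a_k c_m=\sum_{k=1}^{n}a_k c_{n-k}$, so the right-hand coefficient is $2c_n-2\sum_{k=1}^{n}a_k c_{n-k}$. Equating the two gives $a_n=2c_n-2\sum_{k=1}^{n}a_k c_{n-k}$.

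Next I would separate the $k=n$ term of this Cauchy product, namely $a_n c_0=\tfrac12 a_n$, from the rest, writing $\sum_{k=1}^{n}a_k c_{n-k}=\tfrac12 a_n+\sum_{k=1}^{n-1}a_k c_{n-k}$. Substituting this into $a_n=2c_n-2\sum_{k=1}^{n}a_k c_{n-k}$ and cancelling yields $2a_n=2c_n-2\sum_{k=1}^{n-1}a_k c_{n-k}$, i.e. $c_n=a_n+\sum_{k=1}^{n-1}a_k c_{n-k}$, which is exactly the asserted recursion (with $j=k$).

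The argument is entirely routine power-series bookkeeping, and there is no genuine obstacle; the one place where care is needed — and the sole spot an error could slip in — is the treatment of the $k=n$ term $a_n c_0$, which must be pulled out of the Cauchy product before the remaining sum is indexed from $k=1$ to $n-1$. It is precisely the normalization $c_0=1/2$ that cancels one copy of $a_n$ and leaves the clean recursion, so I would state that value explicitly at the outset to keep the computation transparent.
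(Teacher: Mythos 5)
Your proof is correct and follows essentially the same route as the paper: clear the denominator in \eqref{c_n's} and compare coefficients of $z^n$. The only cosmetic difference is that the paper first rewrites the identity as $\bigl(1+\sum_{n\geq 1}c_nz^n\bigr)\bigl(1-\sum_{n\geq 1}a_nz^n\bigr)=1$, which absorbs the factor of $2$ and avoids the explicit extraction of the $a_nc_0=\tfrac12 a_n$ term that you carry out by hand.
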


\begin{proof}
Consider the expression 
\[\chi \circ f(z) =2\left(c_0+\sum_{n=1}^{\infty}c_nz^n\right) 
= 2\left(\frac{1}{2}+f(z)+f(z)^2+f(z)^3+\cdots \right).\]
Rewriting, we get
\[ \frac{1}{1-f(z)} = 1 + \sum\limits_{n = 1}^{\infty}c_nz^n.\]
Hence, we have 
\[\left(1+\sum_{n=1}^{\infty}c_n z^n\right)
\left(1-\sum_{n=1}^{\infty}a_n z^n\right) =1.\]
A comparison of the coefficients completes the verification. 
\end{proof}
\begin{rem}
Applying Theorem \ref{Koranyi Theorem} to 
$\chi \circ f$, we conclude that 
$f$ maps $\mathbb{D}$ to $\mathbb{D}$ 
if and only if the Kor\'{a}nyi-Puk\'{a}nszky function $\phi$ 
corresponding to coefficients  
$(c_n)_{n=0}^{\infty}$ is positive.\\
\end{rem}
\textbf{Matrix of $\phi$ :} 
The matrix $\left(\phi(j-k)\right)_{j,k}$  is given by
\begin{equation}
	\bordermatrix{
		~ & \cdots & -1 & 0 & 1 & \cdots\cr
		\vdots & & \vdots & \vdots & \vdots & \cr
		-1 & \cdots & 1 & \overline{c}_1 & \overline{c}_2 & \cdots\cr
		0 & \cdots & c_1 & 1 & \overline{c}_1 & \cdots \cr
		1 & \cdots & c_2 & c_1 & 1 & \cdots \cr
		\vdots &  & \vdots & \vdots & \vdots &  \cr}. \label{Matrix of phi}
\end{equation}
Therefore, we can rewrite the 
Problem \ref{planar extension} in the equivalent form:
\begin{prob}
Let $p$ be  a polynomial of the form $p(z) = a_1 z + a_2z^2.$ There exists a holomorphic function $q,$ $q^{(k)}(0)=0$ for $k=0,1,2,$ defined on the unit disc $\D,$ such that $\|p+q\|_{\mathbb D, \infty} \leq 1$ if and only if 
\[\left(
	\begin{array}{ccc}
		1 & \overline{c}_1 & \overline{c}_2\\
		c_1 & 1 & \overline{c}_1 \\
		c_2 & c_1 & 1 \\
	\end{array}
\right)\]
	is non-negative definite  
	and for $j > 2,$ there exists $c_j \in \mathbb{C}$ 
	such that the Kor\'{a}nyi-Puk\'{a}nszky function 
	$\phi$ corresponding to $(c_n)_{n\in\mathbb{N}_0}$ is positive.	
\end{prob}
\section{Alternative proof of Theorem \ref{single variable}}
Suppose $f$ is an analytic function on the unit disc $\mathbb{D}$ 
with $\|f\|_{\D,\infty}\leq 1$ and that 
$f(z)=\sum_{n=1}^{\infty}a_nz^n$ is its power series expansion in the unit disc $\D.$ 
Then 
$\chi\circ f(z)$ has the power series $2(c_0+\sum_{n=1}^{\infty}c_n z^n)$ 
in the unit disc $\D$, where $c_0=1/2$ and $c_n$ is of the form prescribed  in the Lemma \ref{recursive coefficients}. 
In this section also, we set $c_0=1/2$, wherever it occurs. 
Let $C_n, A_n$ and $P_n$ denote  
\[(C_n:=)\,\, \left(
{
\begin{array}{ccccccc}
	1 & \overline{c}_1 & \overline{c}_2 & \cdots & \overline{c}_n \\
	c_1 & 1 & \overline{c}_1 & \cdots & \overline{c}_{n-1}\\ 
	c_2 & c_1 & 1 & \cdots & \overline{c}_{n-2}\\
	\vdots & \vdots & \vdots & \ddots & \vdots \\
	c_n & c_{n-1} & c_{n-2} & \cdots & 1 \\
\end{array} }
\right),
\,(A_n:=)\,\, \left(
{
\begin{array}{ccccc}
	a_1 & a_2 & a_3 &\cdots & a_n \\
	0 & a_1 & a_2 & \cdots & a_{n-1}\\ 
	0 & 0 & a_1 & \cdots & a_{n-2}\\
	\vdots & \vdots & \vdots & \ddots & \vdots \\
	0 & 0 & 0 & \cdots & a_1 \\
\end{array} }
\right)
\]
and 
\[(P_n:=)\,\, \left(
{
\begin{array}{ccccc}
	1 & -a_1 & -a_2 &\cdots & -a_n \\
	0 & 1 & -a_1 & \cdots & -a_{n-1}\\ 
	\vdots & \vdots & \ddots & \ddots & \vdots \\
	0 & 0 & 0 & \cdots & -a_1 \\
	0 & 0 & 0 & \cdots & 1 \\
\end{array} }
\right)
\]
respectively for each $n\in \mathbb N.$ 
\begin{lem}\label{n=2 case}
	If $a_1,a_2\in\mathbb{C},$ then $|a_1|^2+|a_2|\leq 1 $
	if and only if the matrix $C_2$ is non-negative definite.
\end{lem}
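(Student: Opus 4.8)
The plan is to reduce the positivity of the $3\times 3$ Hermitian Toeplitz matrix $C_2$ to two scalar inequalities via determinants, exploiting the explicit form of $c_1,c_2$ in terms of $a_1,a_2$. First I would record, from Lemma \ref{recursive coefficients}, that $c_1=a_1$ and $c_2=a_2+a_1^2$; these are the only coefficients entering $C_2$. Everything then hinges on a single determinant identity.

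The central computation is to evaluate $\det C_2$. A cofactor expansion along the first row gives
\[
\det C_2 = 1 - 2|c_1|^2 - |c_2|^2 + 2\Re(\overline{c}_1^{\,2}c_2).
\]
Substituting $c_1=a_1$ and $c_2=a_2+a_1^2$, the contribution $-2\Re(\overline{a}_1^{\,2}a_2)$ coming from $-|c_2|^2$ cancels exactly against the $+2\Re(\overline{a}_1^{\,2}a_2)$ coming from $2\Re(\overline{c}_1^{\,2}c_2)$, and after collecting the surviving terms one obtains the clean factorization
\[
\det C_2 = (1-|a_1|^2)^2 - |a_2|^2 = (1-|a_1|^2-|a_2|)(1-|a_1|^2+|a_2|).
\]
I expect this cancellation of the cross terms to be the one delicate point; the rest is bookkeeping.

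For the forward implication, if $C_2$ is non-negative definite then every principal minor is non-negative. In particular the top-left $2\times 2$ minor $1-|a_1|^2$ and $\det C_2$ are both $\geq 0$. If $1-|a_1|^2+|a_2|>0$, then $\det C_2\geq 0$ forces the other factor $1-|a_1|^2-|a_2|\geq 0$, that is $|a_1|^2+|a_2|\leq 1$. The degenerate case $1-|a_1|^2+|a_2|=0$ forces $|a_1|=1$ and $|a_2|=0$ (a sum of two non-negative reals vanishing), which again yields $|a_1|^2+|a_2|=1$.

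For the converse, suppose $|a_1|^2+|a_2|\leq 1$. When the inequality is strict, the three leading principal minors $1$, $1-|a_1|^2$, and $\det C_2=(1-|a_1|^2-|a_2|)(1-|a_1|^2+|a_2|)$ are all strictly positive, so $C_2$ is positive definite by Sylvester's criterion, hence non-negative definite. The boundary case $|a_1|^2+|a_2|=1$ then follows by a limiting argument: $C_2$ depends continuously on $(a_1,a_2)$ through $c_1,c_2$, the cone of non-negative definite matrices is closed, and every point of $\{|a_1|^2+|a_2|=1\}$ is a limit of points of the open region $\{|a_1|^2+|a_2|<1\}$, so $C_2$ remains non-negative definite there as well. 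The only subtlety to keep straight is that Sylvester's criterion is applied in its positive-definite form on the open region, with the closed boundary handled by continuity rather than by checking leading minors directly.
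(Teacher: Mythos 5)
Your proof is correct, and the determinant computation at its core checks out: with $c_1=a_1$ and $c_2=a_2+a_1^2$ from Lemma~\ref{recursive coefficients}, the cross terms $\pm 2\Re(\overline{a}_1^{\,2}a_2)$ do cancel and $\det C_2=(1-|a_1|^2)^2-|a_2|^2$, after which the principal-minor argument for the forward direction and the Sylvester-plus-continuity argument for the converse are both sound. The route is genuinely different from the paper's, which never touches determinants: there one observes that $|a_1|^2+|a_2|\leq 1$ is equivalent to $\|A_2\|\leq 1$ for the upper-triangular Toeplitz block $A_2=\left(\begin{smallmatrix}a_1 & a_2\\ 0 & a_1\end{smallmatrix}\right)$, hence to $I-A_2A_2^{*}\geq 0$, and then exhibits the congruence $P_2C_2^{\rm t}P_2^{*}=(I-A_2A_2^{*})\oplus 1$ with $P_2$ invertible, so non-negativity passes back and forth in one step with no boundary case. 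The payoff of that method is structural: it is exactly the $n=2$ instance of the inductive identity $P_nC_n^{\rm t}P_n^{*}=(I-A_nA_n^{*})\oplus 1$ of Lemma~\ref{contractivity vs positivity}, which is what the later arguments (positivity of the Kor\'{a}nyi--Puk\'{a}nszky matrix versus contractivity of $\mathscr{T}(A_1,\ldots,A_n)$, and the operator-valued analogue in Lemma~\ref{Contractivity and Positivity}) actually require, whereas an explicit determinant factorization does not scale to general $n$ or to operator entries. What your argument buys is a self-contained elementary verification for this one case that avoids invoking the contractivity criterion for $2\times 2$ Jordan-type blocks, at the cost of a degenerate-case split and a limiting argument that the congruence makes unnecessary.
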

\begin{proof}
Since $|a_1|^2+|a_2|\leq 1$, 
it follows that $\|A_2\|\leq 1$. 
It is equivalent to the positivity of the following matrix 
\[\left(
\begin{array}{cc}
	I- A_2 A_{2}^{*} & 0\\
	0 & 1\\
\end{array}
\right)
=
\left(
{
\begin{array}{ccc}
	1-(|a_1|^2+|a_2|^2) & -a_2\overline{a}_1 & 0 \\
	-a_1\overline{a}_2 & 1-|a_1|^2 & 0\\ 
	0 & 0 & 1\\
\end{array} }
\right)=P_2 C_2^{\rm t} P_{2}^{*}.
\]
Since $P_2$ is an invertible matrix, 
the positivity of $P_2 C_2^{\rm t} P_{2}^{*}$ is equivalent to 
$C_2^{\rm t}$ being non-negative definite. 
Thus, we conclude that $C_2$ is non-negative definite.
\end{proof}
\begin{lem}\label{contractivity vs positivity}
	For all $n\in\mathbb{N},$ $P_n C_{n}^{\rm t} P_{n}^{*}=(I-A_n A_{n}^{*})\oplus 1.$ 
	\end{lem}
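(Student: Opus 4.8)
The plan is to read all three matrices as finite sections of Toeplitz matrices attached to the formal power series $a(z)=\sum_{k\ge 1}a_kz^k$ and $c(z)=\sum_{k\ge 0}c_kz^k=(1-a(z))^{-1}$, where $c_0=1$ and the $c_k$ are exactly the coefficients produced by Lemma \ref{recursive coefficients}. Write $T^{+}(u)$ for the $(n+1)\times(n+1)$ upper-triangular Toeplitz matrix whose $(i,j)$ entry is $u_{j-i}$ for $j\ge i$ and $0$ otherwise. Since products of such matrices satisfy $T^{+}(u)T^{+}(v)=T^{+}(uv)$ (the product series, truncated), and since $(1-a)c=1$, the matrix $P_n=T^{+}(1-a)$ is invertible with $P_n^{-1}=T^{+}(c)$, that is, the upper-triangular Toeplitz matrix with first row $(1,c_1,\ldots,c_n)$.

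The key step is to record the additive decomposition of the Hermitian Toeplitz matrix $C_n$. After transposing, the upper-triangular part of $C_n^{\rm t}$ (including the diagonal) is precisely $T^{+}(c)$ and its lower-triangular part is $T^{+}(c)^{*}$; subtracting the doubly counted diagonal gives
\[C_n^{\rm t}=T^{+}(c)+T^{+}(c)^{*}-I=P_n^{-1}+(P_n^{*})^{-1}-I.\]
This is the matrix incarnation of the symbol identity $\Phi=c+\overline{c}-1=(1-|a|^{2})/|1-a|^{2}$ underlying the Kor\'anyi--Puk\'anszky function. Conjugating by $P_n$ on the left and $P_n^{*}$ on the right then collapses everything algebraically:
\[P_nC_n^{\rm t}P_n^{*}=P_n^{*}+P_n-P_nP_n^{*}=I-(I-P_n)(I-P_n)^{*}.\]

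It remains to identify $(I-P_n)(I-P_n)^{*}$. Here $N:=I-P_n$ is the strictly upper-triangular Toeplitz matrix with $N_{ij}=a_{j-i}$ for $j>i$; in particular its last row and column vanish. A direct computation gives $(NN^{*})_{ij}=\sum_{k>\max(i,j)}a_{k-i}\overline{a_{k-j}}$, and a shift of the summation index matches this, for $0\le i,j\le n-1$, with the $(i,j)$ entry of $A_nA_n^{*}$, while the vanishing last row and column of $N$ account for the $\oplus 1$ in the bottom-right corner. Hence $I-NN^{*}=(I-A_nA_n^{*})\oplus 1$, as claimed. The only point demanding care — and the only place the boundary of the finite section enters — is this final corner identification; the main conceptual content is spotting the decomposition $C_n^{\rm t}=P_n^{-1}+(P_n^{-1})^{*}-I$, after which the computation is clean and free of truncation corrections.
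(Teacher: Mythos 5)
Your proof is correct, and it takes a genuinely different route from the one in the text. The paper argues by induction on $n$: it writes $P_n$ and $C_n^{\rm t}$ in block form with a border row and column, uses the recursion of Lemma~\ref{recursive coefficients} in the form $P_{n-1}\tilde{C}_n+\tilde{P}_n=0$ to annihilate the off-diagonal border of the product, and then matches the remaining rank-one correction $\tilde{P}_n\tilde{C}_n^{*}P_{n-1}^{*}$ against the difference between $I-A_nA_n^{*}$ and $(I-A_{n-1}A_{n-1}^{*})\oplus 1$. You instead exploit the algebra of upper-triangular Toeplitz matrices: the single decomposition $C_n^{\rm t}=T^{+}(c)+T^{+}(c)^{*}-I=P_n^{-1}+(P_n^{*})^{-1}-I$ (valid because $(1-a)c=1$, which is exactly the content of Lemma~\ref{recursive coefficients}) collapses the conjugation to $P_n+P_n^{*}-P_nP_n^{*}=I-(I-P_n)(I-P_n)^{*}$, after which the corner bookkeeping is a one-line index shift. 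Your argument is non-inductive, uniform in $n$, and makes visible the symbol identity $c+\overline{c}-1=(1-|a|^{2})/|1-a|^{2}$ that underlies the Kor\'anyi--Puk\'anszky criterion, which is a real gain in transparency; the paper's induction is more pedestrian but needs nothing beyond direct block multiplication. One small slip in your write-up: the last \emph{column} of $N=I-P_n$ does not vanish (it is $(a_n,\ldots,a_1,0)^{\rm t}$); only the last row does. This is harmless, because your own formula $(NN^{*})_{ij}=\sum_{k>\max(i,j)}a_{k-i}\overline{a_{k-j}}$ already shows that the last row and column of $NN^{*}$ vanish (the sum is empty when $\max(i,j)=n$), which is all that the $\oplus\,1$ requires.
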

\begin{proof}
We shall prove the result by induction on $n$. 
The case $n=1$ follows from the Lemma \ref{n=2 case}. 
Assume the result upto $n-1$ for $n>1$. 
For each $n\in \N,$ let  
\[\tilde{P}_n:=\left(-a_n,-a_{n-1},\ldots ,-a_1\right)^{\rm t} 
\mbox{ \rm and } 
\tilde{C}_n:=\left(c_n,c_{n-1},\ldots ,c_1\right)^{\rm t}.\] 
The verification of the identity   
\[P_nC_n^{\rm t}P_{n}^{*} =\left(
\begin{array}{cc}
	P_{n-1} & \tilde{P}_n\\
	0 & 1\\
\end{array}
\right) 
\left(
\begin{array}{cc}
	C_{n-1}^{\rm t} & \tilde{C}_n\\
	\tilde{C}_{n}^{*} & 1\\
\end{array}
\right) 
\left(
\begin{array}{cc}
	P_{n-1}^{*} & 0\\
	\tilde{P}_{n}^{*} & 1\\
\end{array}
\right)\]
is easy. Hence 
\[P_nC_{n}^{\rm t}P_{n}^{*} = 
\left(
\begin{array}{cc}
	P_{n-1}C_{n-1}^{\rm t}P_{n-1}^{*} + \tilde{P}_n\tilde{C}_{n}^{*}P_{n-1}^{*} + 
	\tilde{P}_{n}^{*}\left(P_{n-1}\tilde{C}_n+
	\tilde{P}_n\right) & P_{n-1}\tilde{C}_n + \tilde{P}_n\\
	\left(P_{n-1}\tilde{C}_n +\tilde{P}_n\right)^* & 1\\
\end{array} 
\right).\]
From the Lemma \ref{recursive coefficients}, 
we have 
$P_{n-1}\tilde{C}_n + \tilde{P}_n = 0$ 
and therefore we conclude that
\[P_nC_nP_{n}^{*}=
\left(
\begin{array}{cc}
	P_{n-1}C_{n-1}^{\rm t}P_{n-1}^{*}+ \tilde{P}_n\tilde{C}_{n}^{*}P_{n-1}^{*} & 0\\
	0 & 1\\
\end{array}
\right).
\]
Now 
\[\tilde{P}_n \tilde{C}_{n}^{*}P_{n-1}^{*}
=\left(
\begin{array}{c}
	-a_n \\
	\vdots \\
	-a_1\\
\end{array}
\right)
\left(
\begin{array}{cccc}
	\overline{c}_n-\sum\limits_{i=1}^{n-1}a_ic_{n-i} & 
	\overline{c}_{n-1}-\sum\limits_{i=1}^{n-2}a_ic_{n-i} & 
	\cdots & \overline{c}_1\\
\end{array}
\right).
\]
From the Lemma \ref{recursive coefficients}, 
we get
\[\tilde{P}_n\tilde{C}^{*}_{n}P_{n-1}^{*}
=\left(
\begin{array}{c}
	-a_n \\
	\vdots \\
	-a_1\\
\end{array}
\right)
\left(
\begin{array}{ccc}
	\overline{a}_n  & \cdots & \overline{a}_1\\
\end{array}
\right)=
\left(-a_{n-i}\overline{a}_{n-j}\right)_{i,j=0}^{n-1}.\]
Also,
\[I-A_k A^{*}_{k} = 
\left(
\begin{array}{cccc}
	1-\sum\limits_{j=1}^{k}|a_j|^2 & 
	-\sum\limits_{j=2}^{k}a_j\overline{a}_{j-1} & 
	\cdots & -a_k\overline{a}_1 \\
	-\sum\limits_{j=2}^{k}\overline{a}_ja_{j-1} & 
	1-\sum\limits_{j=1}^{k-1}|a_j|^2 & \cdots & 
	-a_{k-1}\overline{a}_1\\
	\vdots & \vdots & \ddots & \vdots \\
	-a_1\overline{a}_{k} & 
	-a_1\overline{a}_{k-1} & 
	\cdots & 1-|a_1|^2 \\ 
\end{array}
\right)\]
and therefore
$I-A_nA_{n}^{*} = \left((I-A_{n-1}A_{n-1}^*)\oplus 1\right) + 
\left(-a_{n-j}\overline{a}_{n-l}\right)_{1\leq j,l\leq {k-1}}.$
Thus
$I-A_nA_{n}^{*} = P_{n-1}C_{n-1}^{\rm t}P_{n-1}^{*} + 
\tilde{P}_n\tilde{C}_nP_{n-1}^{*},$
which completes the proof.
\end{proof}
An immediate corollary is the following proposition.
\begin{prop}
	The matrix $C_n$ is non-negative definite 
	if and only if $A_n$ satisfies $\|A_n\|\leq 1$.
\end{prop}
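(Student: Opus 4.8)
The plan is to read the proposition off directly from Lemma~\ref{contractivity vs positivity}, which identifies the congruence $P_n C_n^{\rm t} P_n^*$ with $(I-A_nA_n^*)\oplus 1$. The whole argument is a short chain of equivalences, so there is no serious obstacle; the only points requiring a moment's care are the passage between $C_n$ and its transpose and the invertibility of $P_n$.

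First I would observe that $C_n$ is Hermitian: it is the Toeplitz matrix whose $(j,k)$ entry is $c_{j-k}$ for $j>k$ and $\overline{c}_{k-j}$ for $j<k$, so that $(C_n)_{jk}=\overline{(C_n)_{kj}}$. For any Hermitian $M$ one has $M^{\rm t}=\overline{M}$, and complex conjugation preserves positive semidefiniteness because $\langle \overline{M}\,\overline{x},\overline{x}\rangle=\overline{\langle Mx,x\rangle}$. Hence $C_n\geq 0$ if and only if $C_n^{\rm t}\geq 0$. This is the step one is most likely to bungle, by trying to conjugate $P_n C_n P_n^*$ directly and forgetting the transpose that actually appears in Lemma~\ref{contractivity vs positivity}; dealing with the Hermitian transpose at the very start sidesteps the issue.

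Next I would use that $P_n$ is upper triangular with $1$'s on the diagonal, hence invertible. Congruence by an invertible matrix preserves positivity: for Hermitian $M$ we have $\langle P_n M P_n^* x,x\rangle=\langle M P_n^* x,P_n^* x\rangle$, and since $P_n^*$ is a bijection, $P_n M P_n^*\geq 0$ if and only if $M\geq 0$. Taking $M=C_n^{\rm t}$ and invoking Lemma~\ref{contractivity vs positivity} gives
\[
C_n^{\rm t}\geq 0 \iff P_n C_n^{\rm t} P_n^*\geq 0 \iff (I-A_nA_n^*)\oplus 1\geq 0.
\]

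Finally, a block-diagonal matrix is positive semidefinite exactly when each block is, and the scalar block $1$ is automatically positive, so the last condition reduces to $I-A_nA_n^*\geq 0$, i.e.\ $A_nA_n^*\leq I$, which is precisely $\|A_n\|\leq 1$. Stringing these equivalences together with the Hermitian reduction of the first paragraph yields $C_n\geq 0\iff\|A_n\|\leq 1$, completing the proof.
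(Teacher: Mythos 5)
Your proof is correct and takes essentially the same route as the paper, which states this proposition as an immediate corollary of Lemma~\ref{contractivity vs positivity} without writing out a proof. The details you supply --- that $C_n$ is Hermitian so $C_n^{\rm t}=\overline{C_n}$ has the same positivity, that congruence by the invertible $P_n$ preserves positivity, and that $(I-A_nA_n^*)\oplus 1\geq 0$ is equivalent to $\|A_n\|\leq 1$ --- are exactly the intended argument.
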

In the theorem below we provide an alternative proof of the Theorem \ref{single variable}. The technique involved here is from the Section 3 of the paper of Parrott \cite{SP}. 
\begin{thm}
	Suppose $a_1$ and $a_2$ are two complex numbers.  
	Then there exists $f\in{\mbox{\rm H}^{\infty}}(\mathbb{D})$, 
	with $\|f\|_{\D,\infty}\leq 1$, 
	such that
	$f(0)=0,~f'(0)=a_1,~f{'\!'}(0)=2a_2$ 
	if and only if
	$|a_1|^2+|a_2|\leq 1.$
\end{thm}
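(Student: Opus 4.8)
The plan is to turn the interpolation requirement into a question about positive-definite Toeplitz sections and to settle it by a step-by-step completion in the manner of Parrott. Write the desired function as $f(z)=\sum_{n\ge1}a_nz^n$, so that the constraints $f(0)=0$, $f'(0)=a_1$, $f''(0)/2=a_2$ fix the first two coefficients while $a_3,a_4,\dots$ are free. Passing to $\chi\circ f$ and invoking Theorem \ref{Koranyi Theorem}, the requirement $\|f\|_{\D,\infty}\le1$ is equivalent to positivity of the Kor\'anyi--Puk\'anszky function $\phi$ built from the coefficients $c_n$ of Lemma \ref{recursive coefficients}. Since the $C_n$ are precisely the finite principal sections of the Toeplitz array \eqref{Matrix of phi}, and since the proposition following Lemma \ref{contractivity vs positivity} gives $C_n\ge0$ if and only if $\|A_n\|\le1$, the entire problem reduces to the following completion question: given $a_1,a_2$, can one choose $a_3,a_4,\dots$ so that $\|A_n\|\le1$ for every $n$?

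For necessity, I would simply read this dictionary in one direction. If a suitable $f$ exists, then $\phi$ is positive, hence so is its $3\times3$ principal section $C_2$, and Lemma \ref{n=2 case} yields $|a_1|^2+|a_2|\le1$ immediately.

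The content is in the converse, which I would establish by an inductive construction of the free coefficients. Assuming $|a_1|^2+|a_2|\le1$, Lemma \ref{n=2 case} and the proposition give $\|A_2\|\le1$. The inductive step rests on the Toeplitz symmetry of $A_{n+1}$: deleting its first row displays the last $n$ columns as $A_n$ bordered by a zero row, while deleting its last column displays the first $n$ rows as $A_n$ bordered by a zero column. Thus, decomposing
\[
A_{n+1}=\begin{pmatrix} A & X\\ C & D\end{pmatrix},\qquad A=(a_1,\dots,a_n),\quad X=a_{n+1},
\]
with $\h_1=\C^n,\ \h_2=\C,\ \mathbb K_1=\C,\ \mathbb K_2=\C^n$, both $\binom{A}{C}$ and $(C\ D)$ have norm equal to $\|A_n\|\le1$. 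Parrott's theorem then supplies a scalar $X=a_{n+1}$ for which $\|A_{n+1}\|\le1$, and iterating produces an infinite sequence with $\|A_n\|\le1$ for all $n$.

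It remains to assemble these coefficients into a genuine function, and this is the step I expect to require the most care. From $\|A_n\|\le1$ for all $n$ the proposition gives $C_n\ge0$ for all $n$, so the full Hermitian Toeplitz array \eqref{Matrix of phi} is positive; by the one-variable content of Theorem \ref{Koranyi Theorem} this positivity produces a holomorphic $g$ on $\D$ with $\Re g\ge0$ and $g=1+2\sum_{n\ge1}c_nz^n$, whence $f:=\chi^{-1}\circ g$ maps $\D$ into $\D$. Inverting the recursion of Lemma \ref{recursive coefficients} confirms that the Taylor coefficients of $f$ are exactly the $a_n$, so $f(0)=0$, $f'(0)=a_1$, $f''(0)/2=a_2$ and $\|f\|_{\D,\infty}\le1$. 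The delicate point is precisely this passage from positivity of all the finite sections $C_n$ to the existence of the disc-valued $f$ (equivalently, the legitimacy of forming $\chi\circ f$), since the individual Parrott completions only control the finite matrices $A_n$ and must be shown to cohere into a single bounded holomorphic function.
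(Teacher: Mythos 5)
Your argument is correct and follows the paper's proof essentially verbatim: necessity via positivity of the $3\times3$ section $C_2$ and Lemma \ref{n=2 case}, sufficiency via iterated Parrott completions producing $\|A_n\|\leq 1$ for all $n$, the equivalence $C_n\geq 0\iff\|A_n\|\leq 1$, and then Theorem \ref{Koranyi Theorem} to recover $g$ and $f=\chi^{-1}\circ g$. The ``delicate point'' you flag at the end is exactly what the one-variable Kor\'anyi--Puk\'anszky (Herglotz/Carath\'eodory--Toeplitz) theorem supplies --- positivity of all finite sections forces $|c_n|\leq 1$, so the series converges on $\D$ and defines the required $g$ with $\Re g\geq 0$ --- so no additional coherence argument is needed.
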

\begin{proof}
Assume $f$ and $\chi \circ f$ are 
as in \eqref{c_n's}. 
Then by Kor\'{a}nyi-Puk\'{a}nszky theorem \ref{Koranyi Theorem}, 
every principal submatrix of finite size  of the matrix in 
\eqref{Matrix of phi} is  non-negative definite. 
In particular, the matrix $C_2$
is non-negative definite. 
From the Lemma \ref{n=2 case} 
we conclude that 
$|a_1|^2+|a_2|\leq 1.$
Conversely, 
assume $a_1,a_2\in\mathbb{C}$ are such that 
$|a_1|^2+|a_2|\leq 1$. 
Then,
\[A_2=\left(
{
\begin{array}{cc}
	a_1 & a_2\\
	0 & a_1\\
\end{array}}
\right)
\]
satisfies $\|A_2\|\leq 1$. 
Using Parrott's theorem, 
there exists $a_3\in\mathbb{C}$ 
such that 
\[A_3=\left(
{
\begin{array}{ccc}
	a_1 & a_2 & a_3\\
	0 & a_1 & a_2\\
	0 & 0 & a_1\\
\end{array}}
\right)
\]
has operator norm less than or equal to $1$.
Using the Lemma \ref{recursive coefficients}, we see that  
the matrix $C_3$ is non-negative definite. 
Repeatedly using the Parrott's theorem and the 
Lemma \ref{recursive coefficients}, one may ensure 
the existence of non-negative definite matrices $C_n,$ for all 
$n>3.$ 

Hence the Kor\'{a}nyi-Puk\'{a}nszky function 
corresponding to $(c_n)_{\N_0}$ is positive. 
Thus the function $g(z)=\sum_{n}c_n z^n$ maps 
$\mathbb{D}$ to $\mathbb{H}_+$ 
by Kor\'{a}nyi-Puk\'{a}nszky theorem \ref{Koranyi Theorem}. 
Hence from the Lemma \ref{recursive coefficients}, 
the function $f=\chi^{-1} \circ g$ satisfies all the required conditions.
\end{proof}

\section{The case of two Variables}
\begin{prob}\label{Two Variables}
	Fix $p\in\mathbb{C}[Z_1,Z_2]$ to be the polynomial defined by 
	$$p(z_1,z_2)=a_{10}z_1+a_{01}z_2+
	a_{20}z_{1}^{2}+a_{11}z_1z_2+a_{02}z_{2}^2.$$
	Find necessary and sufficient conditions 
	for the existence of a holomorphic function function 
	$q$ defined on the bi-disc $\mathbb D^2$ 
	with $q^{(k)}(0)=0$ for $|k|\leq 2,$
	such that
	$\|p+q\|_{\D^2,\infty}\leq 1.$
\end{prob}
Let $f$ be an analytic function on $\mathbb{D}^2.$ 
Suppose $f$ is represented  by the power series 
\[f(z)=\sum\limits_{\substack{{m,n=0}}}^{\infty}a_{mn}z_{1}^{m}z_{2}^{n}\]
and $a_{00}=0.$ 
Also assume that $f$ maps $\mathbb{D}^2$ into $\mathbb{D}.$ 
This happens if and only if 
$\chi \circ f$ maps $\mathbb{D}^2$ to $H_{+},$ where 
\[\chi \circ f(z)= (1+f(z))(1-f(z))^{-1}= 
2\left(c_{00}+\sum_{m,n=1}^{\infty}c_{mn}z_{1}^{m}z_{2}^{n}\right),\]
$c_{00}=1/2$ and the coefficients $c_{mn}$ are from the Lemma \ref{recurrence 2 variables}. 
In this section, we set $c_{00}=1/2$, wherever it occurs.  
If $\phi$ denotes the Kor\'{a}nyi-Puk\'{a}nszky function 
corresponding to the coefficients $(c_{mn}),$ 
then $\phi$ is positive.

\textbf{The matrix of $\phi$:} 
For a fixed $k\in\mathbb{Z}$, define 
$P_k:=\left\{\left(x,y\right)|x+y=k \right\}.$ 
The sequence $(P_k)$ is a sequence of disjoint subsets of $\mathbb{Z}^2.$ Besides 
\[\bigsqcup_{k\in\mathbb{Z}}P_k=\mathbb{Z}^2.\]
An order on $\mathbb{Z}^2,$ which we call the {\tt D-slice} ordering, is  defined below. Clearly, it is different from the usual co-lexicographic order. The matrix computations that follow are transparent because of the D-slice ordering that we use in describing the matrix of the Kor\'{a}nyi-Puk\'{a}nszky function $\phi.$ 
\begin{defn}[D-slice ordering] \label{D-slice}
Suppose $(x_1,y_1)\in P_l$ and $(x_2,y_2)\in P_m$ 
are two elements in $\mathbb{Z}^2$. 
Then 
\begin{enumerate}
	\item If $l=m,$ then $(x_1,y_1) < (x_2,y_2)$ is determined by the lexicographic ordering on $P_l\subseteq \mathbb Z^2$ and    
	\item  if $l < m$ (resp., if $l > m$), then $(x_1,y_1) < (x_2,y_2)$ (resp., $(x_1,y_1) > (x_2,y_2)$).  
\end{enumerate}
\end{defn}
The following theorem describes the Kor\'{a}nyi-Puk\'{a}nszky function $\phi$ with respect to the D-slice ordering on $\mathbb Z^2.$ 
\begin{thm}\label{Matrix phi}
Let $(c_{mn})_{m,n\in\N_0}$ be an infinite array of complex numbers. The matrix of the  Kor\'{a}nyi-Puk\'{a}nszky function $\phi$ in the D-slice ordering   corresponding to this array is of the form   
$$\bordermatrix{
	~ & \cdots & P_{-1} & P_0 & P_1 & \cdots\cr
	\vdots &  & \vdots & \vdots & \vdots & \cr
	P_{-1} & \cdots & I & C_1^{*} & C_2^{*} & \cdots\cr
	P_{0} & \cdots & C_1 & I & C_1^{*} & \cdots \cr
	P_1 & \cdots & C_2 & C_1 & I & \cdots \cr
	\vdots &  & \vdots & \vdots & \vdots &  \cr},$$
where $C_n:=c_{n0}I+c_{n-1,1}B^{*}+\cdots +c_{0n}{B^*}^n,$ 
$n\in \mathbb N.$ 
\end{thm}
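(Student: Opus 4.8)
The plan is to read the matrix of $\phi$ straight off its definition, exploiting that $\phi(\mu-\nu)$ depends only on the group difference $\mu-\nu\in\Z^2$, so the full array $\big(\!\!\big(\phi(\mu-\nu)\big)\!\!\big)_{\mu,\nu\in\Z^2}$ is a Toeplitz-type matrix over $\Z^2$. The entire purpose of the D-slice ordering (Definition \ref{D-slice}) is to convert this $\Z^2$-Toeplitz structure into an ordinary block-Toeplitz matrix whose blocks are themselves Toeplitz, hence functions of the bilateral shift $B$ on $\ell^2(\Z)$. First I would fix the slice functional $\sigma(x,y)=x+y$, so that $P_k=\sigma^{-1}(k)$, and parametrize each slice by its first coordinate, giving a bijection $P_k\cong\Z$ via $(x,k-x)\leftrightarrow x$.

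Next I would record the two nested Toeplitz properties. Because $\sigma$ is additive, $\sigma(\mu-\nu)=\sigma(\mu)-\sigma(\nu)$; hence if $\mu\in P_i$ and $\nu\in P_j$ then $\mu-\nu\in P_{i-j}$, so the $(P_i,P_j)$ block depends only on $n:=i-j$. Writing $\mu=(x,i-x)$ and $\nu=(y,j-y)$ gives $\mu-\nu=(s,\,n-s)$ with $s=x-y$, so the $(x,y)$ entry of this block depends only on $s=x-y$. Thus each block is a Toeplitz matrix on $\ell^2(\Z)$, i.e. a (formal) series in $B$ and $B^{*}$. This already yields the block-Toeplitz shape displayed in the statement; what remains is to identify each block.

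The identification is where the definition of $\phi$ enters. For a fixed slice-difference $n\ge 0$ the relevant differences are the multi-indices $(s,n-s)$; by the definition of the Kor\'anyi--Puk\'anszky function, $\phi(s,n-s)=c_{s,n-s}$ exactly when both entries are nonnegative, i.e. when $0\le s\le n$, while every mixed-sign difference contributes $0$. This is precisely what (i) truncates the Toeplitz block to the finite band $0\le s\le n$, turning it into a degree-$n$ polynomial in the shift, and (ii) fills that band with the coefficients $c_{p,n-p}$, $0\le p\le n$ — exactly the set of coefficients assembled into $C_n=c_{n0}I+c_{n-1,1}B^{*}+\cdots+c_{0n}{B^*}^n$. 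The diagonal blocks ($n=0$) use $\phi(0)=2\Re c_{00}=1$ together with the vanishing on mixed indices to give $I$, and the blocks with $n<0$ are governed by the $\alpha<0$ branch of $\phi$ and produce the conjugate transposes $C_{|n|}^{*}$, consistently with the Hermitian symmetry $\phi(-\alpha)=\ov{\phi(\alpha)}$ and in agreement with the displayed matrix.

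The main obstacle is purely the bookkeeping in this last identification: one must fix a convention for the bilateral shift $B$ on $\ell^2(\Z)$ and for the within-slice lexicographic order so that the within-block difference $s=x-y$ corresponds to the correct power of $B^{*}$ and so that the coefficient sitting on the $s$-th diagonal is indeed $c_{n-s,s}$ rather than its mirror $c_{s,n-s}$. Once the orientation of the parametrization $P_k\cong\Z$ is chosen to match the convention used to write $C_n$, the mixed-sign vanishing of $\phi$ makes the banded truncation fall out automatically, and the whole computation becomes transparent — this transparency being exactly the reason for introducing the D-slice ordering. No analytic input is required beyond the definition of $\phi$; the theorem is a reorganization statement, and the proof reduces to a finite check on the three cases $n>0$, $n=0$, $n<0$, together with the observation that nothing depends on which representatives $\mu,\nu$ are chosen within their slices.
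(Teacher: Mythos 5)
Your proposal is correct and follows essentially the same route as the paper's proof: both parametrize each slice $P_k$ by its first coordinate, observe that $\phi(\mu-\nu)$ depends only on the slice difference $l-m$ and the within-slice difference, and then run the three-case check (within-block difference positive, zero, negative) using the vanishing of $\phi$ on mixed-sign multi-indices to truncate each Toeplitz block to the band carrying the coefficients $c_{p,\,n-p}$. The only cosmetic difference is that you first isolate the two-level Toeplitz structure abstractly before identifying the blocks, whereas the paper computes the entries directly; the bookkeeping point you flag about orienting $B^*$ against the lexicographic order within slices is indeed the one delicate step, and it is handled the same way in both arguments.
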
 
\begin{proof}
With respect to the D-slice ordering on $\mathbb{Z}^2,$ 
the matrix corresponding to the function $\phi$ 
is a doubly infinite block matrix,  
where $(k,n)$ element in $(l,m)$ block,  
which is $\phi\left((k,-k+l)-(n,-n+m)\right).$ 
is computed as follows, separately, in three different cases:

First, let $k-n<0.$	
	
	The quantity $\phi\left((k,-k+l)-(n,-n+m)\right)$ is non-zero 
	only if $k-n \geq l-m.$ 
	Hence if $l\geq m,$ 
	then $\phi\left((k,-k+l)-(n,-n+m)\right)=0.$ 
	Now, assume $l<m.$  
	In this case, the possible values 
	for $k-n$ are $l-m,l-m+1,\ldots,-1,$
	otherwise $\phi\left((k,-k+l)-(n,-n+m)\right) = 0.$ 
	For $p\in\{0,1,\ldots,-l+m-1\}$ and $k-n=l-m+p,$   
	we have
	\[\phi\left((k,-k+l)-(n,-n+m)\right)=\overline{c}_{m-l-p,p}.\]
	
	Second, let $k-n=0.$
		\begin{eqnarray*}
		\phi (0,l-m)=\left\{
		\begin{array}{ll}
 		     c_{0,l-m} & \mbox{ if } l\geq m\\
		      \overline{c}_{0,m-l} & \mbox{ if } l< m\\
	\end{array} 
	\right.
	\end{eqnarray*}
	
	Finally, let $k-n>0.$
	 
	The quantity $\phi\left((k,-k+l)-(n,-n+m)\right)$ 
	is non-zero only if $k-n \leq l-m$. 
	Hence if $l\leq m,$ 
	then $\phi\left((k,-k+l)-(n,-n+m)\right)=0$. 
	Now, assume $l>m$. 
	In this case the possible values 
	for $k-n$ are $l-m,l-m-1,\ldots,1$ otherwise 
	$\phi\left((k,-k+l)-(n,-n+m)\right).$ 
	For $p\in\{0,1,\ldots,l-m-1\}$ and $k-n=l-m-p,$ 
	we have
	\[\phi\left((k,-k+l)-(n,-n+m)\right)=c_{l-m-p,p}.\]

Therefore, the $(l,m)$ block in the matrix of $\phi$ 
is given exactly by the following rule:  
\begin{enumerate}
	\item $C_{m-l}^{*}$ if $l<m$,
	\item $C_{l-m}$ if $l>m$,
	\item $I$ if $m=l$.
\end{enumerate}
Hence the matrix of the  Kor\'{a}nyi-Puk\'{a}nszky function $\phi$ in the D-slice ordering corresponding to the array $(c_{mn})$ is of the form 
$$\bordermatrix{
	~ & \cdots & P_{-1} & P_0 & P_1 & \cdots\cr
	\vdots &  & \vdots & \vdots & \vdots & \cr
	P_{-1} & \cdots & I & C_1^{*} & C_2^{*} & \cdots\cr
	P_{0} & \cdots & C_1 & I & C_1^{*} & \cdots \cr
	P_1 & \cdots & C_2 & C_1 & I & \cdots \cr
	\vdots &  & \vdots & \vdots & \vdots &  \cr}$$
\end{proof}
Assume that the power series 
	$\sum_{j,k=0}^{\infty}a_{jk}z_{1}^{j}z_{2}^{k},$ 
	with $a_{00}=0,$ represents a holomorphic function $f$ defined on the bi-disc and that $\|f\|_{\D^2,\infty}\leq 1.$ 
Let $2\big(c_{00}+\sum_{j,k=1}^{\infty}c_{jk}z_{1}^{j}z_{2}^{k}\big)$
be the power series representation for $\chi \circ f$ on the bi-disc $\mathbb D^2$ for some choice of complex number $c_{mn}$ which are determined from the coefficients $a_{mn}$ of the function $f.$
\begin{lem} \label{recurrence 2 variables}
For all $n\in \mathbb N,$ setting $A_n:=a_{n0}I+a_{n-1,1}B^*+\cdots +a_{0n}{B^*}^n,$ $C_n=c_{n0}I+c_{n-1,1}B^*+ \cdots +c_{0n}{B^*}^n,$ we have 
		$$C_n=A_n+\sum\limits_{\substack{j=1}}^{n-1}A_jC_{n-j}.$$
\end{lem}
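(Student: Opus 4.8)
The plan is to reduce the two-variable recursion to the one-variable computation already carried out in Lemma~\ref{recursive coefficients}, by introducing an auxiliary bookkeeping variable that records the total degree $m+n$ of each monomial $z_1^mz_2^n$. First I would record, exactly as in the planar case, the identity underlying the Cayley transform: since $\chi\circ f=(1+f)(1-f)^{-1}=2\big(c_{00}+\sum_{(m,n)\neq(0,0)}c_{mn}z_1^mz_2^n\big)$ and $(1+f)(1-f)^{-1}=1+2f(1-f)^{-1}$, one reads off $c_{00}=\tfrac12$ together with
\[
\frac{1}{1-f(z_1,z_2)}=1+\sum_{(m,n)\neq(0,0)}c_{mn}z_1^mz_2^n ,
\]
equivalently $\big(1-f\big)\big(1+\sum_{(m,n)\neq(0,0)}c_{mn}z_1^mz_2^n\big)=1$ as an identity of formal power series.

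Next I would apply the substitution $z_1=z,\ z_2=zw$, which is a unital algebra homomorphism on formal power series and therefore preserves this identity and commutes with the reciprocal of the unit $1-f$. Under it $z_1^mz_2^n=z^{m+n}w^{n}$, so every monomial of total degree $k$ lands in $z^k$, and the coefficient of $z^k$ is a polynomial in $w$ of degree $\le k$. Writing $\mathsf A_k(w):=\sum_{p+q=k}a_{pq}w^{q}$ and $\mathsf C_k(w):=\sum_{p+q=k}c_{pq}w^{q}$, the substitution yields $f(z,zw)=\sum_{k\ge1}\mathsf A_k(w)z^k$ (starting at $k=1$ since $a_{00}=0$) and $\dfrac{1}{1-f(z,zw)}=1+\sum_{k\ge1}\mathsf C_k(w)z^k$.

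Then I would run the one-variable argument verbatim: from
\[
\Big(1+\sum_{k\ge1}\mathsf C_k(w)z^k\Big)\Big(1-\sum_{k\ge1}\mathsf A_k(w)z^k\Big)=1,
\]
comparing the coefficient of $z^n$ for $n\ge1$, and using that polynomials in the single variable $w$ commute, gives
\[
\mathsf C_n(w)=\mathsf A_n(w)+\sum_{j=1}^{n-1}\mathsf A_j(w)\,\mathsf C_{n-j}(w).
\]
This is precisely the scalar recursion of Lemma~\ref{recursive coefficients}, now with polynomial coefficients.

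Finally I would substitute $w=B^{*}$. Since $B^{*}$ is a single operator, the evaluation map $\mathbb C[w]\to\mathcal B(\ell^2(\mathbb Z))$, $w\mapsto B^{*}$, is a unital homomorphism into a commutative subalgebra; hence it sends the products $\mathsf A_j(w)\mathsf C_{n-j}(w)$ to the operator products $A_jC_{n-j}$ and respects the commutativity used above. Since $\mathsf A_k(B^{*})=A_k$ and $\mathsf C_k(B^{*})=C_k$ by definition, the displayed identity becomes $C_n=A_n+\sum_{j=1}^{n-1}A_jC_{n-j}$, as claimed. The only point needing care is the bookkeeping that the coefficient of $z^n$ after the substitution is genuinely the finite polynomial $\mathsf A_n$ (resp.\ $\mathsf C_n$) encoding the degree-$n$ slice --- this is exactly the content of the D-slice ordering --- together with the fact that reciprocation commutes with the substitution; both are routine once the grading by total degree is made explicit.
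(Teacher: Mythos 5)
Your argument is correct, and it reaches the same identity the paper does from the same starting point --- the formal power series identity $\big(1+\sum_{(m,n)\neq(0,0)}c_{mn}z_1^mz_2^n\big)\big(1-f(z_1,z_2)\big)=1$ graded by total degree --- but it organizes the verification differently. The paper extracts the scalar recursion $c_{n-k,k}=\sum_{p=0}^{k}\sum_{j=k}^{n}a_{n-j,p}c_{j-k,k-p}$ by comparing the coefficient of $z_1^{n-k}z_2^k$ directly, and then separately expands the coefficient of ${B^*}^k$ in $A_n+\sum_{j=1}^{n-1}A_jC_{n-j}$ as a double sum and matches the two expressions term by term. You instead push the whole identity through the substitution homomorphism $z_1\mapsto z$, $z_2\mapsto zw$ (legitimate on formal power series since both images have zero constant term, and it carries inverses to inverses), which packages each total-degree slice into a single polynomial coefficient $\mathsf A_k(w)$, $\mathsf C_k(w)$ of $z^k$; the recursion then falls out of the one-variable computation of Lemma~\ref{recursive coefficients} verbatim, and evaluating the resulting polynomial identity at $w=B^*$ (a unital homomorphism into the commutative algebra generated by $B^*$) gives the operator statement. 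What your route buys is the elimination of the index bookkeeping: the paper's double-sum matching is replaced by two applications of functoriality, and it makes transparent that the lemma is literally the one-variable lemma with coefficients in $\mathbb C[w]$ --- which also explains why the argument generalizes painlessly to $n>2$ variables (substitute $z_j\mapsto zw_{j}$ and evaluate at commuting operators). The paper's computation, on the other hand, displays the explicit coefficient formula $c_{n-k,k}$ in terms of the $a_{ij}$, which your proof does not produce and which is not needed for the lemma as stated.
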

\begin{proof}
Let 
$C(z_1,z_2):=\sum_{i,j=0}^{\infty}c_{ij}z_{1}^{i}z_{2}^{j}.$
We have 
\[1+f(z_1,z_2)+f(z_1,z_2)^2+\cdots=\frac{(\chi\circ f )(z_1,z_2)}{2}+c_{00}= C(z_1,z_2).\]
Thus 
$C(z_1,z_2)(1-f(z_1,z_2))=1,$ which is the same as  
\begin{eqnarray*} 
	\lefteqn{(1+c_{10}z_1+c_{01}z_2+c_{20}z_{1}^{2}+
	c_{11}z_1z_2+c_{02}z_{2}^{2}+\cdots )\times}\\&\phantom{\times \times}& (1-a_{10}z_1-a_{01}z_2-
	a_{20}z_{1}^{2}-a_{11}z_1z_2-a_{02}z_{2}^{2}+\cdots)\\&=&1.
\end{eqnarray*}
Now comparing the coefficient of 
$z_{1}^{n-k}z_{2}^k$ we have
\[c_{n-k,k}=\sum\limits_{\substack{p=0}}^{k}
\sum\limits_{\substack{j=k}}^{n}a_{n-j,p}c_{j-k,k-p},\]
where $a_{00}=0$.\\
The coefficient of ${B^*}^k$ in $A_n+\sum\limits_{\substack{i=1}}^{n-1}A_iC_{n-i}$ 
is 
\begin{eqnarray*}
\lefteqn{a_{n-k,k}c_{00}+a_{n-k,k-1}c_{01}+a_{n-k-1,k}c_{10}+
	a_{n-k,k-2}c_{02}}\\
 &\phantom{++++}& + a_{n-k-1,k-1}c_{11}+a_{n-k-2,k}c_{20}+\cdots\\
\end{eqnarray*} 
\vspace{-56pt}
\begin{eqnarray*}
\lefteqn{= (a_{n-k,k}c_{00}+a_{n-k,k-1}c_{01}+\cdots +
 	a_{n-k,0}c_{0k})+}\\
 	&\phantom{++++}&(a_{n-k-1,k}c_{10}+a_{n-k-1,k-1}c_{11}+
 	\cdots +a_{n-k-1,0}c_{1,k})+\cdots 
 	\end{eqnarray*}
\vspace{-32pt}
\begin{eqnarray*}	
\lefteqn{ \cdots +(a_{0k}c_{n-k,0}+a_{0,k-1}c_{n-k,1}+\cdots +a_{00}c_{n-k,k})}\\
 	&\phantom{====}& = \sum\limits_{\substack{p=0}}^{k}\sum\limits_{\substack{j=k}}^{n}a_{n-j,p}c_{j-k,k-p}
\end{eqnarray*}
completing the proof of the claim.
\end{proof}

In view of the Theorem \ref{Matrix phi} and the Lemma \ref{recurrence 2 variables}, the CF Problem \ref{Two Variables} takes the following form:
\begin{thm}{\label{equivalent form}}For any polynomial $p$  of the form $$p(z) = a_{10} z_1 + a_{01}z_2 + a_{20}z_1^2 + a_{11} z_1z_2 + a_{02} z_2^2,$$ there exists a holomorphic function $q,$ defined on the bi-disc $\D^2,$ with $q^{(k)}(0)=0$ for $|k|=0,1,2,$ such that $$\|p+q\|_{\mathbb D^2, \infty} \leq 1$$ if and only if 
	\[\left(
	\begin{array}{ccc}
		I & C_1^{*} & C_2^{*}\\
		C_1 & I & C_1^{*} \\
		C_2 & C_1 & I \\
	\end{array}
	\right)
	\]
	is non-negative definite and for each $k\geq 3,$ there exists 
	$C_k=c_{k0}I+c_{k-1,1}B^*+ \cdots +c_{0k}{B^*}^k$ 
	such that the Kor\'{a}nyi-Puk\'{a}nszky function $\phi$ 
	corresponding to $(c_{mn})_{m,n\in \N_0}$ is positive.  
\end{thm}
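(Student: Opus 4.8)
The plan is to chain together four ingredients already in hand: the Cayley transform, the Korányi--Pukánszky theorem (Theorem~\ref{Koranyi Theorem}), the block description of $\phi$ in Theorem~\ref{Matrix phi}, and the recurrence of Lemma~\ref{recurrence 2 variables}. First I would set $f:=p+q$ and note that the constraints $q^{(k)}(0)=0$ for $|k|\le 2$ mean exactly that the Taylor coefficients $a_{mn}$ of $f$ of total degree at most $2$ coincide with those of $p$ (with $a_{00}=0$), while the coefficients of total degree $\ge 3$ are unconstrained and are produced by $q$. Thus Problem~\ref{Two Variables} is equivalent to asking for complex numbers $a_{mn}$, $m+n\ge 3$, extending the prescribed low-order data, so that $\sum a_{mn}z_1^mz_2^n$ represents a holomorphic map of $\D^2$ into $\D$. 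Since $f(0)=0$ and $\|f\|_{\D^2,\infty}\le 1$, the maximum principle forces $|f|<1$ on $\D^2$, so the Cayley map $\chi(z)=\tfrac{1+z}{1-z}$ applies: $f$ maps $\D^2$ into $\D$ if and only if $\chi\circ f$ maps $\D^2$ into $H_+$, i.e. $\Re(\chi\circ f)\ge 0$. Writing $\chi\circ f = 2\big(c_{00}+\sum_{m+n\ge 1}c_{mn}z_1^mz_2^n\big)$ with $c_{00}=1/2$, Theorem~\ref{Koranyi Theorem} says this holds precisely when the associated function $\phi$ on $\Z^2$ is positive.

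Next I would invoke Theorem~\ref{Matrix phi} to realize the positivity of $\phi$, in the D-slice ordering, as the non-negative definiteness of the doubly-infinite block Toeplitz matrix with blocks $I, C_1, C_2,\dots$, where $C_n=c_{n0}I+c_{n-1,1}B^*+\cdots+c_{0n}(B^*)^n$. Positivity of $\phi$ amounts to non-negative definiteness of every finite principal submatrix, and in particular of the three-consecutive-block submatrix $\Big(\begin{smallmatrix} I & C_1^* & C_2^*\\ C_1 & I & C_1^*\\ C_2 & C_1 & I\end{smallmatrix}\Big)$, which is exactly the matrix appearing in the statement.

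The decisive step is the recurrence $C_n=A_n+\sum_{j=1}^{n-1}A_jC_{n-j}$ of Lemma~\ref{recurrence 2 variables}, with $A_n=a_{n0}I+\cdots+a_{0n}(B^*)^n$. Reading off $n=1,2$ gives $C_1=A_1$ and $C_2=A_2+A_1^2$, so $C_1,C_2$ depend only on $a_{10},a_{01},a_{20},a_{11},a_{02}$, that is, only on $p$; this is why the $3\times 3$ block is a condition on $p$ alone. For $n\ge 3$ the recurrence reads $C_n = A_n + (\text{a function of } C_1,\dots,C_{n-1})$, and since $A_n$ ranges over all operators of the prescribed form as its coefficients $a_{n0},\dots,a_{0n}$ (the free coefficients of $q$) vary, the assignment $A_n\mapsto C_n$ is an affine bijection once $C_1,\dots,C_{n-1}$ are fixed. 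By induction, choosing the free coefficients of $q$ is the same as choosing the blocks $C_k$, $k\ge 3$. Hence a solution $q$ exists if and only if there exist $C_k$, $k\ge 3$, completing $\phi$ to a positive function; the non-negative definiteness of the $3\times 3$ block is then automatic, and I would record it separately only because it is the part of the condition that is checkable directly from $p$.

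I expect the main obstacle to be the bookkeeping in this last step: verifying that the recurrence really makes $A_n\mapsto C_n$ a bijection at each level, so that ``free coefficients of $q$'' and ``free blocks $C_k$'' are genuinely interchangeable, and confirming that the scalar positivity of $\phi$ in Theorem~\ref{Koranyi Theorem} matches the operator-level non-negative definiteness of the block Toeplitz matrix of Theorem~\ref{Matrix phi}. The analytic point that $\|f\|_{\D^2,\infty}\le 1$ together with $f(0)=0$ yields $|f|<1$ on $\D^2$ (so that $\chi\circ f$ is genuinely a holomorphic map into $H_+$) also needs a short justification via the maximum principle.
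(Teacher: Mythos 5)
Your proposal is correct and follows essentially the same route as the paper, which derives this theorem directly by combining the Cayley transform with the Korányi--Pukánszky criterion, the D-slice block-Toeplitz description of $\phi$ (Theorem~\ref{Matrix phi}), and the recurrence of Lemma~\ref{recurrence 2 variables}; your explicit verification that $C_1=A_1$, $C_2=A_2+A_1^2$ depend only on $p$ and that $A_n\mapsto C_n$ is an affine bijection for $n\geq 3$ supplies exactly the bookkeeping the paper leaves implicit. The only point both treatments gloss over is that positivity of $\phi$ already bounds the $c_{mn}$ (via $2\times 2$ principal minors), so the series for $\chi\circ f$ genuinely converges on $\D^2$ and the Korányi--Pukánszky theorem is applicable in the converse direction.
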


\begin{lem}\label{Contractivity and Positivity}
	If $A_n$ and $C_n$ are as defined above, then   
	\[\left(
	{
	\begin{array}{ccccccc}
		I & C_{1}^* & C_{2}^* & \cdots & C_{n}^* \\
		C_1 & I & C_{1}^* & \cdots & C_{n-1}^*\\ 
		C_2 & C_1 & I & \cdots & C_{n-2}^*\\
		\vdots & \vdots & \vdots & \ddots & \vdots \\
		C_n & C_{n-1} & C_{n-2} & \cdots & I \\
	\end{array} }
	\right)
	\geq 0\]
	 if and only if 
	 \[\left\| \left(
	{
	\begin{array}{ccccc}
		A_1 & A_2 & A_3 &\cdots & A_n \\
		0 & A_1 & A_2 & \cdots & A_{n-1}\\ 
		0 & 0 & A_1 & \cdots & A_{n-2}\\
		\vdots & \vdots & \vdots & \ddots & \vdots \\
		0 & 0 & 0 & \cdots & A_1 \\
	\end{array} }
	\right)\right\|
	\leq 1.\]
\end{lem}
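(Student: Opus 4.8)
The plan is to reduce the operator statement to the scalar case already settled in Lemma \ref{contractivity vs positivity}, exploiting that every block entry $A_j$ and $C_k$ is a polynomial in the single operator $B^*$. Recall from the discussion preceding \eqref{norm of matrix valued polynomial} that $C^*(B)$ is commutative and $*$-isomorphic to $C(\mathbb T)$ via $\tau(g)=g(B^*)$; hence the block matrix in the statement, call it $\mathcal C$, lies in $M_{n+1}(C^*(B))\cong C(\mathbb T,M_{n+1})$, while the triangular matrix $\mathcal A:=\mathscr T(A_1,\ldots,A_n)$ lies in $M_n(C^*(B))\cong C(\mathbb T,M_n)$. For $\zeta\in\mathbb T$ let $\mathrm{ev}_\zeta$ be the character of $C^*(B)$ sending $B^*$ to $\zeta$; it carries $A_k$ to the scalar $a_k(\zeta):=a_{k0}+a_{k-1,1}\zeta+\cdots+a_{0k}\zeta^{k}$ and $C_k$ to $c_k(\zeta)$. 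I will use two standard facts about $C(\mathbb T,M_N)$: a self-adjoint element is positive if and only if its value under $\mathrm{ev}_\zeta\otimes\mathrm{id}$ is a positive matrix for every $\zeta$, and its norm equals $\sup_\zeta$ of the norms of these values (the latter being exactly the content of \eqref{norm of matrix valued polynomial}, since $\tau\otimes I_N$ is isometric).

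First I would evaluate both matrices fibrewise. Applying $\mathrm{ev}_\zeta\otimes\mathrm{id}$ to $\mathcal C$ produces the scalar Hermitian Toeplitz matrix $C_n$ of Lemma \ref{contractivity vs positivity} built from $c_1(\zeta),\ldots,c_n(\zeta)$ (here one checks that each adjoint block $C_k^{*}$ maps to $\overline{c_k(\zeta)}$, so the Hermitian Toeplitz pattern is preserved), while $\mathcal A$ maps to the scalar upper triangular Toeplitz matrix $A_n$ built from $a_1(\zeta),\ldots,a_n(\zeta)$. Crucially, evaluating the operator recurrence of Lemma \ref{recurrence 2 variables} at $\zeta$ yields $c_n(\zeta)=a_n(\zeta)+\sum_{j=1}^{n-1}a_j(\zeta)c_{n-j}(\zeta)$, which is precisely the scalar recurrence of Lemma \ref{recursive coefficients}. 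Thus the scalars $(a_k(\zeta))$ and $(c_k(\zeta))$ satisfy exactly the hypotheses under which Lemma \ref{contractivity vs positivity} is valid.

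Next I would invoke the scalar consequence of Lemma \ref{contractivity vs positivity}: since $P_n$ is invertible and $P_nC_n^{\mathrm t}P_n^{*}=(I-A_nA_n^{*})\oplus 1$, positivity of $C_n$ is equivalent to $I-A_nA_n^{*}\geq 0$, i.e. to $\|A_n\|\leq 1$. Applied at each point this gives
\[ (\mathrm{ev}_\zeta\otimes\mathrm{id})(\mathcal C)\geq 0 \iff \|(\mathrm{ev}_\zeta\otimes\mathrm{id})(\mathcal A)\|\leq 1 \quad\text{for every }\zeta\in\mathbb T. \]
Finally I would globalize: by the pointwise criterion for positivity the left side holds for all $\zeta$ if and only if $\mathcal C\geq 0$, and by the pointwise formula for the norm the right side holds for all $\zeta$ if and only if $\|\mathcal A\|=\|\mathscr T(A_1,\ldots,A_n)\|\leq 1$. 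The two conditions are therefore equivalent, which is the assertion.

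The main obstacle, indeed the only delicate point, is the legitimacy of this fibrewise reduction: one must verify that positivity and operator norm in $M_N(C^*(B))$ are genuinely detected pointwise under the identification with $C(\mathbb T,M_N)$, and that the fibre at $\zeta$ of each block matrix is \emph{exactly} the scalar object occurring in the one variable lemma. As a fully self-contained alternative one can bypass $C(\mathbb T)$ altogether and simply repeat the induction behind Lemma \ref{contractivity vs positivity} verbatim: congruence of $\mathcal C$ by the block unipotent triangular matrix assembled from the $-A_j$ collapses it to $\bigl(I-\mathscr T(A_1,\ldots,A_n)\mathscr T(A_1,\ldots,A_n)^{*}\bigr)\oplus I$, because the entries $A_j,C_k$ all commute as polynomials in $B^*$, so every cancellation in the scalar computation (driven by the recurrence of Lemma \ref{recurrence 2 variables}) survives unchanged.
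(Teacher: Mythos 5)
Your argument is correct, but your primary route is genuinely different from the one in the thesis. The thesis disposes of this lemma in a single sentence: since every $A_m$ and $C_m$ is a polynomial in the one operator $B^*$, all the block entries commute with one another, and therefore the congruence computation of Lemma~\ref{contractivity vs positivity} --- conjugating by the block unipotent triangular matrix built from the $-A_j$ and using the recurrence of Lemma~\ref{recurrence 2 variables} to collapse the product to $\bigl(I-\mathscr{T}(A_1,\ldots,A_n)\mathscr{T}(A_1,\ldots,A_n)^*\bigr)\oplus I$ --- goes through verbatim at the operator level. That is exactly the ``fully self-contained alternative'' you sketch in your closing sentence. Your main argument instead passes through the Gelfand isomorphism $C^*(B)\cong C(\mathbb T)$ and reduces everything to the scalar lemma fibrewise; this is legitimate, because positivity and norm in $M_N(C^*(B))\cong C(\mathbb T,M_N)$ are indeed detected pointwise, evaluation at $\zeta$ is a $*$-homomorphism (so $C_k^*\mapsto\overline{c_k(\zeta)}$ and the Hermitian Toeplitz pattern survives), and the operator recurrence of Lemma~\ref{recurrence 2 variables} evaluates to the scalar recurrence of Lemma~\ref{recursive coefficients}. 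What your route buys is that no matrix algebra has to be redone: the scalar identity $P_nC_n^{\rm t}P_n^*=(I-A_nA_n^*)\oplus 1$ is simply quoted at each fibre, and the same mechanism cleanly explains why the norm of $\mathscr T(A_1,\ldots,A_n)$ is a supremum over $\mathbb T$ (which the paper also uses, via \eqref{norm of matrix valued polynomial}, in the spectral-theorem proof of the Kor\'anyi--Puk\'anszky theorem). What the paper's route buys is complete elementarity --- no appeal to the structure of $C(\mathbb T, M_N)$ --- at the cost of trusting that every cancellation in the scalar induction is driven only by commutativity and the recurrence, which is true but must be checked. The one point to watch in your version is that Lemma~\ref{contractivity vs positivity} has to be read as a purely algebraic identity valid for arbitrary scalar sequences linked by the recurrence (the fibres $a_k(\zeta),c_k(\zeta)$ are not literally Taylor coefficients of a holomorphic self-map of the disc); since its proof uses nothing beyond that recurrence, this is harmless, and you flag it appropriately.
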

\begin{proof}
For each $n\in\mathbb N,$ 
$C_n$ commutes with $C_m$ and $A_m$ 
for all $m\in\mathbb{N}$ and 
hence we can adapt the proof of the Lemma \ref{contractivity vs positivity} 
to complete the proof in this case.  
\end{proof}
Since the adjoint of the bilateral shift $B^*$ on $\ell^2(\Z)$ is unitarily equivalent to 
the multiplication operator $M_z$ on $L^2(\T)$, it follows that 
$A_n$ and $C_n$ are unitarily equivalent to the multiplication operators 
$M_a$ and $M_c$ respectively, where $a(z)=a_{n0}+a_{n-1,1}z + \cdots +a_{0n}z^n,$ and $c(z)=c_{n0}+ c_{n-1,1}z + \cdots +c_{0n}z^n.$  
Now the Theorem \ref{equivalent form} takes the equivalent form given below, where for the polynomial $p$ of the form  $p(z) = a_{10} z_1 + a_{01}z_2 + a_{20}z_1^2 + a_{11} z_1z_2 + a_{02} z_2^2,$ we have set
\begin{equation}\label{p1p2}
 p_1(z):= a_{10}+a_{01}z \mbox{\rm ~and~} p_2(z) = a_{20}+a_{11}z+a_{02}z^2.
\end{equation}
\begin{thm}\label{Final form}
	For any polynomial $p$ of the form 
	$$p(z) = a_{10} z_1 + a_{01}z_2 + a_{20}z_1^2 + a_{11} z_1z_2 + a_{02} z_2^2,$$ 
	there exists a holomorphic function $q,$ 
	defined on the bi-disc $\D^2,$ with $q^{(k)}(0)=0$ for $|k|=0,1,2,$ such that 
	$$\|p+q\|_{\mathbb D^2, \infty} \leq 1$$ if and only if
	 	$|p_2|\leq 1-|p_1|^2$ 
	and there exists a holomorphic function  
	$f:\mathbb{D}\to \mathcal B(L^2(\mathbb{T}))$
	with 
	\[\|f\|_{\D,\infty}^{\rm op}\leq 1 
	\mbox{ and }
	\frac{f^{(k)}(0)}{k!}=M_{p_k}\mbox{ for all }k\geq 0,\]
	where $p_0=0$ and for $k\geq 3,$ 
	$p_k\in\mathbb{C}[Z]$ 
	is a polynomial of degree less than or equal to $k.$ 
	Here $M_{p_k}$ is the multiplication operator on $L^2(\mathbb{T})$ 
	induced by the polynomial $p_k.$ 
	\end{thm}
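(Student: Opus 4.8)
The plan is to deduce Theorem~\ref{Final form} from Theorem~\ref{equivalent form} by transporting the two positivity conditions there into the language of multiplication operators on $L^2(\T)$ and then running, at the operator level, the one–variable argument used in the alternative proof of Theorem~\ref{single variable}. Recall that $B^*$ on $\ell^2(\Z)$ is unitarily equivalent to $M_z$ on $L^2(\T)$, so that $A_n=a_{n0}I+a_{n-1,1}B^*+\cdots+a_{0n}{B^*}^n$ and $C_n=c_{n0}I+c_{n-1,1}B^*+\cdots+c_{0n}{B^*}^n$ become the multiplication operators $M_{a_n}$ and $M_{c_n}$ by polynomials of degree at most $n$; by definition $a_1=p_1$ and $a_2=p_2$. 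Thus the whole problem takes place inside the commutative algebra of multiplication operators induced by polynomials.

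First I would dispose of the condition $|p_2|\le 1-|p_1|^2$. By Lemma~\ref{Contractivity and Positivity} with $n=2$, the non-negativity of the $3\times 3$ block matrix of Theorem~\ref{equivalent form} is equivalent to $\|\mathscr{T}(A_1,A_2)\|=\|\mathscr{T}(M_{p_1},M_{p_2})\|\le 1$. Since $M_{p_1}$ and $M_{p_2}$ are multiplications by continuous functions, this operator norm equals $\sup_{z\in\T}\big\|\mathscr{T}(p_1(z),p_2(z))\big\|$, and the elementary $2\times 2$ criterion already used in the proof of Lemma~\ref{OPnorm}, namely $\|\mathscr{T}(\w,\alpha)\|\le 1$ iff $|\alpha|\le 1-|\w|^2$, turns this supremum into the pointwise inequality $|p_2(z)|\le 1-|p_1(z)|^2$ on $\T$, that is, $|p_2|\le 1-|p_1|^2$.

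Next I would treat the remaining condition, that the blocks $C_k$ for $k\ge 3$ can be chosen so that the Korányi--Pukánszky function $\phi$ is positive. By Theorem~\ref{Matrix phi} the matrix of $\phi$ in the D-slice ordering is a doubly infinite block Toeplitz matrix with diagonal block $I$ and lower blocks $C_n=M_{c_n}$, and its positivity is precisely the operator analogue of the scalar Toeplitz positivity that, in one variable, the Korányi--Pukánszky theorem identified with $\chi\circ f$ having non-negative real part. Running that argument with operators in place of scalars --- the recurrence of Lemma~\ref{recurrence 2 variables} being the exact operator lift of Lemma~\ref{recursive coefficients}, and the operator Cayley transform $f=\chi^{-1}(G)$ of the Herglotz function $G(\zeta)=I+2\sum_{n\ge 1}M_{c_n}\zeta^n$ producing an analytic contraction --- yields an analytic $f:\D\to\mathcal B(L^2(\T))$ with $\|f\|_{\D,\infty}^{\rm op}\le 1$ and $f(0)=0$ whose Taylor coefficients are forced by the recurrence to equal $f^{(k)}(0)/k!=M_{a_k}=M_{p_k}$, where $p_1,p_2$ are the given data and $p_k$ for $k\ge 3$ are the degree $\le k$ polynomials recording the free choices $C_k$. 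Conversely, given such an $f$, reversing the Cayley transform produces a positive $\phi$, while the argument of Lemma~\ref{VN for MVP} applied to $f$ gives $\|\mathscr{T}(M_{p_1},M_{p_2})\|\le 1$; thus the first condition is in fact subsumed by the second, though both are recorded for clarity.

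The step I expect to be the real work is the operator--valued lifting just described: justifying that the block Toeplitz positivity of $\phi$ corresponds to an operator Herglotz function and that its Cayley transform is an analytic contraction with the prescribed coefficients, all while keeping the inductively produced blocks $C_k$ multiplication operators by polynomials of degree at most $k$. Since every operator in sight commutes and is a multiplication operator, this can be carried out either by repeating the one--variable Parrott--plus--recurrence induction of the proof of Theorem~\ref{single variable} inside this commutative algebra, or by a direct integral reduction, pointwise in $z\in\T$, to the scalar Korányi--Pukánszky theorem; the bookkeeping that guarantees the polynomial degree constraints survive the induction is the only delicate point.
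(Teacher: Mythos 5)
Your proposal is correct and follows essentially the same route as the paper: Theorem~\ref{equivalent form} combined with Lemma~\ref{Contractivity and Positivity} and the unitary equivalence of $B^*$ with $M_z$ on $L^2(\T)$, with the $3\times 3$ block condition collapsing pointwise to $|p_2|\le 1-|p_1|^2$ and the remaining positivity condition identified with the existence of the operator-valued Schur function with coefficients $M_{p_k}$. The paper records this passage only as a reformulation, so your explicit Herglotz/Cayley-transform justification of that last identification (and your observation that the first condition is subsumed by the second) merely fills in steps the paper leaves implicit.
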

	Thus the Problem \ref{Two Variables} has been reduced to a one variable problem except it now involves holomorphic functions taking values in $\mathcal B(L^2(\mathbb{T})).$ To discuss this variant of the CF problem, the following definition will be useful. 
\begin{defn}[Completely Polynomially Extendible]
	Suppose $k\in\N$ and $\{p_j\}_{j=1}^{k}$ is a sequence of polynomials, 
	with $\deg(p_j)\leq j$ for all $j=1,2,\ldots,k$. 
	Then $\mathscr{T}(M_{p_1},\ldots,M_{p_k})$ will be called 
	$n$-{\tt{polynomially~extendible}} 
	if $\|\mathscr{T}(M_{p_1},\ldots,M_{p_k})\|\leq 1$ 
	and there exists a sequence of polynomials 
	$\{p_{l}\}_{l=k+1}^{n}$, 
	with $\deg(p_{l})\leq l$,  
	such that $\|\mathscr{T}(M_{p_1},\ldots,M_{p_n})\|
	\leq 1$. 
	Also, $\mathscr{T}(M_{p_1},\ldots,M_{p_k})$ 
	will be called {\tt{completely~polynomially~extendible}} 
	if the operator $\mathscr{T}(M_{p_1},\ldots,M_{p_k})$ is 
	$n$-polynomially extendible for all $n\in\N$.
\end{defn}

For $p_1,p_2\in\mathbb{C}[Z],$ polynomials of degree at most $1$ and $2$ respectively,  
let $P$ denote the polynomial
$P(z)=M_{p_1}z + M_{p_2}z^2.$ 
We shall call $P$ to be a polynomial in the CF class if given these polynomials
$p_1,\,p_2,$ there is a holomorphic function $f:\mathbb{D}\to \mathcal B(L^2(\mathbb{T}))$ 
satisfying properties stated in the Theorem \ref{Final form}. Such a function $f$ is called a CF-extension of the polynomial $P$. It follows that a solution to the Problem \ref{Two Variables} exists if and only if the polynomial $P$ is in the CF class. We have therefore proved the following theorem.
\begin{thm}
A solution to the Problem \ref{Two Variables} exists if and only if  	
the corresponding one variable operator valued polynomial $P$ is in the CF class. 
Or, equivalently, 
	$\mathscr{T}(M_{p_1},M_{p_2})$ 
	is completely polynomially extendible.
\end{thm}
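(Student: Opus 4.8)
The plan is to read the statement as the composition of two equivalences and to notice that the first is already in hand. By Theorem~\ref{Final form} a solution to Problem~\ref{Two Variables} exists exactly when $|p_2|\le 1-|p_1|^2$ and there is a holomorphic $f\colon\D\to\mathcal B(L^2(\T))$ with $\|f\|_{\D,\infty}^{\rm op}\le 1$ and $f^{(k)}(0)/k!=M_{p_k}$ for all $k\ge 0$ (with $p_0=0$); by definition this last clause is precisely the assertion that $P$ lies in the CF class. Since $|p_2|\le 1-|p_1|^2$ is nothing but $\|\mathscr{T}(M_{p_1},M_{p_2})\|\le 1$, and the existence of such an $f$ forces this inequality (apply the argument below to $g:=f/z$), the condition $|p_2|\le 1-|p_1|^2$ is subsumed by membership in the CF class. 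Thus the real content to be verified is the equivalence of ``$P$ is in the CF class'' with ``$\mathscr{T}(M_{p_1},M_{p_2})$ is completely polynomially extendible.''

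For the forward direction, suppose $f$ is a CF-extension of $P$. Because $f(0)=M_{p_0}=0$ and $f$ maps $\D$ into the closed operator unit ball, the scalar Schwarz lemma applied to each matrix coefficient $\langle f(z)u,v\rangle$ shows that $g:=f/z$ again maps $\D$ into the closed unit ball, with Taylor coefficients $g^{(k)}(0)/k!=M_{p_{k+1}}$. For each $n$ the upper-triangular block Toeplitz operator $\mathscr{T}(M_{p_1},\ldots,M_{p_n})$ is, after reversing the order of the coordinate blocks, the compression of the multiplication operator $M_g$ to the polynomials of degree at most $n-1$; hence $\|\mathscr{T}(M_{p_1},\ldots,M_{p_n})\|\le\|M_g\|=\|g\|_{\D,\infty}^{\rm op}\le 1$. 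Since each $p_k$ has degree at most $k$, this exhibits, for every $n$, a length-$n$ extension of the required kind, so $\mathscr{T}(M_{p_1},M_{p_2})$ is $n$-polynomially extendible for all $n$. (One may equally iterate Lemma~\ref{VN for MVP}.)

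For the converse I would first manufacture a single coherent infinite extension out of the family of finite ones. Complete extendibility provides, for each $n$, polynomials $p_3,\ldots,p_n$ (depending on $n$) with $\deg p_l\le l$ and $\|\mathscr{T}(M_{p_1},\ldots,M_{p_n})\|\le 1$; contractivity gives $\|p_l\|_{\T,\infty}\le 1$, so the finitely many coefficients of each such $p_l$ lie in a fixed compact set. A diagonal extraction then yields polynomials $p_3,p_4,\ldots$ with $\deg p_l\le l$ along which every finite truncation $\mathscr{T}(M_{p_1},\ldots,M_{p_m})$ is a limit of contractions, hence itself a contraction. Feeding this coherent sequence into Lemma~\ref{Contractivity and Positivity}, all finite principal truncations of the D-slice block Toeplitz matrix built from the $C_k$ (determined from the $M_{p_k}$ by the recurrence of Lemma~\ref{recurrence 2 variables}) are non-negative definite; that is, the Kor\'{a}nyi-Puk\'{a}nszky function of the resulting array $(c_{mn})$ is positive. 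This is exactly the condition of Theorem~\ref{equivalent form} (equivalently, of Theorem~\ref{Koranyi Theorem}) for a solution of Problem~\ref{Two Variables} to exist, whence $P$ lies in the CF class.

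The crux is the passage in the converse from ``a finite extension exists for every $n$'' to ``one infinite extension works for all $n$ at once,'' since the finite extensions supplied by complete extendibility may vary with $n$. The normal-families argument above closes this gap; an alternative is to run the inductive Parrott construction used in the proof of Theorem~\ref{single variable}. The essential difficulty is that, unlike the scalar case, the corner supplied by Parrott's theorem need not be a multiplication operator by a polynomial of degree at most $k$, which is precisely why membership in the CF class is a genuine restriction and why the necessary condition $|p_1|^2+|p_2|\le 1$ fails to be sufficient in general.
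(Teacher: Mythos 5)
Your proof is correct, and it follows the route the paper intends: Theorem \ref{Final form} makes ``a solution to Problem \ref{Two Variables} exists'' synonymous with membership of $P$ in the CF class, and the equivalence with complete polynomial extendibility is obtained by shuttling between the Kor\'{a}nyi--Puk\'{a}nszky positivity condition of Theorem \ref{equivalent form} and the contractivity of the block Toeplitz truncations via Lemma \ref{Contractivity and Positivity} and the recurrence of Lemma \ref{recurrence 2 variables}. The one place where you genuinely add something is the converse: the paper records the theorem as an immediate consequence of the definitions and never addresses the fact that $n$-polynomial extendibility for every $n$ only supplies finite extensions $(p_3^{(n)},\ldots,p_n^{(n)})$ that may vary with $n$, whereas the CF class (equivalently, positivity of a single Kor\'{a}nyi--Puk\'{a}nszky function) requires one coherent infinite sequence. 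Your normal-families/diagonal extraction — using that $\|\mathscr{T}(M_{p_1},\ldots,M_{p_n})\|\leq 1$ forces $\|p_l\|_{\T,\infty}\leq 1$, hence confines the coefficients of each $p_l$ to a compact set, and that coefficientwise convergence gives norm convergence of the finite truncations — closes this gap cleanly; your closing observation that the naive inductive Parrott argument fails because the Parrott corner need not be a multiplication operator by a polynomial of degree at most $k$ is exactly the right diagnosis of why the gap is not vacuous. The forward direction via $g:=f/z$ and compression of $M_g$ is also fine and is an acceptable substitute for running Theorem \ref{equivalent form} and Lemma \ref{Contractivity and Positivity} in the other direction.
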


%
It is clear, from the Theorem \ref{Final form}, that $|p_1|^2+|p_2|\leq 1$ is a necessary condition for the existence of a  solution to the Problem \ref{Two Variables}. This condition via Parrott's theorem is also equivalent to the condition $\|\mathscr T(M_{p_1}, M_{p_2})\| \leq 1.$ 

	We now give some instances, where this necessary condition 
	is also sufficient for the existence of a solution to the Problem \ref{Two Variables}. This would amount to find condition for   $\mathscr{T}(M_{p_1},M_{p_2})$ 
	to be completely polynomially extendible. 

\begin{thm}
	Let $p_1(z)=\gamma + \delta z$ and $p_2(z)=(\alpha + \beta z)(\gamma + \delta z)$ for some choice of complex numbers $\alpha,~\beta,~\gamma$ and $\delta.$ Assume that 
	$|p_1|^2+|p_2|\leq 1.$ 
	If either $\alpha \beta \gamma \delta =0$ or 
	$\arg(\alpha)-\arg(\beta)=\arg(\gamma)-\arg(\delta),$ 
	then $\mathscr{T}(M_{p_1},M_{p_2})$ is 
	completely polynomially extendible.
\end{thm}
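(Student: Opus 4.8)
The plan is to translate the statement, through Theorem~\ref{Final form}, into the problem of producing one bounded holomorphic function on the bidisc whose two-jet is prescribed, and then to construct that function explicitly in each admissible case. First I would record the algebraic shape of the data. Writing $L(z_1,z_2)=\gamma z_1+\delta z_2$ and $M(z_1,z_2)=\alpha z_1+\beta z_2$, the hypotheses $p_1=\gamma+\delta z$ and $p_2=(\alpha+\beta z)(\gamma+\delta z)$ say exactly that the degree-two polynomial whose $D$-slices are $p_1,p_2$ equals $p(z_1,z_2)=L(1+M)$. Hence, by Theorem~\ref{Final form} together with the identification $B^\ast\cong M_z$ on $L^2(\T)$, it suffices to produce a holomorphic $F:\mathbb D^2\to\overline{\mathbb D}$ whose two-jet at the origin is $L(1+M)$: its restriction $F(\zeta,\zeta z)$ then has Taylor coefficients $p_k(z)$ which are polynomials of degree at most $k$ with the prescribed $p_1,p_2$, and $M_{p_k}=\tfrac1{k!}f^{(k)}(0)$ defines the CF-extension $f$. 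The given inequality $|p_1|^2+|p_2|\le1$ is, via Parrott's theorem, precisely the compatibility $\|\mathscr T(M_{p_1},M_{p_2})\|\le1$ of the two-jet.

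Next I would use the symmetries to normalise the coefficients. The rotations $z_j\mapsto e^{i\eta_j}z_j$ of $\mathbb D^2$ are automorphisms, and multiplying $F$ by a unimodular constant also preserves contractivity and the structure $L(1+M)$; together these three real parameters act on $(\alpha,\beta,\gamma,\delta)$ by phases $\alpha\mapsto e^{i\eta_1}\alpha$, $\beta\mapsto e^{i\eta_2}\beta$, $\gamma\mapsto e^{i(\eta_1+\chi)}\gamma$, $\delta\mapsto e^{i(\eta_2+\chi)}\delta$. A short computation shows that the consistency condition needed to drive all four phases to zero is exactly $\arg\alpha-\arg\beta=\arg\gamma-\arg\delta$; when instead $\alpha\beta\gamma\delta=0$ there are only three phases to kill and no condition is required. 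Thus in both cases one may assume $\alpha,\beta,\gamma,\delta\ge0$, and in the degenerate case $L$ or $M$ in addition collapses onto a single coordinate. In the normalised situation every modulus is maximised at $z=1$, so $\sup_{z\in\T}\big(|p_1(z)|^2+|p_2(z)|\big)=s(s+t)$ with $s=\gamma+\delta$ and $t=\alpha+\beta$, and the standing hypothesis reads simply $s(s+t)\le1$.

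Then I would construct $F$ case by case. \textbf{(i)} If $\gamma=0$ or $\delta=0$ then $L$ is a multiple of one coordinate $z_i$, and I would seek $F=z_iG$; since $|z_i|=1$ on the distinguished boundary, $\|F\|_\infty=\|G\|_\infty$, and prescribing the two-jet of $F$ reduces to prescribing $G(0)$ and $\nabla G(0)$. This one-point value-plus-gradient interpolation is solvable exactly when $s(s+t)\le1$, with the explicit solution $G=\varphi_{-c}\circ\ell$, where $\ell$ is a linear contraction of $\mathbb D^2$ into $\overline{\mathbb D}$ and $\varphi_{-c}$ the disc automorphism carrying $0$ to $G(0)$. \textbf{(ii)} If $L$ and $M$ are proportional I would set $F=\psi(u)$, with $u$ a linear contraction of $\mathbb D^2$ into $\mathbb D$ proportional to $L$ and $\psi$ a scalar Schur function; the two-jet requirement is then the one-variable Carath\'eodory--Fej\'er condition solved in Theorem~\ref{single variable}, which holds because $s^2+st\le1$. \textbf{(iii)} In the remaining normalised cases I would exhibit an explicit rational contraction $F=L/D$ with $D(0)=1$ and linear part $-M$; any such $D$ leaves the two-jet equal to $L(1+M)$, so the freedom in the higher-order terms of $D$ (equivalently, a $3\times3$ Schur--Agler colligation) is available to arrange $|L|\le|D|$ on $\T^2$, the non-negativity of the coefficients being what makes this achievable. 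Feeding $F$ back through Theorem~\ref{Final form}, equivalently checking that the Kor\'anyi--Puk\'anszky matrices $\big(\!\!\big(C_{i-j}\big)\!\!\big)$ stay positive, yields the complete polynomial extendibility of $\mathscr T(M_{p_1},M_{p_2})$.

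The main obstacle is the contractivity verification underlying step~(iii) (and implicitly (i)--(ii)). The obvious extension $p_k=r^{k-1}p_1$, i.e. $F=L/(1-M)$, is a contraction only under the strictly stronger bound $\sup_{z\in\T}(|p_1|+|r|)=s+t\le1$, whereas we are given only $s(s+t)\le1$; so a naive geometric extension fails and the construction must genuinely exploit the hypotheses. The role of the degeneracy or the equal-argument condition is precisely to align the phases so that a better, non-geometric extension exists whose sup-norm is governed by $s(s+t)$ rather than by $s+t$; equivalently, in the Parrott/Kor\'anyi--Puk\'anszky scheme, alignment is what permits each Parrott completion to be taken as a multiplication operator $M_{p_k}$ with $\deg p_k\le k$ while keeping all block matrices positive. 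The example later in the section shows that, absent such a hypothesis, the completion leaves this class and the necessary condition ceases to be sufficient.
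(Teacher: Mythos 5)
Your framework is sound: the reduction through Theorem \ref{Final form} to a two-jet interpolation problem on the bidisc, the phase normalization (whose consistency condition is indeed exactly $\arg\alpha-\arg\beta=\arg\gamma-\arg\delta$, with one fewer constraint to satisfy when a coefficient vanishes), and your cases (i) and (ii) are correct as far as they go. But the content of the theorem lives almost entirely in your case (iii), and there you have not given a proof. After normalization, case (iii) still contains both the generic situation under the argument condition (that condition only says $\arg(\alpha/\gamma)=\arg(\beta/\delta)$, not that $L$ and $M$ are proportional, so your case (ii) covers only a thin slice of it) and the cases $\alpha=0$ or $\beta=0$ with $\gamma\delta\neq0$. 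For all of these you propose the ansatz $F=L/D$ and assert that ``the freedom in the higher-order terms of $D$ is available to arrange $|L|\le|D|$ on $\mathbb T^2$,'' but you construct no $D$ and give no argument that one exists. As you yourself observe, the obvious choice $D=1-M$ requires $s+t\le 1$, which is strictly stronger than the hypothesis $s(s+t)\le 1$; saying that ``alignment permits a better, non-geometric extension'' is a restatement of the theorem, not a proof of it.

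The paper closes exactly this gap with a concrete device that you would need to reproduce or replace. One writes $1+\alpha z_1+\beta z_2=(a+\alpha z_1)+(1-a+\beta z_2)$ with $a=\lambda/(1+\lambda)$ and $\lambda=|\alpha|/|\beta|$, extends each summand in its own variable by Nehari's theorem to $\tilde h_1,\tilde h_2$ whose sup-norms equal $\|\mathscr T(a,\alpha)\|$ and $\|\mathscr T(1-a,\beta)\|$ respectively, and observes that this choice of $a$ makes $\mathscr T(a,|\alpha|)$ and $\mathscr T(1-a,|\beta|)$ proportional, so that the a priori lossy bound $\|\tilde h_1\|+\|\tilde h_2\|$ collapses to the single norm $\|\mathscr T(1,|\alpha|+|\beta|)\|$. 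This yields $\|f\|\le\|\mathscr T(\|p_1\|,(|\alpha|+|\beta|)\|p_1\|)\|$, and only then does the argument condition (or a vanishing coefficient) enter, to identify $(|\alpha|+|\beta|)\|p_1\|$ with $\|p_2\|$ so that the hypothesis $|p_1|^2+|p_2|\le1$ gives $\|f\|\le1$. Some such quantitative mechanism --- a splitting whose subadditivity is exactly saturated --- is indispensable; without it your case (iii) is an unproved claim rather than a construction, and it is precisely the case the theorem is about.
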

\begin{proof} All through this proof, for brevity of notation, we will let $\|f\|$ stand for the norm $\sup\{\|f(z)\|_{\rm op}:z\in\mathbb D\},$ for any holomorphic function $f:\mathbb D \to \mathcal B(L^2(\mathbb T)).$  

\textbf{Case 1:} Suppose $\beta=0$. 
Then $P(z)=M_{p_1}(z + \alpha z^2 ).$
Let $p(z)=z + \alpha z^2$. 
Using Nehari's theorem, we extend $p$ to the function 
$\tilde{p}(z)=z + \alpha z^2 + \alpha_3 z^3 +\cdots$ 
such that
$\|\tilde{p}\|_{\D,\infty}=\left\|\mathscr{T}(1,\alpha)\right\|.$
Define
$f(z)=M_{p_1}\tilde{p}(z)=M_{p_1}z + M_{p_2}z^2 + M_{p_3}z^3 +\cdots, $
where 
$p_k=\alpha_k p_1$. 
Also,
\[\|f\|=\sup_{z\in\D}\|M_{p_1}\tilde{p}(z)\|
=\|M_{p_1}\|\sup_{z\in\D}|\tilde{p}(z)|=\|M_{p_1}\| \|\mathscr{T}(1,\alpha)\|.\]
Thus 
$\|f\|=\|M_{p_1}\otimes \mathscr{T}(1,\alpha)\|=\|\mathscr{T}(M_{p_1},M_{p_2})\|\leq 1.$
Hence $f$ is a required CF-extension of $P$.

\textbf{Case 2:} Suppose $\alpha=0$. 
Then, $P(z)=M_{p_1}(z + \beta M_z z^2)$. 
Let $Q(z)=z + \beta M_z z^2$ and 
$r(z_1,z_2)=z_1(1 + \beta z_2)$. 
Define $s(z_2)=1+\beta z_2$. 
Suppose
$\tilde{s}(z_2)=s(z_2) + \beta_2 z^{2}_{2} + \beta_3z_{2}^{3} + \cdots$
be such that 
$\|\tilde{s}\|_{\D,\infty}=\|\mathscr{T}(1,\beta)\|$. 
If $\tilde{r}:=z_1\tilde{s}(z_2),$ then
$\|\tilde{r}\|=\|\tilde{s}\|=\|\mathscr{T}(1,\beta)\|.$
If $\tilde{Q}(z)=z + M_{\beta z}z^2 + M_{\beta_2 z^2}z^2 + 
\cdots$ and 
$f(z)=M_{p_1}\tilde{Q}(z),$
then
$\|f\|=\|M_{p_1}\tilde{Q}\|\leq \|M_{p_1}\|\|\tilde{Q}\|.$ 
Since $\tilde{s}(M_z)=\tilde{Q}(z)$, from the von-Neumann inequality it follows that 
$\|\tilde{Q}\|\leq \|\tilde{s}\|.$ 
Therefore,
$\|f\|\leq \|M_{p_1}\|\|\mathscr{T}(1,\beta)\|
=\|\mathscr{T}(M_{p_1},\beta M_{p_1})\|.$
Hence
\[\|f\|
\leq 
\left\|\left(
\begin{array}{cc}
	M_z & 0\\
	0 & I\\
\end{array}
\right)
\left(
\begin{array}{cc}
	M_{p_1} & \beta M_{p_1}\\
	0 & M_{p_1}\\
\end{array}
\right)
\left(
\begin{array}{cc}
	M_{z}^{*} & 0\\
	0 & I\\
\end{array}
\right)\right\|
=
\|\mathscr{T}(M_{p_1},M_{p_2})\|
\leq 1. \]
Therefore $f$ is a CF-extension of $P.$

\textbf{Case 3:} Suppose $\alpha\neq 0$ and  
$\beta\neq 0$. 
Then, $P(z)=M_{p_1}\left(z + M_{\alpha + \beta z}z^2\right)$. 
Let $Q(z):=z + M_{\alpha + \beta z}z^2$. 
Define
$r(z_1,z_2):=z_1 + \alpha z^{2}_{1} + \beta z_1z_2=
z_1\left(1 + \alpha z_1 + \beta z_2\right).$
Let $\lambda:=|\alpha|/|\beta|$ and 
$a:=\lambda/(1+\lambda)$. 
Define
$s(z_1,z_2):=1 + \alpha z_1 + \beta z_2=
\left(a + \alpha z_1\right) + \left(1-a + \beta z_2\right).$
If $h_1(z_1):= a + \alpha z_1$ and 
$h_2(z_2):= 1-a + \beta z_2,$ then 
there exist
$\tilde{h}_1=a + \alpha z_1 + \alpha_2 z_{1}^{2} +\cdots$
and $\tilde{h}_2=1-a + \beta z_2 + \beta_2 z_{2}^{2} +\cdots$
with
$\left\|\tilde{h}_1\right\|=\big \|\mathscr{T}(a,\alpha)\big \| 
\mbox{ and } 
\left\|\tilde{h}_2\right\|=\left \|\mathscr{T}(1-a,\beta)\right\|.$
If
\[\tilde{r}(z_1,z_2):=z_1\left(\tilde{h}_1(z_1) + \tilde{h}_2(z_2)\right)=
z_1 + \alpha z_{1}^{2} + \beta z_1z_2 + 
\alpha_2 z_{1}^{3} + \beta_2z_1z_{2}^{2} +\cdots,\]
then $\|\tilde{r}\|\leq \|\tilde{h}_1\| + \|\tilde{h}_2\|$. 
Let
$\tilde{Q}(z)=Iz + M_{\alpha + \beta z}z^2 + M_{\alpha_2 + \beta_2 z^2}z^3 +\cdots$
and 
$f(z)=M_{p_1}\tilde{Q}(z)$ $=\sum_{j}M_{p_j}z^j,$
where 
$p_{k+1}(z)=\big(\alpha_k + \beta_kz^k\big)p_1$ 
for all $k>1$. 
Thus $\|f\|\leq \|M_{p_1}\|\|\tilde{Q}\|.$ 
Since $\tilde{Q}(z)=\tilde{r}(z,M_z),$ from the von-Neumann inequality, it follows that 
\[\|f\|\leq \|M_{p_1}\|\|\tilde{r}\|\leq \|M_{p_1}\|
\left(\|\tilde{h}_1\| + \|\tilde{h}_2\|\right).\]
As $\mathscr{T}(a,|\alpha|)=\lambda \mathscr{T}(1-a,|\beta|)$, 
therefore
$\left(\|\tilde{h}_1\| + \|\tilde{h}_2\|\right)=
\|\mathscr{T}\left(1,|\alpha|+|\beta|\right)\|$
and hence
\[\|f\|\leq \|M_{p_1}\|\|\mathscr{T}\left(1,|\alpha|+|\beta|\right)\|=
\left\|\mathscr{T}\left(\|p_1\|,(|\alpha|+|\beta|)\|p_1\|\right)\right\|.\]
\textbf{subcase 1:} Suppose $\gamma\neq 0$, $\delta \neq 0$ 
and 
$\mbox{ arg }(\alpha) - \mbox{ arg }(\beta) = \mbox{ arg }(\gamma) - \mbox{ arg }(\delta)$. 
Then
\[(|\alpha|+|\beta|)\|p_1\|= \|(\alpha + \beta)p_1\|=\|p_2\|.\]
Our hypothesis clearly implies that
$\|p_2\| + \|p_1\|^2 \leq 1.$
Hence $\|f\|\leq 1$.\\
\textbf{subcase 2:} Suppose $\gamma=0$ or $\delta=0$. 
Then
\[(|\alpha|+|\beta|)\|p_1\|= \|(\alpha + \beta)p_1\|=\|p_2\|.\]
As in subcase 1, 
here also $\|f\|\leq 1$ can be inferred easily. 
\end{proof}
\begin{rem}
	In Problem \eqref{Two Variables}, 
If either $p_1\equiv 0$ or $p_2\equiv 0,$ and $\|\mathscr T(M_{p_1}, M_{p_2})\| \leq 1,$ then $\|P\|\leq 1$ 
and hence $f$ in the Theorem \eqref{Final form} 
can be taken to be $P$ itself. 	
\end{rem}

Having verified that the necessary condition 
$\|\mathscr{T}(M_{p_1},M_{p_2})\|\leq 1$ is also  sufficient 
for $P$ to be in the CF class in several cases, we expected it to be sufficient in general.  
But unfortunately this is not the case. 
We give an example of a polynomial $P$ 
for which $\|\mathscr{T}(M_{p_1},M_{p_2})\|\leq 1$ 
but $P$ is not in the CF class. 

\textbf{An Example:} Let $p_1(z)=1/\sqrt{2}$ and 
$p_2(z)=z^2/2$. 
We show that $\mathscr{T}(M_{p_1},M_{p_2})$ 
is not even $3-$polynomially extendible.

It can easily be seen that 
$\|\mathscr{T}(M_{p_1},M_{p_2})\|\leq 1$. 
Now suppose there exists a polynomial $p_3$ of degree at most $3$ such that  
$\|\mathscr{T}(M_{p_1},M_{p_2},M_{p_3})\|\leq 1.$ 
Then Parrott's theorem 
guarantees the existence of a contraction $V\in\mathcal{B}\big(L^2(\T)\big)$ such that
\[M_{p_3}=\left(I-M_{|p_1|^2}-M_{p_2}\left(I-M_{|p_1|^2}\right)^{-1}M_{p_2}^{*}\right)V-M_{p_2}\left(I-M_{|p_1|^2}\right)^{-\frac{1}{2}}M_{p_1}^{*}\left(I-M_{|p_1|^2}\right)^{-\frac{1}{2}}M_{p_2}.\]
As we have $(1-|p_1|^2)^{2}-|p_2|^2\equiv 0,$ 
therefore operator in the first bracket is zero and hence
\[p_3=\frac{-p_{2}^{2}\overline{p_1}}{1-|p_1|^2}=\sqrt{2}z^4.\]
Thus $p_3$ is a polynomial of degree more than $3$ which is a contradiction. 
Hence $\mathscr{T}(M_{p_1},M_{p_2})$ is not even 
$3-$ polynomially extendible.

We close this section with an open question: What are the properties we must impose on the polynomials  $p_1$ and $p_2$  in addition to the requirement $\|\mathscr{T}(M_{p_1},M_{p_2})\|\leq 1$ to ensure 
that $P$ is in the CF class?

\section{Kor\'{a}nyi-Puk\'{a}nszky Theorem as an Application of Spectral Theorem}
In this section we show that 
Kor\'{a}nyi-Puk\'{a}nszky Theorem \ref{Koranyi Theorem} 
is an application 
of the well known spectral theorem. 
For simplicity we consider $n=2$.
\begin{thm}[Kor\'{a}nyi-Puk\'{a}nszky Theorem]
	If the power series 
	$\sum_{\alpha \in \mathbb{N}_{0}^{n}}c_{\alpha}z^{\alpha}$ represents a holomorphic function $g$ on the polydisc $\mathbb D^n,$ then   $\Re(g(z))\geq 0$ for all $z\in \mathbb{D}^n$ 
	if and only if the map 
	$\phi:\mathbb{Z}^n\to \mathbb{C}$ 
	defined by 
	\begin{eqnarray*}
		\phi (\alpha )=\left\{
		\begin{array}{ll}
		      2\Re c_{\alpha} & \mbox{\rm if }\alpha =0\\
		      c_{\alpha} & \mbox{\rm if }\alpha > 0\\
		      c_{-\alpha} & \mbox{\rm if }\alpha < 0\\
		      0 & \mbox{\rm otherwise }\\
		\end{array} 
		\right.
	\end{eqnarray*}
	is positive, that is, the $k\times k$ matrix $\big (\!\!\big ( \phi(\scriptstyle{m_i-m_j}) \big )\!\!\big )$ is non-negative definite for every choice of $m_1, \ldots, m_k\in \mathbb Z^n.$ 
\end{thm}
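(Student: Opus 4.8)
The plan is to deduce the theorem from the Herglotz--Bochner characterisation of positive-definite functions on the group $\Z^n$, and it is precisely that characterisation which is the direct application of the spectral theorem. Throughout I take $n=2$, the general case being identical in spirit. The bridge between the two sides of the equivalence is the behaviour of $g$ on the shrunken tori $r\T^2$: for a fixed polyradius $r=(r_1,r_2)\in[0,1)^2$ I would record the Fourier coefficients of the real, continuous function $h_r(w):=2\,\Re g(r_1w_1,r_2w_2)$ on $\T^2$. Writing $g=\sum_{\alpha\geq 0}c_\alpha z^\alpha$ and $\Re g=\tfrac12(g+\bar g)$, a direct expansion of $h_r=g(rw)+\overline{g(rw)}$ shows that the coefficient of $w^\beta$ equals $\rho_r(\beta)\,\phi(\beta)$ for every $\beta\in\Z^2$, where $\rho_r(\beta):=r_1^{|\beta_1|}r_2^{|\beta_2|}$ and $\phi$ is exactly the Kor\'anyi--Puk\'anszky function of the statement (the four cases $\beta=0$, $\beta>0$, $\beta<0$, and mixed sign match the four branches of $\phi$). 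Since $\Re g\geq 0$ on $\D^2$ is equivalent to $h_r\geq 0$ on $\T^2$ for every such $r$, the whole theorem reduces to comparing the sign of $h_r$ with the positive-definiteness of the sequences $\rho_r\phi$.

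The key lemma I would isolate is the Herglotz--Bochner statement: a function $\psi:\Z^2\to\C$ is positive-definite, that is, every matrix $\big(\!\!\big(\psi(m_i-m_j)\big)\!\!\big)$ is non-negative definite, if and only if $\psi(\beta)=\int_{\T^2}w^{\beta}\,d\mu(w)$ for some finite positive measure $\mu$ on $\T^2$. This is where the spectral theorem enters. Given a positive-definite $\psi$, I would run the GNS construction on the finitely supported functions $\xi:\Z^2\to\C$ equipped with the form $\langle\xi,\eta\rangle:=\sum_{m,m'}\psi(m-m')\xi(m)\overline{\eta(m')}$; positivity of $\psi$ makes this a pre-inner product, and after quotienting by its null space and completing one obtains a Hilbert space on which the two coordinate shifts descend to a commuting pair of unitaries $U_1,U_2$ with a cyclic vector $\xi_0$ satisfying $\langle U_1^{m_1}U_2^{m_2}\xi_0,\xi_0\rangle=\psi(m)$. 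Applying the spectral theorem to the commuting unitaries $U_1,U_2$ produces a joint spectral measure $E$ on $\T^2$, and $\mu(\cdot):=\langle E(\cdot)\xi_0,\xi_0\rangle$ is the required positive measure. The converse is the elementary computation $\sum_{i,j}\psi(m_i-m_j)\lambda_i\bar\lambda_j=\int_{\T^2}\big|\sum_i\lambda_i w^{m_i}\big|^2\,d\mu\geq 0$. Together these give the corollary actually used below: a continuous function on $\T^2$ is non-negative if and only if its sequence of Fourier coefficients is positive-definite, by uniqueness of Fourier coefficients as measures.

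With this in hand both implications fall out. If $\Re g\geq 0$, then each $h_r\geq 0$, so each $\rho_r\phi$ is positive-definite; letting $r\to(1,1)$ and using $\rho_r(\beta)\to 1$ pointwise, together with the fact that non-negativity of each fixed finite quadratic form $\langle\big(\!\!\big(\rho_r(m_i-m_j)\phi(m_i-m_j)\big)\!\!\big)v,v\rangle$ passes to the entrywise limit, shows $\phi$ is positive-definite. Conversely, if $\phi$ is positive-definite, I note that $\rho_r$ is itself positive-definite, being the Fourier coefficient sequence of the non-negative product Poisson kernel $P_{r_1}\otimes P_{r_2}$; by the Schur product theorem the Hadamard product $\rho_r\phi$ is then positive-definite, so the continuous function $h_r$ carrying these Fourier coefficients is non-negative for every $r$, that is $\Re g\geq 0$ on $\D^2$.

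The main obstacle, and the reason the spectral theorem is genuinely needed, is that in several variables one cannot simply represent $g$ by integrating a product Herglotz kernel against a positive measure: the real part of $\prod_j(1-z_j\bar w_j)^{-1}$ is not non-negative on $\D^2\times\T^2$, since values in the half-plane $\Re>\tfrac12$ are not closed under multiplication, so the naive one-variable reconstruction of $g$ breaks down. Routing the argument instead through positive-definiteness on $\Z^2$ and the GNS plus spectral-measure construction sidesteps the need for any explicit reproducing kernel. The only genuinely analytic points left to verify with care are the Fourier-coefficient identity $\widehat{h_r}(\beta)=\rho_r(\beta)\phi(\beta)$ and the stability of positive-definiteness under the limit $r\to(1,1)$, both of which are routine.
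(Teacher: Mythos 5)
Your proof is correct, but it takes a genuinely different route from the one in the paper. You argue via harmonic analysis on the group $\mathbb Z^n$: restrict $2\Re g$ to the shrunken tori $r\mathbb T^2$, observe that its Fourier coefficients are the Hadamard product $\rho_r\phi$ with $\rho_r(\beta)=r_1^{|\beta_1|}r_2^{|\beta_2|}$, prove the Herglotz--Bochner characterisation of positive-definite functions on $\mathbb Z^2$ by a GNS construction followed by the spectral theorem for the resulting pair of commuting unitaries, and then close the loop with the Schur product theorem (using that $\rho_r$ is the coefficient sequence of the product Poisson kernel) and an entrywise limit $r\to(1,1)$. The paper instead pulls $g$ back to the disc via the Cayley transform, so that $\Re g\geq 0$ becomes $\|f\|_{\mathbb D^2,\infty}\leq 1$ for $f=\chi^{-1}\circ g$; it encodes the matrix of $\phi$ in the D-slice ordering as a block Toeplitz matrix in the operators $C_n=\sum_j c_{n-j,j}{B^*}^j$, proves an algebraic identity (Lemma \ref{contractivity vs positivity} and its operator version, Lemma \ref{Contractivity and Positivity}) showing that positivity of that matrix is equivalent to contractivity of $\mathscr T(A_1,\ldots,A_n)$, and finally invokes the spectral theorem only for the concrete commuting unitaries $I\otimes B^*$ and $B^*\otimes B^*$ to identify $\sup_n\|\mathscr T(A_1,\ldots,A_n)\|$ with $\|f\|_{\mathbb D^2,\infty}$. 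Your argument is the classical one: it is shorter, needs none of the D-slice or Cayley-recursion machinery, and works uniformly in $n$; the paper's argument is longer but is deliberately built on the block-Toeplitz apparatus that drives the Carath\'eodory--Fej\'er analysis in the rest of the chapter, which is the payoff for the extra structure. One small point common to both readings: your Fourier-coefficient identity $\widehat{h_r}(\beta)=\rho_r(\beta)\phi(\beta)$ for $\beta<0$ uses $\phi(\beta)=\overline{c_{-\beta}}$, which is what the displayed matrix of $\phi$ in the paper actually requires for Hermitian symmetry, rather than the unconjugated $c_{-\beta}$ appearing in the theorem statement; that is a typo in the statement, not a gap in your argument.
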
 
\begin{proof}
Suppose the power series $2\sum_{\alpha \in \mathbb{N}_{0}^{n}}c_{\alpha}z^{\alpha}$ 
represents a holomorphic function $g$ defined on the bi-disc $\D^2$ with the property that 
$\Re(g(z))\geq 0$ for all $z\in\D^2$(This extra factor $2$ has been put in to the power series just to make previous computation matched). 
Without loss of generality, we assume that $g(0)=1$. 
The function $g$ maps $\D^2$ 
to the right half plane $H_+$ if and only if 
$f:=\chi^{-1}\circ g$ maps $\D^2$ to the unit disc $\D$. 
Suppose $\sum_{j,k=0}^{\infty}a_{jk}z_{1}^{j}z_{2}^{k}$ represents the function $f$. 
Then, $a_{00}=0$ and the array of coefficients $\big(\!\!\big(a_{jk}\big)\!\!\big)$ 
and $\big(\!\!\big(c_{jk}\big)\!\!\big)$ are related by the formula obtained in 
the Lemma \ref{Contractivity and Positivity}. The operators $I\otimes B^*$ and $B^*\otimes B^*$ 
are commuting unitaries 
and they have $\T^2$ as their joint spectrum.  
Now, applying spectral theorem and maximum modulus principle, 
we get the following: 
\begin{equation}\label{Spectral}
\|f(I\otimes B^*,B^*\otimes B^*)\|=\|f\|_{\D^2,\infty}.
\end{equation}

Also, we note that 
\[f(I\otimes B^*,B^*\otimes B^*)=A_1\otimes B^* + A_2\otimes {B^*}^2+\cdots,\]
where $A_n:=a_{n0}I+a_{n-1,1}B^*+\cdots +a_{0n}{B^*}^n$ and $C_n:=c_{n0}I+c_{n-1,1}B^*+\cdots +c_{0n}{B^*}^n$ as in the Lemma \ref{Contractivity and Positivity}. 
Since $\|f\|_{\D^2,\infty}\leq 1,$ it follows from \eqref{Spectral} that 
$\|\mathscr{T}(A_1,\ldots,A_n)\|\leq 1$ for all $n\in\N$. Now, from the Lemma \ref{Contractivity and Positivity}, we conclude that 
\begin{equation}\label{Principal block}
\left(
	{
	\begin{array}{ccccccc}
		I & C_{1}^* & C_{2}^* & \cdots & C_{n}^* \\
		C_1 & I & C_{1}^* & \cdots & C_{n-1}^*\\ 
		C_2 & C_1 & I & \cdots & C_{n-2}^*\\
		\vdots & \vdots & \vdots & \ddots & \vdots \\
		C_n & C_{n-1} & C_{n-2} & \cdots & I \\
	\end{array} }
	\right)
\end{equation} 
is non-negative for all $n\in\N$. 
Hence from the Theorem \ref{Matrix phi}, we get that the Kor\'{a}nyi-Puk\'{a}nszky 
function $\phi$ corresponding to the array $\big(\!\!\big(c_{jk}\big)\!\!\big)$ is positive.

Conversely, suppose the Kor\'{a}nyi-Puk\'{a}nszky 
function $\phi$ corresponding to the array 
$\big(\!\!\big(c_{jk}\big)\!\!\big)$ is positive, where $c_{00}$ is assumed to be $1/2.$ 
Then, from the Theorem \ref{Matrix phi}, 
we get that operator in \eqref{Principal block} 
is non-negative for all $n\in \N$. 
Thus, from the Lemma \ref{Contractivity and Positivity} and the equation \eqref{Spectral}, 
we conclude that $\|\chi\circ g\|_{\D^2,\infty}\leq 1,$ where $g(z_1,z_2)=2\sum_{m,n=0}^{\infty}c_{mn}z_1^mz_2^n.$ This is so if and only if $g$ maps $\D^2$ to the right half plane $H_+.$
Hence the theorem is proved.
\end{proof}

\chapter{A generalization of Nehari's Theorem}
For a closed subspace $M$ and 
a point $x$ in a Hilbert space $\h$, 
the distance of $M$ from $x$ is attained at $P(x),$ 
where $P$ is the orthogonal projection of $\h$ onto $M$. 
Nehari considered a similar problem 
but in the space $L^{\infty}(\T)$. 
He evaluated the distance of a function 
$f$ in $L^{\infty}(\T)$ from 
the closed subspace $H^{\infty}(\T)$. 
Before stating Nehari's theorem 
we shall give some definitions.

\section{The Hankel Operator}
Let $H^2(\T)$ denote the Hardy space, namely the closed subspace of $L^2(\T):$ 
$$H^2(\mathbb T):= \big \{f\in L^2( \mathbb T)\vert\, \hat{f}(-n) = 0, n\in \mathbb N \big\},$$
where $\hat{f}(-n)$ is the Fourier coefficient of $f$ with respect to the standard orthonormal basis $z^n,$ $n\in\mathbb Z$ and $z\in\mathbb T,$ of $L^2(\mathbb T).$ Let $P_{-}$ denote the orthogonal projection 
of $L^2(\T)$ onto $L^2(\T)\ominus H^2.$
\begin{defn}[Multiplication Operator]
	For $\phi\in L^{\infty}(\T),$ 
	we define the multiplication operator 
	$M_{\phi}:L^2(\T)\to L^2(\T)$ 
	by the rule $M_{\phi}(f)=\phi  f,$ 
	where $(\phi  f)(z) = \phi(z) f(z),\, z\in \mathbb T.$  
\end{defn}
For any $\phi\in L^{\infty}(\T)$ and 
$f\in L^2(\T),$ it is easy to see that 
$\phi f\in L^2(\T)$ and  
that $M_{\phi}$ is bounded.  
Indeed $\|M_{\phi}\|=\|\phi\|_{\infty}$ (cf. \cite[Theorem 13.14]{NY}). 

\begin{defn}[Hankel Operator]
	For $\phi\in L^{\infty}(\T),$ define the  
	Hankel operator with symbol $\phi$ 
	to be the operator $P_{-}\circ M_{\phi}\vert_{H^2}$ and  
	denote it by $H_{\phi}.$
\end{defn}
We recall the well known theorem of Nehari.
\begin{thm}[Nehari]
	If $\phi\in L^{\infty}(\T)$ and 
	$H_{\phi}$ is the corresponding Hankel operator, 
	then
	\[\inf\left\{\|\phi-g\|_{\T,\infty}:g\in H^{\infty}(\T)\right\}
	=\|H_{\phi}\|_{op}.\]
\end{thm}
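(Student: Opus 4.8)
The plan is to establish the two inequalities separately. The bound $\|H_\phi\|_{op}\le\inf_{g\in H^\infty(\T)}\|\phi-g\|_{\T,\infty}$ is the elementary half: first I would note that for $g\in H^\infty(\T)$ the operator $M_g$ maps $H^2(\T)$ into itself, whence $P_-M_g|_{H^2(\T)}=0$ and therefore $H_{\phi-g}=H_\phi$. Consequently
\[\|H_\phi\|_{op}=\|H_{\phi-g}\|_{op}=\big\|P_-M_{\phi-g}|_{H^2(\T)}\big\|_{op}\le\|M_{\phi-g}\|_{op}=\|\phi-g\|_{\T,\infty},\]
using the recorded identity $\|M_\psi\|=\|\psi\|_\infty$; taking the infimum over $g$ gives this inequality.

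The content of the theorem is the reverse inequality, which I would obtain by duality. Let $H^1_0(\T)$ denote the functions $k\in L^1(\T)$ with $\hat k(n)=0$ for all $n\le 0$; a short Fourier-coefficient computation identifies $H^1_0(\T)$ with the preannihilator of $H^\infty(\T)$ in $L^1(\T)$, so that
\[\inf_{g\in H^\infty(\T)}\|\phi-g\|_{\T,\infty}=\sup\Big\{\Big|\int_{\T}\phi k\,dm\Big|:k\in H^1_0(\T),\ \|k\|_{L^1}\le 1\Big\}.\]
The decisive step is to factor the competitors: each such $k$ can be written $k=z\,fg$ with $f,g\in H^2(\T)$ and $\|f\|_2\|g\|_2=\|k\|_{L^1}\le 1$, using the $H^1=H^2\cdot H^2$ factorization through outer functions. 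Since $\overline{z}\,\overline{g}\in L^2(\T)\ominus H^2(\T)$ with $\|\overline{z}\,\overline{g}\|_2=\|g\|_2$, a direct computation shows $\int_{\T}\phi k\,dm=\langle H_\phi f,\ \overline{z}\,\overline{g}\rangle$, and Cauchy--Schwarz then yields $\big|\int_{\T}\phi k\,dm\big|\le\|H_\phi\|_{op}\,\|f\|_2\|g\|_2\le\|H_\phi\|_{op}$. Taking the supremum over all admissible $k$ finishes this direction.

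The hard part will be exactly this second inequality, and it concentrates in two classical inputs: the duality formula, which rests on the F.\ and M.\ Riesz theorem to identify the preannihilator, and the $H^1=H^2\cdot H^2$ factorization with exact norm control. An alternative route, closer to the operator-theoretic machinery used elsewhere in this thesis, would be to avoid duality altogether: one checks the Hankel intertwining identity $H_\phi S=V H_\phi$, where $S=M_z|_{H^2(\T)}$ is the unilateral shift and $V=P_-M_z|_{L^2(\T)\ominus H^2(\T)}$, and then applies the commutant lifting theorem to produce a symbol $\psi\in L^\infty(\T)$ with $H_\psi=H_\phi$ and $\|M_\psi\|=\|H_\phi\|_{op}$; setting $g:=\phi-\psi\in H^\infty(\T)$ then gives $\|\phi-g\|_{\T,\infty}=\|H_\phi\|_{op}$. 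Either way, the norm-preserving extension of the prescribed off-diagonal data $H_\phi$ to a full multiplication operator is the crux, while the elementary inequality and the reduction above are routine.
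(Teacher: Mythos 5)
Your proof is correct, but note that the thesis does not actually prove this statement: it quotes Nehari's theorem as a known result with a citation to \cite[Theorem 15.14]{NY}, so there is no in-paper argument to compare against. Both halves of your argument are sound. The easy inequality via $H_{\phi-g}=H_\phi$ and $\|M_\psi\|=\|\psi\|_\infty$ is exactly the standard reduction, and your duality argument is the classical function-theoretic proof: the identity $\mbox{\rm dist}_\infty(\phi,H^\infty)=\sup\{|\int\phi k|:k\in H^1_0,\ \|k\|_1\le 1\}$ is correct (it is the annihilator--quotient duality for the closed subspace $H^1_0\subset L^1$; identifying $(H^1_0)^\perp$ with $H^\infty$ is a Fourier-coefficient computation and does not really require the F.\ and M.\ Riesz theorem, which is needed only for the $C(\T)$--$M(\T)$ version of this duality), and the pairing $\int\phi k=\langle H_\phi f,\ \overline{z}\,\overline{g}\rangle$ together with the Riesz factorization $H^1=H^2\cdot H^2$ with $\|f\|_2\|g\|_2=\|k\|_1$ closes the gap. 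Your alternative route via the intertwining $H_\phi S=VH_\phi$ and commutant lifting is also valid and is in fact the one most consonant with this thesis: the generalization of Nehari's theorem to $L^2(\T^2)$ proved in Chapter 5 extends the symbol one diagonal at a time by repeated application of Parrott's theorem, which is precisely the ``norm-preserving extension of off-diagonal data'' that you identify as the crux. The duality proof buys you the sharp statement with minimal operator theory; the lifting proof generalizes (as the thesis shows) to the operator-valued and two-variable settings where the $H^1=H^2\cdot H^2$ factorization is unavailable.
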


\section{Nehari's theorem for \texorpdfstring{$L^2(\mathbb T^2)$}{TEXT}}
In this section, 
we will give a possible generalization of Nehari's theorem for $L^2(\mathbb T^2).$ 
This generalization is most conveniently stated in terms of the D-slice ordering on $\Z^2,$ which we now recall.
For a fixed $k\in\mathbb{Z}$, define 
$P_k:=\left\{\left(x,y\right)|x+y=k \right\}.$ 
The sequence $\big(P_k\big)$ is a sequence of disjoint subsets of $\mathbb{Z}^2$ 
and $\bigsqcup_{k\in\mathbb{Z}}P_k=\mathbb{Z}^2.$
The D-slice ordering on $\mathbb{Z}^2$ is the ordering:

Suppose $(x_1,y_1)\in P_l$ and $(x_2,y_2)\in P_m$ 
are two elements in $\mathbb{Z}^2$. 
Then 
\begin{enumerate}
	\item If $l=m,$ then $(x_1,y_1) < (x_2,y_2)$ is determined by the lexicographic ordering on $P_l\subseteq \mathbb Z^2$ and    
	\item  if $l < m$ (respectively, if $l > m$), then $(x_1,y_1) < (x_2,y_2)$ (respectively, $(x_1,y_1) > (x_2,y_2)$).  
\end{enumerate}
Let $A_1=\bigsqcup_{k\in\mathbb{N}_0}P_k$ and 
$A_2=\bigsqcup_{k\in\mathbb{N}}P_{-k}$. 
Define
\[H_1:=\left\{f:=\!\!\!\!\!\sum\limits_{(m,n)\in A_1}\!\!\!\!\!\!\!\!a_{m,n}z_{1}^{m}z_{2}^{n}|f\in L^{\infty}(\T^2)\right\},\,
H_2:=\left\{f:=\!\!\!\!\!\sum\limits_{(m,n)\in A_2}\!\!\!\!\!\!\!\!a_{m,n}z_{1}^{m}z_{2}^{n}|f\in L^{\infty}(\T^2)\right\}.\]
$H_1$ and $H_2$ are two closed and disjoint subspaces 
of $L^{\infty}(\T^2)$ satisfying 
$L^{\infty}(\T^2)=H_1 \oplus H_2$. 
Now the answer to the following question on $L^2(\mathbb T^2)$ would be a natural generalization of the Nehari's theorem.
\begin{Qn}
	 	For $\phi\in L^{\infty}(\T^2),$ what is $\mbox{\rm dist}_{\infty}(\phi,H_1),$ the distance of $\phi$ 
	from the subspace $H_1?$  
\end{Qn}

\section{The Hankel Matrix corresponding to \texorpdfstring{$\phi$}{TEXT}}
Any $\phi\in L^{2}(\T^2)$ 
can be written as
\[\phi(z_1,z_2)=\sum\limits_{m,n\in\Z}\!\!\!\!\!\!\!\!a_{m,n}z_{1}^{m}z_{2}^{n}
=\sum\limits_{m,n\in A_1}\!\!\!\!\!\!\!\!a_{m,n}z_{1}^{m}z_{2}^{n} + 
\sum\limits_{m,n\in A_2}\!\!\!\!\!\!\!\!a_{m,n}z_{1}^{m}z_{2}^{n}.\]
Suppose $z_2=\lambda z_1$. 
Then
\[\phi(z_1,\lambda z_1)=\sum\limits_{k\geq 0}\left(\sum\limits_{m+n=k}
\!\!\!\!\!\!\!\!a_{m,n}\lambda^n\right)z_{1}^{k} + 
\sum\limits_{k<0}\left(\sum\limits_{m+n=k}\!\!\!\!\!\!\!\!a_{m,n}\lambda^n\right)z_{1}^{k}.\]
Setting 
$f^{\phi}_{k}(\lambda):=\sum_{m+n=k}a_{m,n}\lambda^n,$ 
we have  
$$\phi(z_1,\lambda z_1)=\sum_{k\in\Z}f^{\phi}_{k}(\lambda)z_{1}^{k}.$$
In this way, $L^{2}(\T^2)$ 
is first identified  
with $L^{2}(\T)\otimes L^{2}(\T)$ and then  
a second time with 
$L^{2}(\T)\otimes \ell^2(\Z)$, the identifications in both cases  
being isometric. 
For any $\phi\in L^{\infty}(\T^2),$ 
define the multiplication operator 
$M_{\phi}:L^{2}(\T)\otimes \ell^2(\Z)\to L^{2}(\T)\otimes \ell^2(\Z)$ 
as follows 
\[M_{\phi}\left(\sum\limits_{j\in\Z}g_j\otimes e_j\right):=
\sum\limits_{k\in\Z}\left(\sum\limits_{q\in\Z}g_qf_{q+k}\right)e_k.\] 

\begin{lem}\label{Norm of Multiplication Operator}
	For any $\phi\in L^{\infty}(\T^2),$ we have $\|M_{\phi}\|=\|\phi\|_{\T^2,\infty}.$ 
\end{lem}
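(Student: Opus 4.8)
The plan is to show that the operator $M_\phi$ defined above on $L^2(\mathbb T)\otimes\ell^2(\Z)$ is nothing but the ordinary multiplication operator by $\phi$ on $L^2(\mathbb T^2)$, transported through the two isometric identifications already set up; the equality of norms then reduces to the scalar fact that multiplication by an $L^\infty$ function on $L^2$ of a measure space has norm equal to the essential supremum, exactly as recorded after the definition of the one variable multiplication operator (cf. \cite[Theorem 13.14]{NY}).

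First I would package the two identifications as a single unitary. The map $\Theta:\mathbb T^2\to\mathbb T^2$, $(z_1,z_2)\mapsto(z_1,z_2z_1^{-1})$, is a measure preserving homeomorphism, so $h\mapsto h\circ\Theta^{-1}$ is unitary on $L^2(\mathbb T^2)$ and carries the monomial $z_1^mz_2^n$ to $z_1^{m+n}\lambda^n$, where $\lambda$ denotes the second coordinate after the substitution $z_2=\lambda z_1$. Expanding in powers of $z_1$ and sending $z_1^kg(\lambda)$ to $g\otimes e_k$ gives the second isometry onto $L^2(\mathbb T)\otimes\ell^2(\Z)$. Composing, a function $h=\sum_{m,n}b_{m,n}z_1^mz_2^n$ goes to $\sum_k g_k\otimes e_k$ with $g_k(\lambda)=\sum_{m+n=k}b_{m,n}\lambda^n$, matching the slice notation used in defining $M_\phi$. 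Write $W$ for the resulting unitary $L^2(\mathbb T^2)\to L^2(\mathbb T)\otimes\ell^2(\Z)$.

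Next I would verify that $W(\phi\,\cdot\,)W^{-1}=M_\phi$. In the $\lambda$ coordinate $\phi$ becomes $\sum_k f^\phi_k(\lambda)z_1^k$, so after the change of variables multiplication by $\phi$ is multiplication by this series; collecting the coefficient of each power of $z_1$ in the product $\big(\sum_k f^\phi_k z_1^k\big)\big(\sum_j g_j z_1^j\big)$ reproduces precisely the defining formula for $M_\phi$ on the slice coefficients. It is cleanest to carry out the comparison first on trigonometric polynomials, where all sums are finite, and then extend to $L^2(\mathbb T^2)$ by density, since $\phi\in L^\infty(\mathbb T^2)$ makes $(\phi\,\cdot\,)$ bounded. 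This also resolves the apparent difficulty that the individual terms $g_qf^\phi_{q+k}$ are only products of $L^2$ functions: the displayed formula is to be read as the slice expansion of the genuinely $L^2$ function $\phi h$, and its meaning and convergence are inherited from multiplication by $\phi$.

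Finally, since $W$ is unitary we get $\|M_\phi\|=\|(\phi\,\cdot\,)\|_{\mathcal B(L^2(\mathbb T^2))}$, and the latter equals $\|\phi\|_{\mathbb T^2,\infty}$: the bound $\le$ is immediate from $\|\phi h\|_2\le\|\phi\|_\infty\|h\|_2$, while the reverse follows by testing against normalized indicator functions of the sets $\{|\phi|>\|\phi\|_\infty-\varepsilon\}$, which have positive measure for every $\varepsilon>0$. I expect the only real work to be the bookkeeping in the third step, namely tracking the change of variables $z_2=\lambda z_1$ and matching the slice coefficients to the printed formula, including the index convention in the identification $z_1^kg\mapsto g\otimes e_k$; everything else is either a standard measure preserving change of variables or the elementary computation of the norm of a scalar multiplication operator.
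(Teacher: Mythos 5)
Your proof is correct, and it takes a genuinely cleaner route than the paper's, though both rest on the same underlying fact. The paper writes out the matrix of $M_{\phi}$ in the D-slice ordering, recognizes it as the doubly infinite block Laurent matrix with entries $M_{f^{\phi}_k}$, identifies $\|\phi\|_{\mathbb{T}^2,\infty}$ with $\sup_{\lambda\in\mathbb{T}}$ of the norms of the scalar Laurent matrices with entries $f^{\phi}_k(\lambda)$, and then asserts that this supremum equals the norm of the operator-entried matrix. That last step --- passing from the fibrewise norms over $\lambda$ to the norm of the block matrix --- is precisely a direct-integral statement and is left implicit. Your argument supplies it in one stroke: by packaging the substitution $z_2=\lambda z_1$ as the measure-preserving group automorphism $\Theta$ of $\mathbb{T}^2$ and composing with the Fourier transform in the first variable, you produce a single unitary $W$ conjugating ordinary multiplication by $\phi$ on $L^2(\mathbb{T}^2)$ into $M_{\phi}$, after which the lemma is the classical computation of the norm of a multiplication operator on $L^2$ of a measure space (your indicator-function argument for the lower bound is the standard one and is valid here). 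Two small points: your density argument on trigonometric polynomials properly justifies the convergence of the slice convolution $\sum_q g_q f^{\phi}_{q+k}$, which the paper takes for granted; and when you match index conventions you may find the transported operator differs from the paper's printed formula by the flip $e_k\mapsto e_{-k}$ on $\ell^2(\mathbb{Z})$ (the printed coefficient formula reads as Hankel-indexed while the displayed matrix is Toeplitz), but that flip is a further unitary and does not affect the norm identity.
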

\begin{proof}
Let $\phi\in L^{\infty}(\T^2)$ 
be an arbitrary element. 
From what we have said above, it follows that 
$\phi(z,\lambda z)=\sum_{k\in\Z}f^{\phi}_{k}(\lambda)z^{k}$ for some $f_k^\phi$ in $L^2(\mathbb T).$ 
The set of vectors $\left\{z^i \otimes e_j:(i,j)\in\Z^2\right\}$ 
is an orthonormal basis 
in $L^{2}(\T)\otimes \ell^2(\Z)$. 
The matrix of the operator $M_{\phi}$ with respect to this basis 
and the D-slice ordering on its index set is of the form 
\[\left(
\begin{array}{ccccc}
	 & \vdots & \vdots & \vdots & \\
	\cdots & M_{f^{\phi}_{-1}} & M_{f^{\phi}_{0}} & M_{f^{\phi}_{1}} & \cdots\\
	\cdots & M_{f^{\phi}_{-2}} & M_{f^{\phi}_{-1}} & M_{f^{\phi}_{0}} & \cdots\\
	\cdots & M_{f^{\phi}_{-3}} & M_{f^{\phi}_{-2}} & M_{f^{\phi}_{-1}} & \cdots\\
	 & \vdots & \vdots & \vdots & \\ 
\end{array}
\right).
\] 
We know that 
$\|\phi\|_{\T^2,\infty}=\sup_{\lambda\in\T}\sup_{z\in\T}\left|\sum_{k\in\Z}f^{\phi}_{k}(\lambda)z^{k}\right|$. 
Thus
\begin{align*}
	\|\phi\|_{\T^2,\infty}&=\sup_{\lambda\in\T}
	\left\|
	\left(
	\begin{array}{ccccc}
		 & \vdots & \vdots & \vdots & \\
		\cdots & f^{\phi}_{-1}(\lambda) & f^{\phi}_{0}(\lambda) & f^{\phi}_{1}(\lambda) & \cdots\\
		\cdots & f^{\phi}_{-2}(\lambda) & f^{\phi}_{-1}(\lambda) & f^{\phi}_{0}(\lambda) & \cdots\\
		\cdots & f^{\phi}_{-3}(\lambda) & f^{\phi}_{-2}(\lambda) & f^{\phi}_{-1}(\lambda) &\cdots\\
		 & \vdots & \vdots & \vdots & \\ 
	\end{array}
	\right)
	\right\| \\
	&=
	\left\|
	\left(
	\begin{array}{ccccc}
		 & \vdots & \vdots & \vdots & \\
		\cdots & M_{f^{\phi}_{-1}} & M_{f^{\phi}_{0}} & M_{f^{\phi}_{1}} & \cdots\\
		\cdots & M_{f^{\phi}_{-2}} & M_{f^{\phi}_{-1}} & M_{f^{\phi}_{0}} & \cdots\\
		\cdots & M_{f^{\phi}_{-3}} & M_{f^{\phi}_{-2}} & M_{f^{\phi}_{-1}} & \cdots\\
		 & \vdots & \vdots & \vdots & \\ 
	\end{array}
	\right)
	\right\|.
\end{align*}
Hence $\|\phi\|_{\T^2,\infty}=\|M_{\phi}\|$ completing the proof.
\end{proof}

The Hilbert space $\ell^{2}(\N_0)$ and the normed linear subspace 
$$\big \{(\ldots,0,x_0,x_1,\ldots):\sum_{i\geq 0}|x_i|^2 < 
\infty \mbox{ with }x_0 \mbox{ at the } 0^{th} 
\mbox{ position}\big \}$$
of $\ell^2(\mathbb Z)$ 
are naturally isometrically isomorphic.
Let $H:=L^2(\T)\otimes \ell^2(\N_0)$. 
The space $H$ is a closed subspace of 
$L^{2}(\T)\otimes \ell^{2}(\Z)$. 
We define Hankel operator 
$H_{\phi}$, with symbol $\phi$, 
to be the operator 
$P_{H^{\perp}}\circ {M_{\phi}}_{|H}$. 
Writing down the matrix for $H_{\phi}$ 
with respect to the bases 
$\{z^i \otimes e_j:i\in\Z,j=0,1,2,\ldots\}$ 
and 
$\{z^i \otimes e_{-j}:i\in\Z,j=1,2,\ldots\}$ 
in the spaces $H$ 
and $H^{\perp}$ respectively, 
we get 
\[H_{\phi}=
\left(
\begin{array}{cccc}
	M_{f^{\phi}_{-1}} & M_{f^{\phi}_{-2}} & M_{f^{\phi}_{-3}} & \cdots\\
	M_{f^{\phi}_{-2}} & M_{f^{\phi}_{-3}} & M_{f^{\phi}_{-4}} & \cdots\\
	M_{f^{\phi}_{-3}} & M_{f^{\phi}_{-4}} & M_{f^{\phi}_{-5}} & \cdots\\
	\vdots & \vdots & \vdots & \\ 
	\end{array}
\right).
\]
We call this the Hankel matrix with symbol $\phi.$

Let $\h$ be a Hilbert space. 
For any $(T_n)_{n\in\N}\subset \mathcal{B}(\h),$ 
define a operator $H(T_1,T_2,\ldots)$ as follows: 
\[H(T_1,T_2,\ldots)=
\left(
\begin{array}{cccc}
	T_1 & T_2 & T_3 & \cdots\\
	T_2 & T_3 & T_4 & \cdots\\
	T_3 & T_4 & T_5 & \cdots\\
	\vdots & \vdots & \vdots & \\ 
\end{array}
\right).
\]
\begin{lem}\label{Upper bound for Hankel matrix}
For $\phi$ in $L^\infty(\mathbb T^2),$ we have $\|H_{\phi}\|\leq \mbox{\rm dist}_{\infty}(\phi,H_1)$.
\end{lem}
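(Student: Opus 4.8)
The plan is to mimic the elementary ``easy half'' of the classical Nehari theorem, in which one exploits the fact that a Hankel operator with an analytic symbol vanishes. The exact analogue needed here is the assertion that $H_\psi = 0$ for every $\psi \in H_1$, and this is the heart of the argument. Everything else is formal.

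First I would record that the assignment $\phi \mapsto H_\phi$ is linear: the map $\phi \mapsto M_\phi$ is visibly linear, and $H_\phi = P_{H^\perp}\circ M_\phi|_{H}$ is obtained from $M_\phi$ by composing on the left with the fixed orthogonal projection $P_{H^\perp}$ and on the right with the fixed isometric inclusion of $H$. Next I would examine the slice functions $f^\psi_k$ of an element $\psi \in H_1$. Since such a $\psi$ has Fourier support contained in $A_1 = \bigsqcup_{k \geq 0} P_k$, that is, $\hat\psi(m,n) = 0$ whenever $m+n < 0$, the slice function $f^\psi_k(\lambda) = \sum_{m+n=k}\hat\psi(m,n)\lambda^n$ is identically zero for every $k < 0$. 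Reading off the block Hankel matrix displayed just above, every block of $H_\psi$ is one of the multiplication operators $M_{f^\psi_k}$ with $k \leq -1$; hence all these blocks vanish and $H_\psi = 0$.

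With this in hand, the estimate follows quickly. By linearity, $H_\phi = H_\phi - H_\psi = H_{\phi-\psi}$ for every $\psi \in H_1$. Because $\phi \in L^\infty(\T^2)$ and $\psi \in H_1 \subseteq L^\infty(\T^2)$, the difference $\phi - \psi$ lies in $L^\infty(\T^2)$, and $H_{\phi-\psi} = P_{H^\perp}\circ M_{\phi-\psi}|_H$ is a compression of $M_{\phi-\psi}$. Since $P_{H^\perp}$ and the inclusion of $H$ are contractions, I obtain
\[
\|H_\phi\| = \|H_{\phi-\psi}\| \leq \|M_{\phi-\psi}\| = \|\phi - \psi\|_{\T^2,\infty},
\]
where the final equality is Lemma \ref{Norm of Multiplication Operator}. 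Taking the infimum over all $\psi \in H_1$ gives $\|H_\phi\| \leq \mbox{\rm dist}_\infty(\phi, H_1)$.

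I expect no serious obstacle in this direction; the one point deserving care is the verification that $H_\psi = 0$ for $\psi \in H_1$, i.e. that the D-slice decomposition genuinely deposits the entire Fourier support of an $H_1$-function into the nonnegative slices $f_k$ with $k \geq 0$, so that none of them can appear as a block of the Hankel matrix. (The reverse inequality $\|H_\phi\| \geq \mbox{\rm dist}_\infty(\phi, H_1)$ is the deep half, requiring a genuine norm-preserving extension argument, but it is not needed for the present lemma.)
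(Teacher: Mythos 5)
Your proposal is correct and follows essentially the same route as the paper: bound $\|H_{\phi-\psi}\|$ by $\|M_{\phi-\psi}\|=\|\phi-\psi\|_{\T^2,\infty}$ via the compression, observe that subtracting an element of $H_1$ does not change the Hankel matrix (the paper asserts this directly from the matrix representation, while you justify it by checking that the negative slices $f^{\psi}_k$, $k<0$, of $\psi\in H_1$ vanish), and take the infimum. Your version merely makes explicit the verification that the paper leaves as ``clear.''
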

\begin{proof} 
From the definition of $H_{\phi}$ and the Lemma \ref{Norm of Multiplication Operator}, it can easily be seen that
\[\big\|H_{\phi}\big\|=\big\|P_{H^{\perp}}\circ {M_{\phi}}_{|H}\big\|
\leq \big\|M_{\phi}\big\| = \big\|\phi\big\|_{\T^2,\infty}.\] 
Thus $\|H_{\phi}\|\leq \|\phi\|_{\T^2,\infty}$. 
From the matrix representation of $H_{\phi}$ 
it is clear that for any 
$g$ in $H_1,$ $H_{\phi-g}=H_{\phi}$. 
Hence 
$\|H_{\phi}\|=\|H_{\phi -g}\|\leq \|\phi-g\|_{\T^2,\infty}$. 
Thus  the proof of the lemma is complete. 
\end{proof}
For 
$n\in\N$, $a_0,a_1,\ldots,a_{n-1}\in\C$ 
and 
$(b_m)_{m\in\N}\subset \C,$ 
define the following operator  
\[T_n\left((b_m),a_0,a_1,\ldots,a_{n-1}\right):=
\left(
\begin{array}{cccc}
	a_0 & a_1 & \cdots & a_{n-1}\\
	b_1 & a_0 & \cdots & a_{n-2}\\
	\vdots & \vdots & & \vdots\\
	b_{n-1} & b_{n-2} & \cdots & a_0\\
	\vdots & \vdots &  & \vdots\\
\end{array}
\right).
\]

\begin{lem}\label{Existence of essentially bounded function}
	Suppose  
	$f_0,f_1,\ldots,f_{n-1}\in L^{\infty}(\T)$ 
	and 
	$(g_m)\subset L^{\infty}(\T)$ 
	are such that 
	\[\sup_{\lambda\in\T}
	\|T_n\left((g_m(\lambda)),f_0(\lambda),\ldots,f_{n-1}(\lambda)\right)\|\leq 1.\]
	Then, there exists $f_n\in L^{\infty}(\T)$ 
	such that 
	\[\sup_{\lambda\in\T}
	\|T_{n+1}\left((g_m(\lambda)),f_0(\lambda),\ldots,f_{n}(\lambda)\right)\|\leq 1.\]
\end{lem}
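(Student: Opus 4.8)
The plan is to prove this by a single application of Parrott's theorem, carried out not pointwise in $\lambda$ but at the level of multiplication operators, so that the completing block is automatically of the form $M_{f_n}$ for some $f_n\in L^\infty(\T)$. First I would pass from the scalar matrices to operators with multiplication-operator entries. Writing $c_r=f_r$ for $r\geq 0$ and $c_{-r}=g_r$ for $r\geq 1$, the matrix $T_k\big((g_m(\lambda)),f_0(\lambda),\ldots,f_{k-1}(\lambda)\big)$ has $(i,j)$ entry $c_{j-i}(\lambda)$, and exactly as in Lemma \ref{Norm of Multiplication Operator} the quantity $\sup_{\lambda\in\T}\|T_k(\lambda)\|$ equals the norm of the block operator $\mathbf{T}_k$ obtained by replacing each scalar entry $c_{j-i}(\lambda)$ by the multiplication operator $M_{c_{j-i}}$ on $L^2(\T)$. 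Under this identification the hypothesis reads $\|\mathbf{T}_n\|\leq 1$, and the goal is to produce $M_{f_n}$ with $\|\mathbf{T}_{n+1}\|\leq 1$.

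Next I would observe that $\mathbf{T}_{n+1}$ is obtained from $\mathbf{T}_n$ by adjoining one new block column, and that the only entry of this new column not already determined by the data $M_{f_0},\ldots,M_{f_{n-1}},(M_{g_m})$ is its top entry, namely the free block $M_{f_n}$ in the $(1,n+1)$ position. Filling in this block is therefore a one-corner completion problem of precisely the type addressed by Parrott's theorem: a completion of norm at most $1$ exists provided the block column obtained by deleting the free corner and the block row obtained by deleting the free corner are each contractive. The first is exactly $\mathbf{T}_n$, contractive by hypothesis; the second is the compression of $\mathbf{T}_{n+1}$ to the block rows of index $\geq 2$, and I would analyse it via the block shift together with the bilateral shift $M_z\cong B^*$ on $L^2(\T)$, using the Toeplitz structure of $\mathbf{T}_{n+1}$ to control its norm by that of $\mathbf{T}_n$.

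The decisive point—and the reason the lemma is phrased in terms of the existence of an \emph{essentially bounded} $f_n$ rather than of a merely abstract completing operator—is that the Parrott solution must be a multiplication operator. For this I would invoke the explicit description of all solutions recorded just after Parrott's theorem, $X=(I-ZZ^*)^{1/2}V(I-Y^*Y)^{1/2}-ZC^*Y$, where $Y,Z$ are determined from the known border data by $D=(I-CC^*)^{1/2}Y$ and $A=Z(I-C^*C)^{1/2}$. Since every piece of border data here is a multiplication operator, it commutes with $M_z$; consequently $Y$ and $Z$, being obtained from these operators by the functional calculus and products, also commute with $M_z$, and choosing the free contraction $V$ to be a multiplication operator forces $X=M_{f_n}$ for some $f_n\in L^\infty(\T)$ with $\|f_n\|_\infty=\|X\|\leq 1$. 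Equivalently, $X$ may be taken in the commutant of $M_z$ in $\mathcal{B}(L^2(\T))$, which is precisely the algebra of multiplication operators by $L^\infty(\T)$ functions. Translating back through Lemma \ref{Norm of Multiplication Operator} then yields $\sup_{\lambda\in\T}\|T_{n+1}(\lambda)\|=\|\mathbf{T}_{n+1}\|\leq 1$.

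I expect the main obstacle to be exactly this extraction step: simultaneously verifying that the second Parrott border is contractive and guaranteeing that the completing block lies in the commutant of $M_z$, so that it is a genuine bounded measurable symbol $f_n$ and not just a family of abstract corner operators. Both rely on exploiting the Toeplitz/shift structure of $\mathbf{T}_{n+1}$ rather than treating the completion as a purely pointwise-in-$\lambda$ matrix problem; a careless pointwise application of Parrott would at best furnish, for each fixed $\lambda$, a scalar $f_n(\lambda)$ with no control over its measurability or essential boundedness, which is why the operator-valued formulation above is the essential device.
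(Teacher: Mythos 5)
Your reduction to a one-corner Parrott completion is the right frame, and your device for extracting an essentially bounded symbol --- keeping the entire completion problem inside the commutant of $M_z$ on $L^2(\T)$, which is precisely $\{M_\psi:\psi\in L^{\infty}(\T)\}$, and choosing the free contraction $V$ there --- is a genuinely different route from the paper's. The paper fixes $\lambda$, applies the scalar Parrott formula with $V=0$, and then argues by hand that the entries of $(I-S(\lambda)^*S(\lambda))^{1/2}$, hence of $Y$, $Z$ and $f_n(\lambda)=-ZS(\lambda)^*Y$, are bounded measurable functions of $\lambda$. Both devices deliver measurability and essential boundedness; yours packages the bookkeeping more cleanly, and your warning that a careless pointwise application of Parrott gives no control over measurability is exactly the issue the paper's proof is addressing entrywise.

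The genuine gap is in the step you defer to ``the Toeplitz structure'': the contractivity of the second Parrott border, namely $\mathbf{T}_{n+1}$ with its first block row deleted. For the $\infty\times n$ matrices $T_n$ of the lemma this block is \emph{not} a reindexed copy of $\mathbf{T}_n$; it is $\mathbf{T}_n$ with the extra column $(g_1,g_2,\ldots)^{\rm t}$ adjoined on the left, and its norm is not controlled by $\|\mathbf{T}_n\|$. Concretely, take $n=1$, $f_0\equiv g_1\equiv 1/\sqrt{2}$ and $g_m\equiv 0$ for $m\geq 2$: then $T_1=(1/\sqrt{2},1/\sqrt{2},0,\ldots)^{\rm t}$ has norm $1$, but rows $2,3,\ldots$ of $T_2$ form the matrix $\left(\begin{smallmatrix}1/\sqrt{2} & 1/\sqrt{2}\\ 0 & 1/\sqrt{2}\\ 0 & 0\\ \vdots & \vdots \end{smallmatrix}\right)$, whose norm squared is $(3+\sqrt{5})/4>1$; hence no choice of $f_1$ makes $T_2$ contractive, and the lemma fails as stated. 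You have not missed anything the paper supplies: its proof writes down the Parrott formula without verifying this second border condition, so it carries the same lacuna. What saves the intended application (Nehari's theorem for $L^2(\T^2)$) is that there the completion is performed on the Hankel operators $H(M_{f_0},M_{f_{-1}},\ldots)$, for which deleting the first block row or the first block column reproduces the previous Hankel operator exactly, so both Parrott conditions are automatic. Your argument --- and the lemma --- should be formulated for those operators, or else the hypothesis must be strengthened to the contractivity of the augmented matrix $\bigl(\,g \ \ T_n\,\bigr)$.
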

\begin{proof}
Let \[Q(\lambda)=
\left(\!\!
\begin{array}{ccc}
	f_0(\lambda)\!\!\!&\!\!\cdots\!\!&\!\!\!f_{n-1}(\lambda)\\
\end{array}
\right),R(\lambda)=
\left(
\begin{array}{ccccc}
	f_{n-1}(\lambda)\!\!\! &\!\! \cdots \!\!&\!\!\! f_0(\lambda) & \!\!\!g_1(\lambda) \!\!&\!\! \cdots\\
\end{array}
\right)^{\rm t}
\]
and 
\[S(\lambda)=
\left(
\begin{array}{cccc}
	g_1(\lambda) & f_0(\lambda) & \cdots & f_{n-3}(\lambda)\\
	g_2(\lambda) & g_1(\lambda) & \cdots & f_{n-4}(\lambda)\\
	\vdots & \vdots & & \vdots\\
	g_{n-1}(\lambda) & g_{n-2}(\lambda) & \cdots & g_1(\lambda)\\
	\vdots & \vdots & & \vdots\\
\end{array}
\right).
\]
All possible choices of $f_n(\lambda)$ for which  $T_{n+1}\left((g_m(\lambda)),f_0(\lambda),\ldots,f_{n}(\lambda)\right)$ is a contraction are given, via Parrott's theorem (cf. \cite[Chapter 12, page 152]{NY}), by the formula
\begin{equation}\label{All solutions in Parrot's theorem}
	f_n(\lambda)=(I-ZZ^*)^{1/2}V(I-Y^*Y)^{1/2}-ZS(\lambda)^*Y, 
\end{equation}
where $V$ is an arbitrary contraction and $Y,$ $Z$ are obtained from the 
formulae $R(\lambda)=(I-S(\lambda)S(\lambda)^*)^{1/2}Y$, 
$Q(\lambda)=Z(I-S(\lambda)^*S(\lambda))^{1/2}.$ 

Every entry of $I-S(\lambda)^*S(\lambda)$ 
is in $L^{\infty}$ as function of $\lambda.$ 
Thus all entries in $(I-S(\lambda)^*S(\lambda))^{1/2}$ 
are measurable functions which are essentially bounded. Consequently, so are entries of $Z.$ A similar assertion can be made for $Y.$
Therefore choosing $V=0$ 
in equation \eqref{All solutions in Parrot's theorem}, 
we get $f_n$ with the required property. 
In fact, one can choose $V$ to be any contraction 
whose entries are $L^{\infty}$ functions. 
\end{proof}

\begin{thm}[Nehari's theorem for $L^2(\mathbb T^2)$] 
	If $\phi\in L^{\infty}(\T^2),$ 
	then $\|H_{\phi}\|=\mbox{\rm dist}_{\infty}(\phi,H_1)$.
\end{thm}
\begin{proof}
From the Lemma \ref{Upper bound for Hankel matrix},  
we know that 
$\|H_{\phi}\|\leq {\rm dist}_{\infty}(\phi,H_1)$. 
Without loss of generality 
we assume that $\|H_{\phi}\|=1$.
Using the Lemma \ref{Existence of essentially bounded function}, 
we find 
$f^{\phi}_{0}\in L^{\infty}(\T)$ 
such that the norm of 
$H\big (M_{f^{\phi}_{0}},M_{f^{\phi}_{-1}},\ldots \big )$ 
is at most 1. 
Repeated use of 
the Lemma \ref{Existence of essentially bounded function} 
proves the theorem.
\end{proof}

\section{CF problem in view of Nehari's theorem for \texorpdfstring{$L^2(\T)$}{TEXT}}
Fix $p\in\mathbb{C}[Z_1,Z_2]$ to be the polynomial defined by 
	$$p(z_1,z_2)=a_{10}z_1+a_{01}z_2+
	a_{20}z_{1}^{2}+a_{11}z_1z_2+a_{02}z_{2}^2.$$
Denote $\phi(z_1,z_2):=\overline{z}_1^3p(z_1,z_2)=a_{10}\overline{z}_1^2+a_{01}\overline{z}_1^3z_2+a_{20}\overline{z}_1+a_{11}\overline{z}_1^2z_2+a_{02}\overline{z}_1^3z^2_2.$ 
Suppose $p_1(\lambda)=a_{10}+a_{01}\lambda$ and $p_2(\lambda)=a_{20}+a_{11}\lambda + a_{02}\lambda^2.$
Then, $\|H_{\phi}\|={\rm dist}_{\infty}(\phi,H_1),$ where 
\[H_\phi=
\left(
	\begin{array}{cccc}
		M_{p_2} & M_{p_1}& 0 & \cdots\\
		M_{p_1} & 0 & 0 & \cdots\\
		0 & 0 & 0 & \cdots\\
		\vdots&\vdots&\vdots&\ddots\\
	\end{array}
\right)
\]	
Thus, if there exists a holomorphic function $q:\D^2\to \C$ with $q^{(k)}(0)=0$ for $|k|\leq 2$ such that $\|p+q\|_{\D^2,\infty}\leq 1,$ then $\|H_\phi\|\leq \|p+q\|_{\D^2,\infty}$. Hence $\|H_\phi\|\leq 1$ is a necessary condition for such a $q$ to exist.

\chapter{Operator Space Structures on \texorpdfstring{$\ell ^{1}(n)$}{TEXT}}
\section{Operator space}
\begin{defn}
	An abstract operator space 
	is a normed linear space $V$ 
	together with a 
	norm $\|\cdot\|_k$ defined on the linear space 
$$M_k(V):=\big \{\big (\!\! \big (v_{ij} \big )\!\! \big )\vert v_{ij} \in V, 1\leq i,j \leq k \big \},\,\, k\in \mathbb N,$$	
with the understanding that $\|\cdot\|_1$ is the norm of $V$ and 
	the family of norms $\|\cdot\|_k$ satisfies the compatibility conditions:
	\begin{itemize}
		\item[1.] $\|T\oplus S\|_{p+q} = \max\big\{\|T\|_p,\|S\|_q\big\}$ and
		\item[2.] $\|ASB\|_q \leq \|A\|_{op} \|S\|_p \|B\|_{op}$
	\end{itemize} 
	for all $S\in M_q(V),T\in M_p(V),$ 	
	$A\in M_{q\times p}(\C)$ and $B\in M_{p\times q}(\C).$
\end{defn}

Let $(V,\|\cdot\|_k )$ and 
$(W,\|\cdot\|_k)$ be two operator spaces. 
A linear bijection 
$T:V\to W$ is said to be a complete isometry 
if $T\otimes I_k:(M_k(V),\|\cdot\|_k)\to (M_k(W),\|\cdot\|_k)$
is an isometry for every $k\in \N$. 
Operator spaces $(V,\|\cdot\|_k )$ and 
$(W,\|\cdot\|_k)$ are said to be completely isometric
if there is a linear complete isometry $T:V\to W$. 
A  well known theorem of Ruan 
says that any operator space $(V,\|\cdot\|_k)$ 
can be embedded, completely isometrically, in to $C^*$-algebra $\mathcal{B}(\h)$ 
for some Hilbert space $\h$.
There are two natural operator space structures
on any normed linear space $V,$ which may coincide.  
These are the MIN and the MAX operator space structures defined below.  
\begin{defn}[MIN]
	The MIN operator space structure denoted by MIN($V$) 
	on a normed linear space $V$ 
	is obtained by the isometric embedding of $V$ 
	in to the $C^*$-algebra $C((V^*)_1)$, 
	the space of continuous functions 
	on the unit ball $(V^*)_1$ 
	of the dual space $V^*$. 
	Thus for $\big (\!\! \big (v_{ij} \big )\!\! \big )$ in $M_k(V)$, 
	we set
	\[\big\|\big (\!\! \big (v_{ij} \big )\!\! \big )\big\|_{MIN} = \sup\left\{\big\|\big (\!\! \big (f(v_{ij}) \big )\!\! \big )\big\| : f\in (V^*)_1\right\},\]
	where the norm of a scalar matrix $\big (\!\! \big (f(v_{ij}) \big )\!\! \big )$ 
	is the operator norm in $M_k$. 
\end{defn}

\begin{defn}[Max]
	Let $V$ be a normed linear space and  
	$\big (\!\! \big (v_{ij} \big )\!\! \big )\in M_k(V)$. 
	Define
	\[\big\|\big (\!\! \big (v_{ij} \big )\!\! \big )\big\|_{MAX} = \sup\left\{\big\|\big (\!\! \big (Tv_{ij} \big )\!\! \big )\big\|: T:V\to \mathcal{B}(\h)\right\},\]
	where the supremum is taken over 
	all isometries $T$ and all Hilbert spaces $\h$. 
	This operator space structure is denoted by MAX($V$).
\end{defn}

These two operator space structures 
are extremal in the sense 
that for any normed linear space $V$, 
MIN($V$) and MAX($V$) are 
the smallest and largest 
operator space structures on $V$  respectively. 
For any normed linear space $V,$ 
Paulsen \cite{VP} associates a very interesting constant, namely,   $\alpha(V):$ 
\[\alpha(V):=\sup \left\{\|I_V\otimes I_k\|_{(M_k(V), \|\cdot \|_{\rm MIN}) \to (M_k(V), \|\cdot \|_{\rm MAX})}: k\in \mathbb N \right \}. \]
%
%
The constant $\alpha(V)$ is equal to $1$ 
if and only if 
$V$ has only one operator space structure on it. 
There are only a few examples of  normed linear spaces 
for which $\alpha(V)$ is known to be $1.$  These include   
$\alpha(\ell^{\infty}(2))=\alpha(\ell^1(2))=1.$ 
In fact, it is  known (cf. \cite[Page 79]{GP}) 
that $\alpha(V)>1$ if $\dim(V)\geq 3$.

The map $\phi:\ell^\infty(n)\to \mathcal{B}(\C^n)$ 
defined by $\phi(z_1,\ldots,z_n)={\rm diag}(z_1,\ldots,z_n),$  is an isometric embedding of the normed linear space $\ell^{\infty}(n)$ 
in to the finite dimensional C*-algebra $\mathcal{B}(\C^n).$ Clearly, this is the MIN structure of the normed linear space $\ell^\infty(n).$
%
We shall, however prove that 
there is no such finite dimensional isometric embedding for 
the dual space $\ell^1(n).$ 
Never the less, we shall construct, explicitly, a number of possibly different isometric infinite dimensional embeddings of $\ell^1(n)$.   



\section{\texorpdfstring{$\ell^{1}(n)$}{TEXT} has no isometric embedding into any \texorpdfstring{$M_k$}{TEXT}}
In this section, we will show that 
there does not exist 
an isometric embedding of $\ell^{1}(n),$ $n>1,$ into any finite dimensional matrix algebra $M_k,$ $k\in\mathbb N.$ 
Without loss of generality, 
we prove this for the case of  $n=2.$ 

\begin{lem}\label{Reason for non finite dimensinality}
	For $m\in\N$ and 
	$\theta_1,\ldots,\theta_m\in [0,2\pi),$ 
	there exists $a_1,a_2\in\C$ 
	such that 
	\[\max\limits_{j=1,\ldots, m}\big|a_1 + e^{i\theta_j}a_2\big|< \big|a_1\big| + \big|a_2\big|.\]
\end{lem}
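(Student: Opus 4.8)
The plan is to exploit the equality case of the triangle inequality. Since the maximum runs over only the finite index set $\{1,\ldots,m\}$, it suffices to produce $a_1,a_2\in\C$ with the property that for \emph{every} $j$ the two numbers $a_1$ and $e^{i\theta_j}a_2$ fail to be non-negative real multiples of one another. Indeed, the triangle inequality gives $|a_1+e^{i\theta_j}a_2|\le|a_1|+|e^{i\theta_j}a_2|=|a_1|+|a_2|$, with equality precisely when $a_1$ and $e^{i\theta_j}a_2$ point in the same direction; if we rule this out for each $j$, then each individual inequality is strict, and the maximum over the finite set $\{1,\ldots,m\}$ remains strictly below $|a_1|+|a_2|$.

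First I would normalize by setting $a_1=1$ and $a_2=e^{i\psi}$ for a phase $\psi\in[0,2\pi)$ to be chosen, so that $|a_1|=|a_2|=1$. A direct computation then gives
\[
|a_1+e^{i\theta_j}a_2|^2=\bigl|1+e^{i(\psi+\theta_j)}\bigr|^2=2+2\cos(\psi+\theta_j),
\]
whereas $(|a_1|+|a_2|)^2=4$. Hence the strict inequality for the index $j$ is equivalent to $\cos(\psi+\theta_j)<1$, that is, to $\psi\not\equiv-\theta_j\ (\mathrm{mod}\ 2\pi)$.

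The only requirement on $\psi$ is therefore that it avoid the finite set of forbidden values $\{-\theta_1,\ldots,-\theta_m\}$ modulo $2\pi$. Since $[0,2\pi)$ is infinite, such a $\psi$ certainly exists; fixing any one of them and taking the maximum over $j=1,\ldots,m$ yields the asserted strict inequality. I expect no genuine obstacle in this argument: the entire content is the observation that equality in the triangle inequality forces the arguments to align, and once that is recognized the finiteness of $\{\theta_1,\ldots,\theta_m\}$ does all the remaining work.
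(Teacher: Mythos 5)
Your proof is correct and follows essentially the same route as the paper: both arguments reduce the equality $|a_1+e^{i\theta_j}a_2|=|a_1|+|a_2|$ to an alignment condition on the phases ($\cos(\cdot)=1$) and then choose the phase of $a_2/a_1$ to avoid the finitely many forbidden values $-\theta_1,\ldots,-\theta_m$. Your normalization $a_1=1$, $a_2=e^{i\psi}$ merely streamlines the computation; the underlying idea is identical.
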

\begin{proof} For any two non-zero complex numbers $a_1,a_2,$ we have  
\[\max\limits_{j=1,\ldots, m}\big|a_1 + e^{i\theta_j}a_2\big|
=\max\limits_{j=1,\ldots, m}\big||a_1| + e^{i\theta_j+\phi_2-\phi_1}|a_2|\big|,\] 
where $\phi_1$ and $\phi_2$ are the arguments of 
$a_1$ and $a_2$ respectively. 
Setting $\alpha_j=\theta_j+\phi_2-\phi_1,$ we have 
\begin{align*}
\max\limits_{j=1,\ldots, m}\big|a_1 + e^{i\theta_j}a_2\big|^2
&=\max\limits_{j=1,\ldots, m}\big||a_1| + e^{i\alpha_j}|a_2|\big|^2\\
&=\max\limits_{j=1,\ldots, m}\big||a_1|^2 |a_2|^2+ 2|a_1a_2|cos\alpha_j\big|.
\end{align*}
Therefore
\[\max\limits_{j=1,\ldots, m}\big|a_1 + e^{i\theta_j}a_2\big|
=\big|a_1\big|+\big|a_2\big|
\] 
if and only if 
$\cos \alpha_j=1$ for some $j,$ 
that is, if and only if 
$\alpha_j=0$ for some $j$. 
Choose $a_1$ and $a_2$ such that 
$\phi_1-\phi_2 \neq \theta_j$ 
for all $j=1,\ldots, m$. 
The existence of such a pair $a_1$ and $a_2$ 
proves  the lemma. 
\end{proof}

\begin{thm}\label{No finite dimensional embedding}
	There is no isometric embedding of $\ell^{1}(2)$ into $M_n$ for any $n\in \N.$.
\end{thm}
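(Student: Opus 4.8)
The plan is to argue by contradiction. Suppose $\Phi:\ell^1(2)\to M_n$ is an isometric linear embedding and set $T_1:=\Phi(e_1)$, $T_2:=\Phi(e_2)$. By linearity $\Phi(a_1,a_2)=a_1T_1+a_2T_2$, so the isometry hypothesis is precisely
\[\|a_1T_1+a_2T_2\|=|a_1|+|a_2|\qquad\text{for all }a_1,a_2\in\C .\]
Taking $(a_1,a_2)=(1,0)$ and $(0,1)$ gives $\|T_1\|=\|T_2\|=1$, and taking $(a_1,a_2)=(1,e^{i\phi})$ gives $\|T_1+e^{i\phi}T_2\|=2$ for every $\phi\in[0,2\pi)$. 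The aim is to show that this last family of equalities cannot hold for all $\phi$ once $n$ is finite.

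First I would analyse a single phase $\phi$. Since $M_n$ is finite dimensional, the norm is attained: there is a unit vector $v_\phi$ with $\|(T_1+e^{i\phi}T_2)v_\phi\|=2$. Because $\|T_1 v_\phi\|\le 1$ and $\|T_2 v_\phi\|\le 1$, the chain
\[2=\|T_1v_\phi+e^{i\phi}T_2v_\phi\|\le \|T_1v_\phi\|+\|T_2v_\phi\|\le 2\]
is forced to be a string of equalities. Hence $\|T_1v_\phi\|=\|T_2v_\phi\|=1$, so $v_\phi$ lies in the top right–singular subspaces $E_1$ and $E_2$ of $T_1$ and $T_2$ (the eigenspaces of $T_1^*T_1$ and $T_2^*T_2$ for the eigenvalue $1$); moreover, the equality case of the triangle inequality for the two unit vectors $T_1v_\phi$ and $e^{i\phi}T_2v_\phi$ forces $T_1v_\phi=e^{i\phi}T_2v_\phi$.

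Next I would package these alignments into one fixed operator. Put $E:=E_1\cap E_2$, which is nonzero since it contains $v_0$. On $E$ the restriction $T_2|_E$ is an isometry, so $(T_2|_E)^*(T_2|_E)=I_E$; applying $(T_2|_E)^*$ to the relation $T_1v_\phi=e^{i\phi}T_2v_\phi$ yields
\[W v_\phi=e^{i\phi}v_\phi,\qquad W:=(T_2|_E)^*\,T_1|_E\in\mathcal B(E).\]
Thus every point $e^{i\phi}$ of the unit circle is an eigenvalue of the single, fixed, finite–dimensional operator $W$. Since $\dim E\le n<\infty$, the operator $W$ has at most $n$ eigenvalues, so only finitely many angles $\theta_1,\dots,\theta_m$ can occur; for any $\phi\notin\{\theta_1,\dots,\theta_m\}$ we would get $\|T_1+e^{i\phi}T_2\|<2$, contradicting the isometry. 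Equivalently, these finitely many $\theta_j$ are exactly the angles to which Lemma~\ref{Reason for non finite dimensinality} applies, producing explicit $a_1,a_2$ with $\max_j|a_1+e^{i\theta_j}a_2|<|a_1|+|a_2|$, and hence a concrete pair on which $\Phi$ fails to be isometric. (The general statement for $\ell^1(n)$, $n>1$, follows by restricting an embedding to any two coordinates.)

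The step I expect to be the main obstacle is the second one: rigorously pinning every norm–attaining vector $v_\phi$ to the common subspace $E=E_1\cap E_2$, and then converting the rigidity $T_1v_\phi=e^{i\phi}T_2v_\phi$ into an honest eigenvalue equation for a phase–independent operator $W$. Once that conversion is in place, finiteness of the spectrum of $W$ against the infinitude of the circle—or, in concrete form, Lemma~\ref{Reason for non finite dimensinality}—closes the argument immediately.
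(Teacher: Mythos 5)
Your proof is correct, but it takes a genuinely different route from the one in the paper. The paper's argument dilates the pair $T_1,T_2$ to (non-commuting) unitaries $U_1,U_2\in M_{2n}$ via the Halmos construction, observes that the compression identity $P_{\C^n}(a_1U_1+a_2U_2)|_{\C^n}=a_1T_1+a_2T_2$ forces $a_1U_1+a_2U_2$ to be an isometric image as well, reduces without loss of generality to $U_1=I$ and $U_2$ a diagonal unitary ${\rm diag}(e^{i\theta_1},\ldots,e^{i\theta_{2n}})$, and then invokes Lemma \ref{Reason for non finite dimensinality} to produce $a_1,a_2$ with $\max_j|a_1+e^{i\theta_j}a_2|<|a_1|+|a_2|$. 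Your argument never dilates: you use norm attainment on the compact unit sphere of $\C^n$ together with the equality case of the triangle inequality to extract, for each $\phi$, a unit vector $v_\phi$ in the joint top singular subspace $E=E_1\cap E_2$ with $T_1v_\phi=e^{i\phi}T_2v_\phi$, and then observe that $T_2^*T_1v_\phi=e^{i\phi}T_2^*T_2v_\phi=e^{i\phi}v_\phi$, so that the single matrix $T_2^*T_1$ would have every point of the circle in its point spectrum --- impossible in dimension $n$. (You can take $W=T_2^*T_1$ on all of $\C^n$ rather than fussing with $(T_2|_E)^*$; since $v_\phi\in E_2$ the computation is the same and the restriction is unnecessary.) Each step you flag as delicate --- that $\|T_iv_\phi\|=1$ forces $T_i^*T_iv_\phi=v_\phi$, and that equality in the triangle inequality for two unit vectors in a Hilbert space forces them to coincide --- is standard and goes through. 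What the paper's route buys is a reduction to the purely scalar Lemma \ref{Reason for non finite dimensinality} (which is also reused in the text); what yours buys is self-containedness (no dilation theorem needed) and a sharper structural statement: an isometric image of $\ell^1(2)$ would force an operator with uncountable point spectrum, which also makes transparent why infinite-dimensional embeddings, as in Theorem \ref{Infiniteoperator}, require $\T\subseteq\sigma(T_i)$. Your closing appeal to Lemma \ref{Reason for non finite dimensinality} is not actually needed for your argument; the eigenvalue count already finishes it.
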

\begin{proof}
Suppose there is a $n-dimensional$ isometric embedding $\phi$ of $\ell^{1}(2)$. 
Then this embedding $\phi$ is induced by a pair of operators $T_1, T_2\in M_n$ of norm $1,$ defined by the rule    
$\phi(a_1,a_2)=a_1T_1 + a_2T_2.$  
Let $U_1$ and $U_2$ in $M_{2n}$ be the pair of unitaries: 
$$
U_i:= \begin{pmatrix}T_i & D_{T_i^*} \\ D_{T_i} & -T_i^*, \end{pmatrix} i=1,2,
$$
where $D_{T_i}$ is the positive square root of the (positive) operator $I-T_i^* T_i.$   
Now, we have 
\[P_{\C^n}(a_1U_1 + a_2U_2)_{|\C^n}=a_1T_1 + a_2T_2.\]
(This dilating pair of unitaries is not necessarily commuting nor is 
 it a power dilation!)  
Thus 
$\psi:\ell^{1}(2)\to M_{2n}(\C)$ 
defined by 
$\psi(a_1,a_2)=a_1U_1 + a_2U_2$
is also an isometry. 
Since norms are preserved under unitary operations, without loss of generality,    
we assume $U_1=I$ and $U_2$ 
to be a diagonal unitary, say, $D.$ 
Let 
$D={\rm diag}\big(e^{i\theta_1},\ldots,e^{i\theta_{2n}}\big)$. 
Now applying the Lemma \ref{Reason for non finite dimensinality}, 
we obtain complex numbers $a_1$ and $a_2$ 
such that 
\[\max\limits_{j=1\ldots 2n}\big|a_1 + e^{i\theta_j}a_2\big|
< \big|a_1\big| + \big|a_2\big|.\]
Hence $\psi$ cannot be an isometry contradicting the 
hypothesis that 
$\phi$ is an isometry.    
\end{proof}
\begin{rem}
	An amusing corollary of this theorem is that the two spaces $\ell^{\infty}(n)$ and $\ell^{1}(n)$ 
	cannot be isometrically isomorphic for $n>1.$
\end{rem}
\section{Infinite dimensional embeddings of \texorpdfstring{$\ell^{1}(n)$}{TEXT}}
Let $\h_1,\ldots,\h_n$ be Hilbert spaces and $T_i$ 
be a contraction on 
$\h_i$ for $i=1,\ldots,n.$ 
Assume that the unit circle $\T$ is contained in $\sigma(T_i),$ the spectrum of $T_i,$ for $i=1,\ldots,n$. 
Denote 
$$\tilde{T_1}=T_1\otimes I^{\otimes (n-1)},
\tilde{T_2}=I\otimes T_2\otimes I^{\otimes (n-2)},\ldots ,
\tilde{T_n}=I^{\otimes (n-1)}\otimes T_n.$$
\begin{thm}\label{Infiniteoperator}
Suppose the operators $\tilde{T}_1,\ldots,\tilde{T}_n$ are defined as above. Then, the function 
$$f:\ell ^1(n)\to \mathcal B(\h_1 \otimes\cdots \otimes \h_n)$$
defined by 
\begin{align*}
	f(a_1,a_2,\ldots , a_n):=a_1\tilde{T_1}+ a_2\tilde{T_2} +\cdots + a_n \tilde{T_n}.
\end{align*}
is an isometry.
\end{thm}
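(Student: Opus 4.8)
The plan is to prove the two inequalities
$\|f(a_1,\ldots,a_n)\|\le \sum_{j=1}^{n}|a_j|$ and
$\|f(a_1,\ldots,a_n)\|\ge \sum_{j=1}^{n}|a_j|$ separately, since the left-hand side is by definition the operator norm of $\sum_{j}a_j\tilde{T}_j$ while the right-hand side is $\|(a_1,\ldots,a_n)\|_{\ell^1(n)}$. The upper bound is immediate: each $\tilde{T}_j$ is the tensor product of the contraction $T_j$ with identity operators, hence itself a contraction, and the triangle inequality gives $\|\sum_j a_j\tilde{T}_j\|\le \sum_j|a_j|\,\|\tilde{T}_j\|\le \sum_j|a_j|$. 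All the work is in the reverse inequality, and this is where the hypothesis $\T\subseteq\sigma(T_j)$ is used.

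For the lower bound I would first record the spectral observation that is the crux of the argument. Since each $T_j$ is a contraction, $\sigma(T_j)\subseteq\ov{\D}$, so every $\lambda\in\T$ is a boundary point of $\sigma(T_j)$; because boundary points of the spectrum of any bounded operator are approximate eigenvalues, for each $\lambda\in\T$ there is a sequence of unit vectors $(u_k)$ in $\h_j$ with $\|(T_j-\lambda)u_k\|\to 0$. Next, given $(a_1,\ldots,a_n)$, write $a_j=|a_j|e^{i\phi_j}$ and set $\lambda_j=e^{-i\phi_j}\in\T$ (choosing $\lambda_j\in\T$ arbitrarily when $a_j=0$), so that $a_j\lambda_j=|a_j|$ and hence $\sum_j a_j\lambda_j=\sum_j|a_j|$. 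Choosing approximate eigenvectors $u_k^{(j)}\in\h_j$ for $T_j$ at $\lambda_j$ and forming the unit product vectors $v_k=u_k^{(1)}\otimes\cdots\otimes u_k^{(n)}$, the factorization of $\tilde{T}_j$ gives
\[
(\tilde{T}_j-\lambda_j)v_k
= u_k^{(1)}\otimes\cdots\otimes\big((T_j-\lambda_j)u_k^{(j)}\big)\otimes\cdots\otimes u_k^{(n)},
\]
whose norm equals $\|(T_j-\lambda_j)u_k^{(j)}\|\to 0$. Summing,
\[
\Big(\sum_{j}a_j\tilde{T}_j\Big)v_k-\Big(\sum_{j}|a_j|\Big)v_k
=\sum_{j}a_j(\tilde{T}_j-\lambda_j)v_k\longrightarrow 0,
\]
so that $\|\big(\sum_j a_j\tilde{T}_j\big)v_k\|\to\sum_j|a_j|$ and therefore $\|\sum_j a_j\tilde{T}_j\|\ge\sum_j|a_j|$. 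Combining with the upper bound yields equality, proving that $f$ is an isometry.

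The main obstacle, and the only genuinely nontrivial point, is the passage to the lower bound: one must realize the full $\ell^1$-norm $\sum_j|a_j|$ as a limit of values $\|(\sum_j a_j\tilde{T}_j)v_k\|$, and this is possible precisely because the circle lies in each spectrum, giving approximate eigenvectors at \emph{every} prescribed phase $\lambda_j$, while the tensor (rather than direct-sum) structure lets us activate all $n$ phases simultaneously on a single product vector $v_k$. I expect the proof of the standard fact that boundary spectral points are approximate eigenvalues to be quoted rather than reproduced; everything else is routine.
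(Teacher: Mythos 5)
Your proof is correct and follows essentially the same route as the paper: both arguments rest on the observation that $\T\subseteq\partial\sigma(T_j)$ forces every $\lambda\in\T$ to be an approximate eigenvalue, choose $\lambda_j=e^{-i\arg a_j}$, and test against the product vectors $u_k^{(1)}\otimes\cdots\otimes u_k^{(n)}$. The only (harmless) difference is that the paper passes through the numerical range, showing $\langle T_j u_k^{(j)},u_k^{(j)}\rangle\to\lambda_j$ and bounding $\|A\|\geq|\langle Av,v\rangle|$, whereas you show directly that $v_k$ is an approximate eigenvector of $\sum_j a_j\tilde{T}_j$ with eigenvalue $\sum_j|a_j|$ — a slightly cleaner finish of the same argument.
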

\begin{proof}
Since 
$\mathbb{T}\subset \sigma (T_i)$ and 
$T_i$ is a contraction for $i=1,\ldots,n$, 
it follows that 
$\mathbb{T}\subset \partial\sigma (T_i)$ 
for $i=1,\ldots,n.$ 
From (cf. \cite[Proposition 6.7, page 210]{Conway}), we have 
$\mathbb{T}\subset \sigma _a(T_i)$ (approximate point spectrum of $T_i$) for $i=1,\ldots,n.$ 
Thus for any $i\in\{1,\ldots,n\}$ and $\lambda \in \mathbb{T},$ 
there exists a sequence of unit vectors 
$(x_{m}^{i})_{m\in\mathbb{N}}$ 
in $\h_i$ 
such that 
$$\| (T_i-\lambda)(x_{m}^{i})\| \longrightarrow 0\mbox{ as }  m\longrightarrow \infty.$$ 
Now, applying the Cauchy-Schwarz's inequality, 
we have 
\begin{align*}
	|\langle (T_i-\lambda)(x_{m}^{i}),(x_{m}^{i})\rangle |&
	\leq  \| (T_i-\lambda)(x_{m}^{i})\| \| (x_{m}^{i}) \| \\
	&= \| (T_i-\lambda)(x_{m}^{i})\| \longrightarrow 0.   
\end{align*}
as $m\longrightarrow \infty.$ Hence  
$\langle T_i(x_{m}^{i}),(x_{m}^{i})\rangle \longrightarrow \lambda$ as $m\longrightarrow \infty.$
Let $(a_1,\ldots ,a_n)$ 
be any vector in $\ell ^1(n)$ such that none of its co-ordinates zero.
Let $\lambda _1=e^{-i\arg(a_1)},\lambda _2=e^{-i\arg(a_2)},\ldots ,\lambda _n=e^{-i\arg(a_n)}.$
 Now for each $j\in \{1,\ldots ,n\}$, 
 we have $(x_{m}^{j})_{m\in\mathbb{N}}$, 
 a sequence of unit vectors from  $\h_j$, 
 such that 
 $$\langle T_j(x_{m}^{j}),(x_{m}^{j})\rangle \longrightarrow \lambda _j\mbox{ as } m\longrightarrow \infty .$$
As $m$ goes to $\infty$, we have  
\begin{align*}
	&\big|\langle (a_1T_1\otimes I^{\otimes (n-1)}+\cdots 
	+ a_nI^{\otimes (n-1)}\otimes T_n)(x_{m}^{1}\otimes\cdots\otimes x_{m}^{n}),
	(x_{m}^{1}\otimes \cdots\otimes x_{m}^{n}) \rangle \big|\\
	 &= \big|a_1\langle T_1 (x_{m}^{1}),(x_{m}^{1}))\rangle + 
	 \cdots + a_n\langle T_n (x_{m}^{n}),(x_{m}^{n}))\rangle \big|
	\longrightarrow  \big|a_1 \lambda _1 + \cdots + a_n \lambda _n\big|\\ 
	& =  |a_1| +  \cdots + |a_n| =  \| (a_1,\ldots ,a_n)\| _1.
\end{align*} 
Hence 
$\| a_1\tilde{T_1}+ a_2\tilde{T_2} +\cdots + a_n \tilde{T_n}\| \geq \| (a_1,a_2,\ldots ,a_n)\| _1.$
Also 
	$$\|a_1\tilde{T_1}+ a_2\tilde{T_2} +\cdots + a_n \tilde{T_n}\|
	\leq  |a_1|\| T_1 \| + |a_2|\| T_2 \| + \cdots + |a_n|\| T_n \|.$$
Hence 
$ \|a_1\tilde{T_1}+ a_2\tilde{T_2} +\cdots + a_n \tilde{T_n} \| = \| (a_1,a_2,\ldots ,a_n)\| _1,$
proving that $f$ is an isometry.

If some of the co-ordinates in the vector $(a_1, \ldots , a_n)$ are zero, the same argument, as above, remains valid after dropping those co-ordinates. 
\end{proof}

An adaptation of the technique involved in the proof of the Theorem \ref{Infiniteoperator},
also proves the following theorem. 
\begin{thm}
For $i=1,\ldots,n,$ let $T_i$ be a contraction on a Hilbert space $\h_i$ and 
$\T\subseteq \sigma(T_i).$  Denote $\tilde{T}_i=T_1\otimes \cdots\otimes T_i\otimes I_{\h_{i+1}}\otimes\cdots \otimes I_{\h_n}$.
Then, the function 
$$f:\ell ^1(n)\to \mathcal B(\h_1 \otimes\cdots \otimes \h_n)$$
defined by 
\begin{align*}
	f(a_1,a_2,\ldots , a_n):=a_1\tilde{T_1}+ a_2\tilde{T_2} +\cdots + a_n \tilde{T_n}.
\end{align*}
is an isometry.
\end{thm}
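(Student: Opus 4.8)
The plan is to follow the proof of Theorem~\ref{Infiniteoperator} almost verbatim, modifying only the two places where the new tensor structure of the $\tilde T_i$ enters. The upper bound is immediate: since each $T_j$ is a contraction, every operator $\tilde T_i = T_1 \otimes \cdots \otimes T_i \otimes I_{\h_{i+1}} \otimes \cdots \otimes I_{\h_n}$ is a tensor product of contractions and so $\|\tilde T_i\| \leq 1$; hence by the triangle inequality $\|f(a_1,\ldots,a_n)\| \leq \sum_{i=1}^{n} |a_i| = \|(a_1,\ldots,a_n)\|_1$. The content is the reverse inequality, which I would establish by producing, for each fixed $a=(a_1,\ldots,a_n)$, a sequence of unit vectors along which the diagonal matrix coefficient of $f(a)$ tends in modulus to $\|a\|_1$.

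First I would recall, exactly as before, that $\T \subseteq \sigma(T_i)$ together with $\|T_i\| \leq 1$ forces $\T \subseteq \partial\sigma(T_i) \subseteq \sigma_a(T_i)$ (cf. \cite[Proposition 6.7, page 210]{Conway}), so for every $\lambda \in \T$ there is a sequence of unit vectors $(x_m^i)_m$ in $\h_i$ with $\|(T_i-\lambda)x_m^i\| \to 0$, whence $\langle T_i x_m^i, x_m^i\rangle \to \lambda$ by the Cauchy--Schwarz inequality. The one new computation is the diagonal coefficient of $\tilde T_i$ against $\xi_m := x_m^1 \otimes \cdots \otimes x_m^n$. Because $\tilde T_i$ now carries $T_1,\ldots,T_i$ in the first $i$ factors and identities thereafter, the identity slots contribute $\|x_m^j\|^2 = 1$ and one finds
\[
\langle \tilde T_i \xi_m, \xi_m \rangle = \prod_{j=1}^{i} \langle T_j x_m^j, x_m^j \rangle \longrightarrow \prod_{j=1}^{i} \lambda_j,
\]
a telescoping product of the chosen phases rather than a single phase as in Theorem~\ref{Infiniteoperator}.

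The remaining, and only genuinely new, step is to select $\lambda_1,\ldots,\lambda_n \in \T$ so that $\langle f(a)\xi_m,\xi_m\rangle \to \sum_{i=1}^{n} a_i \prod_{j=1}^{i}\lambda_j$ equals $\|a\|_1$. I would solve the telescoping system recursively by setting $\Lambda_0 = 1$ and, for $i$ from $1$ to $n$, choosing $\lambda_i$ so that the partial product $\Lambda_i := \prod_{j=1}^{i}\lambda_j$ equals $\overline{a_i}/|a_i|$ whenever $a_i \neq 0$; concretely, $\lambda_i = (\overline{a_i}/|a_i|)\,\overline{\Lambda_{i-1}}$ in that case, and $\lambda_i = 1$ (so $\Lambda_i = \Lambda_{i-1}$) when $a_i = 0$. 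Each $\lambda_i$ has modulus one, hence lies in $\T$, and for every $i$ in the support of $a$ one gets $a_i \Lambda_i = |a_i|$, so $\langle f(a)\xi_m,\xi_m\rangle \to \sum_i |a_i| = \|a\|_1$. This yields $\|f(a)\| \geq \|a\|_1$, and together with the upper bound shows $f$ is an isometry. I do not expect a serious obstacle here: the only point of departure from the earlier proof is that the phases are coupled through the partial products $\Lambda_i$, and this coupling is precisely what the triangular recursion above resolves, with the zero coordinates absorbed by the choice $\lambda_i = 1$.
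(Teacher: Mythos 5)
Your proof is correct and is precisely the adaptation the paper alludes to (the paper gives no written proof here, only the remark that the technique of Theorem \ref{Infiniteoperator} carries over). The single genuinely new ingredient --- the telescoping choice of phases $\lambda_i = (\overline{a_i}/|a_i|)\,\overline{\Lambda_{i-1}}$ so that $a_i\Lambda_i = |a_i|$ on the support of $a$ --- is exactly the right resolution of the coupling introduced by the nested tensor factors, and the rest of the argument goes through verbatim.
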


\begin{rem}\label{MINon2}
We have already noted that $\alpha(\ell^1(2))=1.$ 
Therefore all the operator space structures on $\ell^1(2)$, defined in the Theorem \ref{Infiniteoperator}, must be completely isometric to the MIN operator space structure. 

Suppose $T_1$ and $T_2$ are contractions 
on Hilbert spaces $\h_1$ and $\h_2$ respectively 
with the property that 
$\T\subseteq \sigma(T_i)$ for $i=1,2$.  
Denote $\tilde{T}_1=T_1\otimes I_{\h_2}$ and 
$\tilde{T}_2=I_{\h_1}\otimes T_2.$ 
Then the map $f$ defined as in the 
Theorem \ref{Infiniteoperator} is an isometry. 
The dilation theorem due to Sz.-Nagy
(cf. \cite[Theorem 1.1, page 7]{Paulsen}), 
gives  unitaries 
$U_1:\mathbb{K}_1\to \mathbb{K}_1$ and 
$U_2:\mathbb{K}_2\to \mathbb{K}_2$ dilating the contractions 
$T_1$ and $T_2$ respectively. 
The operator space structure defined by the isometry 
$g:\ell^1(2)\to \mathcal{B}(\mathbb{K}_1\otimes \mathbb{K}_2),$
where $g(a_1,a_2)=a_1U_1\otimes I_{\mathbb{K}_2}+a_2I_{\mathbb{K}_1}\otimes U_2,$ 
is no lesser than that of $f$. Since $U_1$ is a unitary map, it follows that  
the map 
$a_1U_1\otimes I_{\mathbb{K}_2}+a_2I_{\mathbb{K}_1}\otimes U_2\mapsto a_1I_{\mathbb{K}_1}\otimes I_{\mathbb{K}_2}+a_2U_1^*\otimes U_2$ is a complete isometry. 
Therefore,  
without loss of generality, 
for all operator space structures, 
defined in the Theorem \ref{Infiniteoperator}, 
we can assume that $T_1$ is $I_{\h_1}.$ 
Now suppose $k\in \N$ and $A_1,A_2\in M_k.$ The von-Neumann inequality implies that 
\[\|A_1\otimes I_{\h_1}\otimes I_{\h_2}+A_2\otimes I_{\h_1}\otimes T_2\|\leq \|A_1+A_2z\|_{\D,\infty}^{\rm op}=\|A_1\otimes (1,0)+A_2\otimes (0,1)\|_{MIN}.\]
Since $MIN$ is the smallest operator space structure, 
it follows that all operator space structures on $\ell^1(2)$, defined in the Theorem \ref{Infiniteoperator} are completely isometric to the $MIN$ structure.   
\end{rem}

\begin{rem}
Here we note that all the operator space structures on $\ell^1(3)$, defined in the Theorem \ref{Infiniteoperator}, are completely isometric to the $MIN$ structure. Suppose $T_1,\,T_2$ and $T_3$ are contractions 
on Hilbert spaces $\h_1,\,\h_2$ and $\h_3$ respectively, 
with the property that 
$\T\subseteq \sigma(T_i),$ for $i=1,2,3.$  
Then the map $f$ defined as in the 
Theorem \ref{Infiniteoperator} is an isometry. 
Using the same arguments as in the Remark \ref{MINon2}, 
here also we can assume that $T_1=I_{\h_1}.$ 
Let $k\in\N$ and $A_1,A_2,A_3\in M_k$. 
Since $U_1^*\otimes U_2 \otimes I_{\mathbb K_3}$ and $U_1^*\otimes I_{\mathbb K_2}\otimes U_3$ commute, therefore via Ando's theorem, we conclude that 
\[\|A_1\otimes I_{\h_1\otimes \h_2\otimes \h_3}+A_2\otimes I_{\h_1}\otimes T_2 \otimes I_{\h_3}+ A_3\otimes I_{\h_1}\otimes I_{\h_2}\otimes T_3\|\leq \|A_1+A_2z_2+A_3z_3\|_{\D^2,\infty}^{\rm op}.\]
The right hand quantity in this inequality is $\|A_1\otimes (1,0,0)+A_2\otimes (0,1,0)+A_3\otimes (0,0,1)\|_{MIN}$. Since $MIN$ is the smallest operator space structure, therefore all the operator space structure on $\ell^1(3),$ defined in the Theorem \ref{Infiniteoperator}, are completely isometric to the $MIN$ structure.
\end{rem}
\section{Operator space structures on $\ell ^1(n)$ different from the MIN structure}
Due to Parrott's example \cite{Pexample}, it is known that a linear contractive map on $\ell^1(3)$ may not be completely contractive. An explicit example for this is give also in the paper of G. Misra \cite{GMParrott}. This example explains that there are more than one operator space structure on $\ell^1(3)$. In this section, using the example in \cite{GMParrott}, we give an explicit operator space structure on $\ell^1(3)$, which is different from the $MIN$ structure. 

Consider the following $2\times 2$ unitary operators:
\[I_2=\left(
	\begin{array}{cc}
		1 & 0\\
		0 & 1\\
	\end{array}
\right),\,
U:=\left(
	\begin{array}{cc}
		\frac{1}{2} & \frac{\sqrt{3}}{2}\\
		\frac{\sqrt{3}}{2} & -\frac{1}{2}\\
	\end{array}
\right)\,{ \rm and }\,
V:=\left(
	\begin{array}{cc}
		\frac{1}{2} & -\frac{\sqrt{3}}{2}\\
		\frac{\sqrt{3}}{2} & \frac{1}{2}\\
	\end{array}
  \right).
\]
It is clear that the map $h:\ell^1(3)\to M_2,$ defined by $h(z_1,z_2,z_3)=z_1I+z_2U+z_3V,$ is of norm at most $1.$ The computations done in \cite{GMParrott} includes the following:
\begin{equation}\label{equal3}
\|I\otimes I+U\otimes U+V\otimes V\|=3. 
\end{equation}
and 
\begin{equation}\label{less3}
\sup_{z_1,z_2,z_3\in \D}\|z_1 I+z_2 U+z_3 V\|<3.
\end{equation}
Choose a diagonal operator 
$D$ on $\ell ^2(\mathbb{Z})$ 
such that 
$\| D \|\leq 1$ and $\mathbb{T}\subset \sigma(D)$. 
Define 
$$\tilde{T_1}:=\left[
\begin{array}{ll}
      I & 0\\
     0 & D \\
\end{array} 
\right],\tilde{T_2}:=\left[
\begin{array}{ll}
      U & 0\\
     0 & D \\
\end{array} 
\right],\tilde{T_3}:=\left[
\begin{array}{ll}
      V & 0\\
     0 & D \\
\end{array} 
\right]$$
and 
$$\hat{T_{1}}=\tilde{T_1}\otimes I\otimes I,\,\hat{T_{2}}=I\otimes\tilde{T_2}\otimes I,\,\hat{T_{3}}=I\otimes I\otimes \tilde{T_n}.$$
Let $$S_1:=\hat{T_1}\oplus I,\,S_2:=\hat{T_2}\oplus U,\,S_3:=\hat{T_3}\oplus V$$
be operators on a Hilbert space $\mathbb{K}$. 

Define 
$$S:(\ell ^1(3),MIN)\longrightarrow B(\mathbb{K})$$
by 
$$S(e_1)=S_1,\,S(e_2)=S_2,\,S(e_3)=S_3$$
and extend it linearly.

From the Theorem \ref{Infiniteoperator}, we know that the function $(z_1,z_2,z_3)\mapsto z_1\hat{T}_1+z_2\hat{T}_2+z_3\hat{T}_3$ is an isometry  
and since $h$ is of norm at most $1,$ it follows that the map $(z_1,z_2,z_3)\mapsto z_1S_1+z_2S_2+z_3S_3$ is also an isometry.   
%
%
%
Consequently, there is an operator space structure on $\ell ^1(3)$ 
for which $S$ is a complete isometry.  Also from \eqref{equal3}, we have
\begin{align*}
	\left \|S_1\otimes I+S_2\otimes U+S_3\otimes V\right\|
	& \geq \left \|I\otimes I+U\otimes U+V\otimes V\right \| =3.
\end{align*}
On the other hand from \eqref{less3}, we have 
\[\sup_{z_1,z_2,z_3\in \D}\|z_1 I+z_2 U+z_3 V\|<3\]
and hence the operator space structure 
induced by $S$ is different from the MIN structure.

\backmatter

\chapter{List of Symbols}
$
\begin{array}{ll}
\N & \mbox{The set of all positive integers}\\
\Z & \mbox{The set of all integers}\\
\mathbb{N}_{0} & \mbox{The set of all non-negative integers}\\
\C & \mbox{Complex plane }\\
\C[Z_1,\ldots,Z_m] & \mbox{The set of all polynomials in }m \mbox{ variables}\\
z\cdot x & \sum\limits_{j=1}^{l}z_jx_j \mbox{ for }x=(x_1,\ldots,x_l)\in B^l,~z=(z_1,\ldots,z_l)\in\C^l\\
\|f\|_{\Omega,\infty} & \sup\{|f(z)|:z\in\Omega\}\\
\mathbb{H} & \mbox{A separable Hilbert space }\\
\mathcal{B}(\h) & \mbox{The set of all bounded operators on }\h \\
\sigma(T) & \mbox{ Spectrum of }T\\
H^{\infty}(\Omega) & \mbox{The set of all bounded holomorphic functions on }\Omega\\ 
\D & \mbox{ unit disk in }\C\\
\T & \mbox{ Circle of unit length }\\
\|p\|_{\W,\infty}^{\rm op} & \sup\{\|p(z)\|_{op}:z\in\W\}\\
K_G^{\C} & \mbox{ Complex Grothendieck Constant}\\
\mathcal{P}_k[Z_1,\ldots,Z_n] & \mbox{The set of all polynomials of degree }k\mbox{ in }n\mbox{ variables}\\
H^{\infty}(\W) & \mbox{The set of all complex valued bounded holomorphic function on }\W\\
H^{\infty}(\W,\D) & \left\{f\in H^{\infty}(\W):\|f\|_{\W,\infty}\leq 1 \right\}\\
H^{\infty}_{\omega}(\Omega,\D) & \left\{f\in H^{\infty}(\Omega,\D):f(\omega)=0\right\}\\
D f(\omega) & \left(\frac{\pa}{\pa z_1}f(\omega),\ldots,\frac{\pa}{\pa z_m}f(\omega)\right)\\		
\mathcal{L}[Z_1,\ldots,Z_n] & \{a_1z_1+\cdots + a_nz_n:a_i\in\C,i=1,\ldots,n\}\\
\mbox{}[x^{\sharp},y] & \sum_{j}x_jy_j\\
x^{\sharp} & \C-\mbox{valued linear map on }\mathbb{H} \mbox{ defined by }x^{\sharp}(y)=[x^{\sharp},y]\\
\h^{\sharp} & \{x^{\sharp}:x\in\h\}\\
L^2(\mathbb{T}) & \mbox{The set of all square integrable functions on } \T \mbox{ with respect to }\\
\end{array}$
\newpage
$
\begin{array}{ll}
H^2(\T) & \mbox{Hardy space}\\

L^{\infty}(\T^n) & \mbox{The set of all essentially bounded functions } 
\T^n \mbox{ with respect to  }\\
& \mbox{ Lebsegue measure}\mbox{ and equipped with the essential sup norm}\\

M_{\phi} & \mbox{Multiplication operator corresponding to }\phi\\

H_{\phi} & \mbox{Hankel matrix corresponding to symbol }\phi\\

\|\boldsymbol T\|_{\infty} & \max\big\{\|T_1\|,\ldots,\|T_n\|\big\}\\

C_k(n) & \sup\big\{\|p(\boldsymbol T)\|:\|p\|_{\D^n,\infty}\leq 1, p \mbox{~\rm is of degree at most~} k,\,\|\boldsymbol T\|_{\infty} \leq 1 \big\}\\
C(n) & \lim_{k\to \infty} C_k(n)\\
\mathscr{T}(A_1,\ldots,A_n) & \left(
{\begin{array}{ccccc}
A_1 & A_2 & A_3 &\cdots & A_n \\
0 & A_1 & A_2 & \cdots & A_{n-1}\\ 
0 & 0 & A_1 & \cdots & A_{n-2}\\
\vdots & \vdots & \vdots & \ddots & \vdots \\
0 & 0 & 0 & \cdots & A_1 \\
\end{array} } 
\right)\\
M_{k}(B) & \mbox{The set of all }k\times k \mbox{ matrices with entries in Banach space }B\\
{\rm dist}_{\infty}(\phi,K) & \mbox{Distance of }\phi \mbox{ from }K\mbox{with respect to essential sup norm}\\
A_1 & \bigsqcup_{k\in\mathbb{N}_0}P_k\\
A_2 & \bigsqcup_{k\in\mathbb{N}}P_{-k}\\
H_1 & \Big\{f:=\sum\limits_{(m,n)\in A_1}a_{m,n}z_{1}^{m}z_{2}^{n}|f\in L^{\infty}(\T^2)\Big\}\\
H_2 & \Big\{f:=\sum\limits_{(m,n)\in A_2}a_{m,n}z_{1}^{m}z_{2}^{n}|f\in L^{\infty}(\T^2)\Big\}\\
\ell^2(\N_0) & \big\{(a_0,a_1,\ldots):a_j\in\C\mbox{ for all }j\in\N_0,\sum\limits_{j\geq 0}|a_j|^2< \infty\big\}\\
p_{\!_A} & \sum_{i,j=1}^n a_{ij} z_i w_j, \mbox{ where }A=\big(\!\!\big(a_{ij}\big)\!\!\big)\\
\vartriangle & \big \{(z_1,\ldots, z_n, z_1,\ldots, z_n): |z_i| < 1, 1\leq i \leq n \big \}\\
p_{_{\!A,\vartriangle}} & \mbox{ the restriction of }p_{\!_A} \mbox{ to the diagonal set }
\vartriangle\\
\|(z_1,\ldots,z_n)\|_p &  (|z_1|^p+\cdots+|z_n|^p)^{1/p}\\
\ell^p(n) & (\C^n,\|.\|_p)\\ 
\|A\|_{\ell^{\infty}(n)\to \ell^1(n)} & \mbox{ Operator norm of }A:\ell^{\infty}(n)\to \ell^1(n)\\
A_{xy} & \mbox{The matrix }([x_j^{\sharp},y_k])_{m\times m}\\
\mathcal{D}_{\Omega}^{(\omega)} & \left\{\left(\frac{1}{2} D^2f(\omega),D f(\omega)\right): f:\Omega\rightarrow \D\mbox{ is a analytic map with }f(\omega)=0\right\}\\
M_{m}^{s} & \mbox{The set of all }m\times m \mbox{ complex symmetric matrices}\\
\|\cdot\|_\mathcal{D} & \mbox{The norm in }M_{m}^{s}\times \mathbb{C}^m \mbox{ corresponding to the unit ball }\mathcal{D}_{\Omega}^{(\omega)}\\
\mathbb{U} & \left\{(z,v):z\in\mathbb{C},v\in\mathbb{H}\mbox{ such that }|z|+\|v\|^2\leq 1\right\}\\
\|\cdot\|_\mathbb{U} & \mbox{The norm in }\C\oplus\mathbb{H} \mbox{ corresponding to the unit ball }\mathbb{U}\\
\mathcal{P}_{k}(\W,E) & \left\{p\in\C[Z_1,\ldots,Z_n]:\deg(p)\leq k \mbox{ and }p(\W)\subset E\right\}.\\
\mathcal{P}_{k}^{\w}(\W,E) & \mbox{The set of all polynomials }p\in \mathcal{P}_{k}(\W,E)\mbox{ with }p(\w)=0.\\ 
\end{array}$
\newpage
$
\begin{array}{ll}
M_k & \mbox{The set of all }k\times k\mbox{ complex matrices}\\
(X)_1 & \mbox{Open Unit Ball of Banach Space }X\\
\mathcal{P}(\C^m,M_k) & \mbox{ The set of all }M_k \mbox{ valued polynomials in }m \mbox{ variables}\\
\mathcal{P}_{k}(\C^m,M_k) & \left\{p\in\mathcal{P}(\C^m,M_k):\deg(p)\leq k \right\}.\\
\mathcal{P}_{k}^{(\omega)}(\C^m,M_k) & \mbox{The set of all polynomials }p\in\mathcal{P}_{k}(\C^m,M_k) \mbox{ such that }p(\omega)=0\\
\mathcal{P}_{n}^{(\w)}\left(\W,(M_k)_1\right) &
\left\{p\in\mathcal{P}_{n}^{(\w)}(\C^m,M_k):\|p\|_{\W,\infty}^{\rm op}\leq 1 \right\}\\

B^* & \mbox{Adjoint of the bilateral shift}\\
\ell^{2}(\Z) & \big\{(\ldots,a_{-1},a_0,a_1,\ldots):a_j\in\C\mbox{ for all }j\in\Z,\sum\limits_{j\in\Z}|a_j|^2< \infty\big\}\\
C^*(a) & C^*-\mbox{algebra geneated by }1,a \mbox{ and }a^*\\
\sigma(a) & \left\{\lambda\in\C:a-\lambda 1\mbox{ is not invertible}\right\}\\
C(X) & \mbox{The set of all complex valued continuous functions on }X\\
I_k & \mbox{Identity operator on }\C^k\\
I & \mbox{Identity operator on }\h\\
\mathfrak{R}(z) & \mbox{Real part of }z\\
H_{+} & \mbox{The right half plane}\\
P_k & \left\{(x,y)\in\Z^2:x+y=k\right\}\\
& \mbox{normalized Lebsegue measure and equipped with the }L^2\mbox{ norm}\\
arg(\alpha) & \mbox{Argument of the complex number }\alpha\\
H(T_1,T_2,\ldots) & \left(
\begin{array}{cccc}
T_1 & T_2 & T_3 & \cdots\\
T_2 & T_3 & T_4 & \cdots\\
T_3 & T_4 & T_5 & \cdots\\
\vdots & \vdots & \vdots & \\ 
\end{array}
\right)\\
T_n\left((b_m),a_0,a_1,\ldots,a_{n-1}\right) &\left(
\begin{array}{cccc}
a_0 & a_1 & \cdots & a_{n-1}\\
b_1 & a_0 & \cdots & a_{n-2}\\
\vdots & \vdots & & \vdots\\
b_{n-1} & b_{n-2} & \cdots & a_0\\
\vdots & \vdots &  & \vdots\\
\end{array}
\right)\\
M_k(V) & M_k\otimes V\\
MIN(V) & \mbox{MIN operator space structure on }V\\ 
MAX(V) & \mbox{MAX operator space structure on }V\\
\alpha(V)& \left\{\frac{\|(v_{ij})\|_{MAX}}{\|v_{ij}\|_{MIN}}:(v_{ij})\in M_{k,l}(V),k
\mbox{ and }l
\mbox{ are arbitrary positive integers}\right\}\\
{\rm dim}(V) & \mbox{Dimension of the vector space }V\\
D_{T} & \mbox{ The positive square root of the operator }I-T^*T\\
\end{array}
$



\bibliographystyle{amsalpha}\bibliography{Thesis}

\end{document}